\theoremstyle{plain}
\newtheorem{theorem}{Theorem}[section]
\newtheorem{proposition}[theorem]{Proposition}
\newtheorem{corollary}[theorem]{Corollary}
\newtheorem{lemma}[theorem]{Lemma}
\theoremstyle{definition}
\newtheorem{definition}[theorem]{Definition}
\newtheorem{remark}[theorem]{Remark}
\numberwithin{equation}{section}
\setlist[enumerate,2]{label=\textit{(\alph*)},ref=(\textit{\alph*})}
\setlist[enumerate,1]{label=\textit{(\roman*)},ref=(\textit{\roman*})}
\newenvironment{Cases}{\hspace{-8pt}\begin{cases}}{\end{cases}\hspace{-20pt}}
\newcounter{zeile}
\newcommand{\zz}{\stepcounter{zeile}(\arabic{zeile})}
\newcommand{\C}{\mathbb{C}}
\newcommand{\Nc}{\mathcal{N}}
\newcommand{\cM}{\mathcal{M}}
\newcommand{\Hb}{\mathbb{H}}
\newcommand{\Q}{\mathbb{Q}}
\newcommand{\R}{\mathbb{R}}
\newcommand{\N}{\mathbb{N}}
\newcommand{\Pb}{\mathbb{P}}
\newcommand{\SO}{\operatorname{SO}}
\newcommand{\Herm}{\operatorname{Herm}}
\newcommand{\GL}{\operatorname{GL}}
\newcommand{\SL}{\operatorname{SL}}
\newcommand{\SU}{\operatorname{SU}}
\newcommand{\tr}{\operatorname{tr}}
\newcommand{\im}{\operatorname{Im}}
\newcommand{\Sp}{\operatorname{Sp}}
\newcommand{\Ad}{\operatorname{Ad}}
\newcommand{\ad}{\operatorname{ad}}
\newcommand{\diag}{\operatorname{diag}}
\newcommand{\Res} {\operatorname{Res}} 
\newcommand{\Spin}{\operatorname{Spin}}
\newcommand{\Sym}{\operatorname{Sym}}
\newcommand{\Skew}{\operatorname{Skew}}
\newcommand{\rank}{\operatorname{rank}}
\newcommand{\re}{\operatorname{Re}}
\newcommand{\Mat}{\operatorname{Mat}}
\newcommand{\Gq}{{\oline G}}
\newcommand{\Hq}{{\oline H}}
\renewcommand\hat{\widehat}
\newcommand{\af}{\mathfrak{a}}
\newcommand{\e}{\epsilon}
\newcommand{\gf}{\mathfrak{g}}
\newcommand{\ff}{\mathfrak{f}}
\newcommand{\hf}{\mathfrak{h}}
\newcommand{\kf}{\mathfrak{k}}
\newcommand{\lf}{\mathfrak{l}}
\newcommand{\mf}{\mathfrak{m}}
\newcommand{\nf}{\mathfrak{n}}
\newcommand{\pf}{\mathfrak{p}}
\newcommand{\spin}{\mathfrak{spin}}
\newcommand{\qf}{\mathfrak{q}}
\renewcommand{\sf}{\mathfrak{s}}
\renewcommand{\sl}{\mathfrak{sl}}
\newcommand{\gl}{\mathfrak{gl}}
\newcommand{\symp}{\mathfrak{sp}}
\newcommand{\so}{\mathfrak{so}}
\renewcommand{\sp}{\mathfrak{sp}}
\newcommand{\su}{\mathfrak{su}}
\newcommand{\uf}{\mathfrak{u}}
\newcommand{\zf}{\mathfrak{z}}
\newcommand{\la}{\langle}
\newcommand{\ra}{\rangle}
\newcommand{\1}{{\bf1}}
\newcommand{\oline}{\overline}
\newcommand{\F}{\mathcal{F}}
\newcommand{\Unitary}{\operatorname{U}}
\newcommand{\fhq}{\overline{\mathfrak{h}}}
\newcommand{\HH}{\mathbb{H}} 
\newcommand{\sA}{\mathsf{A}}
\newcommand{\sB}{\mathsf{B}}
\newcommand{\sC}{\mathsf{C}}
\newcommand{\sE}{\mathsf{E}}
\newcommand{\sF}{\mathsf{F}}
\newcommand{\sG}{\mathsf{G}}
\newcommand{\aas}{\llap{$*$}}
\newcounter{class}
\newcommand{\yy}{\stepcounter{class}(\arabic{class})}
\newcounter{Tabelle}
\newcommand{\Tabelle}[1]{\refstepcounter{Tabelle}Table \arabic{Tabelle}\label{#1}}
\title[Classification of real spherical pairs]
{Classification of reductive real spherical pairs\\ I. The simple case}
\subjclass[2000]{14M17, 20G20, 22E15, 22F30, 53C30}
\begin{document}
\date{October 24, 2017}

\begin{abstract} This paper gives a classification of all pairs $(\gf,\hf)$ 
with $\gf$ a simple real Lie algebra and $\hf\subset\gf$ a reductive subalgebra 
for which there exists a minimal parabolic subalgebra $\pf\subset\gf$ such that 
$\gf=\hf+\pf$ as vector sum. 
\end{abstract}

\author[Knop]{Friedrich Knop}
\email{friedrich.knop@fau.de}
\address{FAU Erlangen-Nürnberg, Department Mathematik\\
Cauerstr. 11, D-91058 Erlangen, Germany} 

\author[Krötz]{Bernhard Krötz}
\email{bkroetz@gmx.de}
\address{Universität Paderborn, Institut für Mathematik\\Warburger Straße 100, 
D-33098 Paderborn, Germany}

\author[Pecher]{Tobias Pecher} 
\email{tpecher@math.upb.de}
\address{Universität Paderborn, Institut für Mathematik\\Warburger Straße 100, 
D-33098 Paderborn, Germany}

\thanks{The second author was supported by ERC Advanced Investigators Grant HARG 268105}
\author[Schlichtkrull]{Henrik Schlichtkrull}
\email{schlicht@math.ku.dk}
\address{University of Copenhagen, Department of Mathematics\\Universitetsparken 5, 
DK-2100 Copenhagen \O, Denmark}

\maketitle

\section{Introduction}

\subsection{Spherical pairs}

We recall that a pair $(\gf_\C, \hf_\C)$ consisting of a complex
reductive Lie algebra $\gf_\C$ and a complex subalgebra $\hf_\C$
thereof is called {\it spherical} provided there exists a Borel
subalgebra ${\mathfrak b}_\C\subset\gf_\C$ such that
$\gf_\C=\hf_\C +{\mathfrak b}_\C$ as a sum of vector spaces (not
necessarily direct). In particular, this is the case for symmetric
pairs, that is, when $\hf_\C$ consists of the elements fixed by an
involution of $\gf_\C$.

Complex spherical pairs with $\hf_\C$ reductive were classified by
Krämer \cite{Kr} for $\gf_\C$ simple and for $\gf_\C$ semisimple by
Brion \cite{Brion} and Mikityuk \cite{Mik}.

The objective of this paper is to obtain the appropriate real version
of the classification of Krämer.  To be more precise, let $\gf$ be a
real reductive Lie algebra and $\hf\subset\gf$ a subalgebra. We call
$\hf$ {\it real spherical} provided there exists a minimal parabolic
subalgebra $\pf\subset\gf$ such that $\gf = \hf +\pf$.  Being in this
situation we call $(\gf,\hf)$ a {\it real spherical pair}.  The pair
is said to be trivial if $\hf=\gf$.

We say that $(\gf,\hf)$ is {\it absolutely spherical} if the
complexified pair $(\gf_\C,\hf_\C)$ is spherical.  It is easy to see
(cf.~Lemma \ref{real form}) that then $(\gf,\hf)$ is real spherical.
In particular, all real symmetric pairs $(\gf,\hf)$ are absolutely
spherical, since the involution of $\gf$ that defines $\hf$ extends to
an involution of $\gf_\C$.  The real symmetric pairs were classified
by Berger \cite{Berger}.  It is not difficult to classify also the
non-symmetric absolutely spherical pairs with $\hf$ reductive; this is
done in Table \ref{non-symmetric real forms} at the end of the paper.

\subsection{Main result}

Assume that $\gf$ is simple and non-compact.  The main result of this
paper is a classification of all reductive subalgebras of $\gf$ which
are real spherical. The following Table~\ref{table real spherical}
presents the most important outcome.  It contains all the real
spherical pairs which are not absolutely spherical, up to isomorphism
(and a few more, see Remark \ref{remark intro}).

Formally the classification is given in the following theorem, which
refers to a number of tables in addition to Table \ref{table real
  spherical}. These tables are collected at the end of the paper,
except for the above-mentioned list of Berger.

\begin{theorem}\label{theorem classification}  
  Let $(\gf,\hf)$ be a non-trivial real spherical pair for which $\gf$
  is simple and $\hf$ an algebraic and reductive subalgebra.  Then at
  least one of the following statements holds:
  \renewcommand{\theenumi}{\roman{enumi}}
  \begin{enumerate}
  \item\label{comp} $\gf$ is compact,
  \item\label{symm} $(\gf,\hf)$ is symmetric and listed by Berger (see
    \cite{Berger}*{Tableaux II}),
  \item\label{abs-sph} $(\gf,\hf)$ is absolutely spherical, but
    non-symmetric (see Tables \ref{non-symmetric spherical},
    \ref{complex forms}, and \ref{non-symmetric real forms}),
  \item\label{not abs-sph} $(\gf,\hf)$ is isomorphic to some pair in
    Table \ref{table real spherical}.
  \end{enumerate}
  \noindent Conversely, all pairs mentioned in
    \ref{comp}--\ref{not abs-sph} are real spherical.
\end{theorem}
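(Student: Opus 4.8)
The plan is to prove both implications, the converse being the shorter. Throughout I would use the elementary reformulation that $\gf=\hf+\pf$ holds if and only if $\dim(\hf\cap\pf)=\dim\hf-\dim\nf$, the inequality $\dim(\hf\cap\pf)\ge\dim\hf-\dim\nf$ being automatic for dimension reasons, together with the fact that the minimal parabolic subalgebras of $\gf$ form a single $G$-conjugacy class, so that it suffices to test the equality for one --- equivalently, for a generic --- choice of $\pf$. I would also exploit the monotonicity that if $\hf\subset\hf'\subset\gf$ are subalgebras and $(\gf,\hf)$ is real spherical, then so is $(\gf,\hf')$; hence it suffices to determine the \emph{minimal} real spherical reductive subalgebras, every other one containing such a one.

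\textbf{The converse.} If $\gf$ is compact then $\pf=\gf$ and $\gf=\hf+\pf$ is automatic, giving \ref{comp}. If $(\gf,\hf)$ is symmetric or non-symmetric absolutely spherical, it is real spherical by Lemma \ref{real form}, handling \ref{symm} and \ref{abs-sph}. There remains the verification that every entry of Table \ref{table real spherical} is real spherical: for each such pair I would fix explicit matrix realizations of $\gf$ and $\hf$, write down a minimal parabolic $\pf$, compute $\hf\cap\pf$, and check the dimension equality above --- a finite list of linear-algebra computations.

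\textbf{The direct implication: reduction to $\gf_\C$.} Assume $\gf$ non-compact (else \ref{comp}) and $(\gf,\hf)$ real spherical with $\hf$ proper, algebraic and reductive. Complexifying $\gf=\hf+\pf$ gives $\gf_\C=\hf_\C+\pf_\C$, where $\pf_\C$ is a proper parabolic subalgebra of $\gf_\C$ whose type --- that is, its Levi $\mf_\C\oplus\af_\C$ --- is dictated by the Satake diagram of $\gf$. Two extremes frame the analysis. If $\gf$ is split then $\pf$ is already a Borel subalgebra of $\gf$, so $\pf_\C$ is a Borel of $\gf_\C$, whence $(\gf_\C,\hf_\C)$ is spherical and $(\gf,\hf)$ is absolutely spherical; more generally, whenever $\gf_\C=\hf_\C+\mathfrak{b}_\C$ for some Borel $\mathfrak{b}_\C\subset\pf_\C$, the pair is absolutely spherical, and by Krämer's classification \cite{Kr} together with Berger's list \cite{Berger} and Tables \ref{non-symmetric spherical}, \ref{complex forms}, \ref{non-symmetric real forms} it falls under \ref{symm} or \ref{abs-sph}. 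The genuinely new regime is therefore: $(\gf_\C,\hf_\C)$ is \emph{not} spherical, yet $\hf_\C$ has an open orbit on the partial flag manifold $\gf_\C/\pf_\C$, which has dimension $\dim\nf$.

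\textbf{Case analysis; the main obstacle.} To exhaust the new regime I would run through Cartan's list of simple, non-compact, non-split real Lie algebras $\gf$; for each, $\pf_\C$ and the flag manifold $\gf_\C/\pf_\C$ are read off from the restricted root system. The reductive subalgebras $\hf\subset\gf$ would be enumerated from the known classification of reductive subalgebras of complex simple Lie algebras, retained only when $\hf_\C$ is stable under the real structure attached to $\gf$, and pruned by the bound $\dim\hf\ge\dim\nf$ and by the monotonicity above. For each survivor one tests whether $\hf_\C$ has an open orbit on $\gf_\C/\pf_\C$; when it does without $(\gf_\C,\hf_\C)$ being spherical, one must descend to the real level and decide which real forms $\hf$ of $\hf_\C$ actually satisfy $\gf=\hf+\pf$. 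This descent is the crux, and I expect it to be the main obstacle: the complex open-orbit condition is necessary but not sufficient for real sphericality, so for each such candidate one has to work inside an explicit realization, compute $\hf\cap\pf$ for a minimal parabolic $\pf$, and compare dimensions. The pairs that pass are exactly those listed in Table \ref{table real spherical}, i.e.\ case \ref{not abs-sph}; the remaining work is bookkeeping --- confirming that the enumeration of reductive subalgebras is exhaustive, and that pairs which are in fact absolutely spherical get filed under \ref{symm}/\ref{abs-sph} rather than duplicated in \ref{not abs-sph}.
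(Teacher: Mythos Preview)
Your outline is logically sound but underestimates the combinatorial explosion and is missing the structural tool that makes the classification tractable. The step ``enumerate the reductive subalgebras $\hf\subset\gf$ from the known classification of reductive subalgebras of complex simple Lie algebras, retain those stable under the real structure, prune by $\dim\hf\ge\dim\nf$, then test the open-orbit condition'' is not a realistic programme: Dynkin's lists of semisimple subalgebras of the exceptional algebras alone run to hundreds of conjugacy classes, each with several real forms, and for each survivor the open-orbit test on the partial flag variety is itself a nontrivial problem. Your monotonicity observation points toward minimal spherical subalgebras, but gives no mechanism for recognising them.

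The paper's route avoids this by a two-stage reduction you do not mention. First, one classifies only the \emph{maximal} reductive real spherical subalgebras (Theorem~\ref{maximals are real forms}), showing they are all absolutely spherical; this is already a substantial case analysis (Sections~\ref{max SL}--\ref{max ex}) but is confined to Dynkin's short list of maximal subalgebras. Second, and this is the key idea, one invokes the local structure theorem to attach to each real spherical $(\gf,\hf)$ an adapted parabolic $\qf=\lf+\uf$, and proves (Corollary~\ref{cor tower-factor}) that any real spherical $\hf'\subsetneq\hf$ forces a \emph{factorization} $\hf=\hf'+(\lf\cap\hf)$. Since $\lf\cap\hf$ can be computed explicitly for each symmetric $\hf$ (Tables~\ref{lcaph1}--\ref{lcaph2}) and Onishchik has classified all factorizations of simple complex Lie algebras (Proposition~\ref{Oni2}), the possible $\hf'$ below a given maximal $\hf$ are pinned down to a handful of candidates. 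This is what collapses the search space to something finite and manageable; without it your case analysis has no natural termination. For the converse on Table~\ref{table real spherical}, the paper likewise does not verify sphericality by brute linear algebra but establishes it in the course of the classification, notably via Lemma~\ref{center removed} for the starred entries.
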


\begin{table}[ht]
\[\begin{array}{rllll}
    &\gf&\hf\\
    \hline
    \yy&\su(p_1+p_2,q_1+q_2)&\aas\su(p_1,q_1)+\su(p_2,q_2)&&(p_1,q_1)\ne(q_2,p_2)\\
    \yy&\su(n,1)&\su(n-2q,1){+}\sp(q)+\ff&\ff\subset \uf(1)&1\le q\le\frac n2\\
    \yy&\sl(n,\HH)&\sl(n-1,\HH)+\ff&\ff\subset \C&n\ge3   \\
    \yy&\sl(n,\HH)&\aas\sl(n,\C)&&n\text{ odd}\\
    \hline
    \yy&\sp(p,q)&\aas\su(p,q)&&p\ne q\\
    \yy&\sp(p,q)&\aas\sp(p-1,q)&&p,q\ge1\\
    \hline
    \yy&\so(2p,2q)&\aas\su(p,q)&&p\ne q\\
    \yy&\so(2p+1,2q)&\aas\su(p,q)&&p\ne q-1,q\\
    \yy&\so(n,1)&\so(n-2q,1)+\su(q)+\ff&\ff\subset \uf(1)&2\le q\le\frac n2\\
    \yy&\so(n,1)&\so(n-4q,1)+\sp(q)+\ff&\ff\subset \sp(1)&2\le q\le\frac n4\\
    \yy&\so(n,1)&\so(n-16,1)+\spin(9)&&n\ge16\\
    \yy&\so(n,q)&\so(n-7,q)+\sG_2&&n\ge7, q=1,2\\
    \yy&\so(n,q)&\so(n-8,q)+\spin(7)&&n\ge8, q=1,2,3\\
    \yy&\so(6,3)&\so(2,0)+\sG_2^1\\
    \yy&\so(7,4)&\so(3,0)+\spin(4,3)\\
    \yy&\so^*(2n)&\aas\so^*(2n-2)&&n\ge5\\
    \yy&\so^*(10)&\aas\spin(6,1)\text{ or }*\!\spin(5,2)\\
    \hline
    \yy&\sE_6^4&\sl(3,\HH)+\ff&\ff\subset \uf(1)\\
    \yy&\sE_7^2&\aas\sE_6^2\text{ or }\ \  \aas\sE_6^3\\
    \yy&\sF_4^2&\sp(2,1)+\ff&\ff\subset \uf(1)\\
    \hline
  \end{array}\]
  \smallskip
  \centerline{\rm\Tabelle{table real spherical}}  
\end{table}

\begin{remark}\label{remark intro}\

  1. We use Berger's notation for the exceptional real Lie
  algebras. See Section \ref{exc-not}.

  2. There is some overlap between \ref{abs-sph} and \ref{not
    abs-sph}, as it appeared more useful to include a couple of
  absolutely spherical cases in Table \ref{table real spherical}. This
  holds for case (1) which is absolutely spherical unless
  $p_1+q_1= p_2+q_2$.  Moreover, case (2) is absolutely spherical when
  $q=\frac n2$, case (8) is absolutely spherical when $p+q$ is odd,
  and case (9) is absolutely spherical if $q=\frac n2$ and
  $\ff=\uf(1)$.

  3. The tables contain redundancies for small values of the
  parameters. These are mostly resolved by restricting $\gf$ to
\[
\begin{array}{lllll}
  \su(p,q)&\sl(n,\HH)&\sp(p,q)&\so(p,q)&\so^*(2n)\\
  p+q\ge2&n\ge2&p+q\ge2&p+q\ge 7&n\ge5
\end{array}
\]
and $p\ge q\ge 1$.

4. In Table \ref{table real spherical} the real spherical subalgebras
which are of codimension one in an absolutely spherical subalgebra are
marked with an $*$ in front of $\hf$ (with the exception of (2) and
(9) with $\ff=0$ and $n=2q$).  See Lemma \ref{center removed}.
	
5.  For simple Lie algebras $\gf$ of split rank one the real spherical
pairs were previously described in \cite{Kimelfeld}, and a more
explicit classification was later given in \cite{KM}.
\end{remark}

\subsection{Method of proof}

Our starting point is the following theorem which we prove in Sections
\ref{max SL}--\ref{max ex}, by making use of Dynkin's classification
of the maximal subalgebras in a complex simple Lie algebra.

\begin{theorem}\label{maximals are real forms}
  Let $(\gf,\hf)$ be a real spherical pair for which $\gf$ is simple
  and non-compact, and $\hf$ is a maximal reductive subalgebra.  Then
  $(\gf,\hf)$ is absolutely spherical.
\end{theorem}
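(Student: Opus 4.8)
The plan is to deduce the theorem from two classical classifications---Dynkin's, of the maximal subalgebras of a complex simple Lie algebra, and Krämer's \cite{Kr}, of the reductive complex spherical pairs $(\gf_\C,\hf_\C)$ with $\gf_\C$ simple---by passing to complexifications and using a crude dimension estimate. First I record the estimate. From the defining relation $\gf=\hf+\pf$ of real sphericity together with the Iwasawa decomposition $\gf=\oline\nf\oplus\mf\oplus\af\oplus\nf$ (so that $\pf=\mf\oplus\af\oplus\nf$) one gets
\[
\dim\hf\ \ge\ \dim\gf-\dim\pf\ =\ \dim\oline\nf\ =\ \tfrac12\bigl(\dim\gf-\dim\mf-\dim\af\bigr),
\]
so that $\hf$ fills, up to the correction $\tfrac12(\dim\mf+\dim\af)$, at least half of $\gf$. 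Complexifying gives $\gf_\C=\hf_\C+\pf_\C$, but $\pf_\C$ is only a \emph{proper parabolic} subalgebra of $\gf_\C$---not in general a Borel subalgebra---so a priori this merely says that $\hf_\C$ has a dense orbit on the partial flag variety $\gf_\C/\pf_\C$; this is why the assertion is not automatic. Finally, after conjugating we may assume $\hf$ is stable under a Cartan involution $\theta$ of $\gf$, so that $\hf$ is itself a reductive real Lie algebra whose Cartan and restricted-root data are compatible with those of $\gf$.

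Next I would make two preliminary reductions. If $\hf$ is of compact type, then $\hf\subseteq\kf$ for a suitable maximal compact subalgebra $\kf$ of $\gf$; since $\gf$ is noncompact, $\kf$ is a proper reductive subalgebra, so maximality forces $\hf=\kf$, which is a symmetric pair and hence absolutely spherical. If $\gf$ is a complex simple Lie algebra regarded over $\R$, then $\gf_\C$ is only semisimple; splitting it as a product reduces the question to Krämer's list on the complex factor, and one again lands among symmetric (or already absolutely spherical) pairs. In the remaining case $\gf$ is a noncompact, non-complex real form of a complex simple $\gf_\C$ and $\hf$ is noncompact. Now $\hf_\C$ is a reductive proper subalgebra of $\gf_\C$, hence contained in a maximal reductive subalgebra $\mf_\C$, and Dynkin's classification makes the possibilities for $\mf_\C$ explicit: the reductive subalgebras of maximal rank coming from the ordinary and extended Dynkin diagrams, together with the finitely many maximal semisimple subalgebras of non-maximal rank---the subalgebras $\so(n,\C)$ and $\sp(n,\C)$ of $\sl(n,\C)$, the tensor-product embeddings, and the irreducibly embedded small subalgebras (principal $\sl(2,\C)$, $\sG_2$, $\spin(7)$, $\spin(9)$, and the exceptional instances). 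I would then work through these type by type, noting that $\hf$ maximal reductive in $\gf$ does \emph{not} force $\hf_\C$ maximal reductive in $\gf_\C$ (for instance $\mf_\C$ may fail to be stable under conjugation over $\gf$), so one must allow $\hf_\C\subsetneq\mf_\C$ and iterate, applying the dimension bound at each step. Combined with the explicit list of the $\mf_\C$, the bound eliminates all but a short, explicit set of candidates in each type: the small subalgebras of bounded dimension are excluded once $\rank\gf$ is large enough, because then $\dim\oline\nf>\dim\hf$; what remains consists of the real forms of the entries of Krämer's list, together with a handful of genuinely low-rank cases.

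For each surviving candidate I would then verify absolute sphericity directly: either its complexification $(\gf_\C,\hf_\C)$ appears on Krämer's list---so $(\gf,\hf)$ is absolutely spherical by definition---or it does not, in which case I would show that the candidate is in fact not real spherical, contradicting the hypothesis. Here the crude estimate is replaced by its sharp form $\dim(\hf\cap\pf)=\dim\hf-\dim\oline\nf$, equivalently the requirement that the image of the $\theta$-stable subalgebra $\hf$ in the $\mf\oplus\af$-module $\gf/\pf\cong\oline\nf$ be all of it; for the borderline candidates an explicit inspection of this module shows that it is not. I expect this last matching to be the main obstacle: in each classical and exceptional type one must line up, case by case, the real forms of $\gf$ with those of the candidate $\hf_\C$ and decide real sphericity for the finitely many pairs not already settled by the crude dimension estimate---in effect re-doing over $\R$ the delicate tail ends of both Dynkin's and Krämer's analyses. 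For $\gf$ of split rank one this can be checked against the explicit classification in \cite{KM}.
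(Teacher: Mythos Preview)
Your overall strategy---reduce via Dynkin's classification of maximal subalgebras of $\gf_\C$, use the dimension bound $\dim\hf\ge\dim\nf$ to eliminate most candidates, then handle the survivors case by case against Kr\"amer's list---is exactly the paper's approach. Two concrete ingredients are missing, however, and without them the case analysis does not close.

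First, for the situation where $\hf$ is maximal reductive in $\gf$ but $\hf_\C$ fails to be maximal reductive in $\gf_\C$, you propose to ``iterate, applying the dimension bound at each step.'' The paper instead invokes Komrakov's explicit classification (Proposition~\ref{prop Komrakov}) of all such pairs: a finite list of nine families, each then excluded by the dimension bound (Corollary~\ref{Komrakov cor}). Your iteration has no obvious termination without this input. Second, and more seriously, the dimension bound alone leaves many Type~II and Type~III candidates that are not absolutely spherical; your proposed tool for these---inspecting whether the image of $\hf$ in the $\mf\oplus\af$-module $\gf/\pf\cong\oline\nf$ is surjective---is in principle equivalent to deciding real sphericity, but gives no practical leverage. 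The paper instead uses several structural shortcuts: orbit-codimension bounds on the null cone or on $\Pb(\HH^m)$ (Lemmas~\ref{orbit bound SU}, \ref{orbit bound SU*}, \ref{so-codim2}, \ref{codim7}) force $H_\C$ to act prehomogeneously on $\C^n$, which for Type~III reduces to the Sato--Kimura table (Proposition~\ref{prop:prehomogeneous}); for Type~II, signature comparisons of invariant Hermitian forms (Lemma~\ref{signature lemma}) pin down $(p,q)$, matrix submersions onto Hermitian matrices bound the compact tensor factor, and finally explicit constructions of independent $H'$-invariant rational functions on $G/P$ (Lemma~\ref{independent}) exclude the surviving low-rank cases such as $\SU(r)\otimes\SU(p_2,q_2)$ with $r\le3$ or $\Sp(1)\otimes\SO_0(p_2,q_2)$. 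These are the tools that make the ``tail end'' tractable; the module-theoretic inspection you sketch would amount to redoing each of these computations from scratch.
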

 
Using Krämer's list \cite{Kr} we then also obtain the following lemma.

\begin{lemma}\label{most maximals are symmetric}
  Let $\gf$ be a non-compact simple real Lie algebra without complex
  structure and $\hf\subsetneq\gf$ be a maximal reductive subalgebra
  which is spherical. Then either $\hf$ is a symmetric subalgebra of
  $\gf$ or a real form of $\sl(3,\C)\subset \sG_2^\C$ or
  $\sG_2^\C\subset \so(7,\C)$.
\end{lemma}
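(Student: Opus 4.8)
The plan is to combine Theorem~\ref{maximals are real forms} with Krämer's classification. Since $\gf$ is simple and carries no complex structure, its complexification $\gf_\C$ is simple. As $(\gf,\hf)$ is a real spherical pair with $\gf$ simple and non-compact and $\hf$ a maximal reductive subalgebra, Theorem~\ref{maximals are real forms} shows that $(\gf,\hf)$ is absolutely spherical; hence $(\gf_\C,\hf_\C)$ is a spherical pair with $\gf_\C$ simple and $\hf_\C$ reductive, and therefore occurs in Krämer's list \cite{Kr}. It remains to run through that finite list, separating the symmetric from the non-symmetric entries.

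Suppose first that $(\gf_\C,\hf_\C)$ is symmetric, say $\hf_\C=\gf_\C^\tau$ for an involution $\tau$ of $\gf_\C$. The conjugation $\sigma$ of $\gf_\C$ relative to $\gf$ preserves $\hf_\C$, this being the complexification of $\hf\subset\gf$. By the standard fact that the defining involution of a complex symmetric pair can be normalized against a given real structure --- equivalently, that a real form of a complex symmetric pair is again symmetric --- one may choose $\tau$ to commute with $\sigma$. Then $\tau$ restricts to an involution of $\gf$ with fixed point algebra $\gf\cap\hf_\C=\hf$, so $\hf$ is a symmetric subalgebra of $\gf$.

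Suppose now that $(\gf_\C,\hf_\C)$ is not symmetric. Then it is one of the few non-symmetric entries of Krämer's list, all of which have $\gf_\C$ classical or equal to $\sG_2^\C$. For each such entry other than $\sl(3,\C)\subset\sG_2^\C$ and $\sG_2^\C\subset\so(7,\C)$ I would produce a reductive subalgebra $\mf_\C$ with $\hf_\C\subsetneq\mf_\C\subsetneq\gf_\C$ that is attached to $(\gf_\C,\hf_\C)$ canonically, namely the subalgebra of $\gf_\C$ preserving each $\hf_\C$-isotypic component of the natural representation of $\gf_\C$. For instance this yields $\mathfrak{s}(\gl(1,\C)+\gl(n-1,\C))\supsetneq\sl(n-1,\C)$ inside $\sl(n,\C)$, the reductive subalgebra $\sp(2,\C)+\sp(2n-2,\C)$ strictly above $\hf_\C$ inside $\sp(2n,\C)$, and $\so(7,\C)+\so(n-7,\C)\supsetneq\sG_2^\C$ (and similarly with $\spin(7,\C)$) inside $\so(n,\C)$. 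The very same recipe returns $\hf_\C$ itself when $\gf_\C=\sG_2^\C$ and $\hf_\C=\sl(3,\C)$, because there $\sl(3,\C)$ is exactly the stabilizer in $\sG_2^\C$ of the $\sl(3,\C)$-isotypic decomposition of $\C^7$, and it returns $\gf_\C$ when $\gf_\C=\so(7,\C)$ and $\hf_\C=\sG_2^\C$, since the latter acts irreducibly on $\C^7$ --- which is precisely why these two pairs are the genuine exceptions. Since $\mf_\C$ is canonically attached to $(\gf_\C,\hf_\C)$ and $\sigma$ preserves $\hf_\C$, we have $\sigma(\mf_\C)=\mf_\C$; hence $\mf:=\mf_\C\cap\gf$ is a reductive subalgebra of $\gf$ strictly between $\hf$ and $\gf$, contradicting the maximality of $\hf$. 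Therefore the only surviving non-symmetric possibilities are that $(\gf,\hf)$ is a real form of $\sl(3,\C)\subset\sG_2^\C$ or of $\sG_2^\C\subset\so(7,\C)$, as asserted.

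I expect the main obstacle to be this last descent from $\gf_\C$ to $\gf$: for every non-symmetric pair on Krämer's list apart from the two exceptions one must exhibit an intermediate reductive subalgebra depending only on $(\gf_\C,\hf_\C)$ --- so that the real structure preserves it automatically --- and one must verify that for the two exceptional pairs no such subalgebra exists. This is exactly the point at which the explicit form of Krämer's list is used. The symmetric case is by comparison routine, given the standard remark that real forms of complex symmetric pairs are symmetric.
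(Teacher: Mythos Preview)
Your overall strategy matches the paper's: apply Theorem~\ref{maximals are real forms} to get that $(\gf_\C,\hf_\C)$ is spherical, reduce the symmetric case to Lemma~\ref{real symmetric}, and in the non-symmetric case rule out all of Kr\"amer's entries except $(\sG_2^\C,\sl(3,\C))$ and $(\so(7,\C),\sG_2^\C)$ by exhibiting a proper intermediate reductive subalgebra. There are, however, two concrete gaps in your execution.

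First, the claim that every non-symmetric entry of Kr\"amer's list has $\gf_\C$ classical or equal to $\sG_2^\C$ is false: entry~(11) of Table~\ref{non-symmetric spherical} is $(\sE_6^\C,\spin(10,\C))$. This is easily repaired --- the normalizer $N_{\gf_\C}(\hf_\C)=\spin(10,\C)+\C$ is the canonical intermediate --- but it must be addressed.

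Second, your uniform recipe ``the stabilizer in $\gf_\C$ of the $\hf_\C$-isotypic decomposition of the natural representation'' does not always produce a \emph{strictly larger} subalgebra than $\hf_\C$. For entry~(3), $\hf_\C=\sp(n-1,\C)+\C\subset\sp(n,\C)$, the torus splits the two-dimensional trivial $\sp(n-1,\C)$-summand of $\C^{2n}$ into two eigenlines, so the isotypic decomposition is $\C^{2n-2}\oplus\C_+\oplus\C_-$, and its stabilizer in $\sp(n,\C)$ is exactly $\hf_\C$ again, not the intended $\sp(n-1,\C)+\sp(1,\C)$. The same failure occurs for entry~(5), $\gl(n,\C)\subset\so(2n+1,\C)$: the decomposition $V\oplus V^*\oplus\C$ has stabilizer $\gl(n,\C)$, not $\so(2n,\C)$. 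One must instead use case-dependent constructions; the paper records suitable ones in Remark~\ref{remark to table 6} (normalizers, centralizers of centralizers, or centralizers of $\hf_\C$-fixed vectors).

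There is also a cleaner route that avoids the canonicity issue entirely and is the one the paper actually takes. Corollary~\ref{Komrakov cor} (a consequence of Komrakov's classification) shows directly that $\hf_\C$ is maximal reductive in $\gf_\C$. Then the mere existence of an intermediate $\fhq_\C$ in the last column of Table~\ref{non-symmetric spherical} --- canonical or not --- already eliminates all entries except~(8) and~(9), and no descent to $\gf$ is needed.
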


In order to complete the classification we use the following
criterion, see Proposition \ref{tower-factor} and Corollary \ref{cor
  tower-factor}: If $(\gf, \hf)$ is real spherical with $\hf$
reductive and algebraic there exists a parabolic subalgebra
$\qf\supset \pf$ and a Levi decomposition $\qf=\lf +\uf$, such that
for every reductive and algebraic subalgebra $\hf'\subset\hf$, which
is also spherical in $\gf$, one has
\begin{equation}\label{fac} \hf = \hf' + (\lf \cap \hf).\end{equation}
In other words \eqref{fac} provides a factorization in the sense of
Onishchik. It is not too hard to determine all $\lf\cap \hf$ for
maximal $\hf$ (see Tables \ref{lcaph1} and \ref{lcaph2}). This allows
us to conclude the classification by means of Onishchik's list
\cite{Oni62} of factorizations of complex simple Lie algebras (see
Proposition \ref{Oni2}).

\subsection{Motivation}
This paper serves as the starting point for a follow up second part
which classifies all real spherical reductive subalgebras of
semisimple Lie algebras (see \cite{classII}).  With these
classifications one obtains an invaluable source of examples of real
spherical pairs.

Our main motivation for studying these pairs is that they provide a
class of homogeneous spaces $Z=G/H$, which appears to be natural for
the purpose of developing harmonic analysis.  Here $G$ is a reductive
Lie group and $H$ a closed subgroup.  The class includes the reductive
group $G$ itself, when considered as a homogeneous space for the
two-sided action.  In this case the establishment of harmonic analysis
is the fundamental achievement of Harish-Chandra \cite{HC-works}. More
generally a theory of harmonic analysis has been developed for
symmetric spaces $Z=G/H$ (see \cite{Delorme} and \cite{vdBS}). A
common geometric property of these spaces is that the minimal
parabolic subgroups of $G$ have open orbits on $Z$, a feature which
plays an important role in the cited works.  This property of the pair
$(G,H)$ is equivalent that the pair of their Lie algebras is real
spherical.  Recent developments reveal that a further generalization
of harmonic analysis to real spherical spaces is feasible, see
\cite{KS2}, \cite{KKSS}, the overview article \cite{KS3}, and
\cite{KKS2}.

\bigskip\noindent{\it Acknowledgment}: It is our pleasure to thank the two
referees for plenty of useful suggestions.  They resulted in a
significant improvement of the initially submitted manuscript.

\section{Generalities}

\subsection{Real spherical pairs}
In the sequel $\gf$ will always refer to a real reductive Lie algebra
and $\hf\subset\gf$ will be an algebraic subalgebra. The Lie algebra
$\hf$ is called {\it real spherical} provided there exists a minimal
parabolic subalgebra $\pf$ such that
\[ \gf = \hf +\pf\, .\] The pair $(\gf, \hf)$ is then referred to as a
{\it real spherical pair}.

Let $\theta$ be a Cartan involution of $\gf$, and let $\gf=\kf+\sf$
denote the corresponding Cartan decomposition.  Given a minimal
parabolic subalgebra $\pf$ we select a maximal abelian subspace $\af$
of $\sf$, which is contained in $\pf$, and write $\mf$ for the
centralizer of $\af$ in $\kf$. Then $\pf=\mf+\af+\nf$, where $\nf$ is
the unipotent radical of $\pf$.  Moreover $\dim(\gf/\pf)=\dim\nf$, and
hence this gives us the {\it dimension bound} for a real spherical
subalgebra $\hf\subset\gf$:
\begin{equation} \label{db} \dim \hf \geq \dim \nf =
  \dim(\gf/\kf)-\rank_\R \gf\, . \end{equation} We note that
$\dim(\gf/\kf)$ and $\rank_\R \gf$ are both listed in Table V of
\cite{Helgason}*{Ch.~X, p.~518}.  Further we record the obvious but
nevertheless sometimes useful {\it rank inequality}
\begin{equation} \label{rankcondition} \rank_\R \gf \geq \rank_\R \hf
  \, .\end{equation}

A pair $(\gf,\hf)$ of a complex Lie algebra and a complex subalgebra
is called {\it complex spherical} or just {\it spherical} if it is
real spherical when regarded as a pair of real Lie algebras.  Note
that in this case the minimal parabolic subalgebras of $\gf$ are
precisely the Borel subalgebras.

Given a pair $(\gf_\C,\hf_\C)$ of a complex Lie algebra and a
subalgebra, a {\it real form} of it is a pair $(\gf,\hf)$ of a real
Lie algebra and a subalgebra such that $\gf$ and $\hf$ are real forms
of $\gf_\C$ and $\hf_\C$, respectively.  We recall from the
introduction that the real form $(\gf,\hf)$ is called {\it absolutely
  spherical} when $(\gf_\C, \hf_\C)$ is spherical.  The following is
easily observed (see \cite{KKS2}*{Lemma 2.1}).

\begin{lemma}\label{real form}  
  All absolutely spherical pairs $(\gf,\hf)$ are real spherical.
\end{lemma}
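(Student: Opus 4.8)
The plan is to produce, from a Borel subalgebra witnessing sphericality of $(\gf_\C,\hf_\C)$, a minimal parabolic subalgebra $\pf\subset\gf$ witnessing real sphericality of $(\gf,\hf)$. The key point is a dimension count together with the observation that the real points of the (generically open) $H_\C$-orbit through the Borel must meet $G/P$ in an open set. First I would recall that $(\gf_\C,\hf_\C)$ spherical means $\gf_\C=\hf_\C+\mathfrak b_\C$, equivalently $\dim\mathfrak b_\C+\dim\hf_\C-\dim(\mathfrak b_\C\cap\hf_\C)=\dim\gf_\C$, equivalently the orbit $H_\C\cdot[\mathfrak b_\C]$ is open in the full flag variety $X_\C$ of $\gf_\C$; and this open condition is independent of which Borel one chooses. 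Now complexification gives $\gf_\C=\gf\otimes_\R\C$ with complex conjugation $\sigma$ fixing $\gf$, and likewise $\hf_\C$ is $\sigma$-stable with fixed points $\hf$. Choose a minimal parabolic $\pf\subset\gf$ with Langlands decomposition $\pf=\mf+\af+\nf$; then $\pf_\C$ is a parabolic subalgebra of $\gf_\C$ containing a Borel $\mathfrak b_\C$, and one has $\dim_\C\pf_\C=\dim_\R\pf$ while $\dim_\C\gf_\C=\dim_\R\gf$, so $\dim_\R(\gf/\pf)=\dim_\C(\gf_\C/\pf_\C)\le\dim_\C(\gf_\C/\mathfrak b_\C)$.

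The core step is then the following. Consider the partial flag variety $Y_\C=G_\C/P_\C$ of parabolics of type $\pf_\C$, with its real form $Y=G/P\hookrightarrow Y_\C$, and the natural $G_\C$-equivariant projection $\pi\colon X_\C\to Y_\C$. Since $H_\C\cdot[\mathfrak b_\C]$ is open in $X_\C$, its image $H_\C\cdot[\pf_\C]=\pi(H_\C\cdot[\mathfrak b_\C])$ is open in $Y_\C$ (the projection of an open set is open). Equivalently $\gf_\C=\hf_\C+\pf_\C$. I would now intersect with real points: because $\hf_\C$ and $\pf_\C$ are both defined over $\R$ (the former by hypothesis, the latter because $\pf\subset\gf$), taking $\sigma$-fixed points of the vector-space identity $\gf_\C=\hf_\C+\pf_\C$ is not immediate — a sum of $\sigma$-stable subspaces need not have $\sigma$-fixed part equal to the sum of the fixed parts. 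To get around this, I would instead argue that the real points of the open orbit $H_\C\cdot[\pf_\C]\subset Y_\C$ contain a nonempty open subset of $Y=Y_\C(\R)$: indeed $Y$ is a real form of $Y_\C$, hence Zariski-dense in it, so it cannot be contained in the proper closed complement $Y_\C\setminus(H_\C\cdot[\pf_\C])$; therefore $Y$ meets the open orbit. Pick $[\qf]\in Y\cap (H_\C\cdot[\pf_\C])$; then $\qf\subset\gf$ is a minimal parabolic subalgebra of $\gf$ and the tangent space to the orbit at $[\qf]$ fills $T_{[\qf]}Y_\C$, which after descending to $\gf$ gives $\gf=\hf+\qf$, as wanted.

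There is a subtlety I should be careful about: having $[\qf]$ in the \emph{complex} orbit of $[\pf_\C]$ under $H_\C$ gives $\gf_\C=\hf_\C+\qf_\C$, and I must convert this back into the \emph{real} identity $\gf=\hf+\qf$. This follows because $\qf$ is $\sigma$-stable with $\qf=\qf_\C^\sigma$ and $\qf_\C+\sigma(\qf_\C)$'s contribution is already inside $\qf_\C$; more cleanly, $\dim_\R\hf+\dim_\R\qf-\dim_\R(\hf\cap\qf)=\dim_\C\hf_\C+\dim_\C\qf_\C-\dim_\C(\hf_\C\cap\qf_\C)=\dim_\C\gf_\C=\dim_\R\gf$, where the middle equality uses that $\hf\cap\qf$ is the real form of $\hf_\C\cap\qf_\C$ (both sides are the fixed points of $\sigma$ on a $\sigma$-stable subspace, and here the intersection of two real subspaces of $\gf$ complexifies correctly since it is computed inside the real algebra $\gf$). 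I expect this last bookkeeping — keeping straight which intersections are honestly defined over $\R$ — to be the only real obstacle; the rest is the standard ``open orbit passes to real points by Zariski density of the real form'' argument, and indeed the lemma is quoted from \cite{KKS2}*{Lemma 2.1}, so I would ultimately just cite that and sketch the density argument as above.
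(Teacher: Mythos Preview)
Your argument is correct and fleshes out the standard density-of-real-points reasoning that the paper simply defers to \cite{KKS2}*{Lemma~2.1}. One small expository wrinkle: you pick a Borel $\mathfrak b_\C\subset\pf_\C$ and then assert $H_\C\cdot[\mathfrak b_\C]$ is open in $X_\C$, but nothing guarantees that the particular $\mathfrak b_\C$ you chose lies in the open $H_\C$-orbit---the sphericality hypothesis only gives \emph{some} Borel with that property. The fix is immediate (the open orbit is Zariski-dense in $X_\C$, hence meets every fibre of $\pi$; or just note that $\pi$ carries the open $H_\C$-orbit, wherever it sits, onto an open $H_\C$-orbit in $Y_\C$), and the remainder---Zariski density of $G/P$ in $Y_\C$, then the linear-algebra identity $(\hf\cap\qf)_\C=\hf_\C\cap\qf_\C$ forcing $\gf=\hf+\qf$---is exactly right.
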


We recall also that a pair $(\gf,\hf)$ is called {\it symmetric} in
case there exists an involution of $\gf$ for which $\hf$ is the set of
fixed elements, and that all such pairs are absolutely
spherical. Conversely we have the following result.

\begin{lemma} \label{real symmetric} Let $(\gf,\hf)$ be a real form of
  a complex symmetric pair $(\gf_\C,\hf_\C)$ with $\gf$ semisimple.
  Let $\sigma$ be the involution of $\gf_\C$ with fix point algebra
  $\hf_\C$.  Then $\sigma$ preserves $\gf$. In particular, $(\gf,\hf)$
  is symmetric.
\end{lemma}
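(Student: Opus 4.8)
The plan is to exploit the essential uniqueness of involutions with a given fixed-point algebra, combined with the Galois cohomology / conjugacy statement that is implicit in the classification of real forms. Let $\tau$ denote the conjugation of $\gf_\C$ with respect to the real form $\gf$, so that $\gf = \gf_\C^\tau$, and likewise note that $\hf_\C^\tau$ is a real form of $\hf_\C$. First I would observe that $\sigma\tau\sigma^{-1}$ is again an antilinear involution of $\gf_\C$ whose fixed-point set is a real form of $\gf_\C$; since $\sigma$ fixes $\hf_\C$ pointwise, the real algebras $\gf_\C^\tau$ and $\gf_\C^{\sigma\tau\sigma}$ both contain a real form of $\hf_\C$, and I want to conclude that $\sigma$ and $\tau$ commute, which is exactly the statement that $\sigma$ preserves $\gf$.

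The key step is the following: the automorphism $\varphi := \sigma\tau\sigma^{-1}\tau^{-1}$ of $\gf_\C$ (a $\C$-linear automorphism, being the composite of two antilinear maps) restricts to the identity on $\hf_\C$, because $\sigma$ and $\sigma^{-1}=\sigma$ act trivially there and $\tau$ preserves $\hf_\C$ — here one uses that $\hf_\C$ is $\sigma$-stable by hypothesis and that $\tau(\hf_\C)$ is a subalgebra isomorphic to $\hf_\C^\tau\otimes\C=\hf_\C$, so after arranging $\tau$ to commute with $\sigma$ on the $\hf$-part (which we may do since $\hf$ is a real form of $\hf_\C$) we get $\varphi|_{\hf_\C}=\mathrm{id}$. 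Then $\varphi$ is an automorphism of the semisimple Lie algebra $\gf_\C$ that is trivial on the subalgebra $\hf_\C$. Since $(\gf_\C,\hf_\C)$ is a symmetric pair, $\hf_\C$ is its own normalizer's identity component up to the issue of the centralizer, and more importantly an automorphism fixing $\hf_\C$ pointwise must preserve the $\sigma$-eigenspace decomposition $\gf_\C=\hf_\C\oplus\qf_\C$ and act on $\qf_\C$ by an $\hf_\C$-module automorphism; for an irreducible (or at least suitably controlled) isotropy representation this forces $\varphi=\mathrm{id}$, hence $\sigma\tau=\tau\sigma$.

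The main obstacle, and where care is needed, is precisely this last point: an automorphism of $\gf_\C$ fixing $\hf_\C$ pointwise need not be the identity in general (the centre of the centralizer of $\hf_\C$ can contribute, and $\qf_\C$ may be reducible as an $\hf_\C$-module when the symmetric space is not irreducible). I would handle this by first reducing to the case where $\gf_\C$ is simple (decomposing $\gf$ into simple factors, which $\sigma$ and $\tau$ permute compatibly), and then using that for a simple symmetric pair the isotropy representation of $\hf_\C$ on $\qf_\C$ is either irreducible or splits into two pieces interchanged by an outer element — in either case the group of $\hf_\C$-fixing automorphisms is finite, and one checks that $\varphi$, being connected to the identity (it lies in $\mathrm{Inn}(\gf_\C)$ since $\sigma$ and $\tau$ induce the same outer class as $\tau^2=\mathrm{id}$ would force), must be trivial. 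Finally, from $\sigma\tau=\tau\sigma$ it follows that $\sigma$ maps $\gf=\gf_\C^\tau$ into itself, so $\sigma|_\gf$ is an involution of $\gf$ with fixed-point algebra $\gf\cap\hf_\C=\hf$, proving $(\gf,\hf)$ symmetric.
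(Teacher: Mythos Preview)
Your approach has a genuine gap at the key step. You claim that ``the group of $\hf_\C$-fixing automorphisms is finite'' for a simple symmetric pair, but this is false whenever the pair is Hermitian: if $\gf_\C=\sl(p+q,\C)$ and $\hf_\C=\sf(\gl(p,\C)\oplus\gl(q,\C))$, the one-dimensional centre of $H_\C$ acts by inner automorphisms that fix $\hf_\C$ pointwise and scale the two irreducible pieces of $\qf_\C$ nontrivially. Your fallback, that $\varphi=\sigma\tau\sigma\tau$ is ``connected to the identity'' because ``$\sigma$ and $\tau$ induce the same outer class'', is not an argument: $\tau$ is antilinear, $\sigma$ is linear, and nothing you have written forces their commutator to be inner, let alone trivial. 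So the proof as it stands does not close.

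The paper's argument bypasses all of this with a one-line observation using the Killing form. Let $\qf\subset\gf$ be the orthogonal complement of $\hf$ with respect to the Killing form of $\gf$. Then $\qf_\C$ is the Killing-orthogonal complement of $\hf_\C$ in $\gf_\C$. But $\sigma$ preserves the Killing form of $\gf_\C$, so its $\pm1$-eigenspaces are orthogonal; hence $\qf_\C$ is exactly the $(-1)$-eigenspace of $\sigma$. Since $\gf=\hf\oplus\qf$ and $\sigma$ is $+1$ on $\hf$ and $-1$ on $\qf$, it preserves $\gf$. This uses only that $\gf$ is semisimple (so the Killing form is nondegenerate) and avoids any case analysis or appeal to the structure of the isotropy representation.
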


\begin{proof} Let $\qf\subset \gf$ be the orthogonal complement of
  $\hf$ with respect to the Cartan-Killing form of $\gf$.  Then
  $\qf_\C$ is the orthogonal complement of $\hf_\C$ in $\gf_\C$ with
  respect to the Cartan-Killing form of $\gf_\C$. On the other hand
  $\qf_\C$ is the $-1$-eigenspace of $\sigma$. The assertion follows.
\end{proof}

Fix $\gf$ and let $G_\C$ be a linear complex algebraic group with Lie
algebra $\gf_\C=\gf\otimes_\R\C$.  We denote by $G$ the connected Lie
subgroup of $G_\C$ with Lie algebra $\gf$.  For any Lie subalgebra
$\lf\subset\gf$ we denote by the corresponding upper case Latin letter
$L\subset G$ the associated connected Lie subgroup, unless it is
indicated otherwise.

Let $P\subset G$ be a minimal parabolic subgroup. Then $Z:=G/H$ is
called a real spherical space provided that $(\gf, \hf)$ is real
spherical, which means that there is an open $P$-orbit on~$Z$.  In the
sequel we write $P=MAN$ for the decomposition of $P$ which corresponds
to the previously introduced decomposition $\pf=\mf+\af+\nf$ of its
Lie algebra, where the connected groups $A$ and $N$ are defined
through the convention above, and the possibly non-connected group $M$
is defined as the centralizer of $\af$ in $K$,

\subsection{Notation for classical and exceptional
  groups}\label{exc-not}

If $\gf_\C$ is classical, then $G_\C$ will be the corresponding
classical group, i.e.~$G_\C= \SL(n,\C), \SO(n,\C), \Sp(n, \C)$.  To
avoid confusion let us stress that we use the notation $\Sp(n,\R)$,
$\Sp(n,\C)$ to indicate that the underlying classical vector space is
$\R^{2n}$, $\C^{2n}$.  Further $\Sp(n)$ denotes the compact real form
of $\Sp(n,\C)$ and likewise the underlying vector space for $\Sp(p,q)$
is $\C^{2p+2q}$.

By $\SL(n,\HH)\subset\SL(2n,\C)$ and $\SO^*(2n)\subset\SO(2n,\C)$ we
denote the subgroups of elements $g$ which satisfy
\[gJ=J\bar g,\qquad J=\begin{pmatrix} 0&I_n\\-I_n&0\end{pmatrix}\]
where $I_n$ denotes the identity matrix of size $n$.  Another standard
notation for $\SL(n,\HH)$ is $\SU^*(2n)$.

We denote by ${\mathrm O}(p,q)$ the indefinite orthogonal group on
$\R^{p+q}$.  The identity component of ${\mathrm O}(p,q)$ is denoted
by $\SO_0(p,q)$.

For exceptional Lie algebras we use the notation of Berger,
\cite{Berger}*{p.~117}, and write $\sE_6^\C, \sE_7^\C$ etc.~for the
complex simple Lie algebras of type $E_6, E_7$ etc., and
$\sE_6, \sE_7$ etc.~for the corresponding compact real forms. For the
non-compact real forms we write
\begin{eqnarray*}
  \sE_6^1,\sE_6^2,\sE_6^3,\sE_6^4\qquad&\hbox{for}&\qquad\mathrm{E\,I,E\,II,E\,III,E\,IV}\\
  \sE_7^1,\sE_7^2,\sE_7^3\qquad&\hbox{for}&\qquad\mathrm{E\,V,E\,VI,E\,VII}\\
  \sE_8^1,\sE_8^2\qquad&\hbox{for}&\qquad\mathrm{E\,VIII,E\,IX}\\
  \sF_4^1,\sF_4^2\qquad&\hbox{for}&\qquad\mathrm{F\,I,F\,II}
\end{eqnarray*}
and finally $\sG_2^1$ for $\mathrm G$, the unique non-compact real
form of $\sG_2^\C$.  By slight abuse of notation we denote the simply
connected Lie groups with exceptional Lie algebras by the same
symbols.

\subsection{Factorizations of reductive groups}

Let $\hf$ be a reductive Lie algebra.  Then a triple
$(\hf, \hf_1, \hf_2)$ is called a {\it factorization of $\hf$} if
$\hf_1$ and $\hf_2$ are reductive subalgebras of $\hf$ and
\begin{equation}\label{factor1} \hf= \hf_1 +\hf_2\, .\end{equation}
It is called trivial if one of the factors equals $\hf$.  Recall that
a reductive subalgebra of $\hf$ is a subalgebra for which $\ad_\hf$ is
completely reducible.

Likewise if $H$ is a connected reductive group and $H_1$ and $H_2$ are
connected reductive subgroups of $H$, then we call $(H, H_1, H_2)$ a
{\it factorization of $H$} provided that
\begin{equation} \label{factor2} H=H_1 H_2\, .\end{equation}

\begin{proposition}[Onishchik \cite{Oni69}] \label{Oni1} Let
  $H$ be a connected reductive group and $H_1, H_2$ reductive
  subgroups of $H$. Then the following are equivalent:
  \begin{enumerate}
  \item\label{algebra factorization} $(\hf, \hf_1, \hf_2)$ is a
    factorization of $\hf$.
  \item\label{group factorization} $(H, H_1, H_2)$ is a factorization
    of $H$.
  \item\label{corOni1} $H_1 x H_2 \subset H$ is open for some
    $x\in H$.
  \end{enumerate}
\end{proposition}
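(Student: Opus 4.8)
The plan is to prove the three equivalences by going $(i)\Leftrightarrow(ii)$ and then $(ii)\Leftrightarrow(iii)$, using the standard dimension-count / orbit arguments. Throughout, recall that $H_1,H_2$ are connected reductive (hence algebraic in $H_\C$), so I may freely pass between the Lie algebras, the real groups, and their complexifications.

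\textbf{The implication $(ii)\Rightarrow(i)$.} This is the easy direction. If $H=H_1H_2$, consider the multiplication map $\mu\colon H_1\times H_2\to H$, $(h_1,h_2)\mapsto h_1h_2$. It is surjective, and its differential at $(e,e)$ is the addition map $\hf_1\oplus\hf_2\to\hf$. Since $\mu$ is a smooth surjective map between manifolds of the stated dimensions, Sard's theorem gives a regular point, and by homogeneity (translating by elements of $H_1$ on the left and $H_2$ on the right) every point is regular; in particular $d\mu_{(e,e)}$ is surjective, i.e. $\hf=\hf_1+\hf_2$. (Alternatively: the image of $\mu$ contains an open set, which by the inverse function theorem forces $d\mu$ to be surjective somewhere, hence everywhere by homogeneity.)

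\textbf{The implication $(i)\Rightarrow(iii)$.} Suppose $\hf=\hf_1+\hf_2$. View $H$ acting on itself; the orbit map of $H_1\times H_2$ through a point $x\in H$ under the action $(h_1,h_2)\cdot x = h_1 x h_2^{-1}$ has differential at $(e,e)$ given by $(X_1,X_2)\mapsto X_1 x - x X_2 = \bigl(X_1 - \Ad(x)X_2\bigr)x$. Taking $x=e$ this is $(X_1,X_2)\mapsto X_1-X_2$, whose image is $\hf_1+\hf_2=\hf=T_eH$. Hence the orbit $H_1 e H_2 = H_1H_2$ is open at $e$, and since an orbit is a submanifold, an orbit that contains an interior point is open; so $H_1xH_2$ is open for $x=e$. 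This proves $(iii)$ (indeed with $x=e$).

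\textbf{The implication $(iii)\Rightarrow(ii)$.} This is the genuine content and the main obstacle: from a single open double coset one must recover \emph{all} of $H$. The key point is that $H_1xH_2$ open implies, by the orbit argument above run in reverse, that $\hf_1+\Ad(x)\hf_2=\hf$; replacing $H_2$ by its conjugate $xH_2x^{-1}$ (still connected reductive, with Lie algebra $\Ad(x)\hf_2$) we reduce to the case $x=e$, so $\hf=\hf_1+\hf_2$ and $H_1H_2$ is open in $H$. Now I invoke Onishchik's theorem, Proposition~\ref{Oni1} — no wait, that is circular; instead the argument is: the complement $H\setminus H_1H_2$ is a union of $(H_1,H_2)$-double cosets each of strictly smaller dimension than $\dim H$, hence is a proper closed subvariety of positive codimension in the algebraic group $H_\C$ (working with the Zariski closures, $H_1H_2$ is a constructible dense set since it is open). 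The crucial input is that $H$, $H_1$, $H_2$ being reductive, the quotient considerations are well-behaved: one shows $H_1H_2$ is closed. This is the heart of Onishchik's result \cite{Oni69}, and here is the mechanism: an open orbit of a reductive group acting on an affine variety whose complement has codimension $\ge 1$ must actually be everything when the isotropy groups are reductive — more precisely, $H_1H_2$ open together with reductivity of $H_1\cap H_2$ forces $H_1\backslash H / H_2$ to be a single point by a Luna-slice / stability argument. So the honest plan is: reduce to $x=e$ as above, then quote Onishchik's structure theorem for factorizations of reductive groups (this is \cite{Oni69}, which the proposition is attributed to) to conclude $H_1H_2=H$. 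I expect the reduction steps to be routine and the closedness of $H_1H_2$ — equivalently the fact that an open $(H_1,H_2)$-double coset in a reductive $H$ exhausts $H$ — to be the one step where one must either reproduce Onishchik's argument or cite it; given the attribution in the statement, citing \cite{Oni69} is the intended route.
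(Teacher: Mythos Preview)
Your approach is essentially the paper's: both recognize that the nontrivial content is $(i)\Rightarrow(ii)$ and outsource it to the literature --- the paper cites \cite{Ak}*{Prop.~4.4} for $(i)\Leftrightarrow(ii)$, you cite \cite{Oni69}. Your arguments for $(ii)\Rightarrow(i)$ and $(i)\Rightarrow(iii)$ are fine.

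There is one small gap in your $(iii)\Rightarrow(ii)$. You write ``replacing $H_2$ by $xH_2x^{-1}$ we reduce to the case $x=e$'', but this reduction needs a line of justification: knowing $H=H_1\cdot(xH_2x^{-1})$ does not literally say $H=H_1H_2$. The paper supplies exactly this missing step: from $H=H_1(xH_2x^{-1})$ one gets $H=Hx=H_1xH_2$, so the action $(h_1,h_2)\cdot y=h_1yh_2^{-1}$ of $H_1\times H_2$ on $H$ is \emph{transitive}, and therefore the orbit of $e$ is also all of $H$, i.e.\ $H_1H_2=H$. With this one observation your reduction is complete, and you can drop the speculative Luna-slice/closedness discussion entirely.
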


\begin{proof} We refer to \cite{Ak}*{Prop.~4.4}, for the equivalence of
  \ref{algebra factorization} and \ref{group factorization}. It is
  obvious that (\ref{factor2}) implies $H_1xH_2=H$ for all $x$, and
  hence in particular \ref{group factorization} implies
  \ref{corOni1}.

  Assume \ref{corOni1}, then \ref{algebra factorization} is valid
  for the pair of $\hf_1$ and $\Ad(x)\hf_2$. Hence \ref{group
    factorization} holds for the pair of $H_1$ and $xH_2x^{-1}$. This
  implies $H=H_1xH_2$ and thus $H_1\times H_2$ acts transitively on
  $H$, that is, \ref{group factorization} holds for $H_1,H_2$.
\end{proof}

As a consequence we obtain the following result.  Here we call a
subalgebra of $\gf$ compact if it generates a compact subgroup in the
adjoint group of $\gf$.

\begin{lemma} \label{factor compact} Let $\gf$ be a semisimple Lie
  algebra without compact ideals. Then every factorization of $\gf$ by
  a reductive and a compact subalgebra is trivial.
\end{lemma}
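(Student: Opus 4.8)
The plan is to argue by contradiction using Proposition \ref{Oni1} together with the classification-free fact that a compact connected group cannot act transitively on a non-compact one. Suppose $\gf = \gf_1 + \gf_2$ is a factorization with $\gf_1$ reductive and $\gf_2$ compact, and suppose it is non-trivial, so $\gf_1 \subsetneq \gf$. First I would pass to the adjoint group $G = \Ad(\gf)$, which is a connected semisimple linear group; by hypothesis it has no compact ideals, hence $G$ is non-compact (any semisimple group with no compact ideals is non-compact, since a compact semisimple group is a product of compact simple ideals). Let $G_1, G_2 \subset G$ be the connected subgroups with Lie algebras $\gf_1, \gf_2$. By the hypothesis that $\gf_2$ is compact, $G_2$ is a compact subgroup of $G$. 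By the equivalence \ref{algebra factorization} $\Leftrightarrow$ \ref{group factorization} of Proposition \ref{Oni1}, the algebra identity $\gf = \gf_1 + \gf_2$ gives $G = G_1 G_2$.

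Now the key step: derive a contradiction from $G = G_1 G_2$ with $G_2$ compact and $G_1 \ne G$. I would use the Cartan decomposition. Fix a Cartan involution $\theta$ of $G$ with maximal compact subgroup $K$ and Cartan decomposition $G = K \exp(\sf)$, where $\gf = \kf + \sf$. Since $G_2$ is compact, after conjugating $G_2$ (which does not affect the factorization property, by the same proposition) we may assume $G_2 \subset K$. Then $G = G_1 G_2 \subset G_1 K$, so $G = G_1 K$. But $G = G_1 K$ forces $\dim \gf = \dim \gf_1 + \dim \kf - \dim(\gf_1 \cap \kf)$, and more usefully, the map $G_1 \times K \to G$ is surjective, which means $G/K$ is covered by the image of $G_1$; hence $\dim(G_1/(G_1\cap K)) = \dim(G/K)$, i.e.\ $G_1$ contains a full-dimensional submanifold of the symmetric space $G/K$. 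Since $G_1$ is reductive and $\theta$-stable up to conjugacy, one can arrange $G_1$ to be $\theta$-stable, so $\gf_1 = (\gf_1\cap\kf) + (\gf_1\cap\sf)$ and the above dimension count gives $\gf_1 \cap \sf = \sf$, i.e.\ $\sf \subset \gf_1$. But $\sf$ generates $\gf$ as a Lie algebra (since $\gf$ is semisimple with no compact ideals, $[\sf,\sf] + \sf = \gf$ — indeed $[\sf,\sf]=\kf$ modulo compact ideals, and here there are none), so $\gf_1 = \gf$, contradicting $\gf_1 \subsetneq \gf$.

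The main obstacle I anticipate is the bookkeeping around $\theta$-stability: making $G_1$ $\theta$-stable requires knowing that a reductive subgroup of a reductive group is conjugate to a $\theta$-stable one (standard, but needs a citation, e.g.\ to Mostow), and I must be careful that the conjugation used to put $G_2$ inside $K$ and the conjugation used to make $G_1$ $\theta$-stable are compatible. A cleaner route that sidesteps this is topological: $G = G_1 G_2$ with $G_2$ compact means the orbit map $G_1 \to G/G_2$ is surjective; since $G_2$ is compact and $G$ is non-compact, $G/G_2$ is non-compact, and if moreover $G_1$ is a proper reductive subgroup one can compare $\dim G_1$ with the dimension bound forced by transitivity to reach a contradiction — but this again ultimately reduces to the same Cartan-decomposition dimension count. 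A third, entirely soft alternative: if $G = G_1 G_2$ with $G_2$ compact, then $G$ is a union of finitely many (in fact one) translates of the image, so $G$ retracts onto a compact-by-$G_1$ piece; since $G$ deformation retracts onto $K$ which is connected, comparing with the fact that $G_1$ is a proper reductive subgroup whose maximal compact is smaller than $K$ when $\gf_1 \cap \sf \subsetneq \sf$ yields the contradiction. I would present the Cartan-decomposition argument as the main line, since it is the most self-contained and only invokes results available in the excerpt plus the standard structure theory of reductive Lie groups.
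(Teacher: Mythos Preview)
Your argument is correct and follows essentially the same line as the paper: reduce to $\gf=\hf_1+\kf$, use $\theta$-stability of $\hf_1$ to get $\sf\subset\hf_1$, and finish with $\kf=[\sf,\sf]$ (no compact ideals). The compatibility issue you flag is handled in the paper by reversing the order of operations: rather than fixing $\theta$ first and then conjugating both subgroups, one chooses the Cartan involution $\theta$ so that $\hf_1$ is already $\theta$-stable (possible since $\hf_1$ is reductive), and only afterwards enlarges $\hf_2$ to a maximal compact subalgebra and invokes Proposition~\ref{Oni1} to replace it by $\kf$---so only a single conjugation is needed and the obstacle disappears.
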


\begin{proof} Let $\gf=\hf_1+\hf_2$ be as assumed.  Since $\hf_1$ is a
  reductive subalgebra there exists a Cartan involution which leaves
  it invariant. Let $\gf=\kf+\sf$ denote the corresponding Cartan
  decomposition, and note that $\kf=[\sf,\sf]$ since $\gf$ has
  no compact ideals.  Without loss of generality we may assume that
  $\hf_2$ is a maximal compact subalgebra, hence conjugate to $\kf$.
  It then follows from Proposition \ref{Oni1} that
  $\gf=\hf_1+\kf$. Hence $\sf\subset\hf_1$. Then
  $\gf=[\sf,\sf]+\sf= \hf_1$ and the factorization is
  trivial.
\end{proof}

Factorizations of simple complex Lie algebras were classified in
\cite{Oni62} as follows.

\begin{proposition} \label{Oni2} {\rm (Onishchik)} Let $\gf$ be a
  complex simple Lie algebra and let $\gf=\hf_1+\hf_2$ where $\hf_1$
  and $\hf_2$ are proper reductive complex subalgebras of $\gf$. Then,
  up to interchanging $\hf_1$ and $\hf_2$, the triple
  $(\gf,\hf_1,\hf_2)$ is isomorphic to a triple in Table
  \ref{Onishchik}, where line by line, $\zf\subset\C$ and
  $\ff\subset\sp(1,\C)$.
\end{proposition}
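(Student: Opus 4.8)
The plan is to reconstruct Onishchik's argument \cite{Oni62} in four phases: reduce to maximal factors, extract the numerical constraint imposed by a dimension count, screen this constraint against Dynkin's classification of maximal subalgebras, and finally decide the finitely many surviving candidates. For the first step I would observe that $\gf=\hf_1+\hf_2$ persists whenever $\hf_1$ and $\hf_2$ are enlarged to reductive subalgebras containing them, so there is no loss in assuming $\hf_1$ and $\hf_2$ to be maximal among the proper reductive subalgebras of $\gf$. By Proposition \ref{Oni1} the factorization is then equivalent to $H_1xH_2\subset G$ being open for some $x$, and computing the dimension of this double coset yields the identity
\[ \dim(\hf_1\cap\Ad(x)\hf_2)=\dim\hf_1+\dim\hf_2-\dim\gf ;\]
in particular the necessary numerical condition is $\dim\hf_1+\dim\hf_2\ge\dim\gf$.

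Next I would invoke Dynkin's classification of the maximal connected subgroups of a complex simple Lie group: each is either parabolic — which cannot occur here, since parabolic subalgebras are not reductive — or maximal reductive, and the maximal reductive ones are given by explicit finite lists (the maximal-rank subalgebras read off from the extended Dynkin diagram, together with the remaining ``primitive'' maximal subalgebras, which for the classical $\gf$ come from tensor, symmetric and exterior square constructions and from the orthogonal or symplectic forms carried by small representations). Running the inequality $\dim\hf_1+\dim\hf_2\ge\dim\gf$ over all pairs of entries in these lists, separately for the classical series $\sl$, $\so$, $\sp$ and for each of the five exceptional types, should cut the possibilities down to a short list of candidate triples $(\gf,\hf_1,\hf_2)$.

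For the last phase I would test each surviving candidate for whether $\gf=\hf_1+\hf_2$ genuinely holds, equivalently whether $\hf_1\cap\hf_2$ attains the dimension forced above, equivalently whether $H_1$ has a dense orbit on $G/H_2$. The candidates that do factorize are realized geometrically: after complexification they descend from the classical transitive actions on spheres of $\SO(n)$, $\SU(n)$, $\Sp(n)$, $\sG_2$, $\Spin(7)$ and $\Spin(9)$, which is what produces the occurrences of $\sl(n,\C)$, $\sp(n,\C)$, $\sG_2^\C$, $\spin(7,\C)$ and $\spin(9,\C)$ inside the relevant orthogonal and special linear algebras in Table \ref{Onishchik}, and the option of enlarging one factor by a central torus $\zf\subset\C$ or by a subalgebra $\ff\subset\sp(1,\C)$ reflects that the corresponding sphere is also homogeneous under the enlarged group. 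I expect the real obstacle to be the negative direction: a candidate may satisfy the dimension inequality yet fail to factorize because $\hf_1$ and $\hf_2$ are not in general position — typically because they lie in a common proper subalgebra, or because a common maximal torus blocks transitivity. Excluding these cases is the technical heart of the classification, to be carried out case by case either by exhibiting an invariant preserved by both factors or by computing the generic isotropy subalgebra directly; this is precisely the work done in \cite{Oni62}, to which we refer for the details.
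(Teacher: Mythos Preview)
The paper does not give its own proof of this proposition: it is stated as Onishchik's theorem with a reference to \cite{Oni62}, and Table~\ref{Onishchik} is simply quoted. Your proposal is likewise, in the end, a deferral to \cite{Oni62}, preceded by a reasonable outline of the strategy used there; in that sense the two are aligned.

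Two small remarks on your sketch. First, the passage to maximal factors is a valid opening move, but it only classifies the \emph{maximal} factorizations; to recover the full table (with its variable $\zf\subset\C$ and $\ff\subset\sp(1,\C)$) you must then descend and determine, for each maximal factorization $\gf=\hf_1'+\hf_2'$, which proper reductive $\hf_i\subset\hf_i'$ still yield $\gf=\hf_1+\hf_2$. You touch on this but phrase it as ``enlarging one factor'', which is the wrong direction. Second, the geometric link to transitive actions on spheres runs the other way: Onishchik works with compact real forms acting on spheres and then \emph{complexifies} to obtain the table, rather than ``after complexification they descend'' to spheres. Neither point is a genuine gap, since you explicitly hand off the details to \cite{Oni62}.
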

	
\begin{table}[ht]
  \setcounter{class}{0}
  \[
    \begin{array}{r l l l l l}
      &\gf&\hf_1&\hf_2&\hf_1\cap\hf_2\\
      \hline
      \yy&\sl(2n,\C)&\sl(2n-1,\C)+\zf&\sp(n,\C)
                      &\sp(n-1,\C)+\zf&n\ge2\\
      \yy&\so(2n,\C)&\so(2n-1,\C)&\sl(n,\C)+\zf
                      &\sl(n-1,\C)+\zf&n\ge4\\
      \yy&\so(4n,\C)&\so(4n-1,\C)&\sp(n,\C)+\ff
                      &\sp(n-1,\C)+\ff&n\ge2\\
      \yy&\so(7,\C)&\so(5,\C)+\zf&\sG_2^\C
                      &\sl(2,\C)+\zf\\
      \yy&\so(7,\C)&\so(6,\C)&\sG_2^\C
                      &\sl(3,\C)\\
      \yy&\so(8,\C)&\so(5,\C)+\ff&\spin(7,\C)
                      &\sl(2,\C)+\ff\\
      \yy&\so(8,\C)&\so(6,\C)+\zf&\spin(7,\C)
                      &\sl(3,\C)+\zf\\
      \yy&\so(8,\C)&\so(7,\C)&\spin(7,\C)
                      &\sG_2^\C\\
      \yy&\so(8,\C)&\spin(7,\C)_+&\spin(7,\C)_-
                      &\sG_2^\C\\
      \yy&\so(16,\C)&\so(15,\C)&\spin(9,\C)
                      &\spin(7,\C)\\
    \end{array}
  \]
  \centerline{\rm\Tabelle{Onishchik}}
\end{table}

\begin{remark}\label{embeddings remark} \
  \begin{enumerate}
  \item The spin representation embeds $\spin(7,\C)$ into $\so(8,\C)$
    and there are two conjugacy classes of this subalgebra. In
    Table~\ref{Onishchik} (9) the subscripts indicate that this
    factorization involves both conjugacy classes.
	
  \item In all cases $\hf_1$ is given up to conjugation in $\gf$.
    Once $\hf_1$ is fixed, there is only one $\Ad(H_1)$-conjugacy
    class of $\hf_2$ in $\gf$ for which the factorization is valid,
    except where $\hf_2=\spin(7,\C)$ is indicated without subscript.
    In those cases there are exactly two such conjugacy classes,
    provided by $\spin(7, \C)_\pm$.
	
  \item Observe that symplectic or exceptional Lie algebras do not
    admit factorizations.
  \end{enumerate}
\end{remark}

\subsection{Towers of spherical subgroups}\label{Towers}

Let $Z=G/H$ be a real spherical space and $P\subset G$ a minimal
parabolic subgroup such that $PH$ is open in $G$.  Then the local
structure theorem of \cite{KKS} asserts that there is a parabolic
subgroup $Q\supset P$ with Levi decomposition $Q= L \ltimes U$ such
that:

\begin{enumerate}
\item\label{LST1} $PH=QH$.
\item\label{LST2} $Q\cap H=L\cap H$.
\item\label{LST3} $L_{\mathrm n}\subset L\cap H$.
\end{enumerate}
Here $L_{\mathrm n}\subset L$ is the normal subgroup with Lie algebra
$\lf_{\mathrm n}$, the sum of all non-compact simple ideals of $\lf$.
We refer to $Q$ and its Levi part $L$ as being adapted to $Z$ and $P$,
taking it for granted that $PH$ is open.

\begin{remark} \label{remark cplx adapted}In the special case where
  $Z$ is complex spherical note that
  $\lf_{\rm n}=[\lf,\lf]$.\end{remark}

\begin{lemma}\label{min psgp LcapH} 
  Let $H\subset G$ be reductive and real spherical, and let $Q=LU$ be
  adapted to $G/H$ and $P$.  Then $L\cap H$ is reductive and contains
  $P\cap H$ as a minimal parabolic subgroup.\end{lemma}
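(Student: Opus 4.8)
The plan is to push the statement down to the reductive Levi factor $L$, using the three defining properties \ref{LST1}--\ref{LST3} of the adapted parabolic $Q=LU$, and then to read it off from the decomposition of $\lf$ as a reductive Lie algebra. First I record two elementary reductions. Since $Q\supset P$ is a standard parabolic, the unipotent radical $\uf$ of $\qf$ is contained in the nilradical $\nf$ of $\pf$; hence $U\subset P$ and $P=(P\cap L)\ltimes U$, where $P_L:=P\cap L$ is a minimal parabolic subgroup of $L$ whose nilradical $N_L$ has Lie algebra $\nf_L\subset\lf_{\mathrm n}$. By \ref{LST2} we have $P\cap H\subset Q\cap H=L\cap H\subset L$, so that $P\cap H=P_L\cap H$ and, on the level of Lie algebras, $\pf\cap\hf=\pf_L\cap(\lf\cap\hf)$ with $\pf_L=\pf\cap\lf$. (Projecting the identity $\qf=\pf+(\hf\cap\qf)$, which holds because $\gf=\pf+\hf$ and $\pf\subset\qf$, along $\qf\to\qf/\uf\cong\lf$ and invoking \ref{LST2} again yields $\lf=\pf_L+(\lf\cap\hf)$, i.e.\ $L\cap H$ is real spherical in $L$; this is not needed below but is the germ of the later tower factorization.)

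Next I analyse $\lf\cap\hf$ using \ref{LST3}. Decompose the reductive Lie algebra $\lf=\zf(\lf)\oplus\lf_{\mathrm n}\oplus\lf_{\mathrm c}$ into its centre, the sum $\lf_{\mathrm n}$ of its non-compact simple ideals, and the sum $\lf_{\mathrm c}$ of its compact simple ideals. By \ref{LST3}, $\lf_{\mathrm n}\subset\lf\cap\hf$; being an ideal of $\lf$, it is a semisimple ideal of $\lf\cap\hf$, so $\lf\cap\hf=\lf_{\mathrm n}\oplus\mathfrak{c}$, where $\mathfrak{c}$ is the centraliser of $\lf_{\mathrm n}$ in $\lf\cap\hf$ and hence $\mathfrak{c}\subset\zf(\lf)\oplus\lf_{\mathrm c}$. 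Now $\zf(\lf)\oplus\lf_{\mathrm c}$ carries an inner product — an arbitrary positive definite form on $\zf(\lf)$ plus the negative of the Killing form on $\lf_{\mathrm c}$ — with respect to which $\ad(X)$ is skew-symmetric for every $X$; restricting it to the subalgebra $\mathfrak{c}$ produces an $\ad_{\mathfrak{c}}$-invariant positive definite form, so $\mathfrak{c}$ is reductive with $[\mathfrak{c},\mathfrak{c}]\subset\lf_{\mathrm c}$ compact. Consequently $\lf\cap\hf=\lf_{\mathrm n}\oplus\mathfrak{c}$ is reductive, and therefore the algebraic group $L\cap H$ is reductive.

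It remains to identify $P\cap H$. The minimal parabolic subalgebra $\pf_L$ of the reductive $\lf$ decomposes as $\pf_L=\pf_{\lf_{\mathrm n}}\oplus\lf_{\mathrm c}\oplus\zf(\lf)$, where $\pf_{\lf_{\mathrm n}}:=\pf_L\cap\lf_{\mathrm n}$ is a minimal parabolic subalgebra of $\lf_{\mathrm n}$. Since $\mathfrak{c}\subset\zf(\lf)\oplus\lf_{\mathrm c}\subset\pf_L$, the reduction in the first step gives
\[
  \pf\cap\hf=\pf_L\cap(\lf_{\mathrm n}\oplus\mathfrak{c})=(\pf_L\cap\lf_{\mathrm n})\oplus\mathfrak{c}=\pf_{\lf_{\mathrm n}}\oplus\mathfrak{c}.
\]
On the other hand, a minimal parabolic subalgebra of $\lf\cap\hf=\lf_{\mathrm n}\oplus\mathfrak{c}$ is the direct sum of minimal parabolic subalgebras of the two ideals; that of $\lf_{\mathrm n}$ is $\pf_{\lf_{\mathrm n}}$, while $\mathfrak{c}$, having compact derived algebra, admits no proper parabolic subalgebra and is its own minimal parabolic. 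Hence $\pf\cap\hf=\pf_{\lf_{\mathrm n}}\oplus\mathfrak{c}$ is precisely a minimal parabolic subalgebra of $\lf\cap\hf$. Passing to groups, $\nf_L\subset\lf_{\mathrm n}$ together with \ref{LST3} gives $N_L\subset H$, and the Langlands decomposition of $P_L$ restricts to one of $P\cap H=P_L\cap H$ inside $L\cap H$; so $P\cap H$ is a minimal parabolic subgroup of $L\cap H$.

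I expect the reductivity of $L\cap H$ to be the main obstacle: an intersection of two reductive subgroups need not be reductive, and it is exactly \ref{LST3} that saves the argument, by confining the ``extra'' summand $\mathfrak{c}$ to the subalgebra $\zf(\lf)\oplus\lf_{\mathrm c}$, which is compact modulo its centre. One must be equally careful that it is \ref{LST3}, rather than mere sphericity, that forces $P\cap H$ to be a \emph{full} minimal parabolic of $L\cap H$: already for $(\gf,\hf)=(\sl(2,\R),\so(2))$ the group $P\cap H$ is finite, which is the minimal parabolic of $L\cap H$ only because \ref{LST3} forces the adapted $Q$ to equal $P$ in that case.
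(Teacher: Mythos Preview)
Your proof is correct and follows essentially the same approach as the paper's: both use \ref{LST3} to exhibit $\lf_{\mathrm n}$ as a semisimple ideal of $\lf\cap\hf$ whose complement sits inside the abelian-plus-compact part $\zf(\lf)\oplus\lf_{\mathrm c}$ of $\lf$, whence reductivity, and both use \ref{LST2} to reduce $P\cap H$ to $P\cap L\cap H$ and then \ref{LST3} again to identify it as a minimal parabolic of $L\cap H$. The paper's argument is compressed into four lines; your version spells out the decomposition $\lf\cap\hf=\lf_{\mathrm n}\oplus\mathfrak c$ and the splitting of $\pf_L$ explicitly, which is a welcome expansion of the same idea.
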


\begin{proof} It follows from \ref{LST3} above that $\lf_n$ is a
  semisimple ideal in $\lf\cap\hf$. As the quotient consists of
  abelian or compact factors, $\lf\cap\hf$ is reductive.  Since
  $P\cap L$ is a minimal parabolic subgroup in $L$, it also follows
  from \ref{LST3} that $P\cap L\cap H$ is a minimal parabolic
  subgroup in $L\cap H$.  Since $P\subset Q$ it follows from
  \ref{LST2} that $P\cap H=P\cap L\cap H$.
\end{proof}

\begin{proposition} \label{tower-factor} Let $H'\subset H\subset G$ be
  subgroups such that $H$ is reductive and $G/H$ is real spherical,
  and let $Q=LU$ be adapted to $G/H$ and $P$.  Then $G/H'$ is real
  spherical if and only if $H/H'$ is real spherical for the action of
  $L\cap H$, that is, it admits an open orbit for the minimal
  parabolic subgroup $P\cap H$ (cf.~Lemma \ref{min psgp LcapH}).
\end{proposition}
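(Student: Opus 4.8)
The plan is to strip the statement down to an elementary linear‑algebra identity, letting the group theory enter only through conjugacy of minimal parabolics and through Lemma~\ref{min psgp LcapH}. First I would fix the dictionary. Since all minimal parabolic subalgebras of $\gf$ are $G$-conjugate, $G/H'$ is real spherical if and only if $\gf=\pf+\Ad(g)\hf'$ for some $g\in G$. By Lemma~\ref{min psgp LcapH}, $P\cap H$ is a minimal parabolic subgroup of $L\cap H$, and since $P\subset Q$ and $Q\cap H=L\cap H$ one has $\pf\cap\hf=\pf\cap\lf\cap\hf$; hence ``$H/H'$ is real spherical for $L\cap H$'' is exactly the statement $\hf=(\pf\cap\hf)+\Ad(h)\hf'$ for some $h\in H$. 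Throughout one may use $\gf=\pf+\hf$, since $PH$ is open.

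The computational core is: for any subspace $\mathfrak c\subset\hf$,
\[
  \pf+\mathfrak c=\gf \iff (\pf\cap\hf)+\mathfrak c=\hf .
\]
For $\Leftarrow$, $\gf=\pf+\hf=\pf+(\pf\cap\hf)+\mathfrak c=\pf+\mathfrak c$; for $\Rightarrow$, given $X\in\hf$ write $X=Y+Z$ with $Y\in\pf$ and $Z\in\mathfrak c\subset\hf$, so $Y=X-Z\in\pf\cap\hf$ and $X\in(\pf\cap\hf)+\mathfrak c$. The direction ``$H/H'$ real spherical for $L\cap H$'' $\Rightarrow$ ``$G/H'$ real spherical'' is then immediate: apply the identity with $\mathfrak c=\Ad(h)\hf'$, which lies in $\hf$ because $h\in H$.

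For the converse, suppose $\gf=\pf+\Ad(g)\hf'$; then a fortiori $\gf=\pf+\Ad(g)\hf$, so $PgH$ is open in $G$. The key reduction is that $g$ may be taken in $PH$. For this I would use the open, $P$-equivariant projection $\pi\colon G/H'\to G/H$: the set $\pi^{-1}(PH/H)$ is a nonempty $P$-stable open subset of $G/H'$, and — since on a real spherical space the union of the open $P$-orbits is dense (finitely many $P$-orbits) — it contains an open $P$-orbit; choosing $g$ with $gH'$ in that orbit gives $gH\in PH/H$, hence $g\in PH$. Writing $g=p_0h_0$ with $p_0\in P$, $h_0\in H$, and applying $\Ad(p_0^{-1})$ (which fixes $\pf$) to $\gf=\pf+\Ad(p_0)\Ad(h_0)\hf'$ yields $\gf=\pf+\Ad(h_0)\hf'$ with $\Ad(h_0)\hf'\subset\hf$. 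The identity, with $\mathfrak c=\Ad(h_0)\hf'$, then gives $\hf=(\pf\cap\hf)+\Ad(h_0)\hf'$, i.e. $P\cap H$ has an open orbit through $h_0H'$, so $H/H'$ is real spherical for $L\cap H$.

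The only non-bookkeeping step — and the main obstacle — is the reduction ``$g\in PH$'': one must know that on the real spherical space $G/H'$ the open $P$-orbits are dense (equivalently, that $P$ has only finitely many orbits), so that an open $P$-orbit can be placed over the prescribed open orbit $PH/H$ of $G/H$. Without this input the argument only delivers the weaker, base‑point‑bound equivalence $\pf+\hf'=\gf\Leftrightarrow(\pf\cap\hf)+\hf'=\hf$; the genuine real‑spherical statement needs to account for the fact that the open $P$-orbit witnessing real sphericity of $G/H'$ need not a priori lie over $PH/H$.
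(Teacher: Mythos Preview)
Your proof is correct and follows essentially the same line as the paper's. The paper argues directly with open double cosets: an open $PxH'$ meets the open set $PH$, hence equals $PyH'$ for some $y\in H$, and then $(PyH')\cap H=(P\cap H)yH'$ is open in $H$; your linear-algebra identity $\pf+\mathfrak c=\gf\Leftrightarrow(\pf\cap\hf)+\mathfrak c=\hf$ (for $\mathfrak c\subset\hf$) is precisely the infinitesimal version of that intersection identity, and both proofs invoke the same non-trivial input, namely the density of the union of open $P$-orbits on the real spherical space $G/H'$.
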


\begin{proof} Assume $G/H'$ is real spherical. Then, by density of the
  union of the open orbits, for some $x\in G$ the set $PxH'$ is open
  in $G$ and intersects non-trivially with the open set $PH$.  It
  follows that $PyH'$ is open in $G$ for some $y\in H$. Then the
  intersection $(PyH')\cap H=(P\cap H)yH'$ is open in $H$.

  Conversely, it is clear that if $(PyH')\cap H$ is open in $H$ for
  some $y\in H$, then $PyH'$ is open in $PH$ and hence in $G$.
\end{proof}

\begin{corollary} \label{cor tower-factor} Let $H'\subset H\subset G$
  be reductive subgroups and let $Q$ be as above. If $Z'=G/H'$ is real
  spherical then $(H, H', L\cap H)$ is a factorization of $H$, that
  is,
  \begin{equation} \label{product with L} H= H' (L\cap H)\,
    .\end{equation}
  Conversely, if $Q=P$ then \eqref{product with L} implies that $Z'$
  is real spherical.
\end{corollary}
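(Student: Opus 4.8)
The plan is to deduce Corollary \ref{cor tower-factor} directly from Proposition \ref{tower-factor} together with Onishchik's group-algebra dictionary (Proposition \ref{Oni1}). First I would record that by Lemma \ref{min psgp LcapH} the group $L\cap H$ is reductive, so that the statement ``$(H,H',L\cap H)$ is a factorization'' makes sense. For the forward direction, assume $Z'=G/H'$ is real spherical. Proposition \ref{tower-factor} then gives that $H/H'$ admits an open orbit for the action of $L\cap H$; that is, there is some $y\in H$ with $(L\cap H)\,y\,H'$ open in $H$. Rewriting, $(L\cap H)\,yH'y^{-1}$ is open in $H$, so the subgroups $L\cap H$ and $yH'y^{-1}$ of the reductive group $H$ satisfy condition \ref{corOni1} of Proposition \ref{Oni1} (with ``$x$'' there taken to be $1$, or more precisely, one applies the last paragraph of the proof of Proposition \ref{Oni1}). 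Hence $(H, L\cap H, yH'y^{-1})$ is a factorization of $H$ in the group sense, and since conjugation by $y\in H$ is an automorphism of $H$ carrying $yH'y^{-1}$ to $H'$, we conclude $H = H'(L\cap H)$, which is \eqref{product with L}.

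For the converse, suppose $Q=P$ and that \eqref{product with L} holds. Then $H = H'(L\cap H) = H'(P\cap H)$, where the second equality uses $Q=P$ and hence $L\cap H = Q\cap H = P\cap H$ by \ref{LST2}. In particular $(P\cap H)\cdot 1\cdot H' = H$ is (trivially) open in $H$, so taking $y=1$ we see that $H/H'$ admits an open orbit for $P\cap H$. By the converse half of Proposition \ref{tower-factor}, $G/H'$ is real spherical.

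One small point deserves care: Proposition \ref{tower-factor} produces an open orbit based at some $y\in H$, not necessarily at the identity, whereas Onishchik's factorization statement \eqref{factor2} is about the product $H_1H_2$ based at the identity. This is exactly the gap bridged by the equivalence of \ref{corOni1} with \ref{group factorization} in Proposition \ref{Oni1}: an open double coset anywhere forces the full product decomposition. So the main (very mild) obstacle is simply to invoke Proposition \ref{Oni1} in the correct form—transporting the open orbit at $y$ to a factorization at the identity by conjugating—rather than any substantive argument; everything else is bookkeeping with \ref{LST2} and Lemma \ref{min psgp LcapH}.
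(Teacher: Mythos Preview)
Your proof is correct and follows essentially the same approach as the paper's: for the forward direction both invoke Proposition~\ref{tower-factor} to obtain an open double coset $(L\cap H)yH'$ in $H$ and then appeal to Proposition~\ref{Oni1} to upgrade this to the factorization $H=H'(L\cap H)$. For the converse the paper argues in one line that $H=H'(L\cap H)$ gives $H'Q=HQ$, whence $H'P=HP$ is open when $Q=P$; your route via $L\cap H=P\cap H$ and the ``if'' direction of Proposition~\ref{tower-factor} is an equivalent reformulation of the same computation.
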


\begin{proof} It follows from Proposition \ref{tower-factor} that
  $(L\cap H)xH'$ is open in $H$ for some $x\in H$. Then \eqref{product
    with L} follows from Proposition \ref{Oni1}.  Conversely,
  \eqref{product with L} implies that $H'Q=HQ$, and hence $Z'$ is
  spherical if $Q=P$.
\end{proof}

We recall from \cite{KK}*{Prop. 9.1} the following consistency
relation of adapted parabolics.

\begin{lemma} \label{adapQ}Let $Z=G/H$ be a real form of a complex
  spherical space $Z_\C = G_\C/H_\C$.  Let $P\subset G$ be a minimal
  parabolic subgroup and $B_\C \subset G_\C$ a Borel subgroup such
  that $B_\C \subset P_\C$ and $B_\C H_\C \subset G_\C$ open. Let
  ${\mathcal Q}_\C\supset B_\C$ be the $Z_\C$-adapted parabolic
  subgroup of $G_\C$ and $Q\supset P$ the $Z$-adapted parabolic
  subgroup of $G$. Then
  \[ Q_\C = {\mathcal Q}_\C M_\C\, .\]
\end{lemma}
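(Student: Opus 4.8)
Write $\qf_\C=\operatorname{Lie}(Q_\C)$ (the complexification of the Lie algebra of the $Z$-adapted parabolic $Q$) and ${\mathcal q}_\C=\operatorname{Lie}({\mathcal Q}_\C)$. By the local structure theorem of \cite{KKS}, $Q$ is the unique parabolic containing $P$ that satisfies \ref{LST1}--\ref{LST3} (equivalently, the stabilizer of the open $P$-orbit on $Z$), and likewise ${\mathcal Q}_\C$ is the unique parabolic containing $B_\C$ satisfying the complex analogues. The plan is to compare $\qf_\C$ with ${\mathcal q}_\C+\mf_\C$ level by level in the $\ad(\af_\C)$-grading $\gf_\C=\bigoplus_k\gf_\C^k$, where $\gf_\C^0=\mf_\C+\af_\C=Z_{\gf_\C}(\af_\C)$, $\nf_\C=\bigoplus_{k>0}\gf_\C^k$ and $\pf_\C=\bigoplus_{k\geq0}\gf_\C^k$. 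First I would record the elementary identity $\pf_\C=\mf_\C+{\mathfrak b}_\C$, i.e.\ $P_\C=M_\C B_\C=B_\C M_\C$: since ${\mathfrak b}_\C\subset\pf_\C$ is a Borel subalgebra of $\gf_\C$, its level-zero part ${\mathfrak b}_\C\cap\gf_\C^0$ is a Borel subalgebra of $\gf_\C^0$, so $\mf_\C+({\mathfrak b}_\C\cap\gf_\C^0)=\gf_\C^0$, and adding $\nf_\C$ yields the claim. Because $\pf_\C\subset\qf_\C$ and ${\mathfrak b}_\C\subset{\mathcal Q}_\C$, both parabolics are $\af_\C$-graded, and since the nilradical of $Q_\C$ lies in $\nf_\C$ and that of ${\mathcal Q}_\C$ in the nilradical of ${\mathfrak b}_\C$ (of $\af_\C$-level $\geq0$), one gets $\qf_\C^k={\mathcal q}_\C^k=\gf_\C^k$ for $k>0$, $\qf_\C^0=\gf_\C^0$, ${\mathcal q}_\C^0$ a parabolic subalgebra of $\gf_\C^0$ containing ${\mathfrak b}_\C\cap\gf_\C^0$, and strictly negative parts $\qf_\C\cap\bar\nf_\C=\lf_\C\cap\bar\nf_\C$, ${\mathcal q}_\C\cap\bar\nf_\C={\mathcal l}_\C\cap\bar\nf_\C$, where $\lf_\C,{\mathcal l}_\C$ are the Levi factors.

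Comparing levels, the identity $\qf_\C={\mathcal q}_\C+\mf_\C$ is automatic in all levels $\geq0$: in positive levels both sides equal $\gf_\C^k$, and in level zero $\mf_\C+{\mathcal q}_\C^0\supset\mf_\C+({\mathfrak b}_\C\cap\gf_\C^0)=\gf_\C^0=\qf_\C^0$ for \emph{any} parabolic subalgebra ${\mathcal q}_\C^0$ of $\gf_\C^0$ containing ${\mathfrak b}_\C\cap\gf_\C^0$. So the whole content of the lemma is the equality of the strictly negative parts
\[
\qf_\C\cap\bar\nf_\C={\mathcal q}_\C\cap\bar\nf_\C,\qquad\text{equivalently}\qquad{\mathcal l}_\C\cap\bar\nf_\C=\lf_\C\cap\bar\nf_\C.
\]
The inclusion ``$\subset$'' I would deduce from ${\mathcal Q}_\C\subset Q_\C$: the semisimple part of ${\mathcal l}_\C$ lies in $\hf_\C$, hence so do all negative root spaces occurring in ${\mathcal l}_\C$, and --- using that $B_\C H_\C$ is open and contained in the open set $P_\C H_\C$ --- one checks these are forced into $\qf_\C$. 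For ``$\supset$'' I would compare dimensions: the surjection $\hf\twoheadrightarrow\gf/\qf\cong\bar\uf$ from the real local structure theorem gives $\dim(\qf\cap\bar\nf)=\dim\nf-\dim\hf+\dim(\hf\cap\lf)$, and the same over $\C$ gives $\dim_\C({\mathcal q}_\C\cap\bar\nf_\C)=\dim_\C\nf_\C-\dim_\C\hf_\C+\dim_\C(\hf_\C\cap{\mathcal l}_\C)$; since the ambient real and complex dimensions of $\nf$ and $\hf$ coincide, equality follows once $\dim_\C(\hf_\C\cap{\mathcal l}_\C)=\dim_\C(\hf_\C\cap\lf_\C)$, which reduces (via ${\mathcal l}_\C\cap\hf_\C={\mathcal q}_\C\cap\hf_\C$ and $\lf_\C\cap\hf_\C=\qf_\C\cap\hf_\C$) to showing $\hf_\C\cap\qf_\C\cap\bar\nf_\C\subset{\mathcal q}_\C$.

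Granting $\qf_\C={\mathcal q}_\C+\mf_\C={\mathcal q}_\C+\pf_\C$, the passage back to groups is routine: the identity makes ${\mathcal Q}_\C M_\C^0={\mathcal Q}_\C P_\C$ an open (hence dense, full-dimensional) subset of the irreducible variety $Q_\C$, i.e.\ the big $({\mathcal Q}_\C,P_\C)$-double coset in $Q_\C$; moreover the negative-part equality forces $\Psi(\lf_\C)=\Psi({\mathcal l}_\C)\cup\Psi(\mf_\C)$ with the two subsystems agreeing in every nonzero $\af_\C$-level, whence each connected component of the Dynkin diagram of $\lf_\C$ lies inside $\Psi({\mathcal l}_\C)$ or inside $\Psi(\mf_\C)$, so $W(\lf_\C)=W({\mathcal l}_\C)\,W(\mf_\C)$ and there is only that one double coset; thus ${\mathcal Q}_\C M_\C^0=Q_\C$, and as the remaining (finitely many) components of $M_\C$ lie in $P_\C\subset Q_\C$ we conclude ${\mathcal Q}_\C M_\C=Q_\C$. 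The main obstacle is the negative-part equality itself: it expresses that below $\af_\C$-level zero the local structure theorem does not see the difference between $B_\C$ and the larger minimal parabolic $P_\C$ (which differ only in their level-zero part, $\mf_\C$ versus a Borel subalgebra of it), and one expects to prove it by describing the strictly negative part of the adapted parabolic explicitly --- as the span of the $\bar\nf$-root spaces pairing suitably against $\hf$ --- a description unchanged under complexification; the inclusion and dimension count above then finish the proof.
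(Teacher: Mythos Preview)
The paper does not supply a proof; the lemma is simply quoted from \cite{KK}*{Prop.~9.1}, so there is nothing in the text to compare your plan against.

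Your reduction to the negative--level equality $\qf_\C\cap\bar\nf_\C={\mathcal q}_\C\cap\bar\nf_\C$ is correct, and so is the passage back to groups: once $\Psi(\lf_\C)=\Psi({\mathcal l}_\C)\cup\Psi(\mf_\C)$ is known, the highest root of any Dynkin component of $\lf_\C$ must lie in $\operatorname{span}(S^{(p)})$ or in $\operatorname{span}(S^0)$, forcing the whole component into one of the two, after which ${\mathcal Q}_\C$ visibly acts transitively on $Q_\C/P_\C$. (Your inclusion labels are swapped, incidentally: from ${\mathcal Q}_\C\subset Q_\C$ one obtains ${\mathcal q}_\C\cap\bar\nf_\C\subset\qf_\C\cap\bar\nf_\C$, not the reverse.)

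The genuine gap is in the dimension count. You write that ``the same over $\C$'' gives $\dim_\C({\mathcal q}_\C\cap\bar\nf_\C)=\dim_\C\nf_\C-\dim_\C\hf_\C+\dim_\C(\hf_\C\cap{\mathcal l}_\C)$, but the complex local structure theorem is stated relative to $B_\C$, not $P_\C$: it yields $\dim_\C{\mathcal u}_\C=\dim_\C\hf_\C-\dim_\C(\hf_\C\cap{\mathcal l}_\C)$, and since ${\mathcal u}_\C$ has a level--zero piece ${\mathcal u}_\C\cap\mf_\C$ (the positive root spaces of $\mf_\C$ not lying in ${\mathcal l}_\C$), one actually gets
\[
\dim_\C({\mathcal q}_\C\cap\bar\nf_\C)=\dim_\C\nf_\C-\dim_\C\hf_\C+\dim_\C(\hf_\C\cap{\mathcal l}_\C)+\dim_\C({\mathcal u}_\C\cap\mf_\C).
\]
The correction term is nonzero whenever $S^0\not\subset S^{(p)}$; already for $(\gf,\hf)=(\su(p,q),\so(p,q))$ with $q\ge p+2$ one has $S^{(p)}=\emptyset$ and the term equals $\binom{q-p}{2}=\dim(\lf\cap\hf)$. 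So the identity that really has to be proved is $\dim(\lf\cap\hf)=\dim_\C(\hf_\C\cap{\mathcal l}_\C)+\dim_\C({\mathcal u}_\C\cap\mf_\C)$, and neither this nor the inclusion ${\mathcal Q}_\C\subset Q_\C$ (which is not automatic: $Q_\C$ is the complexification of a real stabilizer, not itself a stabilizer in $G_\C$) is established by your sketch. These are exactly the points carrying the content of the result in \cite{KK}.
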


\begin{remark}\label{remark curlyQ} Suppose that $(\gf, \hf)$ is
  absolutely spherical with $\hf$ self-normalizing.  Let $H_\C$ be the
  normalizer of $\hf_\C$ in $G_\C$.  Note that $H_\C$ is a
  self-normalizing spherical subgroup of $G_\C$.  In view of
  \cite{Kn}*{Cor.~7.2} this implies that $Z_\C=G_\C/H_\C$ admits a
  wonderful compactification and as such is endowed with a Luna
  diagram, see \cite{BP}.

  The Luna diagram consists of the Dynkin diagram of $\gf_\C$ with
  additional information.  In particular the roots corresponding to
  the adapted Levi ${\mathcal L}_\C\subset{\mathcal Q}_\C$ are the
  uncircled elements in the Luna diagram where ``uncircled'' means no
  circle around, above, or below a vertex in the underlying Dynkin
  diagram.  Combining this information with the Satake diagram of
  $\gf$ then gives us the structure of $L$ via Lemma \ref{adapQ}.

  In view of Remark \ref{remark cplx adapted} we have
  \begin{equation} \label{cL for P=Q} [\operatorname{Lie}({\mathcal
      L}_\C), \operatorname{Lie}({\mathcal L}_\C)]\subset
    \hf_\C\end{equation} and in particular
  \begin{equation} \label{L for P=Q} [\lf,\lf]\subset \hf \qquad
    \hbox{in case $Q_\C={\mathcal Q}_\C$}\, . \end{equation}
\end{remark}

\subsection{The case where $\gf$ is a quasi-split real form of
  $\gf_\C$}
Recall that $\gf$ is called quasi-split if the complexification
$\pf_\C$ of $\pf$ is a Borel subalgebra of $\gf_\C$. An equivalent way
of saying this is that $\mf$ is abelian.  The following is clear.

\begin{lemma} \label{lemma quasi-split} Let $(\gf,\hf)$ be a real form
  of $(\gf_\C,\hf_\C)$ and assume that $\gf$ is quasi-split. Then
  $(\gf, \hf)$ is real spherical if and only if $(\gf_\C, \hf_\C)$ is
  spherical.
\end{lemma}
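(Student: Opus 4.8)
The plan is to prove Lemma \ref{lemma quasi-split} by a direct dimension-count argument together with the already-established implication from Lemma \ref{real form}. Recall that $(\gf,\hf)$ is real spherical means $\gf=\hf+\pf$ for some minimal parabolic $\pf$, while $(\gf_\C,\hf_\C)$ spherical means $\gf_\C=\hf_\C+\bf_\C$ for some Borel $\bf_\C$. The quasi-split hypothesis is exactly the statement that $\pf_\C$ is itself a Borel subalgebra of $\gf_\C$, equivalently $\mf$ is abelian.

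First I would dispatch the ``if'' direction: if $(\gf_\C,\hf_\C)$ is spherical, then $(\gf,\hf)$ is absolutely spherical by definition, hence real spherical by Lemma \ref{real form}. This needs no use of the quasi-split hypothesis at all.

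For the ``only if'' direction, suppose $(\gf,\hf)$ is real spherical, so $\gf=\hf+\pf$ for a suitable minimal parabolic $\pf\subset\gf$. Complexifying, $\gf_\C=\hf_\C+\pf_\C$ as a vector space sum (the complexification of a sum of real subspaces is the sum of the complexifications). Since $\gf$ is quasi-split, $\pf_\C$ is a Borel subalgebra $\bf_\C$ of $\gf_\C$, so we obtain $\gf_\C=\hf_\C+\bf_\C$, which says precisely that $(\gf_\C,\hf_\C)$ is spherical. That is the whole argument.

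I do not anticipate a serious obstacle here; the lemma is genuinely routine once one unwinds the definitions, which is presumably why the authors label the statement ``clear'' in the preceding sentence. The only point requiring a word of care is the complexification step in the ``only if'' direction — namely that for real subspaces $V,W$ of a real vector space one has $(V+W)\otimes_\R\C = V_\C+W_\C$ inside the complexification — but this is elementary linear algebra and the equality $\gf_\C=\hf_\C+\pf_\C$ is then immediate. The essential content of the lemma is simply the observation, recorded just before the statement, that quasi-split is equivalent to $\pf_\C$ being a Borel subalgebra, which lets the real and complex sphericity conditions be compared term by term.
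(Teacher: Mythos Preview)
Your proof is correct and is exactly the argument the paper has in mind; indeed, the paper gives no proof at all, simply declaring the lemma ``clear'' after noting that quasi-split means $\pf_\C$ is a Borel subalgebra. Your write-up spells out precisely this unwinding of definitions together with the appeal to Lemma~\ref{real form} for the ``if'' direction.
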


\subsection{Two technical Lemmas}
We conclude this section with two lemmas which are repeatedly used in
the classification later on.  The first is variant of Schur's Lemma.

\begin{lemma} \label{signature lemma}Let $V$ be a finite dimensional
  complex vector space endowed with a non-degenerate Hermitian form
  $b$.  Further let $G\subset \GL(V)$ be a subgroup which acts
  irreducibly on $V$ and leaves $b$ invariant.  Then any other
  $G$-invariant Hermitian form $b'$ on $V$ is a real multiple of
  $b$. In particular if $b'\neq 0$, then $b$ and $b'$ have the same
  signature $(p,q)$ up to order.
\end{lemma}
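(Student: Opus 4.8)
The plan is to use the non-degeneracy of $b$ to convert the existence of a second invariant Hermitian form $b'$ into a linear-algebraic statement about a single operator, and then apply the standard (complex, linear) Schur's Lemma. First I would note that since $b$ is non-degenerate, there is a unique $\C$-linear operator $T\in\GL(V)$ such that $b'(v,w)=b(Tv,w)$ for all $v,w\in V$; this $T$ is the ``relative'' operator between the two forms. The key point is that $T$ is $G$-equivariant: for $g\in G$, using that $g$ preserves both $b$ and $b'$ one computes $b(gTg^{-1}v,w)=b(Tg^{-1}v,g^{-1}w)=b'(g^{-1}v,g^{-1}w)=b'(v,w)=b(Tv,w)$ for all $w$, and non-degeneracy of $b$ forces $gTg^{-1}=T$. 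Since $G$ acts irreducibly on the complex vector space $V$, Schur's Lemma gives $T=\lambda\cdot\mathrm{id}$ for some $\lambda\in\C$, hence $b'=\lambda b$.

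Next I would pin down that $\lambda$ is in fact real. This follows because both $b$ and $b'$ are Hermitian: from $b'(v,w)=\overline{b'(w,v)}$ and $b'=\lambda b$ we get $\lambda\, b(v,w)=\overline{\lambda b(w,v)}=\bar\lambda\,\overline{b(w,v)}=\bar\lambda\, b(v,w)$ for all $v,w$, and picking $v,w$ with $b(v,w)\ne 0$ (possible since $b\ne 0$) yields $\lambda=\bar\lambda$, i.e.\ $\lambda\in\R$. Thus $b'$ is a real multiple of $b$, which is the first assertion.

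For the final sentence about signatures, suppose $b'\ne 0$, so $\lambda\ne 0$. If $\lambda>0$ then $b'=\lambda b$ manifestly has the same signature $(p,q)$ as $b$ (scaling a Hermitian form by a positive real leaves the signature unchanged, by Sylvester's law of inertia applied to any orthogonal basis). If $\lambda<0$ then $b'=\lambda b$ has signature $(q,p)$, i.e.\ the same pair up to order. This gives the ``same signature up to order'' conclusion.

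I do not expect any serious obstacle here: the argument is a routine adaptation of Schur's Lemma, and the only mild subtlety is remembering to extract reality of $\lambda$ from the Hermitian symmetry of the two forms rather than assuming it. Everything else is linear algebra over $\C$ together with Sylvester's law of inertia.
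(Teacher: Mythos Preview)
Your proof is correct and follows essentially the same Schur's Lemma argument as the paper, with a bit more detail spelled out. One tiny slip: you should take $T\in\operatorname{End}_\C(V)$ rather than $T\in\GL(V)$, since $b'$ may be degenerate (even zero); this does not affect the rest of the argument.
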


\begin{proof} Since $b$ is non-degenerate we find a unique
  $T\in \operatorname{End}_\C(V)$ such that $b'(v,w) = b(Tv,w)$ for
  all $v, w\in V$. The $G$-invariance of both $b$ and $b'$ and the
  uniqueness of $T$ then implies that $gTg^{-1}=T$ for all $g\in
  G$. Since $G$ acts irreducibly on $V$, Schur's Lemma implies that
  $T=\lambda\cdot \operatorname{id}_V$ for some $\lambda\in \C$.
  Since both $b$ and $b'$ are Hermitian the scalar $\lambda$ needs to
  be real.
\end{proof}

\begin{lemma} \label{independent} Let $X$ be a real algebraic variety
  acted upon by a real algebraic group $H$.  Further let
  $ f_1,\dots,f_k$ be $H$-invariant rational functions on $X$.  Let
  $U\subset X$ be their common set of definition. Consider
  \[ F: U \to\R^k, \ \ x\mapsto (f_1(x), \ldots, f_k(x))\] and assume
  that
  \[V:=\{ x\in U \mid \rank dF(x) \geq k\}\neq \emptyset\, .\] Then
  \[ \max_{x\in X} \dim_\R Hx \leq \dim X - k\, .\]
\end{lemma}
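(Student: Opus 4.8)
The plan is to exploit the $H$-invariance of the $f_i$ by noting that the map $F$ must be constant along every $H$-orbit, so that the fibers of $F$ are $H$-stable and the orbits through a point $x$ are contained in the fiber $F^{-1}(F(x))$. Concretely, first I would fix a point $x_0\in V$, so that $\rank dF(x_0)\ge k$, hence $=k$ since the target is $\R^k$. By the constant rank theorem (in the algebraic/analytic category, after shrinking to a Zariski-open or analytic neighbourhood), the fiber $F^{-1}(F(x_0))$ through $x_0$ is a locally closed submanifold of $U$ of dimension $\dim_{x_0} U - k$, and in fact $\dim X - k$ if we take $X$ irreducible near $x_0$ (or work on the irreducible component through $x_0$; the general case follows by taking the component of maximal dimension, and I would remark that without loss of generality $X$ is irreducible).

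Next, since each $f_i$ is $H$-invariant and rational, for any $h\in H$ and $x\in U$ with $hx\in U$ we have $f_i(hx)=f_i(x)$, so $F(hx)=F(x)$; thus the orbit $Hx_0$ (or at least the part of it lying in $U$, which is Zariski-dense in $\oline{Hx_0}$) lies in the single fiber $F^{-1}(F(x_0))$. Therefore
\[ \dim_\R Hx_0 = \dim_\R \oline{Hx_0} \le \dim F^{-1}(F(x_0)) \le \dim X - k\, .\]
The remaining point is that this bounds the dimension of $Hx_0$ only for the specific $x_0\in V$, whereas the claim is about $\max_{x\in X}\dim Hx$. Here I would use semicontinuity: the function $x\mapsto \dim Hx$ is lower semicontinuous on $X$ in the Zariski topology, so it attains its maximum on a nonempty Zariski-open (hence dense) subset $X_{\max}\subset X$. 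Since $V$ is nonempty and, being defined by a rank condition on $dF$, is itself Zariski-open in $U$ — and $U$ is dense in $X$ — the intersection $V\cap X_{\max}$ is nonempty (two nonempty open subsets of an irreducible variety meet). Picking $x_0$ in this intersection gives $\max_{x\in X}\dim_\R Hx = \dim_\R Hx_0 \le \dim X - k$, as desired.

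The main obstacle, and the step I would be most careful about, is the interface between the algebraic and differential-geometric pictures: the constant rank theorem is a local analytic statement, so I must make sure that "the fiber through $x_0$ has dimension $\dim X - k$" is legitimate — this is fine because $dF$ having rank $k$ at $x_0$ forces $F$ to be a submersion onto a neighbourhood of $F(x_0)$ in $\R^k$ near $x_0$, giving local fiber dimension exactly $\dim_{x_0}X - k$, and one then only needs $\dim_\R Hx_0\le \dim_\R(\text{fiber through }x_0)$, which holds orbit-locally and hence globally since $Hx_0$ is irreducible. A secondary subtlety is handling reducible or non-equidimensional $X$: I would dispose of this at the outset by replacing $X$ with the irreducible component on which $\max\dim Hx$ is realized and noting $V$ still meets it (or simply observe the bound $\dim X - k$ uses the full $\dim X$, so passing to a component only decreases the left side while the right side stays the same or decreases, and one reduces to the irreducible case where $\dim$ of the component is $\le\dim X$). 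Once these foundational points are pinned down, the argument is a short chain of inequalities.
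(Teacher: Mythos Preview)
Your argument is correct and is essentially the same as the paper's own proof, which compresses the whole thing into three sentences: $V$ is Zariski open, so orbits of maximal dimension (being generic) meet $V$, and since level sets of $F$ in $V$ have codimension $k$ and contain the orbits, the bound follows. You have simply spelled out the underlying semicontinuity and constant-rank ingredients that the paper leaves implicit.
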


\begin{proof} Note that $V$ is by assumption Zariski open in
  $X$. Hence generic $H$-orbits of maximal dimension meet $V$.  Since
  level sets in $V$ under $F$ have codimension $k$, the assertion
  follows.
\end{proof}

Functions $f_1, \ldots, f_k$ as above which meet the requirement
$V\neq \emptyset$ will in the sequel be called {\it independent}.

\section{The Dynkin scheme of maximal reductive subgroups of classical
  groups}

Let $G_\C$ be a complex classical group and let $V$ be the standard
representation space attached to $G_\C$, i.e.~$V=\C^n$ for
$G_\C=\SL(n,\C)$ or $\SO(n,\C)$, and $V=\C^{2n}$ for $G_\C=\Sp(n,\C)$.
According to Dynkin~\cite{D}, there are three possible types of a
connected maximal complex reductive subgroup $H_\C$ of $G_\C$.

\subsection{Type I: The action of $H_\C$ on $V$ is reducible} Up to
conjugation $H_\C$ is one of the following subgroups, which are all
symmetric:

\subsubsection{$G_\C=\SL(n,\C)$}
Here $H_\C= S(\GL(n_1,\C)\times \GL(n_2,\C))$, $n=n_1+n_2$, $n_i>0$.

\subsubsection{$G_\C=\SO(n,\C)$} Either
$H_\C= \SO(n_1,\C)\times \SO(n_2,\C)$ with $n=n_1+n_2$, $n_i>0$ or $n$
is even and $H_\C= \GL(n/2,\C)$. In the first case, the defining
bilinear form on $G_\C$ restricts non-trivially to the factors
$\C^n=V= V_1+ V_2=\C^{n_1} \oplus \C^{n_2}$. In the second case
$V=V_1\oplus V_1^*$ for $V_1$ the standard representation of
$\GL(n/2,\C)$ and both factors $V_1$ and $V_1^*$ are isotropic.

\subsubsection{$G_\C=\Sp(n,\C)$} Here
$H_\C= \Sp(n_1,\C)\times \Sp(n_2,\C)$ with $n=n_1+n_2$, $n_i>0$, or
$H_\C=\GL(n,\C)$. In the first case, the defining bilinear form on
$G_\C$ restricts non-trivially to the factors
$\C^{2n}=V= V_1+ V_2=\C^{2n_1} \oplus \C^{2n_2}$. In the second case
$V=V_1\oplus V_1^*$ for $V_1$ the standard representation of
$\GL(n,\C)$ and both factors $V_1$ and $V_1^*$ are Lagrangian.

\subsection{Type II: The action of $H_\C$ on $V$ is irreducible, but
  $\hf_\C$ is not simple}

\subsubsection{$G_\C=\SL(n,\C)$}
Here $H_\C=\SL(r,\C)\otimes\SL(s,\C)$ and $\C^n = \C^r \otimes \C^s$
with $rs=n$ and $2\leq r\leq s$.

\subsubsection{$G_\C=\SO(n,\C)$}
Here there are two possibilities. The first is
$H_\C=\SO(r,\C)\otimes\SO(s,\C)$ acting on $\C^n=\C^r\otimes\C^s$ with
$n=rs$, $3\leq r\leq s$, and $r,s\neq 4$.  The second case is
$H_\C=\Sp(r,\C) \otimes \Sp(s,\C)$ acting on
$\C^n=\C^{2r}\otimes \C^{2s}$ with $n=4rs$ and $1\leq r\leq s$.

\subsubsection{$G_\C=\Sp(n,\C)$}
Here $H_\C=\Sp(r,\C)\otimes \SO(s,\C)$ and
$\C^{2n}=\C^{2r}\otimes\C^s$ with $n=rs$ and $r\ge 1$, $s\geq 3$.
Moreover it is requested that $s\neq 4$ unless if $r=1$.

\subsection{Type III: The action of $H_\C$ on $V$ is irreducible and
  $\hf_\C$ is simple.} \label{typeIII} For this type the different
cases are listed in \cite{D}*{Thm.~1.5}. However, we do not need this
list.

\subsection{Dynkin types in $G$}

Let $H\subset G$ be a maximal connected reductive subgroup. Note that
this implies that $\hf$ is a maximal reductive subalgebra in $\gf$.
To begin with we recall the following result:

\begin{proposition}\label{prop Komrakov}  {\rm (Komrakov \cite{Kom1}, \cite{Kom2})} Let $\gf$ be a real simple Lie algebra 
  and $\hf$ a maximal reductive subalgebra.  If $\hf_\C$ is not
  maximal reductive in $\gf_\C$, then the pair $(\gf,\hf)$ appears in
  the following list:
  \begin{enumerate}
  \item \label{Kom1}$ (\sp(4n,\R),\, \so(1,3)\oplus \sp(n,\R))$,
    $n\geq 2$
  \item
    \label{Kom2}$ (\sp(p+3q, 3p+q),\, \so(1,3) \oplus
    \sp(p,q))$, $p+q\geq 2$
  \item \label{Kom3}$ (\so^*(8n),\, \so(1,3)\oplus \so^*(2n))$,
    $n\geq 2$
  \item \label{Kom4}$ (\so (p+3q, q+3p),\, \so(1,3)\oplus \so(p,q))$,
    $p+q\geq 3$
  \item \label{Kom5}$ (\so (6,10),\, \so(1,3) \oplus \so(1,3))$
  \item \label{Kom6}$( \so(165, 330),\, \so(1,11))$
  \item \label{Kom7}$( \so(234, 261),\, \so(3,9))$
  \item \label{Kom8}$( \sE_8^2,\, \sG_2^\C\oplus \su(2))$
  \item \label{Kom9}$( \sE_8^1,\, \sG_2^\C\oplus \su(1,1))$
  \end{enumerate}
\end{proposition}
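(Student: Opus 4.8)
The plan is to follow Komrakov and reduce the statement to Dynkin's classification of maximal reductive subalgebras of complex simple Lie algebras. First I would fix a conjugation $\sigma$ of $\gf_\C$ with $\gf=\gf_\C^\sigma$; then $\hf_\C$ is $\sigma$-stable and $\hf=\hf_\C^\sigma$. Since $\dim_\R\nf_\C^\sigma=\dim_\C\nf_\C$ for every $\sigma$-stable complex subspace $\nf_\C$, maximality of $\hf$ among the reductive subalgebras of $\gf$ is equivalent to maximality of $\hf_\C$ among the $\sigma$-stable reductive subalgebras of $\gf_\C$. Thus the hypothesis says exactly that $\hf_\C$ is maximal among $\sigma$-stable reductive subalgebras but not among all reductive subalgebras.

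Next I would pick a maximal reductive subalgebra $\mf_\C$ with $\hf_\C\subsetneq\mf_\C\subsetneq\gf_\C$. By the equivalence just noted $\mf_\C$ cannot be $\sigma$-stable, so $\mf_\C':=\sigma(\mf_\C)$ is a second maximal reductive subalgebra, distinct from $\mf_\C$, still containing $\hf_\C$, and interchanged with $\mf_\C$ by $\sigma$. The algebraic subalgebra $\mf_\C\cap\mf_\C'$ is $\sigma$-stable; passing to its reductive part and using the conjugacy theory of Levi factors to pick a $\sigma$-stable one containing $\hf_\C$, the maximality of $\hf_\C$ forces that reductive part to equal $\hf_\C$ (and in every case that actually occurs one has $\mf_\C\cap\mf_\C'=\hf_\C$). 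The theorem is thereby reduced to a finite problem: using Dynkin's lists, classify the ordered pairs $(\mf_\C,\mf_\C')$ of distinct conjugate maximal reductive subalgebras of a complex simple $\gf_\C$ together with a conjugation $\sigma$ interchanging them, and for each resulting $(\gf,\hf)=\bigl(\gf_\C^\sigma,\,(\mf_\C\cap\mf_\C')^\sigma\bigr)$ check that $\hf$ is in fact maximal in $\gf$.

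For $\gf_\C$ classical I would first observe that $\hf_\C$ must act irreducibly on the standard module $V$: otherwise the canonical isotypic decomposition of $V$ under $\hf_\C$ is $\sigma$-stable, and its stabiliser is a proper $\sigma$-stable reductive subalgebra strictly containing $\hf_\C$, a contradiction. Hence $\mf_\C$ is of Dynkin Type II (a tensor product) or Type III (simple, irreducibly embedded), and I would go through the two cases. In Type II the pairs arise exactly where Dynkin's conditions bar a tensor product from being maximal — because it has a tensor factor $\SO(4,\C)$, or is $\SO(4,\C)\otimes\SO(4,\C)$ — while the real form stays maximal because $\so(1,3)\cong\sl(2,\C)$ has complexification $\so(4,\C)=\sl(2,\C)\oplus\sl(2,\C)$ with the two summands interchanged by $\sigma$; this produces \ref{Kom1}--\ref{Kom5}. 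In Type III, an inspection of Dynkin's list of irreducible simple maximal subalgebras of $\SO(N,\C)$, together with a signature computation for the invariant bilinear form on the relevant representation, leaves only $\so(12,\C)\subset\so(495,\C)$ acting through $\Lambda^4$ of the standard module, with the real forms $\so(1,11)$ and $\so(3,9)$ landing in $\so(165,330)$ and $\so(234,261)$; these are \ref{Kom6} and \ref{Kom7}. For $\gf_\C$ exceptional I would repeat the analysis from Dynkin's list of maximal reductive subalgebras; the only surviving configuration is $\sG_2^\C\oplus\sG_2^\C\oplus\sl(2,\C)\subsetneq\sG_2^\C\oplus\sF_4^\C\subsetneq\sE_8^\C$ with $\sigma$ interchanging the two ideals $\sG_2^\C$, which gives \ref{Kom8} and \ref{Kom9} according to the real form of the $\sl(2,\C)$-factor.

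I expect the real difficulty to be the last step — verifying that $\hf$ is genuinely maximal in $\gf$ in each surviving case, and eliminating spurious candidates. The verification needs not merely the non-maximality of $\hf_\C$ but control of the whole lattice of reductive complex subalgebras between $\hf_\C$ and $\gf_\C$, so as to see that every proper intermediate member is of the non-$\sigma$-stable $\mf_\C$-type. A representative spurious case is $\sG_2^\C=\spin(7,\C)_+\cap\spin(7,\C)_-\subset\so(8,\C)$: any conjugation $\sigma$ interchanging the two classes $\spin(7,\C)_\pm$ must permute the three $\SO(8,\C)$-conjugacy classes of $\so(7,\C)$ lying between $\sG_2^\C$ and $\so(8,\C)$, and since $\sigma^2=\operatorname{id}$ it must fix one of them; the corresponding real form of that $\so(7,\C)$ then sits strictly between $\hf$ and $\gf$, so $\hf$ is not maximal and the configuration contributes nothing. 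Analogous numerical coincidences throughout Dynkin's lists would have to be ruled out one at a time in the same spirit.
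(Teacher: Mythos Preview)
The paper does not prove this proposition. It is stated with attribution to Komrakov \cite{Kom1}, \cite{Kom2} and used as a black box; the only accompanying text is a remark that the precise embeddings are irrelevant since only $\dim\hf$ is needed for Corollary~\ref{Komrakov cor}. So there is no proof in the paper to compare against.

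What you have written is a plausible outline of Komrakov's own argument: reduce to finding pairs of $\sigma$-conjugate maximal reductive subalgebras $\mf_\C,\sigma(\mf_\C)$ of $\gf_\C$ whose intersection has $\hf_\C$ as reductive part, then run through Dynkin's lists. The structural reductions (irreducibility of $\hf_\C$ on the standard module in the classical case; the r\^ole of the forbidden $\so(4,\C)$ tensor factors producing the $\so(1,3)$ cases (i)--(v); the $\sG_2^\C\oplus\sF_4^\C\subset\sE_8^\C$ chain for (viii)--(ix)) are the right ones. Your candid acknowledgment that the hard part is the exhaustive elimination of spurious configurations and the verification of real maximality is accurate: that bookkeeping is precisely what Komrakov's papers carry out, and your sketch does not attempt it beyond one illustrative example. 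One small caution: in your Type~III discussion you assert that ``inspection leaves only'' the $\Lambda^4\C^{12}$ case, but you do not identify the intermediate $\mf_\C$ there or explain why no other Dynkin exception survives; this is exactly the sort of step that requires the full case analysis you defer.
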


\begin{remark} The particular embeddings of $\hf$ into $\gf$ in
  Proposition \ref{prop Komrakov} are described in \cite{Kom1}.  For
  this article the particular embeddings are not needed as only
  $\dim \hf$ enters in the proof of Corollary \ref{Komrakov cor}
  below.
\end{remark}

\begin{corollary}\label{Komrakov cor} 
  Let $\gf$ be a real simple Lie algebra and $\hf$ a real spherical
  maximal reductive subalgebra. Then $\hf_\C$ is maximal reductive in
  $\gf_\C$.
\end{corollary}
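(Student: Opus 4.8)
The plan is to argue by contradiction. Assume that $\hf_\C$ is \emph{not} maximal reductive in $\gf_\C$. Since $\gf$ is simple and $\hf$ is maximal reductive in $\gf$, Proposition~\ref{prop Komrakov} applies and forces $(\gf,\hf)$ to be isomorphic to one of the nine pairs \ref{Kom1}--\ref{Kom9} listed there. I would then rule out every one of these nine possibilities by showing that $\hf$ violates the dimension bound~\eqref{db}, i.e.\ that
\[ \dim\hf < \dim\nf = \dim(\gf/\kf)-\rank_\R\gf . \]
Because the right-hand side depends only on $\gf$ and the left-hand side only on the isomorphism type of $\hf$, nothing about the particular embedding of $\hf$ in $\gf$ is needed: the data $\dim(\gf/\kf)$ and $\rank_\R\gf$ are read off Table V of \cite{Helgason}, and $\dim\hf$ straight from the list.

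For the sporadic entries this is immediate. In \ref{Kom5}, $\dim\hf=2\dim\so(1,3)=12$ while $\gf=\so(6,10)$ gives $\dim\nf=6\cdot10-6=54$. In \ref{Kom6} and \ref{Kom7}, $\hf$ is a real form of $\so(12,\C)$, so $\dim\hf=66$, whereas $\gf=\so(a,b)$ with $\{a,b\}=\{165,330\}$ or $\{234,261\}$ gives $\dim\nf=ab-\min(a,b)$, which is of order $10^{4}$. For the two exceptional entries, $\dim\hf=\dim\sG_2^\C+3=31$; here $\gf=\sE_8^1$ has $\dim\kf=\dim\so(16)=120$ and $\rank_\R\gf=8$, hence $\dim\nf=128-8=120$, and $\gf=\sE_8^2$ has $\dim\kf=\dim\sE_7+\dim\su(2)=136$ and $\rank_\R\gf=4$, hence $\dim\nf=112-4=108$. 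In all five cases $\dim\hf$ lies far below $\dim\nf$.

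For the four infinite families \ref{Kom1}--\ref{Kom4} I would reduce~\eqref{db} to an elementary polynomial inequality. Write $m=p+q$ (so that $m=n$ in \ref{Kom1} and \ref{Kom3}), and recall $\dim\sp(m,\R)=m(2m+1)$, $\dim\so^*(2m)=m(2m-1)$, $\dim\so(p,q)=\tfrac12 m(m-1)$, that $\Sp(a,b)/(\Sp(a)\times\Sp(b))$ and $\SO(a,b)/(\SO(a)\times\SO(b))$ have dimensions $4ab$ and $ab$, that $\rank_\R$ of $\Sp(a,b)$ or $\SO(a,b)$ equals $\min(a,b)$, and that $\rank_\R\so^*(2m)=\lfloor m/2\rfloor$. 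Then in \ref{Kom1}, $\gf=\sp(4m,\R)$ gives $\dim\nf=16m^2$ and $\dim\hf=6+m(2m+1)$, so $\dim\nf-\dim\hf=14m^2-m-6>0$ for $m\ge2$; in \ref{Kom3}, $\gf=\so^*(8m)$ gives $\dim\nf=16m^2-6m$ and $\dim\hf=6+m(2m-1)$, with difference $14m^2-5m-6>0$ for $m\ge2$. In \ref{Kom2} and \ref{Kom4} the ambient group has signature $(p+3q,\,3p+q)$, and the estimates $(p+3q)(3p+q)=3m^2+4pq\ge3m^2$ and $\min(p+3q,3p+q)\le2m$ yield $\dim\nf\ge12m^2-2m$ in the symplectic case \ref{Kom2} and $\dim\nf\ge3m^2-2m$ in the orthogonal case \ref{Kom4}; these exceed $\dim\hf=6+\dim\sp(p,q)=6+m(2m+1)$, resp.\ $\dim\hf=6+\dim\so(p,q)=6+\tfrac12 m(m-1)$, for all admissible parameters, namely $m\ge2$ in \ref{Kom2} and $m\ge3$ in \ref{Kom4}, since in each case the difference is a quadratic in $m$ with positive leading coefficient that is already positive at the smallest admissible value of $m$. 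Thus in every one of the nine cases $\hf$ contradicts~\eqref{db}, and therefore $\hf_\C$ must be maximal reductive in $\gf_\C$. The only genuinely nontrivial point is handling the two-parameter families \ref{Kom2} and \ref{Kom4} uniformly in $(p,q)$, which is exactly what the two displayed estimates accomplish; everything else is a finite and entirely routine dimension count.
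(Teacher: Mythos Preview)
Your proposal is correct and follows essentially the same approach as the paper: both argue by contradiction, invoke Komrakov's list (Proposition~\ref{prop Komrakov}), and eliminate each of the nine cases by checking that $\dim\hf<\dim\nf$, in violation of the dimension bound~\eqref{db}. The only difference is cosmetic: the paper dispatches the families \ref{Kom2}--\ref{Kom7} by citing the specialized bounds \eqref{dimsp}, \eqref{dimsoev}, \eqref{dimso} stated later for $\sp(p,q)$, $\so^*(2m)$, $\so(p,q)$, and handles \ref{Kom8}--\ref{Kom9} via $\dim\nf(\sE_8^1)\ge\dim\nf(\sE_8^2)=108$, whereas you compute all the relevant quantities directly from Helgason's table and supply the explicit quadratic estimates yourself.
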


\begin{proof} Recall the dimension bound $\dim \hf\geq \dim \nf$ from
  \eqref{db}.  Now for \ref{Kom1} we note that $\dim \nf = (4n)^2$
  whereas $\dim \hf= 6+ 2n^2 +n$. For \ref{Kom2} we use the
  dimension bound \eqref{dimsp}, for \ref{Kom3} the dimension bound
  \eqref{dimsoev}, for \ref{Kom4}--\ref{Kom7} the dimension bound
  \eqref{dimso}, and finally we exclude \ref{Kom8} and \ref{Kom9}
  via the dimension bound \eqref{dimE_8^2}:
  $\dim \nf(\sE_8^1) \geq \dim \nf(\sE_8^2)=108$.
\end{proof}

\begin{definition} Let $G$ be a real classical group. We say that
  \emph{a maximal connected reductive subgroup $H$ is of type I, II,
    or III}, provided $H_\C$ is maximal reductive and of that type in
  $G_\C$.
\end{definition}

\begin{remark} \label{rem:typeII-complexsubgroup} Suppose that $V$ is
  the complexification of a real vector space $V_\R$ and that
  $H\subset G\subset\GL(V_\R)$ with $H_\C\subset G_\C$ maximal
  reductive. Suppose that there exists a complex structure $J_\R$ on
  $V_\R$ such that $H$ is a complex subgroup of $\GL(V_\R, J_\R)$.
  Then $H$ is of type I.  Indeed $H_\C \simeq H \times \oline H$ and
  the action of $H_\C$ on $V\simeq (V_\R, J_\R)\oplus (V_\R, -J_\R)$
  is reducible.
\end{remark}

\subsection{Bilinear forms on prehomogeneous vector spaces}

Let $G$ be an algebraic group over $\C$ and $\rho$ be a
finite-dimensional representation of $G$ on a complex vector space
$V$. The triplet $(G,\rho,V)$ is called a {\it prehomogeneous vector
  space}, if $G \times \GL(1,\C)$ has a Zariski open orbit in $V$.

Let now $V$ be an irreducible representation of a reductive group $G$
with center at most one-dimensional, and let $G'$ denote the
semisimple part of $G$.  A necessary condition for $(G,\rho,V)$ to be
prehomogeneous is that $G'$ satisfies
\begin{equation}\label{equ:prehomodimension}
  \dim(G') \geq \dim(V) - 1.
\end{equation}

Two triplets $(G_1,\rho_1,V_1)$ and $(G_2,\rho_2,V_2)$ are said to be
{\it equivalent} if there is a linear isomorphism
$\psi: V_1 \rightarrow V_2$ such that
$\hat{\psi}(\rho_1(G_1)) = \rho_2(G_2)$ under the induced map
$\hat{\psi}: \GL(V_1) \rightarrow \GL(V_2)$.

With respect to this notion, $(G,\rho,V)$ and $(H,\rho\circ\tau, V)$
are equivalent whenever $\tau: H \rightarrow G$ is a surjective
homomorphism. In particular, $(G,\rho,V)$ is always equivalent to
$(G,\rho^*,V^*)$ where $\rho^*$ is dual to $\rho$.

\begin{proposition}\label{prop:prehomogeneous}
  Let $(\rho, V)$ be an irreducible representation of a simple group
  $G$. The triplet $(G,\rho,V)$ satisfies \eqref{equ:prehomodimension}
  if and only if it is equivalent to a triplet listed in
  Table~\ref{tab:prehomogeneous} and it gives rise to a prehomogeneous
  vector space if and only if it is marked in the column `preh'.
\end{proposition}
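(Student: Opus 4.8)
The plan is to reduce Proposition~\ref{prop:prehomogeneous} to a bounded, finite verification organized by the \emph{dimension inequality} \eqref{equ:prehomodimension}, and then to settle prehomogeneity case by case using classical results. First I would recall that an irreducible representation of a simple group $G$ is parametrized by its highest weight $\lambda$, and that $\dim V$ grows at least polynomially in the coefficients of $\lambda$ (Weyl dimension formula) as soon as $\lambda$ is large, while $\dim G$ is fixed for each simple type. So the inequality $\dim G \ge \dim V - 1$ can only hold for finitely many $\lambda$ per type. The first step is therefore to make this quantitative: for each of the infinite families $A_n, B_n, C_n, D_n$, run over fundamental weights and small combinations thereof, estimate $\dim V$ from below via the Weyl formula (or via the known dimensions of the small representations: standard, adjoint, $\wedge^k$ of the standard, $\Sym^2$, the spin representations, the $27$ of $E_6$, etc.), and discard everything for which $\dim V - 1 > \dim G$. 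For the exceptional types $G_2, F_4, E_6, E_7, E_8$ this is an explicit finite check since there are no parameters at all. The outcome of this step is exactly the list of candidates appearing in Table~\ref{tab:prehomogeneous}, and I would present the bookkeeping in a short table of the surviving $(\gf_\C,\lambda)$ together with $\dim V$ and $\dim G$.

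The second step is to decide, for each surviving triplet, whether $(G,\rho,V)$ (equivalently $(G\times\GL(1,\C),\rho,V)$) actually has a Zariski-open orbit. Here I would not argue from scratch but invoke the Sato--Kimura classification of irreducible prehomogeneous vector spaces; the entries marked `preh' in Table~\ref{tab:prehomogeneous} should be precisely the \emph{regular} (or at least prehomogeneous) ones on the Sato--Kimura list, and the unmarked ones are those which satisfy the numerical bound \eqref{equ:prehomodimension} but fail to be prehomogeneous (the simplest obstruction being that the generic isotropy subgroup is too large, i.e.\ $\dim G - \dim(\text{generic stabilizer}) < \dim V - 1$, or that $G$ has too few invariants to cut out the complement of the open orbit). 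Since the equivalence relation declared before the proposition identifies $(G,\rho,V)$ with $(G,\rho^*,V^*)$ and with the representation factored through any isogeny, I would note that it suffices to work with one representative of each equivalence class, which keeps the table short. The only subtlety is to make sure the reduction ``$G$ simple'' versus ``$G$ reductive with $\le 1$-dimensional center'' is handled: since we are told $G$ is simple, $G' = G$, so the necessary condition is literally \eqref{equ:prehomodimension} with $\dim G$ on the left, and no center-adjustment is needed.

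The main obstacle I expect is the borderline cases where $\dim V - 1$ is close to $\dim G$ but the representation is not prehomogeneous: these cannot be eliminated by the crude dimension count and require either exhibiting an explicit invariant that forces all orbits to have positive codimension, or computing a generic isotropy Lie algebra $\gf_v = \{X \in \gf_\C : \rho(X)v = 0\}$ and showing $\dim \gf_v > \dim \gf_\C - \dim V + 1$. Equivalently one can quote the corresponding negative entries in Sato--Kimura directly. A secondary nuisance is keeping the small-rank coincidences straight (e.g.\ $\so(3,\C)\cong\sl(2,\C)$, $\so(4,\C)$ not simple, $\so(5,\C)\cong\sp(2,\C)$, $\so(6,\C)\cong\sl(4,\C)$, the two spin representations of $D_n$ for small $n$, triality for $D_4$), so that each equivalence class is listed exactly once in Table~\ref{tab:prehomogeneous}; I would dispose of these at the start by fixing the range of $n$ in each family to avoid overlap, exactly as is done in Remark~\ref{remark intro}.3 for the main classification. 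With those conventions in place the proof is a finite inspection: the numerical bound produces the rows of the table, and Sato--Kimura (or an explicit isotropy computation) produces the `preh' column.
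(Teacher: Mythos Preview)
Your proposal is correct, and for the prehomogeneity column it coincides with the paper: both you and the authors simply invoke the Sato--Kimura classification (the paper cites Theorem~54 of \cite{SK}).

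The difference lies in the first step. You propose to derive the list of triplets satisfying \eqref{equ:prehomodimension} from scratch via the Weyl dimension formula, bounding $\lambda$ type by type and handling the low-rank coincidences by hand. The paper instead cites this list as already established in \cite{AVE} (Andreev--Vinberg--\'Elashvili), whose paper carries out precisely the enumeration you sketch. So your route is a self-contained rederivation of the AVE result, while the paper treats the proposition as a compilation of two known classifications. Your approach buys independence from \cite{AVE} at the cost of a page or two of bookkeeping; the paper's approach is a two-line citation. Neither introduces any new mathematical idea beyond what is in those two references.
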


The table identifies the representation $\rho$ by its highest weight
(expanded in fundamental weights using the Bourbaki numbering
\cite{Bourbaki}*{Ch.\ 6, Planches I--X}) and dimension.

\begin{proof} The cases of Table \ref{tab:prehomogeneous} were
  determined in \cite{AVE} and the fourth column follows from Theorem
  54 in \cite{SK}.
\end{proof}

\def\spinthirteen{18}\def\Esix{22}\def\Eseven{23}\def\Ffour{21}
\begin{table}[ht]
  \small 
  \begin{tabular}{rlllll}
    \hline			
    & $G$                         & $\rho$      & $\dim(V)$          & preh.     & inv. form \\ \hline \hline
    1. & $G$ simple                  & adjoint     & $\dim G$           &           & 1         \\
    2. & $\SL(n,\C)$, $n \geq 3$     & $\omega_1$  & $n$                & \checkmark& 0	        \\
    3. & $\SL(n,\C)$, $n \geq 3$     & $2\omega_1$ & $\frac{1}{2}n(n+1)$& \checkmark& 0		    \\
    4. & $\SL(n,\C)$, $n \geq 5$     & $\omega_2$  & $\frac{1}{2}n(n-1)$& \checkmark& 0		\\
    5. & $\Sp(n,\C)$, $n \geq 1$     & $\omega_1$  & $2n$               & \checkmark& 2 \\
    6. & $\Sp(n,\C)$, $n \geq 3$     & $\omega_2$  & $(n-1)(2n+1)$      &           & 1      \\
    7. & $\SO(n,\C)$, $n\geq 3$, $n\neq 4$ & $\omega_1$  & $n$          & \checkmark& 1   \\ \hline 
    8. & $\SL(2,\C)$                 & $3\omega_1$ & $4$                & \checkmark& 2         \\
    9. & $\SL(6,\C)$                 & $\omega_3$  & $20$               & \checkmark& 2         \\
    10. & $\SL(7,\C)$                & $\omega_3$  & $35$               & \checkmark& 0          \\
    11. & $\SL(8,\C)$                & $\omega_3$  & $56$               & \checkmark& 0           \\
    12. & $\Sp(3,\C)$                & $\omega_3$  & $14$               & \checkmark& 2           \\
    13. & $\Spin(7,\C)$              & spin        & $8$                & \checkmark& 1         \\
    14. & $\Spin(9,\C)$              & spin        & $16$               & \checkmark& 1            \\
    15. & $\Spin(10,\C)$             & half spin   & $16$               & \checkmark& 0             \\
    16. & $\Spin(11,\C)$             & spin        & $32$               & \checkmark& 2             \\
    17. & $\Spin(12,\C)$             & half spin   & $32$               & \checkmark& 2              \\
    \spinthirteen. & $\Spin(13,\C)$  & spin        & $64$               &           & 2        		\\  
    19. & $\Spin(14,\C)$             & half spin   & $64$               & \checkmark& 0             \\
    20. & $\sG_2^\C$                 & $\omega_1$  & $7$                & \checkmark& 1        \\
    \Ffour. & $\sF_4^\C$             & $\omega_4$  & $26$               &           & 1     	\\
    \Esix. & $\sE_6^\C$              & $\omega_1$  & $27$               & \checkmark& 0          \\
    \Eseven. & $\sE_7^\C$            & $\omega_7$  & $56$               & \checkmark& 2     \\	 \hline  
    \multicolumn{6}{c} {\Tabelle{tab:prehomogeneous}}\\
  \end{tabular}

\end{table}

Table~\ref{tab:prehomogeneous} divides into two parts: Each triplet
listed in the first part represents a series of vector spaces while a
triplet in the second part is only defined for a certain dimension.
We call a triplet $(G,\rho,V)$ {\it classical} or {\it sporadic}
depending on whether it is equivalent to a triplet of the former or
the latter type.

The final column of the table is marked by $0$ if there is no
non-degenerate $G$-invariant bilinear form on $V$, and by $1$
(resp.~$2$) if there exists a non-degenerate symmetric (resp.~skew
symmetric) $G$-invariant bilinear form. Given the highest weight
$\omega$, this data is easily determined by means of \cite{Bourbaki}*{Ch.\ 8,
\S7.5, Prop.\ 12}.

\begin{remark}\label{remark on forms}
  Let $G$ be a simple classical group acting on $V$ as described in
  the beginning of this section, and let $H$ be a subgroup of type
  III. If $G$ is a real form of $\SO(n,\C)$, resp. $\Sp(n,\C)$, then
  $H_\C$ fixes a symmetric, resp. skew symmetric bilinear form on $V$.
  On the other hand, if $G$ is a real form of $\SL(n,\C)$ then $H$
  cannot be maximal if it fixes a nondegenerate bilinear form, unless
  $H_\C$ is conjugate to $\SO(n,\C)$ or (if $n$ is even) to
  $\Sp(\frac n2,\C)$.

  It will be a consequence of the dimension bound \eqref{db}, that in
  most cases a subgroup $H$ of type III comes from a triplet in
  Table~\ref{tab:prehomogeneous}.  Hence the provided information
  about invariant forms reduces the number of cases which must be
  considered for the classification of these subgroups.
\end{remark}

\section{Maximal reductive real spherical subgroups in case $G_\C=\SL(n,\C)$}\label{max SL}

We prove the statement in 
Theorem \ref{maximals are real forms} 
for $\gf_\C=\sl(n,\C)$ and $\hf_\C$ maximal reductive
(cf.~Corollary \ref{Komrakov cor}).

\subsection{The real forms}
It suffices to consider the non-split real forms $G=\SU(p,q)$ with
$p+q=n$ and $1\leq p\leq q$, and $G=\SL(m,\Hb)$ with $n=2m>2$.  For
these groups we obtain the following dimension bounds from the table
of \cite{Helgason} cited below \eqref{db}:
\begin{equation}\label{dimsu} \dim H \geq 2pq - p \qquad
  (G=\SU(p,q)),\end{equation}
\begin{equation}\label{dimsl} \dim H \geq 2 m^2 -2 m\qquad
  (G=\SL(m,\Hb)).\end{equation}

For later reference we record the matrix realizations of $G$ and $P$.
We begin with $G=\SU(p,q)$ which we consider as the invariance group
of the Hermitian form $(\cdot, \cdot)_{p,q}$ defined by the symmetric
matrix
\[J=\begin{pmatrix} 0 & I_p & 0\\ I_p & 0 & 0 \\ 0 & 0 &
    I_{q-p}\end{pmatrix}\, .\]

The Lie algebra is then given by
\[\su(p,q)=\left\{X=\begin{pmatrix} A & B & E \\ C & -A^*& F \\ -F^* &
      - E^* & D\end{pmatrix}\middle| \quad
    \begin{matrix}
      A, B, C\in \Mat_{p, p} (\C),\\
      E, F \in \Mat_{p, q-p}( \C), \\
      D \in \Mat_{q-p, q-p}(\C),\\
      B^*, C^*, D^*=-B, -C, -D,\\
      \tr(X)=0\end{matrix}\, \right\}\, .\] We choose the minimal
parabolic such that
\[\pf=\left\{\begin{pmatrix} A & B & E \\ 0 & -A^*& 0\\ 0 & - E^* &
      D\end{pmatrix}\in \su(p,q)\ \bigg|\, A \ \hbox{upper
      triangular}\right\}\] so that $P$ stabilizes the isotropic flag
$\la e_1\ra \subset \la e_1, e_2\ra \subset\ldots\subset \la e_1,
\ldots, e_p\ra $.  Moreover we record that
\[G/P\simeq\{ V_1\subset V_2 \subset \ldots \subset V_p\mid \dim_\C
  V_i = i, (V_p, V_p)_{p,q}=\{0\}\}\] is the variety of full isotropic
$p$-flags in $\C^{p+q}$.  We denote by
\[\Nc_{p,q}:= \{ [v]\in \Pb(\C^n)\mid (v,v)_{p,q}=0\}\]
the null-cone and note that there is a $G$-equivariant surjective map
$G/P \to \Nc_{p,q}$.  Moreover, if $P_{\rm max, \C}$ denotes the
maximal parabolic subgroup of $G_\C=\SL(n,\C)$ which stabilizes
$\la e_1\ra$, then $P_\C \subset P_{{\rm max}, \C}$ and thus we have a
$G_\C$-equivariant surjection
$G_\C/ P_\C \to G_\C/ P_{\rm max,\C}=\Pb(\C^n)$.

Thus we get:

\begin{lemma} \label{orbit bound SU}Let $G=\SU(p,q)$ and $H\subset G$
  be a real spherical subgroup. Then the following assertions hold:
  \begin{enumerate}
  \item There exists an $H$-orbit on $\C^n$ of real codimension at
    most 3.
  \item There exists an open $H_\C$-orbit on $\Pb(\C^n)$, i.e.~$\C^n$
    is a pre\-ho\-mo\-ge\-neous vector space for $H_\C$.
  \end{enumerate}
\end{lemma}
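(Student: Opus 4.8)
The statement to prove is Lemma \ref{orbit bound SU}: for $G = \SU(p,q)$ and $H \subset G$ real spherical, there is an $H$-orbit on $\C^n$ of real codimension at most $3$, and there is an open $H_\C$-orbit on $\Pb(\C^n)$. Both conclusions will be extracted from the geometry set up just before the statement, namely the $G$-equivariant surjection $G/P \to \Nc_{p,q}$ onto the null-cone, together with the $G_\C$-equivariant surjection $G_\C/P_\C \to \Pb(\C^n)$.

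\textbf{Part (ii).} The plan is to observe that $P_\C \subset P_{\mathrm{max},\C}$, where $P_{\mathrm{max},\C}$ is the stabilizer of $\la e_1 \ra$, so that the composite $G_\C/P_\C \to G_\C/P_{\mathrm{max},\C} = \Pb(\C^n)$ is $G_\C$-equivariant and surjective. Since $H$ is real spherical, $HP$ is open in $G$; complexifying (the open condition is preserved), $H_\C P_\C$ is open in $G_\C$, hence the image of this open set in $\Pb(\C^n)$ is an open $H_\C$-orbit. This says precisely that $\C^n$ is prehomogeneous for $H_\C$ (using the $\GL(1,\C)$-scaling that collapses $\C^n \setminus \{0\}$ onto $\Pb(\C^n)$). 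This part is essentially immediate from the definitions and the diagram.

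\textbf{Part (i).} Here I would use that $G/P \simeq \{$full isotropic $p$-flags$\}$ surjects $G$-equivariantly onto the null-cone $\Nc_{p,q} \subset \Pb(\C^n)$ via $(V_1 \subset \dots \subset V_p) \mapsto V_1$. Real sphericity gives an open $H$-orbit $\mathcal{O}$ in $G/P$; its image in $\Nc_{p,q}$ is then an open $H$-orbit $\mathcal{O}'$ in $\Nc_{p,q}$. Now $\Nc_{p,q}$ has real dimension $2(n-1) - 1 = 2n - 3$ inside $\Pb(\C^n)$, which has real dimension $2n-2$; equivalently, the affine cone over $\Nc_{p,q}$ in $\C^n$ has real codimension $1$. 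Pulling $\mathcal{O}'$ back to $\C^n \setminus \{0\}$ (via the $\C^\times$-bundle projection), we get an $H$-orbit in the affine null-cone whose closure has real codimension $1$ in $\C^n$; but an individual $H$-orbit on this cone may itself be one dimension smaller because of the $\R^\times$ versus $\C^\times$ discrepancy in passing between the cone and its projectivization (the $\C^\times$-action on the cone need not be realized inside $H$). So the best general bound on a single $H$-orbit in $\C^n$ is real codimension at most $3$: codimension $1$ from the cone being a hypersurface, plus at most $2$ more from the fact that the fiber of (affine cone) $\to \Nc_{p,q}$ is $\C^\times$ (real dimension $2$) while the $H$-orbit in the cone need only surject onto the $H$-orbit in $\Nc_{p,q}$ with fibers of real dimension $\ge 0$. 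I would make this precise by the orbit-dimension count: $\dim_\R (H\text{-orbit in }\C^n) \ge \dim_\R(H\text{-orbit in }\Nc_{p,q}) = \dim_\R \Nc_{p,q} = 2n - 3 = \dim_\R \C^n - 3$.

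\textbf{Main obstacle.} The only delicate point is the bookkeeping in Part (i): one must be careful that the map $G/P \to \Nc_{p,q}$ is genuinely surjective (it is, since $G$ acts transitively on the null-cone of a nondegenerate Hermitian form of this signature — any nonzero isotropic vector extends to a full isotropic $p$-flag, as $p \le q$) and that openness of the $H$-orbit is preserved under the equivariant surjection (true for a surjective equivariant map of homogeneous-type varieties: the image of an open orbit is an open orbit). Everything else — the real dimension of $\Nc_{p,q}$, the passage between affine cone and projectivization, the complexification of the openness condition — is routine.
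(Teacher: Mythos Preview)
Your proposal is correct and follows essentially the same route as the paper's own proof: for (ii) you pass from the open $H$-orbit on $G/P$ to an open $H_\C$-orbit on $G_\C/P_\C$ and then project to $G_\C/P_{\mathrm{max},\C}=\Pb(\C^n)$, and for (i) you push the open $H$-orbit on $G/P$ down to an open $H$-orbit in $\Nc_{p,q}$ and then lift to $\C^n$ via the dimension count $\dim_\R(H\cdot v)\ge\dim_\R\Nc_{p,q}=2n-3$. The paper does exactly this in two sentences; your only addition is making the codimension bookkeeping in (i) explicit (and your intermediate ``$1+2$'' accounting is a bit loose, but the final inequality you state is the right one).
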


\begin{proof} The fact that $H$ has an open orbit on $G/P$ implies
  that there is an open $H$-orbit in $\Nc_{p,q}$, hence the first
  assertion.  Secondly, the fact that $H$ has an open orbit on $G/P$
  implies that $H_\C$ has an open orbit on $G_\C/ P_\C$ whence on
  $G/P_{\rm max, \C}$.
\end{proof}

For the group $G=\SL(m,\Hb)$ in $G_\C=\SL(2m,\C)$ we choose
$P\subset G$ the upper triangular matrices.  Then
\[G/P=\{ V_1\subset V_2 \subset \ldots\subset V_m = \Hb^m\mid \dim_\Hb
  V_i =i\}\] and in particular we obtain a $G$-equivariant surjection
$G/P\to \Pb(\Hb^m)$.  Hence we get:

\begin{lemma} \label{orbit bound SU*} Let $H\subset\SL(m,\Hb)$ be a
  real spherical subgroup.  Then $H$ has an open orbit on $\Pb(\Hb^m)$
  and an orbit on $\Hb^m=\C^{2m}$ of real codimension at most $4$.
\end{lemma}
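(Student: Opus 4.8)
**Proof plan for Lemma \ref{orbit bound SU*}.**

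The plan is to mimic the argument used for $\SU(p,q)$ in Lemma \ref{orbit bound SU}, passing from the flag variety $G/P$ down to the projective space of lines. First I would observe that since $(\gf,\hf)$ is real spherical, there is a minimal parabolic $P\subset G=\SL(m,\Hb)$ with an open $H$-orbit on $G/P$. With the matrix realization chosen above, $P$ is the subgroup of upper triangular matrices (over $\Hb$), and the quotient $G/P$ is the full quaternionic flag manifold $\{V_1\subset\dots\subset V_m=\Hb^m : \dim_\Hb V_i=i\}$. Forgetting all but the line $V_1$ gives a $G$-equivariant surjection $G/P\to\Pb(\Hb^m)$, and the image of an open orbit under a surjective map between homogeneous spaces is open; hence $H$ has an open orbit on $\Pb(\Hb^m)$.

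Next I would translate this into a statement about $\C^{2m}$. Identifying $\Hb^m$ with $\C^{2m}$ as a right $\Hb$-module that becomes a complex vector space via the embedding $\C\hookrightarrow\Hb$, a quaternionic line $V_1\subset\Hb^m$ corresponds to a complex $2$-plane in $\C^{2m}$, and $\Pb(\Hb^m)$ sits inside the Grassmannian $\operatorname{Gr}_2(\C^{2m})$. Since $\Pb(\Hb^m)$ has real dimension $4(m-1)$ and an open $H$-orbit inside it, and since the fiber of the map $\C^{2m}\setminus\{0\}\to\Pb(\C^{2m})$ over a point of $\Pb(\Hb^m)$ meets the corresponding quaternionic line — which has real dimension $4$ — one concludes by a dimension count that the generic $H$-orbit on $\C^{2m}$ has real codimension at most $4$: indeed $\dim_\R\C^{2m}=4m$, the open $H$-orbit on $\Pb(\Hb^m)$ accounts for $4(m-1)$ of those dimensions, leaving a deficit of at most $4$.

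The one point that requires a little care — and the only real obstacle — is making precise the passage from "open orbit on $\Pb(\Hb^m)$" to "orbit of codimension $\le 4$ on $\C^{2m}$", since $\Pb(\Hb^m)$ is a proper subvariety of $\Pb(\C^{2m})$ and not the full projective space as in the $\SU(p,q)$ case. The cleanest way is to note that $H$ stabilizes the union of all quaternionic lines through the origin, i.e.\ the image of $\Hb^m$ under the identification, which is all of $\C^{2m}$ (every complex vector is contained in a quaternionic line), and that the $H$-saturation of a point whose image lies in the open $H$-orbit of $\Pb(\Hb^m)$ contains the preimage of that open orbit minus the origin; that preimage has real dimension $4(m-1)+4=4m$ minus at most the $4$-dimensional ambiguity within a single quaternionic line, so its codimension in $\C^{2m}$ is at most $4$. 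Alternatively one may invoke Lemma \ref{independent} with $k$ equal to the codimension of the open orbit in $\Pb(\Hb^m)$ together with the three "radial" coordinates on a quaternionic line modulo the $\Hb^\times$-action, but the direct dimension count is simpler. In either formulation this is routine once the quaternionic-versus-complex bookkeeping is set up correctly.
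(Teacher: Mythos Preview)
Your argument is correct and follows the same route as the paper, which simply records the $G$-equivariant surjection $G/P\to\Pb(\Hb^m)$ and reads off the lemma. The only comment is that your detour through $\Pb(\C^{2m})$ and Grassmannians in the second and third paragraphs is unnecessary and slightly misleading: the natural map to use is $\Hb^m\setminus\{0\}\to\Pb(\Hb^m)$ with $\Hb^\times$-fibers of real dimension $4$, so any $H$-orbit in the (open) preimage of the open $H$-orbit downstairs surjects onto it and thus has dimension at least $4(m-1)$, i.e.\ codimension at most $4$ in $\Hb^m=\C^{2m}$.
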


\subsection{Exclusions of sphericity via the codimension
  bound}\label{exclusion orbit bound}

The criterion in Lemma \ref{orbit bound SU} (1) is quite useful to
show that many naturally occurring subgroups are not spherical. We
give an application in the lemma below.

\begin{lemma} \label{supq}Let $p\geq1$. Then $\SO_0(p-1, q)$ is not
  spherical in $\SU(p,q)$.
\end{lemma}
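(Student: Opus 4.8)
The plan is to use the codimension criterion of Lemma \ref{orbit bound SU}\ref{corOni1}... actually, Lemma \ref{orbit bound SU} part (i): if $\SO_0(p-1,q)$ were spherical in $\SU(p,q)$, it would have an orbit on $\C^n$ (with $n=p+q$) of real codimension at most $3$. So I would compute the maximal dimension of an $\SO_0(p-1,q)$-orbit on $\C^{p+q}$ and show it falls short of $2(p+q)-3$. The natural way to embed $\SO_0(p-1,q)\subset\SU(p,q)$ is via the real form: $\C^{p+q}=\R^{p+q}\otimes_\R\C\oplus$ a trivial line? No — more carefully, $\SU(p,q)$ acts on $\C^{p+q}$, and $\SO_0(p-1,q)$ sits inside as the stabilizer of a vector (the "extra" real dimension making $p-1+q$ into... ). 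The cleanest description: $\SO_0(p-1,q)$ fixes a real vector $e$ with $(e,e)_{p,q}>0$ in $\C^{p+q}$, acting on the real orthogonal complement $\R^{p-1+q}$, and $\C^{p+q}=\C e\oplus(\R^{p-1,q}\otimes_\R\C)$. Then an orbit is contained in a set of the form $\{$fixed on $\C e\} \times \{$orbit in $\R^{p-1,q}\otimes\C\}$.

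First I would fix the embedding precisely: $\SO_0(p-1,q)$ acts on $V=\C^{p+q}$ preserving a decomposition $V=\C\oplus W$ where $W\cong\R^{p-1+q}\otimes_\R\C$ carries the induced Hermitian form and $\SO_0(p-1,q)$ acts trivially on the $\C$ summand and via the complexified standard real representation on $W$. Since the action is trivial on $\C$, any orbit $\SO_0(p-1,q)\cdot(z,w)$ lies in $\{z\}\times(\SO_0(p-1,q)\cdot w)$, so $\dim_\R \SO_0(p-1,q)\cdot(z,w)\le \dim_\R \SO_0(p-1,q)\cdot w$ for $w\in W$. Now $\SO_0(p-1,q)$ acts on $W=\R^{p-1+q}_\C$; writing $w=x+iy$ with $x,y\in\R^{p-1+q}$, the invariants include $q(x,x)$, $q(y,y)$, $q(x,y)$ for the real quadratic form $q$ of signature $(p-1,q)$ — three independent invariants generically (when $p-1+q\ge 2$, and there is always room since $q\ge 1$; the case $p=1$ is excluded by hypothesis $p\ge1$... wait $p\ge1$ is allowed, but then $\SO_0(0,q)=\SO_0(q)$ acting on $\C^{1+q}$, still fine). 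By Lemma \ref{independent} applied to these three rational (indeed polynomial) invariants, $\dim_\R \SO_0(p-1,q)\cdot w\le \dim_\R W - 3 = 2(p-1+q)-3$.

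Combining, the maximal $\SO_0(p-1,q)$-orbit on $\C^{p+q}$ has real dimension at most $2(p-1+q)-3 = 2(p+q)-5 < 2(p+q)-3$, contradicting Lemma \ref{orbit bound SU}(i). Hence $\SO_0(p-1,q)$ is not spherical in $\SU(p,q)$.

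The main obstacle is verifying that the three bilinear invariants $q(x,x),q(y,y),q(x,y)$ are actually independent in the sense of Lemma \ref{independent}, i.e.\ that the map $w\mapsto(q(x,x),q(y,y),q(x,y))$ has differential of rank $3$ somewhere; this is routine (pick $x,y$ spanning a nondegenerate $2$-plane on which $q$ restricts non-degenerately, using $\dim\ge 2$, which holds as $q\ge 1$ forces $p-1+q\ge q\ge 1$ and if $p-1+q=1$ then $p=1,q=1$ but then one checks the even cruder bound $2(p+q)-4=0<1$ still suffices, or simply note $\SO_0(0,1)$ is trivial so its orbits are points). I would also double-check the edge case handling so the stated hypothesis $p\ge 1$ is exactly what is needed, and confirm that the embedding $\SO_0(p-1,q)\hookrightarrow\SU(p,q)$ is indeed the intended one (the one arising in the classification), referencing the matrix model of $\su(p,q)$ given above if a concrete realization is wanted.
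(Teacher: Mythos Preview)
Your argument is correct and follows essentially the same route as the paper: both invoke Lemma~\ref{orbit bound SU}(i) and then exhibit enough independent $H$-invariant functions on $\C^{p+q}$ to force the generic codimension above $3$. The paper writes down four invariants directly ($\re v_1$, $\im v_1$, $\re\langle v_2,v_2\rangle$, $\im\langle v_2,v_2\rangle$), whereas you separate off the trivial $\C$-summand and then use the three real bilinear invariants $q(x,x),q(y,y),q(x,y)$ on $W$; since $\langle v_2,v_2\rangle=q(x,x)-q(y,y)+2i\,q(x,y)$, your invariants refine the paper's and yield the slightly sharper codimension bound $5$ instead of $4$, but the idea is identical.
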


\begin{proof} Write $\C^{p+q} = \C \oplus \C^{p+q-1}$ and decompose
  vectors $v= v_1 + v_2$ accordingly.  Denote by $\la\cdot, \cdot\ra$
  the complex symmetric bilinear form on $\C^{p+q-1}$ which defines
  $\SO_0(p-1, q)$.  The following four real valued functions are
  $H$-invariant function are independent:
  \begin{eqnarray*} f_1(v) & :=& \re v_1\\
    f_2(v)&:= &\im v_1\\
    f_3(v)&:=& \re \la v_2, v_2\ra \\
    f_4(v)&:=&\im \la v_2, v_2\ra
  \end{eqnarray*}
  Hence each $H$-orbit on $\C^{p+q}$ has real codimension at least
  $4$ by Lemma \ref{independent}, and hence $H$ is not spherical by
  Lemma \ref{orbit bound SU}.
\end{proof}

\subsection{Type I maximal subgroups}

Let $H\subset G$ be a maximal subgroup of type I. Then
$H_\C=S(\GL(n_1,\C) \times \GL(n_2,\C))$, $n_i>0$, is a symmetric
subgroup of $G_\C$. In view of Lemma \ref{real symmetric} and Berger's
list \cite{Berger} we thus obtain:

\begin{lemma} \label{typeI-su}The maximal connected subgroups of Type
  I for $G=\SU(p,q)$ are given, up to conjugation, by the symmetric
  subgroups
  \begin{enumerate}
  \item $\mathrm{S}(\Unitary(p_1, q_1)\times \Unitary(p_2, q_2))$ with
    $p_1+p_2=p $ and $q_1 +q_2=q$.
  \item $\GL(1,\R)\SL(p,\C)$ if $q=p$.
  \end{enumerate}
\end{lemma}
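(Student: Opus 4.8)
The plan is to bypass any direct matrix computation and route everything through Berger's classification of real symmetric pairs. The two preliminary observations I need are already recorded in the text preceding the statement: by the definition of type~I, $\hf_\C$ is a \emph{maximal reductive} subalgebra of $\gf_\C=\sl(n,\C)$ whose action on $\C^n$ is reducible, so by Dynkin's list $\hf_\C=\mathfrak{s}(\gl(n_1,\C)\oplus\gl(n_2,\C))$ with $n=n_1+n_2$ and $n_i>0$, and this $\hf_\C$ is a symmetric subalgebra of $\gf_\C$. Since $\gf=\su(p,q)$ is semisimple, Lemma~\ref{real symmetric} then applies and shows that $(\gf,\hf)$ is itself a symmetric pair; hence $H$ is, up to conjugation, one of the symmetric subgroups of $\SU(p,q)$ on Berger's list \cite{Berger}*{Tableaux II}.

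The rest is bookkeeping against that list. I would record that, up to conjugacy, the symmetric subalgebras of $\su(p,q)$ are $\mathfrak{s}(\uf(p_1,q_1)\oplus\uf(p_2,q_2))$ with $p_1+p_2=p$ and $q_1+q_2=q$; $\so(p,q)$; $\sp(p/2,q/2)$ when $p$ and $q$ are both even; and $\sl(p,\C)\oplus\R$ when $q=p$. Their complexifications are, respectively, $\mathfrak{s}(\gl(p_1+q_1,\C)\oplus\gl(p_2+q_2,\C))$, $\so(n,\C)$, $\sp(n/2,\C)$, and $\mathfrak{s}(\gl(p,\C)\oplus\gl(p,\C))$ --- for the last one a dimension count $1+2(p^2-1)=2p^2-1$ together with the identification $\C\oplus\sl(p,\C)\oplus\sl(p,\C)\cong\mathfrak{s}(\gl(p,\C)\oplus\gl(p,\C))$ settles the point. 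The standard representations of $\SO(n,\C)$ and $\Sp(n/2,\C)$ on $\C^n$ are irreducible, so those two families are of type~III rather than type~I and get discarded; what survives is exactly $\mathrm{S}(\Unitary(p_1,q_1)\times\Unitary(p_2,q_2))$ and, when $q=p$, $\GL(1,\R)\SL(p,\C)$, as claimed. For the converse direction I would note that both (1) and (2) are symmetric, hence reductive, with complexification $\mathfrak{s}(\gl(n_1,\C)\oplus\gl(n_2,\C))$ maximal reductive in $\sl(n,\C)$; a proper reductive subalgebra strictly between $\hf$ and $\gf$ would complexify to a subalgebra strictly between $\hf_\C$ and $\gf_\C$, which is impossible, so these subgroups are genuinely maximal connected reductive subgroups, and of type~I by construction.

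I do not expect a serious obstacle. The one place that deserves a moment of care is the family $\sl(p,\C)\oplus\R$: one must confirm that its complexification really is $\mathfrak{s}(\gl(p,\C)\oplus\gl(p,\C))$ --- so that it qualifies as type~I at all --- and that the real form of this subalgebra that actually sits inside $\su(p,p)$ is the one generating the group $\GL(1,\R)\SL(p,\C)$ rather than some other real form. Both facts can be read off directly from Berger's tables, so the proof ultimately reduces to citing \cite{Berger} and carrying out this short inspection.
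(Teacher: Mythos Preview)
Your approach is exactly the paper's: invoke Lemma~\ref{real symmetric} to conclude that $(\gf,\hf)$ is symmetric, then read off from Berger's list which symmetric subalgebras of $\su(p,q)$ complexify to $\mathfrak{s}(\gl(n_1,\C)\oplus\gl(n_2,\C))$. One small correction to your bookkeeping: when $p=q$, Berger's list also contains $\so^*(2p)$ and $\sp(p,\R)$, which you omitted; but since these complexify to $\so(n,\C)$ and $\sp(n/2,\C)$ respectively, they are of type~III and get discarded for the same reason as the cases you did list, so the conclusion stands.
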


\begin{lemma} \label{typeI-slmH} The maximal connected subgroups of
  Type I for $G=\SL(n,\Hb)$ are given, up to conjugation, by the
  symmetric subgroups:
  \begin{enumerate}
  \item $\mathrm{S}(\GL(n_1,\Hb) \times \GL(n_2,\Hb))$ with
    $n_1+n_2=n $.
  \item $\Unitary(1) \SL(n,\C)$.
  \end{enumerate}
\end{lemma}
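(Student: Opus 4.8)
The plan is to argue exactly as in the proof of Lemma~\ref{typeI-su}. Since $H$ is of type~I, its complexification $H_\C=\mathrm{S}(\GL(n_1,\C)\times\GL(n_2,\C))$, $n_1+n_2=2n$, is a \emph{symmetric} subgroup of $G_\C=\SL(2n,\C)$ (it is the fixed-point group of $\Ad(I_{n_1,n_2})$). As $\gf=\su^*(2n)$ is semisimple, Lemma~\ref{real symmetric} applies to the real form $(\gf,\hf)$ and shows that the involution $\sigma$ of $\gf_\C$ with fixed-point algebra $\hf_\C$ preserves $\gf$; hence $H$ is a symmetric subgroup of $G=\SL(n,\Hb)$. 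Moreover it is automatically maximal connected reductive: since $H_\C$ is maximal reductive in $G_\C$ (part of the definition of type~I) and $\hf$ is connected, any connected reductive $H'$ with $H\subsetneq H'\subsetneq G$ would give $\hf_\C\subsetneq\hf'_\C\subsetneq\gf_\C$, a contradiction. So it remains to go through Berger's list and single out the symmetric subgroups of $\SL(n,\Hb)$ that are of type~I.

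Running through \cite{Berger}*{Tableaux II} for $\gf=\su^*(2n)$, the symmetric subalgebras are, up to conjugacy, $\sp(p,q)$ with $p+q=n$ (the maximal compact $\sp(n)$ being the case $q=0$), $\so^*(2n)$, $\mathrm{S}(\gl(p,\Hb)\oplus\gl(q,\Hb))$ with $p+q=n$, and $\sl(n,\C)\oplus\uf(1)$. The first two families complexify to $\sp(n,\C)$, respectively $\so(2n,\C)$, both acting \emph{irreducibly} on $\C^{2n}$, hence are of type~III and excluded. The last two complexify to $\mathrm{S}(\gl(2p,\C)\oplus\gl(2q,\C))$ and $\mathrm{S}(\gl(n,\C)\oplus\gl(n,\C))$ respectively, with reducible action on $\C^{2n}$; these are precisely the type~I cases and give (1) and (2) of the lemma. (An alternative, more self-contained route: $H$ preserves the decomposition $\C^{2n}=V_1\oplus V_2$, and the quaternionic structure $j$, which centralizes $H$, carries each $V_i$ onto an $H_\C$-submodule, hence either fixes or interchanges $V_1$ and $V_2$; fixing them forces each $V_i$ to be quaternionic, giving case~(1), while interchanging them forces $n_1=n_2=n$ and leads to case~(2).)

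For the embedding bookkeeping in case~(2) — as a check that the subgroup is $\Unitary(1)\SL(n,\C)$ and not $\GL(1,\R)\SL(n,\C)$ — I would note that for a complex matrix $g\in\GL(n,\C)\subset\GL(n,\Hb)$ the reduced (quaternionic) norm equals $|\det_\C g|^2$, so $g\in\SL(n,\Hb)$ exactly when $|\det_\C g|=1$; thus $\SL(n,\Hb)\cap\GL(n,\C)=\Unitary(1)\SL(n,\C)$, embedded concretely as $A\mapsto\diag(A,\bar A)$. There is no serious obstacle here: once Lemma~\ref{real symmetric} is invoked the statement reduces to a short inspection of Berger's table together with the elementary fact that a symmetric subalgebra of $\su^*(2n)$ is of type~I if and only if its standard representation on $\C^{2n}$ is reducible.
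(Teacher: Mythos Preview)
Your argument is correct and follows exactly the route the paper takes: the paragraph preceding Lemma~\ref{typeI-su} (which also serves as the proof of Lemma~\ref{typeI-slmH}) simply observes that $H_\C$ is symmetric in $G_\C$, invokes Lemma~\ref{real symmetric}, and then reads off the type~I cases from Berger's list. Your additional self-contained alternative via the action of the quaternionic structure $j$ on the decomposition $V_1\oplus V_2$, and your verification that the intersection is $\Unitary(1)\SL(n,\C)$ rather than $\GL(1,\R)\SL(n,\C)$, are nice supplements that the paper omits.
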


\subsection{Type II maximal subgroups}

In this case we have $\C^n=\C^r \otimes \C^s$ with $2\leq r,s$ and 
$H_\C=\SL(r,\C) \otimes \SL(s,\C)$. 
In particular, $\dim H = r^2 + s^2 -2$. 

\subsubsection{The case of $G=\SU(p,q)$}

\begin{lemma}\label{typeII-su} Let $G=\SU(p,q)$ with $q\geq p\geq  1$. 
  Then type II real spherical subgroups $H\subset G$ occur only for
  $p=q=2$ and are given, up to conjugation, by:
  \begin{enumerate}
  \item $H=\SU(2) \otimes \SU(1,1)$,
  \item $H=\SU(1,1)\otimes \SU(1,1)$.
  \end{enumerate}
  Both cases are symmetric.
\end{lemma}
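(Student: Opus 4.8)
The plan is to exploit the dimension bound together with the prehomogeneity criterion of Lemma \ref{orbit bound SU} and the classification of prehomogeneous spaces in Table \ref{tab:prehomogeneous}. Since $H_\C=\SL(r,\C)\otimes\SL(s,\C)$ acts irreducibly on $V=\C^r\otimes\C^s$ with $n=rs$, if $H\subset\SU(p,q)$ is real spherical then by Lemma \ref{orbit bound SU}(ii) the space $\C^n$ is prehomogeneous for $H_\C$, whence by Proposition \ref{prop:prehomogeneous} (applied to the semisimple group $\SL(r,\C)\times\SL(s,\C)$ acting on the tensor product) the necessary inequality $\dim(\SL(r,\C))+\dim(\SL(s,\C))\ge rs-1$ must hold, i.e.\ $r^2+s^2-2\ge rs-1$. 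First I would note this inequality is in fact satisfied for all $r,s\ge2$, so it alone does not suffice; instead the genuine restriction comes from the \emph{real} dimension bound \eqref{dimsu}, namely $\dim H=r^2+s^2-2\ge 2pq-p$ with $p+q=rs$, $q\ge p\ge1$.

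Next I would analyze $r^2+s^2-2\ge 2pq-p$. Writing $n=rs$ and $p+q=n$, one has $2pq=n^2-(q-p)^2$, so the bound becomes $r^2+s^2-2\ge n^2-(q-p)^2-p$, i.e.\ $(q-p)^2+p\ge n^2-r^2-s^2+2=r^2s^2-r^2-s^2+2=(r^2-1)(s^2-1)+1$. Since $q-p\le n-2=rs-2$ (as $p\ge1$) and $p\le n/2$, the left side is at most roughly $(rs-2)^2+rs/2$, which for $r,s$ both $\ge2$ and at least one $\ge3$ is strictly less than $(r^2-1)(s^2-1)+1$; I would check the small cases $r=s=2$, $r=2,s=3$, etc., to pin down that only $r=s=2$, hence $n=4$, survives, and moreover forces $q-p$ to be as large as possible, i.e.\ $p=q=2$ is incompatible unless one reexamines — in fact for $r=s=2$ the bound reads $(q-p)^2+p\ge 10$, and with $p+q=4$, $p\ge1$ the options are $(p,q)\in\{(1,3),(2,2)\}$; $(2,2)$ gives $0+2=2<10$, while $(1,3)$ gives $4+1=5<10$. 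So the dimension bound \emph{alone} excludes everything, which cannot be right since the lemma asserts two cases survive at $p=q=2$. Hence the real argument must be sharper: the codimension-$3$ orbit bound of Lemma \ref{orbit bound SU}(i) on $\C^n$ itself (not just prehomogeneity on $\Pb$) must be used, and the two surviving subgroups are precisely those for which a suitable $H$-invariant real structure on $V=\C^2\otimes\C^2$ yields an open orbit on the null-cone.

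Accordingly, the main line of argument is: (a) use prehomogeneity on $\Pb(\C^n)$ to force, via the sporadic/classical dichotomy of Table \ref{tab:prehomogeneous} applied to the tensor-product representation of $\SL(r)\times\SL(s)$, that $rs=4$ and so $r=s=2$, $n=4$ — here one uses that the tensor representation $\C^r\otimes\C^s$ is prehomogeneous for $\GL(r)\times\GL(s)$ only in very restricted ranges (essentially $\min(r,s)\le$ a small bound), and combined with $\dim H=r^2+s^2-2\ge 2pq-p=\dim\nf$ only $(r,s)=(2,2)$ passes; (b) with $V=\C^2\otimes\C^2\cong\C^4$, determine which real forms $\SU(p,q)$ with $p+q=4$ contain $H=\SU(\epsilon_1)\otimes\SU(\epsilon_2)$ (with $\SU(\epsilon_i)$ either $\SU(2)$ or $\SU(1,1)$) so that the Hermitian form on $V$ has the prescribed signature; since the tensor product of two signature-$(1,1)$ forms is signature $(2,2)$ and the tensor of $(2,0)$ with $(1,1)$ is $(2,2)$, while $(2,0)\otimes(2,0)$ is $(4,0)$ giving the compact form, one is left with exactly the two claimed pairs in $\SU(2,2)$; (c) verify these two are indeed spherical (they are symmetric — $\SU(2)\otimes\SU(1,1)$ and $\SU(1,1)\otimes\SU(1,1)$ sit inside $\SU(2,2)$ as fixed points of the outer-type involutions associated to the tensor decomposition, cf.\ the standard identification $\SU(2,2)\cong\Spin(4,2)$ and $\SL(2)\times\SL(2)\subset\SO(4)$-type factorizations), and appeal to Lemma \ref{real symmetric}. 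I would organize the write-up as: eliminate all $(r,s)\ne(2,2)$ by the combined dimension/prehomogeneity bound; then classify the admissible Hermitian signatures on $\C^2\otimes\C^2$ using Lemma \ref{signature lemma} to see that the signature of the tensor form is determined by those of the factors; then identify the two survivors and cite symmetry.

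The hard part will be step (a): showing cleanly that no pair $(r,s)$ with $\max(r,s)\ge3$ can satisfy both the prehomogeneity requirement on $\Pb(\C^{rs})$ and the real dimension bound $r^2+s^2-2\ge 2pq-p$. The subtlety is that prehomogeneity of $\C^r\otimes\C^s$ under $\SL(r)\times\SL(s)\times\GL(1)$ does hold for several small $(r,s)$ (e.g.\ $(2,s)$ for all $s$, since $\C^2\otimes\C^s$ under $\SL(2)\times\GL(s)$ is the space of $2\times s$ matrices and is prehomogeneous), so the prehomogeneity condition is \emph{not} by itself restrictive enough — one genuinely needs the interplay with $p+q=rs$ and the lower bound on $\dim\nf$, and must be careful about which values of $(p,q)$ are allowed (here the worst case is $q-p$ maximal, and the analysis above shows that even then the bound fails for $\max(r,s)\ge3$; the delicate point is the boundary case $(r,s)=(2,3)$, $n=6$, where a direct check of $\dim H=11$ against $2pq-p$ for all admissible $(p,q)$ with $p+q=6$ is needed). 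I would handle this by a short finite case-check once the inequality $r^2+s^2-2\ge 2pq-p$ is rewritten as above, the only genuine content being the elementary estimate that rules out $(2,3)$ and hence, monotonically, every larger pair.
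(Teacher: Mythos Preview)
Your approach has a genuine gap: neither prehomogeneity nor the dimension bound is strong enough to force $(r,s)=(2,2)$, and your algebra contains an error that masks this.

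First, the algebra: you write $2pq=n^2-(q-p)^2$, but $(p+q)^2-(q-p)^2=4pq$, so the correct identity is $4pq=n^2-(q-p)^2$. This is why your reformulated bound wrongly excludes the surviving case $p=q=2$, $r=s=2$ (where in fact $\dim H=6=2pq-p$ with equality).

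Second, prehomogeneity of $\C^r\otimes\C^s$ under $\SL(r,\C)\times\SL(s,\C)\times\GL(1,\C)$ holds for \emph{all} $r,s\ge2$: it is the space of $r\times s$ matrices under left--right multiplication, and the full-rank locus is a single open orbit. So Lemma~\ref{orbit bound SU}\ref{orbit bound SU}(ii) imposes no restriction whatsoever on $(r,s)$ for type~II subgroups, and Table~\ref{tab:prehomogeneous} is irrelevant here (it classifies irreducible prehomogeneous spaces for \emph{simple} groups).

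Third, the dimension bound alone genuinely fails to exclude $(r,s)\ne(2,2)$. Take $r=2$, $s$ arbitrary, $(p,q)=(1,2s-1)$: then $\dim H=s^2+2$ while $2pq-p=4s-3$, and $s^2+2\ge4s-3$ holds for every $s\ge2$. So the entire family $\SL(2)\otimes\SL(s)$ inside $\SU(1,2s-1)$ passes both of your tests.

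Fourth, you never address the real forms of $\SL(r,\C)\times\SL(s,\C)$ that are not products of unitary groups, e.g.\ $\SU(p_1,q_1)\otimes\SL(s,\R)$ or $\SL(r,\R)\otimes\SL(s,\R)$. These must be excluded separately; the paper does this via a signature argument using Lemma~\ref{signature lemma} and the irreducibility of the maximal compact of the non-unitary factor.

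The paper's actual proof is substantially more involved: after reducing to $H=\SU(p_1,q_1)\otimes\SU(p_2,q_2)$ by the signature argument, the dimension bound (with $(p,q)$ determined by the factor signatures via Lemma~\ref{signature lemma}) reduces to the case of one compact factor $\SU(r)$; then the codimension-$3$ orbit criterion of Lemma~\ref{orbit bound SU}(i), pushed through a matrix submersion $X\mapsto XI_{p_2,q_2}X^*$ onto $\Herm(r,\C)$, forces $r\le3$; finally the residual cases $r=2,3$ are killed by exhibiting enough independent $H'$-invariant rational functions on $G/P$ (Lemma~\ref{independent}). None of this machinery can be bypassed by the numerics you propose.
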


\begin{proof} In case $p=1$ all maximal reductive algebras which are
  real spherical are symmetric by \cite{KS1}. This prevents in
  particular type II real spherical subgroups. Henceforth we thus
  assume that $p\geq 2$.

  The local isomorphism $\SU(2,2)\simeq \SO_0(2,4)$ carries the
  subgroups (1) and (2) to $\SO_0(2,1)\times \SO(3)$ and
  $\SO_0(1,2)\times\SO_0(1,2)$, respectively. Hence they are symmetric
  and real spherical.

  Let $p+q=rs$ and $H_\C=\SL(r,\C) \otimes \SL(s,\C)$.  By
  Remark~\ref{rem:typeII-complexsubgroup} we can exclude that $H$ is
  complex, and hence we may assume that $H=H_1 \otimes H_2$ with
  $H_1, H_2$ real forms of $\SL(r,\C)$ and $\SL(s,\C)$.  We begin with
  the case where exactly one $H_i$, say $H_1$ is unitary:
  $H_1=\SU(p_1,q_1)$.  Let us first exclude the case where
  $H_2=\SL(s,\R)$. Note $s\geq 3$ as $\SL(2,\R) \simeq \SU(1,1)$ is
  unitary. Then the maximal compact subgroup $K_2:=\SO(s,\R)$ of $H_2$
  acts irreducibly on $\C^s$, and hence $V=\C^r \otimes\C^s$ is
  irreducible for the subgroup $H_1\otimes K_2$. Now $\C^s$ carries a
  positive definite $K_2$-invariant Hermitian form, and hence
  $H_1\otimes K_2$ leaves a Hermitian form of signature $(p_1s,q_1s)$
  invariant.  According to Lemma \ref{signature lemma} this form needs
  to be proportional to the original form coming from $G=\SU(p,q)$
  with signature $(p,q)$. It follows that the $K_2$-invariant form on
  $\C^s$ then has to be invariant under $H_2$ as well. This is
  impossible as $H_2$ is not compact. Likewise we can argue when
  $H_2=\SL(k,\Hb)$ which has maximal compact subgroup $K_2=\Sp(k)$
  acting irreducibly on $\C^{2k}$.  Similar to that we can argue with
  both $H_i$ either $\SL(\cdot,\R)$ or $\SL(\cdot, \Hb)$.

  Finally we need to turn to the case where $H_1=\SU(p_1,q_1)$ and
  $H_2=\SU(p_2,q_2)$, with $r=p_1+q_1$ and $s=p_2+q_2$.  We may assume
  that $p_1\leq q_1$ and $p_2\geq q_2$.  Then
  $(p_1-q_1)(p_2-q_2)\leq 0$ and hence
  \[p_1p_2+q_1q_2\leq p_1q_2+p_2q_1.\] We now exploit that
  $H_1\otimes H_2$ leaves invariant on $\C^{p+q}=\C^r\otimes\C^s$ both
  the defining form of $\SU(p,q)$ and the tensor product of the
  defining forms of $\SU(p_1,q_1)$ and $\SU(p_2,q_2)$. By comparing
  signatures (cf. Lemma \ref{signature lemma}) we thus obtain
  \[(p,q)=(p_1p_2 + q_1 q_2, p_1 q_2 + p_2 q_1).\] With that the
  dimension bound \eqref{dimsu} reads
  \[ r^2 + s^2-2 \geq 2(p_1p_2+q_1q_2)(q_1p_2 + q_2p_1) - (p_1 p_2
    +q_1q_2)\] or
  \begin{equation}\label{dimbd}
    r^2+s^2-2\geq2p_2q_2(p_1^2+q_1^2)+2p_1q_1(p_2^2+q_2^2)-(p_1p_2+q_1q_2).
  \end{equation} 
  Now we distinguish various cases.

  We first assume $p_1,q_1,p_2,q_2$ are all non-zero.  If they are all
  $1$ then we are in case (2), hence we may assume $q_1\ge 2$ or
  $p_2\ge 2$. By symmetry between $r$ and $s$ we can assume the
  latter. With $(x+y)^2\le 2(x^2+y^2)$ our bound \eqref{dimbd} implies
  \[ r^2+s^2-2\geq p_2 q_2 r^2 + p_1 q_1 s^2 - (p_1p_2 +q_1q_2)\] and
  hence, since $p_1q_1\ge 1$ and $p_2q_2\ge 2$,
  \[ r^2+s^2-2\geq r^2+s^2+\frac12 p_2q_2 r^2- (p_1p_2 +q_1q_2).\] As
  $r\ge 2$ we find
  \[\frac12 p_2q_2 r^2\ge p_2q_2r=p_2q_2(p_1+q_1)\ge p_2p_1+q_2q_1\]
  and reach a contradiction.

  Hence we may assume now that $p_1=0$ or $q_2=0$. By symmetry between
  $r$ and $s$ we can assume the former, that is
  \[H=\SU(r) \otimes \SU(p_2,q_2)\] with $p_2+q_2=s$. The bound
  \eqref{dimbd} now reads:
  \begin{equation}\label{tensor1} r^2 + s^2-2\geq 2p_2 q_2 r ^2 - r
    q_2.
  \end{equation}

  If $s=2$ then $p_2=q_2=1$ and \eqref{tensor1} gives
  $r^2+2\geq 2r^2-r$, from which it follows that $r=2$ and we are in
  case (1).

  Hence we can assume $s>2$ and $p_2q_2\ge 2$.  As $q_2\leq s$ we
  obtain from \eqref{tensor1} that $r^2+s^2> 4r^2-rs$.  It easily
  follows that $s>r$.

  Now we use that $H$ has an orbit of real codimension at most $3$ on
  $\C^r \otimes \C^s$ (see Lemma \ref{orbit bound SU}). This implies
  that $H$ has an orbit of real codimension at most $3$ on
  $\C^r \otimes (\C^s)^*=\Mat_{r,s}(\C)$ with the action of $H$ given
  as follows: $(h_1,h_2)\cdot X = h_1 X h_2^{-1}$.  Let $\Herm(r, \C)$
  denote the space of Hermitian matrices of size $r$, and for $k,l>0$
  let
  \[
    I_{k,l}=\left(\begin{matrix} I_k&0\\0&-I_l
      \end{matrix}\right).
  \]
  The map
  \begin{equation} \label{matrix submersion}\Phi: \Mat_{r,s}(\C) \to
    \Herm(r, \C), \ \ X\mapsto X I_{p_2,q_2} X^*
  \end{equation}
  is submersive and satisfies
  $\Phi(h_1Xh_2^{-1})= h_1\Phi(X) h_1^{-1}$ for $h_1\in\SU(r)$ and
  $h_2\in\SU(p_2,q_2)$.  Hence there must be an $\SU(r)$-orbit on
  $\Herm(r, \C)$ of real codimension at most $3$ and therefore
  $r\leq 3$ by the spectral theorem.

  We are now left with the examination of the cases where
  $H=\SU(r) \otimes \SU(p_2, q_2)$ with $r=2, 3$ and $s>r$. Set
  $H':=\1 \otimes \SU(p_2,q_2)\subset H$. Then since $H$ has an open
  orbit on $G/P$, $H'$ must have an orbit of codimension at most
  $r^2-1$.

  To move on we introduce projective type coordinates for the flag
  variety $G/P$. We can describe a flag $\F\in G/P$ as follows
  \[\F:\, \la v_1\ra \subset \la v_1, v_2\ra \subset
    \ldots \subset \la v_1, \ldots, v_p\ra\] such that
  $\{ v_1, \ldots, v_p\}$ is orthonormal with respect to the standard
  Hermitian scalar product on $V=\C^n$. Observe in addition that all
  $v_i$ are isotropic and mutually orthogonal with respect to the form
  $(\cdot, \cdot)_{p,q}$. It is important to note that $\F$ determines
  the $v_i$ uniquely up to scaling with $\Unitary(1)$.

  Decompose $V= \C^s \oplus\ldots\oplus \C^s$ into $H'$-orthogonal
  summands where we have two or three summands according to $r=2$ or
  $r=3$.  This gives us $r$ projections $\pi_j : \C^n \to \C^s$.
  Likewise for every $1\leq m\leq p$ the $\pi_j$ induce projections
  $\bigwedge^m \C^n \to \bigwedge^m \C^s$ which will be also denoted
  by $\pi_j$.  Further, the invariant form $(\cdot, \cdot)_{p,q}$
  induces an invariant form on $\bigwedge^m \C^n$, denoted by the same
  symbol.
 
  We define functions $g_{mjk}$ on $G/P$ for $1\leq m\leq p$ and
  $1\leq j,k\leq r$ by
  \[ g_{mjk} (\F):= (\pi_j(v_1\wedge\ldots\wedge v_{m}),
    \pi_k(v_1\wedge\ldots\wedge v_{m}))_{p,q} \, .\] Note, that for
  fixed $m$, the rational functions
  \[ f_{mjk}:= \re \left(\frac{g_{mjk}}{g_{m11}} \right)\quad
    \hbox{and}\quad f_{mjk}':= \im
    \left(\frac{g_{mjk}}{g_{m11}}\right)\] are all $H'$-invariant.
  Already for $m=1$, we obtain $r^2-1$ independent functions this way.
  Further, as $p\ge 2$ and $n>4$ we obtain at least one independent
  invariant for $m=2$ (it will depend on the non-trivial
  $v_2$-coordinate of $\F$), which gives a contradiction by Lemma
  \ref{independent}.
\end{proof}

\subsubsection{The case of $G=\SL(m,\Hb)$.}

\begin{lemma} \label{typeII-slmH} Let $G=\SL(m,\Hb)$ with $m\geq 2$.
  The only type II real spherical subgroup $H\subset G$ occurs for
  $m=2$ and is given, up to conjugation, by:
  \[H=\SU(1,1)\otimes \SU(2)\] This is a symmetric subgroup.
\end{lemma}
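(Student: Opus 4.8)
The plan is to mirror the structure of the proof of Lemma \ref{typeII-su}, but now exploiting the absence of an invariant Hermitian form and instead the quaternionic structure. Here $G=\SL(m,\HH)\subset\SL(2m,\C)$ and a type II maximal reductive subgroup has $\hf_\C=\sl(r,\C)\otimes\sl(s,\C)$ acting on $\C^{2m}=\C^r\otimes\C^s$ with $rs=2m$, $2\le r\le s$, and $\dim H=r^2+s^2-2$. The dimension bound \eqref{dimsl} reads $r^2+s^2-2\ge 2m^2-2m=\tfrac12 rs(rs-2)$. First I would observe via Remark \ref{rem:typeII-complexsubgroup} that $H$ cannot be a complex subgroup, so $H=H_1\otimes H_2$ with $H_i$ a real form of $\SL(\cdot,\C)$. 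Since $G=\SL(m,\HH)$ preserves the quaternionic structure $J$ on $\C^{2m}$, and $J$ acts on $\C^r\otimes\C^s$, the factor structure forces $J$ to be built from (conjugate-linear) structures on the two tensor factors: one factor must carry a quaternionic structure and the other a real structure, or both factors carry quaternionic structures (their tensor product then being again quaternionic only if exactly one is quaternionic — I would record the parity bookkeeping carefully here). Concretely the admissible factor pairs are $(H_1,H_2)$ with $\{H_1,H_2\}$ one of $\{\SL(a,\HH),\SL(b,\R)\}$ (so $r=2a$, $s=b$, $2m=2ab$) or $\{\SU(p_1,q_1),\SU(p_2,q_2)\}$ — but the latter would make $H_1\otimes H_2$ preserve a nondegenerate Hermitian form on $\C^{2m}$, which $\SL(m,\HH)$ does not, so that subcase is excluded.

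Next I would reduce using the dimension bound. With $r^2+s^2-2\ge\tfrac12 rs(rs-2)$ and $2\le r\le s$, write $rs=2m$; the right-hand side grows like $2m^2$ while the left-hand side is at most $\approx 2m^2$ only when $r$ is as small as possible, i.e. $r=2$. A short computation: for $r\ge 3$ one has $r^2+s^2\le r^2s^2/3+\text{lower order}$ failing badly once $s\ge 3$, so $r=2$ and $s=m$. Thus $H_1$ is a real form of $\SL(2,\C)$ and $H_2$ a real form of $\SL(m,\C)$, acting on $\C^2\otimes\C^m$. Since a quaternionic structure on the tensor factor $\C^2$ (namely $\SU(2)=\SL(1,\HH)$) tensored with a real structure on $\C^m$ (namely $\SL(m,\R)$) gives a quaternionic structure on $\C^{2m}$, the natural candidate is $H=\SU(2)\otimes\SL(m,\R)$; but I must then impose the dimension bound once more: $4+m^2-2\ge 2m^2-2m$, i.e. $m^2-2m-2\le 0$, forcing $m=2$. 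For $m=2$ this gives $H=\SU(2)\otimes\SL(2,\R)\cong\SU(2)\otimes\SU(1,1)$, which is the claimed subgroup. The alternative $H_1=\SL(2,\R)$ is $\SU(1,1)$ which is again this same pair up to relabeling; the alternative with $H_1=\SU(2)$ and $H_2=\SU(1,1)$ on $\C^2\otimes\C^2$ is exactly the stated answer (note $\SL(2,\HH)$ and the group in question carries $\C^4$ with the quaternionic structure coming from the $\SU(2)$-factor).

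Finally I would verify that $H=\SU(2)\otimes\SU(1,1)\subset\SL(2,\HH)$ really is symmetric and real spherical. For this I would use the exceptional isomorphism $\SL(2,\HH)\cong\SO_0(1,5)$ (equivalently $\spin(1,5)$); under it the subgroup $\SU(2)\otimes\SU(1,1)\cong\Sp(1)\times\SU(1,1)$ maps to a group locally isomorphic to $\SO(3)\times\SO_0(1,2)$, which is the fixed-point group of the involution defining the symmetric pair $(\so(1,5),\so(1,2)\oplus\so(3))$ — indeed $\SO_0(1,5)/(\SO_0(1,2)\times\SO(3))$ is a symmetric space. Being symmetric, it is absolutely spherical by Lemma \ref{real form}, hence real spherical. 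The main obstacle, as in Lemma \ref{typeII-su}, is the bookkeeping in the reduction step: keeping precise track of which real forms of the tensor factors are compatible with the quaternionic structure $J$ on $\C^{2m}$, and then squeezing the dimension bound \eqref{dimsl} down to the single surviving case $m=2$. The parity/compatibility argument for $J$ on a tensor product is the delicate point; everything after that is a two-line inequality.
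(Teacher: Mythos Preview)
Your approach is considerably more elaborate than the paper's, and the extra machinery introduces a genuine error while leaving a case unaddressed.

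The paper's proof is two lines of dimension counting. From $rs=2m$ with $r,s\ge 2$ one has $(r-2)(s-2)\ge 0$, whence $r+s\le m+2$ and therefore $r^2+s^2=(r+s)^2-2rs\le (m+2)^2-4m=m^2+4$. Substituting into the bound \eqref{dimsl} gives $2m^2-2m\le m^2+2$, which forces $m=2$ and hence $r=s=2$. No discussion of quaternionic structures or admissible real forms of the tensor factors is needed to reach this point; once $m=2$ only two candidates remain, and they are dispatched by the local isomorphism $\SL(2,\HH)\simeq\SO_0(1,5)$ together with the rank inequality.

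Your detour through the $J$-compatibility of tensor factors is not only unnecessary but contains a wrong step: you exclude the possibility $H=\SU(p_1,q_1)\otimes\SU(p_2,q_2)$ on the grounds that such an $H$ would preserve a Hermitian form while $\SL(m,\HH)$ does not. This reasoning is backwards --- a subgroup is free to preserve more structure than the ambient group (e.g.\ $\Sp(m)\subset\SL(m,\HH)$ preserves a definite Hermitian form). Indeed, the surviving answer $H=\SU(2)\otimes\SU(1,1)$ is precisely of the form you claim to have excluded; you recover it only through the accidental identification $\SL(2,\R)\cong\SU(1,1)$. The correct $J$-parity statement is that $\SU(2)$ carries a quaternionic structure on $\C^2$ and $\SU(1,1)\cong\SL(2,\R)$ a real one, and their tensor product is quaternionic --- but this is exactly the ``delicate point'' you flag without actually carrying out.

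Finally, you do not dispose of the candidate $H=\SU(1,1)\otimes\SU(1,1)$ at $m=2$. The paper excludes it in one stroke via the rank inequality \eqref{rankcondition}: $\rank_\R H=2>1=\rank_\R\SL(2,\HH)$. A correct $J$-parity argument would also rule it out (two real structures tensor to a real structure, not a quaternionic one), but you never return to this case.
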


\begin{proof} For $H_\C=\SL(r,\C)\otimes\SL(s,\C)$ the dimension bound
  \eqref{dimsl} reads
  \[ r^2 +s^2 - 2 \geq 2m^2 -2m.\] The equation $rs=2m$ together with
  $r,s\ge 2$ gives $r+s\le m+2$ and hence $r^2+s^2\le m^2+4$. Hence
  $2m^2-2m\le m^2+2$, which implies $m=2$.  Then $r=s=2$. The local
  isomorphism $\SL(2,\Hb)\simeq \SO_0(1,5)$ carries
  $H=\SU(1,1)\otimes \SU(2)$ to $\SO_0(1,2)\times\SO(3)$, which is
  symmetric.  On the other hand, $H=\SU(1,1)\otimes\SU(1,1)$ is
  excluded by the rank inequality.
\end{proof}

\subsection{Type III maximal reductive subgroups}

Here $H_\C$ is simple and acts irreducibly on $\C^n$.  In the
following we denote by $\Sym(m,\C)$ and $\Skew(m,\C)$ the space of
symmetric, respectively skew-symmetric, matrices of size $m$.

\subsubsection{The case of $G=\SU(p,q)$}

\begin{lemma} \label{typeIII-su} Let $p + q \geq 3$ and let
  $H\subset\SU(p,q)$ be a reductive real spherical subgroup of type
  III. Then, up to conjugation, $H$ is one of the following symmetric
  subgroups:
  \begin{enumerate}
  \item $\SO_0(p,q)$.
  \item $\SO^*(2p)$ if $p=q$.
  \item $\Sp(p/2, q/2)$ if $p,q$ are even.
  \item $\Sp(p, \R)$ if $p=q$.
  \end{enumerate}
\end{lemma}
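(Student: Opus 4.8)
The plan is to feed the prehomogeneity established in Lemma~\ref{orbit bound SU} into Table~\ref{tab:prehomogeneous} and then to cut the resulting list down using the dimension bound~\eqref{dimsu} and the rank inequality~\eqref{rankcondition}. Since $H$ is of type III, $H_\C$ is simple and acts irreducibly on $V=\C^n$, and by Lemma~\ref{orbit bound SU} this action is prehomogeneous; hence by Proposition~\ref{prop:prehomogeneous} the triple $(H_\C,\rho,V)$ is equivalent to a row of Table~\ref{tab:prehomogeneous} marked in the column `preh'. Moreover $H$ preserves the Hermitian form of signature $(p,q)$ defining $\SU(p,q)$, and by irreducibility and Lemma~\ref{signature lemma} this is, up to a real scalar, the only $H$-invariant Hermitian form on $V$; consequently a real form $H_0$ of $H_\C$ can be realized inside some $\SU(p,q)$, for a then uniquely determined signature, precisely when it admits an invariant Hermitian form on $V$ at all. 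Finally $\dim H_\C=\dim_\R H\ge 2pq-p$ by~\eqref{dimsu} and $\rank_\R H\le p$ by~\eqref{rankcondition}.

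First I would handle the rows in which $H_\C$ preserves a non-degenerate bilinear form on $V$, i.e.\ those carrying a $1$ or $2$ in the last column. By Remark~\ref{remark on forms}, maximality of $H$ forces $H_\C$ to be conjugate to $\SO(n,\C)$ or, for $n$ even, to $\Sp(n/2,\C)$ --- rows~7 and~5. Then $H$ is a non-compact real form of such a group (the compact form being excluded by Lemma~\ref{signature lemma}, its invariant Hermitian form being definite), i.e.\ one of $\SO_0(a,b)$, $\SO^*(n)$ ($n$ even), $\Sp(n/2,\R)$, $\Sp(a,b)$. For each I would compute the signature of its (by Lemma~\ref{signature lemma} unique) invariant Hermitian form on $V$ --- namely $(a,b)$ for $\SO_0(a,b)$, $(\tfrac{n}{2},\tfrac{n}{2})$ for $\SO^*(n)$ and for $\Sp(\tfrac{n}{2},\R)$, and $(2a,2b)$ for $\Sp(a,b)$ --- and match it with $(p,q)$ under the convention $p\le q$. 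This produces exactly the four listed subgroups: $\SO_0(p,q)$ in all cases; $\SO^*(2p)$ and $\Sp(p,\R)$ when $p=q$; and $\Sp(p/2,q/2)$ when $p$ and $q$ are even. All four are symmetric, hence absolutely spherical, hence real spherical by Lemma~\ref{real form}; this also yields the asserted converse.

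It remains to exclude the rows of Table~\ref{tab:prehomogeneous} with a $0$ in the last column, namely rows~2, 3, 4, 10, 11, 15, 19 and~22. Row~2 is vacuous, since there $H_\C=\SL(n,\C)$ is not a proper subgroup and no other real form of $\SL(n,\C)$ preserves a Hermitian form of signature $(p,q)$, $p+q=n$, on its standard module. For rows~3, 10, 11 and~19 the dimension bound fails for every admissible signature: over $1\le p\le q$ with $p+q=\dim V$ the quantity $2pq-p$ attains its minimum $2\dim V-3$ at $p=1$, and in each of these four rows $2\dim V-3>\dim H_\C$ --- for instance in row~3 one has $\dim V=\tfrac{1}{2}m(m+1)$ and $\dim H_\C=m^2-1$, so $2\dim V-3=m^2+m-3>m^2-1$ for $m\ge3$.

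For the three remaining rows~4, 15 and~22 one must rule out the signature with $p=1$ by hand. Using the rank inequality one reduces to the real forms of real rank $\le1$, and then a short computation of the invariant Hermitian signature shows that $p=\min(p,q)\ge2$ for every admissible signature; the only case not settled by the rank count is $\Spin(9,1)$ acting on $\C^{16}$ through the half-spin module, which cannot lie in $\SU(1,15)$ because its maximal compact subgroup $\Spin(9)$ acts irreducibly on $\C^{16}$ and therefore fixes no line. Once $p\ge2$ is known, $2pq-p\ge 4\dim V-10>\dim H_\C$ in each of rows~4, 15, 22, contradicting~\eqref{dimsu}. I expect this last step to be the main obstacle: it requires knowing precisely which non-compact real forms of each $H_\C$ carry an invariant Hermitian form and with which signature, and the $\Spin(9,1)$ exclusion genuinely needs the irreducibility argument rather than a dimension or rank estimate; the signature computations entering the orthogonal and symplectic step are routine but must also be carried out with care.
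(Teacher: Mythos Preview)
Your proposal is correct and follows essentially the same route as the paper: prehomogeneity from Lemma~\ref{orbit bound SU} feeds into Table~\ref{tab:prehomogeneous} via Proposition~\ref{prop:prehomogeneous}, Remark~\ref{remark on forms} reduces the rows carrying an invariant bilinear form to $H_\C=\SO(n,\C)$ or $\Sp(n/2,\C)$, and the remaining rows are killed by the dimension bound~\eqref{dimsu}. The determination of which real forms of $\SO(n,\C)$ and $\Sp(n/2,\C)$ sit inside which $\SU(p,q)$, by matching the signature of the unique invariant Hermitian form, is exactly the mechanism behind the paper's ``leads to (1) and (2)'' and ``leads to (3) and (4)''.

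The one genuine difference is the handling of $p=1$. The paper disposes of this case in one line by quoting \cite{KS1} and then works throughout under the assumption $p\ge2$, so that the single inequality $\dim H\ge 4n-10$ from~\eqref{bound 4n-10} eliminates the $\Sym$ and $\Skew$ rows and all sporadic rows uniformly. You instead treat $p=1$ from scratch, using the rank inequality~\eqref{rankcondition} to cut down to real forms of real rank $\le1$ and then computing signatures (for $\SU(m-1,1)$ on $\bigwedge^2\C^m$, for the $\sE_6$ real forms) or, in the $\Spin(9,1)$ case, invoking irreducibility of $\Spin(9)$ on the $16$-dimensional spin module. This works and makes the argument self-contained, at the price of the extra case analysis; the paper's version is shorter but imports the rank-one classification as a black box.
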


\begin{proof} According to \cite{KS1}, the assertion is true for $p=1$
  and henceforth we assume that $q\ge p\ge 2$. By the dimension
  bound~\eqref{dimsu} we have for $n=p+q$
  \begin{equation}\label{bound 4n-10}
    \dim H\ge 2pq - p = 2p(n-p)-p\ge 4n-10,
  \end{equation}
  where the last inequality follows since $2\le p\le \frac n2$.  We
  recall from Lemma \ref{orbit bound SU} that $V=\C^n$ is a
  prehomogeneous vector space for $H_\C$, and since $V$ is irreducible
  and $H_\C$ is simple, we can apply
  Proposition~\ref{prop:prehomogeneous} and
  Table~\ref{tab:prehomogeneous} as explained in Remark \ref{remark on
    forms}.

  \bigskip \noindent {\it $\bullet$ $H_\C= \SL(m,\C)$ acting on
    $V=\Skew(m,\C)$}, $m\ge 5$.  Here $n=\frac12 m(m-1)$ and
  $\dim H=m^2-1$. Hence by \eqref{bound 4n-10} we obtain
  $m^2-2m-9\le 0$ which is excluded for $m\ge 5$.

  \bigskip\noindent{ \it $\bullet$ $H_\C= \SL(m,\C)$ acting on
    $V=\Sym(m,\C)$}, $m\ge 3$.  Here $n=\frac12m(m+1)$ and we get
  $m^2+2m-9\le 0$, which is excluded with $m\ge 3$.

  \bigskip\noindent{ \it $\bullet$ $H_\C=\SO(n,\C)$ acting on
    $V=\C^n$.}  This leads to (1) and (2).

  \bigskip\noindent{ \it $\bullet$ $H_\C=\Sp(m,\C)$ acting on
    $\C^n=\C^{2m}$.}  This leads to (3) and (4).

  \bigskip\noindent{ \it $\bullet$ The sporadic prehomogeneous vector
    spaces.}  Since we assume that $p\geq 2$, the dimension bound
  gives no possibilities.
\end{proof}

\subsubsection{The case of $G=\SL(m,\Hb)$}

\begin{lemma} \label{typeIII-slmH} Let $H\subset\SL(m,\Hb)$ for
  $m \geq 3$ be a real spherical subgroup of type III. Then $H$ is
  conjugate to one of the following symmetric subgroups:
  \begin{enumerate}
  \item $\SO^*(2m)$
  \item $\Sp(p,q)$, $p + q = m$.
  \end{enumerate}
\end{lemma}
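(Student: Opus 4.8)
The plan is to follow the pattern of the proof of Lemma \ref{typeIII-su}, now for $G=\SL(m,\Hb)\subset\SL(2m,\C)$ with standard module $V=\C^{2m}$. First I would record the basic data: $H_\C$ is simple and acts irreducibly on $V$ (the definition of type III); $H$ is a proper connected subgroup of $G$, and since $\dim_\R\SL(m,\Hb)=\dim_\C\SL(2m,\C)=4m^2-1$ this forces $H_\C$ to be a proper subgroup of $G_\C=\SL(2m,\C)$; and the dimension bound \eqref{dimsl} gives $\dim_\C H_\C=\dim_\R H\ge\dim\nf=2m^2-2m$. Since $2m^2-2m\ge 2m-1=\dim V-1$ (which certainly holds for $m\ge 3$), Proposition \ref{prop:prehomogeneous} applies: the triple $(H_\C,\rho,V)$ is equivalent to one of the entries of Table \ref{tab:prehomogeneous}.

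Next I would run through Table \ref{tab:prehomogeneous}, keeping only the entries for which $\dim V=2m$ (so in particular $\dim V$ is even), $H_\C$ is proper in $\SL(2m,\C)$, and the stronger bound $\dim H_\C\ge 2m^2-2m$ holds. The standard representation of $\SL(2m,\C)$ is discarded because $H_\C$ is proper; the representations $2\omega_1$ of $\SL(k,\C)$ and $\omega_2$ of $\SL(k,\C)$ with $k\ge 5$ are discarded by the quadratic dimension bound; and all the sporadic entries --- the spin representations of $\Spin(7,\C),\Spin(9,\C),\dots$ and the entries for $\sG_2^\C,\sF_4^\C,\sE_6^\C,\sE_7^\C$, etc., as well as the adjoint representation and the representation $\omega_2$ of $\Sp(k,\C)$ --- all violate $\dim H_\C\ge 2m^2-2m$. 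What survives is exactly $H_\C$ conjugate to $\SO(2m,\C)$ or to $\Sp(m,\C)$, in each case acting on $V$ by its standard representation; the coincidence $\SL(4,\C)\cong\SO(6,\C)$ accounts for $\Lambda^2\C^4\cong\C^6$ when $m=3$. In both surviving cases $H_\C$ preserves a non-degenerate bilinear form on $V$ --- symmetric resp.\ skew --- consistent with Remark \ref{remark on forms}.

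It remains to identify the real forms $H$ of $\SO(2m,\C)$, resp.\ of $\Sp(m,\C)$, that can occur inside $\SL(m,\Hb)$. The decisive point is that $\SL(m,\Hb)=\{g\in\SL(2m,\C)\mid gJ=J\bar g\}$ preserves the quaternionic structure $v\mapsto J\bar v$ on $V$, and $V$ is irreducible over $\C$ for $H$ (as $H$ is Zariski dense in $H_\C$), so the $H$-module $V$ is of quaternionic type. Among the real forms of $\SO(2m,\C)$ only $\SO^*(2m)$ has a quaternionic standard representation, the forms $\SO(p,q)$ with $p+q=2m$ being of real type; likewise among the real forms of $\Sp(m,\C)$ only the quaternionic unitary groups $\Sp(p,q)$ with $p+q=m$ are of quaternionic type, whereas $\Sp(m,\R)$ is of real type. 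Hence, up to conjugation in $\SL(m,\Hb)$, either $H=\SO^*(2m)$ or $H=\Sp(p,q)$ with $p+q=m$. Both are symmetric: they are real forms of the complex symmetric pairs $(\sl(2m,\C),\so(2m,\C))$ and $(\sl(2m,\C),\sp(m,\C))$, so by Lemma \ref{real symmetric} (and Berger's list) the pairs $(\sl(m,\Hb),\so^*(2m))$ and $(\sl(m,\Hb),\sp(p,q))$ are real symmetric, hence absolutely spherical and, by Lemma \ref{real form}, real spherical. The main obstacle I anticipate is precisely this last step: the forms $\SO(p,q)$ with $p\ne q$ pass the rank inequality \eqref{rankcondition} and the dimension bound, so they must be excluded by the type argument via the invariant quaternionic structure --- in the spirit of Remark \ref{rem:typeII-complexsubgroup} --- rather than by a cruder numerical count.
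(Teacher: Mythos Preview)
Your proof is correct and reaches the same conclusion as the paper, but the route differs in two respects worth noting.

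First, for narrowing down $H_\C$, the paper invokes Lemma~\ref{orbit bound SU*} (open $H$-orbit on $\Pb(\Hb^m)$, hence prehomogeneity) and Remark~\ref{remark on forms} (a type~III maximal subgroup of a form of $\SL(n,\C)$ cannot fix a nondegenerate bilinear form unless it is $\SO$ or $\Sp$) to discard most entries of Table~\ref{tab:prehomogeneous} before checking dimensions in the few remaining cases. You bypass both devices and use only the bound $\dim H_\C\ge 2m^2-2m$, which is indeed strong enough by itself to kill every entry of the table except $\SO(2m,\C)$ and $\Sp(m,\C)$ on their standard modules. This is a legitimate shortcut; the trade-off is that the paper's use of Remark~\ref{remark on forms} makes the structural reason (maximality in $\SL$ forbids an invariant form) explicit, whereas your argument hides it behind arithmetic.

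Second, for the real forms, the paper argues case by case: it excludes $\SO_0(p,q)$ by observing that the maximal compact $\SO(p)\times\SO(q)$ must sit in $\Sp(m)$, which forces $p=q=m$, and then kills $\SO_0(m,m)$ via the rank inequality \eqref{rankcondition}; $\Sp(m,\R)$ is excluded by rank alone. Your argument is cleaner: any $H\subset\SL(m,\Hb)$ commutes with the quaternionic structure $v\mapsto J\bar v$, so the irreducible $H$-module $V=\C^{2m}$ is of quaternionic type, and among real forms of $\SO(2m,\C)$ and $\Sp(m,\C)$ only $\SO^*(2m)$ and $\Sp(p,q)$ have quaternionic standard module. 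This disposes of $\SO_0(p,q)$ (including $p=q=m$) and $\Sp(m,\R)$ in one stroke, without recourse to rank. The key point you correctly identify---that an irreducible complex module cannot carry both a real and a quaternionic structure---is exactly what rules out $\SO_0(p,q)$, and this is indeed in the spirit of Remark~\ref{rem:typeII-complexsubgroup}.
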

 
\begin{proof}

  Let $V = \C^{2m} = \C^n$. By~\eqref{dimsl} a spherical subgroup $H$
  satisfies \[ \dim H \geq 2m^2 - 2m > 2m = n = \dim_\C V, \] as
  $m > 2$. Since $H_\C$ acts via $\rho$ irreducibly on $V$, it follows
  from Lemma \ref{orbit bound SU*} that the triplet $(H_\C,\rho,V)$
  appears among the even-dimensional cases in
  Proposition~\ref{prop:prehomogeneous}.  In particular we do not have
  to consider the odd dimensional cases (10) and (22) from Table
  \ref{tab:prehomogeneous}. Further, via Remark \ref{remark on forms},
  we can eliminate the cases (1), (6), (8), (9), (12) - (14), (16) -
  (18), (20), (21) and (23) from Table \ref{tab:prehomogeneous}.
  Since $H$ has to be proper, case (2) is excluded as well.  This
  leaves us with the following possibilities:

  \bigskip\noindent{ \it $\bullet$ $H_\C = \SL(k,\C)$, acting on
    $V = \Skew(k,\C)$} with $k\ge 5$. The dimension bound for $H$
  reads
  \begin{equation}\label{dimbound_Ak_in_SLmH}
    k^2 - 1 \geq 2m^2 - 2m.
  \end{equation} 

  Since $2m = \frac12{k(k-1)}$ and $k \geq 5$, we have
  $m \geq k \geq 5$. Furthermore, $k^2=4m + k$ and by
  \eqref{dimbound_Ak_in_SLmH} we get the contradiction
  \[ 4m+k - 1 \geq 2m(m - 1) \geq 8m. \]

  \bigskip\noindent{ \it $\bullet$ $H_\C =\SL(k,\C)$, acting on
    $V = \Sym(k,\C)$} with $k\ge 3$.  Here $2m =
  \frac12{k(k+1)}$. Since $k \geq 3$, we have $m \geq k \geq
  3$. Furthermore, $k^2=4m - k$ and by \eqref{dimbound_Ak_in_SLmH} we
  get the contradiction
  \[ 4m-k - 1 \geq 2m(m - 1) \geq 4m. \]

  \bigskip\noindent{ \it $\bullet$ $H_\C =\SO(2m,\C)$ acting on
    $V = \C^{2m}$}. The real form $H = \SO^*(2m)$ gives case (1) of
  the lemma. The real form $H = \SO_0(p,q)$, $p + q = 2m$ cannot
  occur, since its maximal compact subgroup $\SO(p)\times\SO(q)$ must
  be conjugate to a subgroup of $K=\Sp(m)\subset \SU(m,m)$ from which
  we conclude $p = q = m$.  But then,
  $\mathrm{rank}_\R(H) = m > m - 1 = \mathrm{rank}_\R(G)$.

  \bigskip\noindent{\it $\bullet$ $H_\C =\Sp(m,\C)$ acting on
    $V = \C^{2m}$}. The real form $H = \Sp(p,q)$ with $p + q = m$
  gives case (2) of the lemma. The real form $H = \Sp(m,\R)$ does not
  occur, since its real rank equals $m$ which is greater than
  $\mathrm{rank}_\R(G) = m -1$.

  \bigskip\noindent{ \it $\bullet$ $H_\C =\SL(k,\C)$ acting on
    $V = \bigwedge^3 \C^k$, $k = 7,8$}.  It is easy to see that for
  $k = 8$ the dimension bound is violated, while for $k = 7$ the
  dimension of $V$ is odd.

  \bigskip\noindent{ \it $\bullet$ $H_\C = \Spin(k,\C)$ acting on a
    half spin representation, $k = 10, 14$}.  The representation
  spaces are $\C^{16}$ and $\C^{64}$ respectively.  The dimension
  bound for $H$ reads $\frac12 k(k-1)\ge 2m(m-1)$, whence we get the
  contradiction $k\ge 2m$.
\end{proof}

This concludes the proof of Theorem \ref{maximals are real forms} for
$G_\C=\SL(n,\C)$.

\section{Maximal reductive real spherical subgroups for the
orthogonal  groups}\label{max SO}

We prove the statement in Theorem \ref{maximals are real forms} for
$\gf_\C=\so(n,\C)$, assuming $n\ge 5$ throughout.  We may assume again
that $\hf_\C$ is maximal reductive (cf.~Corollary \ref{Komrakov cor}).

\subsection{The real forms}
Let $G_\C=\SO(n,\C)$. Our focus is on the real forms $G=\SO_0(p,q)$
with $p+q=n$ and $p\leq q$, and $G=\SO^*(2m)$ with $n=2m$.

Note that $\so^*(6)\simeq \su(1,3)$ was already treated in Lemmas
\ref{typeI-su}, \ref{typeII-su}, and \ref{typeIII-su}. Furthermore,
$\so^*(8)\simeq \so(2,6)$ will be treated below through the general
case of $\SO_0(p,q)$.  We may thus assume $m\ge 5$ for $\SO^*(2m)$.

The dimension bounds obtained from \eqref{db} and the cited table of
\cite{Helgason} read:
\begin{equation}\label{dimso} \dim H \geq pq - p \qquad
  (G=\SO_0(p,q)),\end{equation}
\begin{equation}\label{dimsoev} \dim H \geq m^2 -\frac32 m\qquad
  (G=\SO^*(2m)).\end{equation}

For further reference we record the matrix realizations of $G$ and
$P$.  We begin with $G=\SO_0(p,q)$ which we consider as the invariance
group of the symmetric form $\la\cdot, \cdot\ra_{p,q}$ defined by
\[\begin{pmatrix} 0 & I_p & 0\\ I_p & 0 & 0 \\ 0 & 0 &
    I_{q-p}\end{pmatrix}\, .\] Accordingly we obtain for the Lie
algebra
\[\so(p,q)=\left\{
    \begin{pmatrix} A & B & E \\ C & -A^T& F \\ -F^T & - E^T &
      D\end{pmatrix} \Bigg| \quad
    \begin{matrix}
      A, B, C\in \Mat_{p, p} (\R),\\
      E, F \in \Mat_{p, q-p}( \R), \\
      D \in \Mat_{q-p, q-p}(\C),\\
      B^T, C^T, D^T=-B, -C, -D\end{matrix}\,\right\}\, .\]

We choose the minimal parabolic such that
\[\pf=\left\{\begin{pmatrix} A & B & E \\ 0 & -A^T& 0 \\ 0 & - E^T &
      D\end{pmatrix}\in \so(p,q)\mid A \ \hbox{upper
      triangular}\right\}
\]
so that $P$ stabilizes the isotropic real flag
$\la e_1\ra \subset \la e_1, e_2\ra \subset\ldots\subset \la e_1,
\ldots, e_p\ra $ in $\R^n$.  Moreover
\[G/P\simeq\{ V_1\subset \ldots \subset V_p\mid V_p \subset \R^n, \
  \dim_\R V_i = i, \la V_p, V_p\ra _{p,q}=\{0\}\}\] is the variety of
full isotropic $p$-flags in $\R^{p+q}$ with respect to the symmetric
bilinear form $\la \cdot, \cdot\ra _{p,q}$.

Let us denote
\[\Nc_{p,q}^\R:= \{ [v]\in \Pb(\R^n)\mid \la v,v\ra_{p,q}=0\}\]
and note that there is a $G$-equivariant surjective map
$G/P \to \Nc_{p,q}^\R$.  Hence we obtain the following lemma.

\begin{lemma} \label{so-codim2}Let $G=\SO_0(p,q)$ and $H\subset G$ a
  real spherical subgroup. Then there exists an $H$-orbit on $\R^n$ of
  codimension at most 2.
\end{lemma}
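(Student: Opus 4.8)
The plan is to mimic the structure of Lemma \ref{orbit bound SU}: produce a geometric surjection from $G/P$ onto a projective quadric cone and deduce that an open $P$-orbit forces an open $H$-orbit on the cone, which in turn gives a low-codimensional $H$-orbit on the affine space $\R^n$. Concretely, since $(\gf,\hf)$ is real spherical there is an open $H$-orbit on $G/P$; composing with the stated $G$-equivariant surjection $G/P \to \Nc_{p,q}^\R$ gives an open $H$-orbit $\mathcal{O}$ in the real null-cone $\Nc_{p,q}^\R \subset \Pb(\R^n)$. Let $\tilde{\mathcal{O}} \subset \R^n \setminus \{0\}$ be the preimage of $\mathcal{O}$ under the projection $\R^n \setminus \{0\} \to \Pb(\R^n)$; it is an $H$-invariant locally closed subset of real codimension at most $1$ in $\R^n$ (it is open in the null-quadric $\{\langle v,v\rangle_{p,q} = 0\}$, which has codimension $1$).

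The next step is to extract an actual $H$-orbit of codimension at most $2$ from $\tilde{\mathcal{O}}$. The point is that $\tilde{\mathcal{O}}$ is a union of $H$-orbits, but it need not itself be a single orbit; however, the scaling action of $\R^{>0}$ on $\R^n$ commutes with $H$ and preserves $\tilde{\mathcal{O}}$, and the fibres of $\tilde{\mathcal{O}} \to \mathcal{O}$ are the $\R^{\times}$-orbits (one-dimensional). Since $H$ acts transitively on $\mathcal{O}$, any $H$-orbit in $\tilde{\mathcal{O}}$ surjects onto $\mathcal{O}$, so it has dimension at least $\dim \mathcal{O} = \dim \Nc_{p,q}^\R = n-2$. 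Thus there exists an $H$-orbit on $\R^n$ of dimension at least $n-2$, i.e.\ of codimension at most $2$, which is the assertion.

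The only mild subtlety — and the step I would be most careful about — is the claim that the natural map $G/P \to \Nc_{p,q}^\R$ is genuinely surjective and $G$-equivariant, so that an open orbit upstairs maps to an open orbit downstairs. This is the exact analogue of the complex statement used in Lemma \ref{orbit bound SU}: the parabolic $P$ stabilizes the isotropic line $\langle e_1\rangle$, hence $P$ is contained in the stabilizer in $G$ of the point $[e_1] \in \Nc_{p,q}^\R$; since $G = \SO_0(p,q)$ acts transitively on the set of isotropic lines in $\R^n$ (Witt's theorem for the real form, using $p \geq 1$ so that isotropic vectors exist), this stabilizer is a parabolic $P_{\max} \supseteq P$ and $G/P \twoheadrightarrow G/P_{\max} \simeq \Nc_{p,q}^\R$. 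Composing the open-orbit property through these equivariant surjections then completes the proof exactly as in the paragraph above; I would simply invoke the already-established geometric description of $G/P$ and the existence of the stated $G$-equivariant surjection and record the codimension count.
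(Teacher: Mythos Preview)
Your proof is correct and follows essentially the same route as the paper: both use the $G$-equivariant surjection $G/P \to \Nc_{p,q}^\R$ to push the open $H$-orbit down to an open $H$-orbit on the projective null cone, and then conclude the codimension-$2$ bound on $\R^n$. The paper's proof is a single sentence that leaves the lift from $\Nc_{p,q}^\R\subset\Pb(\R^n)$ back to $\R^n$ implicit, whereas you spell out carefully why an $H$-orbit in the affine preimage $\tilde{\mathcal O}$ must still surject onto $\mathcal O$ and hence have dimension at least $n-2$; this extra care is warranted and fills in exactly the step the paper elides.
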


\begin{proof} The fact that $H$ has an open orbit on $G/P$ implies
  that there is an open $H$-orbit in $\Nc_{p,q}^\R$.
\end{proof}

We continue to recall a few structural facts for the group
$\SO^*(2m)$.  We identify $\Hb^m$ with $\C^{2m}$ via
$\Hb^m= \C^m \oplus j \C^m$.  Denote by $h\mapsto \oline h$ the
conjugation on $\Hb^m$. The group $\SO^*(2m)$ consists of the right
$\Hb$-linear transformations on $\Hb^m$ which preserve the
$\Hb$-valued form
\[ \phi (h, h')= \oline{h_1} j h_1' + \ldots + \oline{h_m} j h_m'
  \qquad (h_i, h_i'\in \Hb)\, .\]

Observe that $\phi$ is a so-called skew-Hermitian form, i.e.~it is
sesquilinear and skew.  We recall that {\it sesquilinear} means
$\phi(hx,h'x')= \oline x \phi (h,h') x'$ for all $h,h'\in\Hb^m$,
$x,x'\in\Hb$, and {\it skew} refers to
$\oline{\phi(h,h')}= -\phi(h',h)$.

Denote the $\C$-part of $\phi(h,h')$ by $(h,h')_{m,m}\in\C$ and the
$j\C$-part by $\la h,h'\ra\in\C$.  If we write elements $h\in \Hb^m $
as $h = x + jy$ with $x,y\in\C^m$, then
\[ ( h, h')_{m,m}= \oline y^T x' - \oline x^T y'\] and
\[ \la h, h'\ra = x^T x' + y^T y' \, .\] Notice that
$i(\cdot, \cdot)_{m,m}$ is a Hermitian form of signature $(m,m)$.  In
particular, if we view $\SO^*(2m)$ as a subgroup of $\SL(2m,\C)$, then
$\SO^*(2m)= \SO(2m,\C) \cap \SU(m,m)$.

The minimal flag variety is given by isotropic right $\Hb$-flags
\begin{equation}\label{H-flags I} G/P =\{ V_1 \subset \ldots \subset
  V_{[m/2]}
  \subset\Hb^{m} \mid  \phi(V_i, V_i) =\{0\},\dim_\Hb V_{i}=i
  \} \, .\end{equation}

\begin{remark} Observe that the sesquilinear form $\phi$ is uniquely
  determined by its $\C$-part or $j\C$-part. Hence an $\Hb$-subspace
  $V_i\subset \Hb^m$ is isotropic if and only if it is isotropic for
  $\la \cdot, \cdot\ra$ (or $(\cdot, \cdot)_{m,m}$).  Recall that
  $G_\C/B_\C$ is the variety of $\la\cdot,\cdot\ra$-isotropic (left)
  $\C$-flags in $\C^{2m}=\Hb^m$.  Hence the right hand side of
  \eqref{H-flags I} embeds totally real into the quotient of
  $G_\C/B_\C$ consisting of even-dimensional isotropic complex flags.
  A simple dimension count then shows equality in \eqref{H-flags I}.
\end{remark}

\subsection{Type I maximal subgroups}

Let $H\subset G$ be a maximal subgroup of type I. Then
$H_\C=\SO(n_1,\C) \times \SO(n_2,\C)$, $n_i>0$ and $n=n_1+n_2$, or
$H_\C=\GL(n/2, \C)$ for $n$ even. In both cases $H_\C$ is a symmetric
subgroup of $G_\C$. Hence with Lemma \ref{real symmetric} and Berger's
list \cite{Berger} we obtain:

\begin{lemma}\label{typeI-so*} Let $H\subset\SO^*(2m)$ be a  subgroup of type I. 
  Then $H$ is symmetric, and up to conjugation it equals one of the
  following groups:
  \begin{enumerate}
  \item $\SO^*(2m_1) \times \SO^*(2m_2)$ with $m_1+m_2=m$,
    $m_1, m_2>0$,
  \item $\SO(m,\C)$,
  \item $\GL(m/2,\Hb)$ for $m$ even,
  \item $\operatorname{U}(k, l)$ with $k+l=m$.
  \end{enumerate}
\end{lemma}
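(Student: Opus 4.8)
The plan is to combine Lemma \ref{real symmetric} with the classification of type I maximal subgroups of $G_\C = \SO(2m,\C)$ and Berger's list of real symmetric pairs. First I would recall that, by the discussion in Section \ref{max SO}, a maximal subgroup $H \subset G$ of type I has complexification $H_\C$ equal to one of $\SO(n_1,\C)\times\SO(n_2,\C)$ with $n_1+n_2 = 2m$, or $\GL(m,\C)$ (when we view $\SO(2m,\C)$ with even standard space). All of these are symmetric subgroups of $G_\C = \SO(2m,\C)$, so by Lemma \ref{real symmetric} — applied to the real form $(\so^*(2m), \hf)$ of the complex symmetric pair $(\so(2m,\C), \hf_\C)$, noting $\so^*(2m)$ is semisimple — the defining involution $\sigma$ of $\gf_\C$ preserves $\gf = \so^*(2m)$, so $(\gf,\hf)$ is a real symmetric pair. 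Hence $H$ is symmetric, and it remains to read off from Berger's tables (\cite{Berger}*{Tableaux II}) which symmetric subgroups of $\SO^*(2m)$ arise, i.e.\ which real forms of the listed $H_\C$ actually embed as fixed-point subgroups of an involution of $\so^*(2m)$.

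Next I would go through the three cases. For $H_\C = \SO(n_1,\C)\times\SO(n_2,\C)$: the corresponding involution of $\so^*(2m)$ is the one associated to a decomposition $\Hb^m = \Hb^{m_1}\oplus\Hb^{m_2}$ orthogonal for the skew-Hermitian form $\phi$, giving $H = \SO^*(2m_1)\times\SO^*(2m_2)$ with $m_1+m_2 = m$; this forces $n_i = 2m_i$ even, which is consistent with the remark that the right-$\Hb$-structure only allows even-dimensional constituents. For $H_\C = \GL(m,\C)$ sitting inside $\SO(2m,\C)$ via $V = V_1\oplus V_1^*$ with both summands isotropic: intersecting with the real structure of $\so^*(2m)$ yields two families, depending on how the real/quaternionic structure interacts with the decomposition into the Lagrangian-type pair — one gives the unitary form $\Unitary(k,l)$ with $k+l = m$ (the involution coming from a $\pm i$-eigenspace decomposition under a suitable element, i.e.\ the standard $\so^*(2m) \supset \uf(k,l)$ symmetric pair), and the other gives $\GL(m/2,\Hb)$ when $m$ is even (coming from the quaternionic structure $\so^*(2m) \supset \gl(m/2,\Hb)$). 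Finally $H = \SO(m,\C)$, viewed as a real form of $H_\C = \SO(m,\C)\times\SO(m,\C)$ (the complexification of the complex Lie algebra $\so(m,\C)$ regarded as real), arises as the symmetric subalgebra for the involution given by the complex structure $j$ on $\Hb^m$; indeed $\so^*(2m) \supset \so(m,\C)$ is a symmetric pair of the "complexification type." One then checks the rank/dimension constraints so that these exhaust Berger's list for $\so^*(2m)$ with the $H_\C$ in question, and that no further real forms of $\SO(n_1,\C)\times\SO(n_2,\C)$ (such as $\SO_0(p_1,q_1)\times\cdots$ with odd pieces, or mixed types) survive — they are ruled out because the complex constituents of $\Hb^m$ under any involution preserving the quaternionic structure must themselves carry compatible quaternionic or complex structures, forcing the shape above.

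The main obstacle I anticipate is not the invocation of Lemma \ref{real symmetric} — which is clean — but the bookkeeping of exactly which real forms of each complex symmetric subgroup of $\SO(2m,\C)$ are realized as symmetric subgroups of the particular real form $\SO^*(2m)$, rather than of some other real form of $\SO(2m,\C)$. Concretely, for the $\GL(m,\C)$ case one must be careful: a priori the involution $\sigma$ of $\so(2m,\C)$ with fixed algebra $\gl(m,\C)$ could restrict to $\so^*(2m)$ in more than one conjugacy class, and one needs Berger's tables to confirm that precisely $\Unitary(k,l)$ (all $k+l=m$) and $\GL(m/2,\Hb)$ ($m$ even) occur and nothing else. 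The cleanest way to organize this is to first list, from Berger, all symmetric pairs $(\so^*(2m), \hf)$, then match each $\hf$ against the three possibilities for $\hf_\C$ coming from the type I classification, discarding those whose complexification is not of type I (these correspond to symmetric $\hf$ whose complexification is maximal reductive but of a different Dynkin type, or not maximal). This matching is routine once Berger's list is in hand, so I would present it as a short verification rather than a computation.
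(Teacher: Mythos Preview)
Your proposal is correct and follows exactly the paper's approach: the paper simply states that since $H_\C$ is symmetric in $G_\C$, Lemma~\ref{real symmetric} combined with Berger's list \cite{Berger} yields the result, without spelling out the case-by-case matching. Your additional bookkeeping (identifying $\SO(m,\C)$ as the real form of $\SO(m,\C)\times\SO(m,\C)$, tracking which real forms of $\GL(m,\C)$ occur, etc.) is accurate and more detailed than what the paper provides, but the underlying argument is the same.
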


\begin{lemma} \label{typeI-so}Let $H\subset\SO_0(p,q)$ be a subgroup
  of type I.  Then $H$ is symmetric, and up to conjugation it equals
  one of the following groups:
  \begin{enumerate}
  \item $\SO_0(p_1,q_1) \times \SO_0(p_2,q_2)$ with $p_1+p_2=p$ and
    $q_1+q_2=q$,
  \item $\SO(p,\C)$ for $p=q$,
  \item $\GL(p,\R)$ with $p=q$.
  \item $\operatorname{U}(p/2, q/2)$ for $p, q$ even.
  \end{enumerate}
\end{lemma}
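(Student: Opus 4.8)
The plan is to mimic the argument already used for Type~I subgroups of $\SU(p,q)$ in Lemma~\ref{typeI-su}. The key input is the Dynkin classification of Type~I maximal reductive subgroups of $\SO(n,\C)$ recalled earlier: up to conjugation, $H_\C$ is either $\SO(n_1,\C)\times\SO(n_2,\C)$ with $n=n_1+n_2$, $n_i>0$, acting on $V=V_1\oplus V_2$ with the form restricting non-degenerately to each factor, or (for $n=p+q$ even) $H_\C=\GL(n/2,\C)$ acting on $V=W\oplus W^*$ with both summands isotropic. In each case $H_\C$ is a symmetric subgroup of $G_\C$, so by Lemma~\ref{real symmetric} the real form $(\gf,\hf)=(\so(p,q),\hf)$ is automatically symmetric, and the actual content of the lemma is just to enumerate the real forms $\hf$ of these two complex subalgebras that can sit inside $\so(p,q)$.

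First I would treat the reducible case $H_\C=\SO(n_1,\C)\times\SO(n_2,\C)$. A real form of this inside $\SO_0(p,q)$ must preserve the real form $\R^n=\R^{p+q}$ and act reducibly, splitting it as $\R^n=W_1\oplus W_2$ with $\dim W_i=n_i$; since $H_\C$ preserves a nondegenerate form on each complex factor, the real form preserves a nondegenerate symmetric form on each $W_i$, so the signature $(p,q)$ distributes as $(p_1,q_1)+(p_2,q_2)$ and $H$ is (up to conjugacy and connectedness) $\SO_0(p_1,q_1)\times\SO_0(p_2,q_2)$ with $p_1+p_2=p$, $q_1+q_2=q$. This gives case~(1); one should note that a priori the real form could also be $\SO(n_1,\C)$ sitting diagonally if $n_1=n_2$, but that possibility is subsumed under the next case or handled by Berger's list, so I would just cross-check against \cite{Berger}*{Tableaux II} rather than reprove it.

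Next I would treat the isotropic case $H_\C=\GL(n/2,\C)$ with $p+q=n$ even and $V=W\oplus W^*$, $W$ Lagrangian (totally isotropic of half dimension). The real forms of $\GL(n/2,\C)$ that can occur are $\GL(n/2,\R)$ (requires a real Lagrangian splitting of $\R^n$, which forces $p=q=n/2$), $\GL(n/4,\HH)$ (requires a quaternionic structure compatible with the form, which forces $p=q=n/2$ again, giving case~(3)), and $\Unitary(k,l)$ with $k+l=n/2$ (here the complex structure $J$ on $\R^n$ together with the symmetric form $\la\cdot,\cdot\ra_{p,q}$ produces a Hermitian form whose signature is $(k,l)$ up to a twist, and one finds $(p,q)=(2k,2l)$ or so; in any event $p,q$ must both be even, giving case~(4)). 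For $p=q$ both $\GL(p,\R)$ and $\Unitary(p/2,p/2)$ survive, giving cases~(2)--(4); here I'd identify $\SO(p,\C)$ with the complex-structure case just as in the $\SU$ analogue, being a bit careful that $\SO(p,\C)\subset\SO_0(p,p)$ is the diagonal embedding via $x\mapsto(x,\bar x)$.

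The main obstacle I anticipate is not any deep argument but bookkeeping: making sure the list of real forms of $\GL(n/2,\C)$ respecting the relevant structure is complete and that the induced signature constraints ($p=q$, or $p,q$ both even) are pinned down correctly, and then matching the resulting list bijectively against Berger's tableaux so that nothing is missed and nothing is double-counted. Concretely I would organize the proof as: (i) invoke Dynkin's classification and Lemma~\ref{real symmetric} to reduce to enumerating real forms; (ii) dispose of the $\SO(n_1,\C)\times\SO(n_2,\C)$ case as above; (iii) dispose of the $\GL(n/2,\C)$ case by running through its real forms $\GL(n/2,\R)$, $\GL(n/4,\HH)$, $\Unitary(k,l)$ and recording which are compatible with a symmetric form of signature $(p,q)$; (iv) cross-reference the combined list with \cite{Berger} to confirm completeness. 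Each of (ii) and (iii) reduces to an elementary linear-algebra statement about bilinear forms restricted to real forms, and I expect no step to require more than a few lines.
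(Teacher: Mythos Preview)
Your overall strategy matches the paper's: invoke Dynkin's Type~I classification, observe that both possibilities $\SO(n_1,\C)\times\SO(n_2,\C)$ and $\GL(n/2,\C)$ are symmetric in $\SO(n,\C)$, apply Lemma~\ref{real symmetric} to conclude that $(\gf,\hf)$ is symmetric, and then read off the real forms from Berger's list. The paper in fact stops there---it simply cites Berger and states the result, without any of the manual case analysis you outline in steps (ii)--(iii). Your extra bookkeeping is unnecessary, and as written it contains a couple of slips.

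Specifically: $\SO(p,\C)\subset\SO_0(p,p)$ is \emph{not} a real form of $\GL(n/2,\C)$; its complexification is $\so(p,\C)\oplus\so(p,\C)$, so it arises from the $\SO(n_1,\C)\times\SO(n_2,\C)$ branch with $n_1=n_2=p$ via the diagonal embedding (exactly as you half-note in step~(ii) before dismissing it as ``subsumed under the next case''). Conversely, in step~(iii) you write that $\GL(n/4,\HH)$ ``forces $p=q=n/2$ again, giving case~(3)'', but case~(3) is $\GL(p,\R)$, not $\GL(p/2,\HH)$; in fact $\GL(m/2,\HH)$ lands in $\SO^*(2m)$ (cf.~Lemma~\ref{typeI-so*}), not in $\SO_0(p,q)$, and does not appear in the present list at all. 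Since you cross-check against Berger in step~(iv) these errors would eventually get corrected, but the cleanest proof is simply to go straight to Berger after Lemma~\ref{real symmetric}, as the paper does.
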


\subsection{Type II maximal reductive subgroups}

Here we suppose that $H_\C$ is a maximal reductive subgroup of
$G_\C=\SO(n,\C)$, $n\geq 5$ of type II. Hence there are the two
possibilities:
\begin{itemize}
\item $H_\C = \SO(r,\C) \otimes \SO(s,\C)$ with $rs=n$,
  $3\leq r\le s$, and $r, s\neq 4$.
\item $H_\C= \Sp(r, \C)\otimes \Sp(s,\C)$ with $4rs=n$ and
  $1\leq r\leq s$.
\end{itemize}

\subsubsection{The case of $G=\SO^*(2m)$}

\begin{lemma} \label{typeII-so*}Let $G=\SO^*(2m)$ for $m\geq 5$.  Then
  there exist no real spherical subgroups $H\subset G$ of type II.
\end{lemma}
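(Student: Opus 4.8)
The strategy is to rule out \emph{every} type II subgroup by the dimension bound \eqref{dimsoev}: if $H\subset \SO^*(2m)$ is of type II then $\dim H=\dim_\C H_\C$ must be at least $m^2-\tfrac32 m$, and I claim this never happens. By the two possibilities listed just above for a type II maximal reductive $H_\C\subset\SO(2m,\C)$, either $H_\C=\SO(r,\C)\otimes\SO(s,\C)$ with $rs=2m$, $3\le r\le s$, $r,s\ne 4$ (so $\dim H=\tfrac12 r(r-1)+\tfrac12 s(s-1)$ and $m=\tfrac12 rs$), or $H_\C=\Sp(r,\C)\otimes\Sp(s,\C)$ with $4rs=2m$, $1\le r\le s$ (so $\dim H=2r^2+r+2s^2+s$ and $m=2rs$). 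In each case I would substitute the relevant expression for $m$ into $m^2-\tfrac32 m$, clear denominators, and reduce the inequality $\dim H\ge m^2-\tfrac32 m$ to a polynomial inequality in $r,s$ which I then show is false.

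In the orthogonal case the inequality $\dim H\ge m^2-\tfrac32 m$ rearranges to $f(r,s):=r^2s^2-3rs-2r^2+2r-2s^2+2s\le 0$. For fixed $s\ge 3$ this is an upward-opening quadratic in $r$ with leading coefficient $s^2-2>0$ and vertex at $r=\tfrac{3s-2}{2(s^2-2)}<1$, hence increasing on $[3,\infty)$; and $f(3,s)=7s^2-7s-12>0$ for all $s\ge 3$. Thus $f(r,s)>0$ for every admissible pair, so no such $H$ meets the dimension bound. (Here one does not even need the constraints $r,s\ne 4$ or $m\ge 5$.)

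In the symplectic case the inequality becomes $g(r,s):=4r^2s^2-3rs-2r^2-r-2s^2-s\le 0$. The hypothesis $m\ge 5$ together with $m=2rs$ forces $rs\ge 3$, so $(r,s)$ is neither $(1,1)$ nor $(1,2)$; hence either $r=1$ with $s\ge 3$, or $2\le r\le s$. Again $g(\cdot,s)$ is an upward quadratic in $r$ with vertex below $1$, so it is increasing on the admissible range of $r$, and it suffices to note $g(1,s)=2s^2-4s-3>0$ for $s\ge 3$ and $g(2,s)=14s^2-7s-10>0$ for $s\ge 2$. Hence $g(r,s)>0$ always, the symplectic possibility is excluded as well, and combining the two cases proves the lemma. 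There is no genuine obstacle in the argument: the entire content is the dimension bound \eqref{dimsoev}, which for $\SO^*(2m)$ is of size roughly $m^2$ while a type II $H_\C$ has dimension only of order $\max(r,s)^2$, much smaller than $(rs)^2$. The one point requiring mild care is to arrange the two polynomial inequalities so that a finite check suffices — handled by the elementary observation that the relevant quadratics in $r$ are increasing on the admissible range — and to confirm that the numerical and parity constraints ($n=2m$, $m\ge 5$) leave no stray small pair $(r,s)$ unexamined.
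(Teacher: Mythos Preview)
Your proof is correct and follows exactly the paper's approach: the paper's proof is the single sentence ``When $m\geq 5$ no type II subgroup satisfies the dimension bound \eqref{dimsoev},'' and you have supplied the explicit verification of that claim by checking the two polynomial inequalities in $r,s$.
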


\begin{proof} When $m\geq 5$ no type II subgroup satisfies the
  dimension bound \eqref{dimsoev}.
\end{proof}

\subsubsection{The case of $G=\SO_0(p,q)$}

\begin{lemma}\label{typeII-so} Let $G=\SO_0(p,q)$ with $p+q\geq 5$. 
  Then type II real spherical subgroups $H\subset G$ occur only for
  $p=q=4$ and are given, up to conjugation, by:
  \begin{enumerate}
  \item $H=\Sp(1,\R) \otimes \Sp(2,\R)$,
  \item $H=\Sp(1) \otimes \Sp(1,1)$.
  \end{enumerate}
  Both cases are symmetric.\end{lemma}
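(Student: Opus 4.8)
The plan is to argue as in the proof of Lemma~\ref{typeII-su}. By Remark~\ref{rem:typeII-complexsubgroup} we may assume that $H$ is not complex, so that $H=H_1\otimes H_2$ with $H_1$ and $H_2$ real forms of the two complex tensor factors --- real forms of $\SO(r,\C)$ and $\SO(s,\C)$ in the first of the two possibilities, and of $\Sp(r,\C)$ and $\Sp(s,\C)$ in the second. The first step is to decide, for each such choice of real forms, into which real form of $\SO(n,\C)$ the subgroup $H$ embeds. Since $H$ acts irreducibly on $\C^n$, Lemma~\ref{signature lemma} forces the symmetric form defining $G$ to be proportional to the tensor product of the invariant forms of $H_1$ and $H_2$; combining this with the type --- real or quaternionic --- of the conjugation on $\C^n$ induced by those attached to $H_1$ and $H_2$, one reads off whether $H$ lies in some $\SO_0(p,q)$ and with which signature $(p,q)$. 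When the induced conjugation is of quaternionic type, $H$ in fact lies in an $\SO^*$-group and is of no concern here (such groups are handled in Lemma~\ref{typeII-so*}). A uniform consequence of this signature bookkeeping is that $p\ge3$ in every type II case, so $\SO_0(1,q)$ needs no separate discussion.

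Next I feed the signature into the dimension bound \eqref{dimso}, $\dim H\ge pq-p$. For $H_\C=\SO(r,\C)\otimes\SO(s,\C)$ with $3\le r\le s$ and $r,s\ne4$ one has $\dim H=\binom r2+\binom s2\le\tfrac12(r^2+s^2)$. If both $H_1$ and $H_2$ are noncompact --- of split-orthogonal type $\SO_0(a,b)$ or of type $\SO^*$ --- then $p$ and $q$ are quadratic in the integers defining the constituent signatures, $pq$ is of order $(rs)^2$, and \eqref{dimso} fails for every admissible $(r,s)$. The only possibility that survives the dimension bound is that one factor $H_1$ is a compact $\SO(d)$ with $d\ge3$ and the other is $H_2=\SO_0(a,b)$, with $(p,q)=(da,db)$; this case I exclude differently. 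Restricting to the null cone $\Nc^\R_{p,q}$ (Lemma~\ref{so-codim2}) and writing a null vector as an element of $\R^d\otimes\R^{a+b}$, the eigenvalues of the Gram matrix of its $d$ components along the compact factor provide a non-constant $H$-invariant rational function on $\Nc^\R_{p,q}$ (here $d\ge3$ is used), so Lemma~\ref{independent} rules out an open $H$-orbit and $H$ is not real spherical. This disposes of the first possibility entirely.

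For $H_\C=\Sp(r,\C)\otimes\Sp(s,\C)$ with $1\le r\le s$ and $n=4rs$ one has $\dim H=r(2r+1)+s(2s+1)$; a short case distinction according to whether $H_1$ and $H_2$ are compact, split of type $\Sp(\cdot,\R)$, or of type $\Sp(a,b)$ shows that, subject to $p+q\ge5$, the bound \eqref{dimso} can hold only for $r=1$, $s=2$, i.e.\ $n=8$ and $p=q=4$, every other value being killed by the bound alone. For $r=1$, $s=2$ the real forms of $\Sp(1,\C)\otimes\Sp(2,\C)$ that land in $\SO_0(4,4)$ are precisely $H=\Sp(1,\R)\otimes\Sp(2,\R)$ and $H=\Sp(1)\otimes\Sp(1,1)$; the remaining real forms lie either in the compact group $\SO(8)$ or in $\SO^*(8)$. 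Finally, that these two pairs are symmetric is seen via triality: under an outer automorphism of $\SO(8,\C)$ the type II subgroup $\Sp(1,\C)\otimes\Sp(2,\C)$ --- which as an abstract group is $\SO(3,\C)\times\SO(5,\C)$ --- is carried to the symmetric subgroup $\SO(3,\C)\times\SO(5,\C)$ in its standard reducible embedding $\C^8=\C^3\oplus\C^5$. Hence $(\so(8,\C),\,\sp(1,\C)\oplus\sp(2,\C))$ is a complex symmetric pair, and Lemma~\ref{real symmetric} shows that both of its real forms, $(\so(4,4),\,\sp(1,\R)\oplus\sp(2,\R))$ and $(\so(4,4),\,\sp(1)\oplus\sp(1,1))$, are symmetric. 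They are then real spherical by Lemma~\ref{real form}.

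The main obstacle is the bookkeeping in the first step: for each real form of each complex factor --- on the orthogonal side $\SO_0(a,b)$, the $\SO^*$-form, or the compact form; on the symplectic side $\Sp(a,b)$, $\Sp(\cdot,\R)$, or the compact form --- one must correctly read off the type (real or quaternionic) of the induced conjugation on $\C^n$ and the resulting signature $(p,q)$, so that no admissible pair is overlooked. Once that table is in place the dimension bound settles almost everything, the only genuine extra input being the null-cone argument needed to rule out the orthogonal tensor products with a compact factor.
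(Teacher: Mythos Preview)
Your overall strategy matches the paper's, and the orthogonal branch is handled correctly (your Gram--matrix argument is exactly the paper's matrix submersion plus Lemma~\ref{so-codim2}). The triality argument for why the two surviving subgroups are symmetric is different from the paper's explicit construction of the involution, but it is sound.

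However, the symplectic branch contains a genuine gap. The assertion that ``the bound~\eqref{dimso} can hold only for $r=1$, $s=2$, every other value being killed by the bound alone'' is false. Take $H=\Sp(1)\otimes\Sp(p_2,1)\subset\SO_0(4,4p_2)$: here $\dim H=3+(p_2+1)(2p_2+3)$ while $pq-p=16p_2-4$, and the former exceeds the latter for every $p_2\ge5$. More generally, $\Sp(r)\otimes\Sp(p_2,q_2)$ (compact first factor) passes~\eqref{dimso} for infinitely many parameter values, including cases with $r>1$. The paper therefore needs two further steps. First, a matrix--submersion argument analogous to the one you used in the orthogonal case, combined with Lemma~\ref{so-codim2}, to force $r=1$. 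Second, and this is the substantive missing ingredient, for $H=\Sp(1)\otimes\Sp(p_2,q_2)$ one must still exclude all $s>2$: the paper does this by passing to $H'=\{\mathbf 1\}\otimes\Sp(p_2,q_2)$, exhibiting three independent $H'$-invariant real symplectic forms $\Omega_1,\Omega_2,\Omega_3$ on $V$, and building from them and their wedge products enough $H'$-invariant rational functions on $G/P$ (the $f_i$ and $g_{jk}$) to contradict, via Lemma~\ref{independent}, the required codimension-$3$ bound on $H'$-orbits. An explicit coordinate check is needed to verify independence of these invariants when $q_2>1$. None of this is captured by the dimension bound, and your proposal does not supply a replacement.
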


\begin{proof} We first prove that the groups listed under (1) and (2)
  are symmetric and hence real spherical.  Write $H=H_1\otimes H_2$
  and $\C^8=\C^2 \otimes \C^4$.  The symplectic forms $\Omega_i$ on
  $\C^{2i}$ defined by $H_i$ give rise to the $\SO_0(4,4)$-invariant
  symmetric form $\la\cdot, \cdot\ra=\Omega_1 \otimes \Omega_2$ on
  $\C^8$.  Write $J_1$ and $J_2$ for the matrices defining $\Omega_1$
  and $\Omega_2$.  Then
  $B=\Omega_1(J_1 \cdot , \cdot) \otimes \Omega_2(J_2\cdot, \cdot)$
  defines a symmetric bilinear form on $\C^8$ and we write
  $g\mapsto g^t$ for the corresponding transpose on matrices. Then the
  assignment $g\mapsto ( J_1\otimes J_2) g^{-t} (J_1\otimes J_2 ) $
  defines an involution on $G=\SO_0(4,4)$ with fixed group $H$. Hence
  $H$ is symmetric (and outer isomorphic to
  $\SO_0(2,1)\times\SO_0(2,3)$, respectively $\SO(3)\times\SO_0(1,4)$,
  from Berger's list).

  Let $H=H_1 \otimes H_2$ be a type II subgroup. We consider first the
  case where each $H_{i\C}$ is symplectic.  We start with
  $H=\Sp(r,\R) \otimes \Sp(s, \R)$.  The invariant Hermitian form on
  each factor gives an invariant Hermitian form on the tensor product
  with signature $(2rs, 2rs)$ which then must be equal to $(p,q)$.
  The dimension bound \eqref{dimso} becomes
  \[r(2r+1)+s(2s+1)\ge 4r^2s^2-2rs.\] For $r\ge 1$ and $s\ge 2$ we
  have $r(2r+1)\le 3r^2 \le \frac34 r^2s^2$ and
  $s(2s+1)\le \frac52 r^2s^2$. It follows that
  $4r^2s^2-2rs\le \frac{13}4r^2s^2$ which easily implies $rs<3$. Since
  $4rs=n\ge 5$ it follows that $r=1$ and $s=2$. These data produce the
  first symmetric subgroup mentioned in the lemma.

  For $H=\Sp(r,\R) \otimes \Sp(p_2,q_2)$ we obtain the same signature
  condition $p=q=2rs$ as before and hence
  $H=\Sp(1,\R)\otimes \Sp(1,1)$.  Up to an outer automorphism this is
  a real form of a symmetric subgroup in $G_\C$, which can be excluded
  with Berger's list for $G=\SO_0(4,4)$.

  The case where $H=\Sp(p_1, q_1)\otimes\Sp(p_2,q_2)$ with $r=p_1+q_1$
  and $s=p_2+q_2$ is treated analogously as Lemma \ref{typeII-su}.  We
  can assume $p_1\le q_1$ and $p_2\ge q_2$.  The group $H$ leaves
  invariant a Hermitian form of signature
  $(4(p_1 p_2 + q_1q_2), 4 (p_1q_2 + p_2 q_1))$, which must then equal
  $(p,q)$ by Lemma \ref{signature lemma}. Then the dimension bound
  \[r(2r+1)+s(2s+1)\ge 16p_2q_2(p_1^2+q_1^2) +16p_1q_1(p_2^2+q_2^2)
    -4(p_1p_2+q_1q_2)\] leads to the absurd unless $p_1=0$ and
  $H=\Sp(r)\otimes\Sp(p_2,q_2)$ with $r\le s$.  Using a matrix
  submersion as \eqref{matrix submersion} we obtain with Lemma
  \ref{so-codim2} that $r=1$, hence $H=\Sp(1)\otimes \Sp(p_2,q_2)$ and
  $G=\SO_0(4p_2,4q_2)$.  Set $H':= \Sp(p_2,q_2)\subset H$.  Then $H'$
  is of codimension 3 in $H$ and thus $H'$ admits an orbit of
  codimension 3 on $G/P$.  We parameterize flags $\F\in G/P$ as in
  Lemma \ref{typeII-su}. Let $V=\R^{4p_2+4q_2}\simeq \C^{2p_2+2q_2}$.
  First we note that there are three independent real symplectic forms
  which are invariant under $H'$. In fact, if $\Omega$ is the complex
  symplectic form on $\C^{2p_2+2q_2}$ which defines $H'$, then
  $\Omega_1 =\re \Omega$, and $\Omega_2=\im \Omega_2$ give two
  independent symplectic forms. A third form is given by
  $\Omega_3=\im (\cdot, \cdot)_{2p_2, 2q_2}$.  Concretely, the
  $\Omega_i$ are given as follows: Out of the standard symplectic
  forms $J_i$ on $\R^4$
  \[
    J_1=\begin{pmatrix} 0 & 0 & 1 & 0 \\ 0 & 0 & 0 & 1 \\ -1 & 0 & 0 &
      0 \\ 0 & -1 & 0& 0 \end{pmatrix}\quad
    J_2=\begin{pmatrix} 0 & 0 & 0& 1 \\ 0 & 0 & 1 & 0 \\ 0& -1 & 0 & 0 \\
      -1 & 0 & 0& 0 \end{pmatrix}\quad J_3=\begin{pmatrix} 0 & 1 & 0 &
      0 \\ -1 & 0 & 0 & 0 \\ 0& 0 & 0 & 1 \\ 0 & 0 & -1&
      0 \end{pmatrix}\] we build the forms
  ${\mathbf J_i} = \diag (J_i) \in \Mat_{p+q}(\R)$.  Then
  $\Omega_i ( \cdot, \cdot) = \la {\mathbf J_i}\cdot, \cdot\ra_{p,q}$.

  This gives us two independent rational invariants
  \[f_i(\F):= \frac{\Omega_i(v_1 \wedge v_2)}{\Omega_3(v_1\wedge
      v_2)}\qquad (i=1,2)\, .\] Further invariants we obtain via
  \[ g_{j,k}(\F)= \frac{(\Omega_j\wedge \Omega_k) (v_1 \wedge
      v_2\wedge v_3\wedge v_4)} {(\Omega_3\wedge \Omega_3)(v_1\wedge
      v_2\wedge v_3\wedge v_4)}\qquad (1\leq j\leq k\leq 3, (j,k)\neq
    (3,3))\, .\] Clearly each $g_{j,k}$ is independent to
  $\{f_1,f_2\}$ as the $f_i$ only depend on the first two coordinates
  $v_1,v_2$. Moreover, if $p_2>1$ we obtain additional invariants with
  an analogous construction on $\bigwedge^6 V$. Thus for $p_2>1$ we
  obtain at least $4$ algebraically independent $H'$-invariants on
  $G/P$ contradicting the fact that the generic $H'$-orbit is of
  codimension at most $3$ (cf.~Lemma \ref{independent}).  This leaves
  us to investigate the case with $p_2=1$.  Now if $q_2=1$, the
  $g_{j,k}$ are all dependent and $H$ is the second symmetric subgroup
  mentioned in the lemma.  If $q_2>1$, then we obtain at least 4
  algebraically independent functions out of $f_i, g_{jk}$. To verify
  that we may restrict ourselves to the case $q_2=2$.  We fix the
  first two coordinates of $\F$ to be $v_1=e_1, v_2=e_6$.  For
  $\lambda, \mu, \nu,\e,\delta \in\R$ we consider
  \[ \tilde v_3 = e_3 + \lambda e_4 + \mu e_7 + \nu e_8 + \e e_9
    \qquad \tilde v_4= e_3 - \mu e_7 + \delta e_{10}\, .\] Then
  $\{ v_1, v_2, \tilde v_3, \tilde v_4\} $ is a set of mutually
  orthogonal vectors with respect to $\la\cdot,\cdot
  \ra_{p,q}$. Moreover $\tilde v_3$, resp. $\tilde v_4$, is isotropic
  provided that $2(\mu + \lambda\nu) +\e^2=0$, resp.
  $ -2\mu + \delta^2=0$.  We choose now the parameters such that both
  $\tilde v_3$ and $\tilde v_4$ are isotropic.  Let $v_3$, $v_4$ be
  unit vectors obtained from $\tilde v_3, \tilde v_4$.
 
  This then gives us isotropic flags
  \[\F= \{ \la e_1\ra\subset \la e_1, e_6\ra\subset \la e_1, e_6,
    v_3\ra\subset \la e_1, e_6, v_3, v_4\ra\}\, .\] Then
  \[g_{11}(\F) = \frac{\Omega_1 (v_1, v_4)
      \Omega_1(v_3,v_2)}{\Omega_3(v_1,v_2)
      \Omega_3(v_3,v_4)}\quad\hbox{and} \quad g_{22}(\F) =
    \frac{\Omega_2(v_1, v_3) \Omega_2(v_2,v_4)} {\Omega_3(v_1,v_2)
      \Omega_3(v_3,v_4)}
  \]
  and in particular
  \[g_{11}(\F)= \frac{\lambda\mu}{\mu \lambda - \nu + \e \delta}
    \quad\hbox{and}\quad g_{22}(\F)=\frac{-\nu }{\mu \lambda - \nu +
      \e \delta} \,.\] It follows that
  $\{ f_1, f_2, g_{11}, g_{22}\} $ are independent and hence $H$ is
  not real spherical for $q_2>1$ (cf.~Lemma \ref{independent}).

  Next we look at the case where $H=H_1 \otimes H_2$ with both
  complexifications orthogonal.  We begin with both $H_i=\SO^*(2m_i)$
  quaternionic, and $m_i> 2$.  The invariant Hermitian form on each
  factor gives an invariant Hermitian form on the tensor product with
  signature $(2m_1m_2, 2m_1m_2)$ which then must be equal to $(p,q)$
  by Lemma \ref{signature lemma}.  Then the dimension bound
  \[2m_1^2-m_1+2m_2^2-m_2\ge 4m_1^2m_2^2- 2m_1m_2\] is easily seen to
  be violated.  Similarly if $H_1=\SO^*(2m_1)$ and
  $H_2=\SO_0(p_2,q_2)$ with $p_2+q_2=s$, then $p=q=m_1(p_2 +q_2)$ by a
  signature argument, and exactly the same bound as above results.

  This reduces to the final case where
  $H=\SO_0(p_1,q_1) \otimes \SO_0(p_2,q_2)$, which is treated
  similarly as the previous case of
  $H=\Sp(p_1, q_1)\otimes\Sp(p_2,q_2)$.  Comparing signatures we find
  that $p=p_1p_2+q_1q_2$ and $q=p_1q_2+p_2q_1$, and the dimension
  bound then implies $H=\SO(r)\otimes\SO_0(p_2,q_2)$ with $r\le s$. By
  applying a matrix submersion as \eqref{matrix submersion} we obtain
  with Lemma \ref{so-codim2} that $H$ must have an orbit on
  $\Sym(r,\R)$ of codimension $2$. This contradicts that $r\ge 3$.
\end{proof}

\subsection{Type III maximal subgroups}

We assume that $H_\C$ is simple and acts irreducibly on~$V$.

\subsubsection{The case $G=\SO_0(p,q)$}

\begin{lemma}\label{typeIII-so} Let $G=\SO_0(p,q)$ for $1\leq p\leq q$
  and $p+q\ge 5$. Then the only real spherical subgroups $H\subset G$
  of type III are given, up to isomorphism, by
  \begin{enumerate}
  \item $\hf=\sG_2^1$ in $\gf=\so(3,4)$.
  \item\label{so(3,4)} $\hf=\spin(3,4)$ in $\gf=\so(4,4)$ (two
    conjugacy classes swapped by an outer automorphism of $\gf$).
  \end{enumerate}
  These pairs are absolutely spherical and the second one is
  symmetric.
\end{lemma}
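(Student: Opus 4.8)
The plan is to follow the scheme of Lemmas \ref{typeIII-su} and \ref{typeIII-slmH}: use the dimension bound to force $(H_\C,\rho,\C^n)$ into Table \ref{tab:prehomogeneous}, and then inspect the finitely many surviving candidates by hand, keeping careful track of signatures. First I would dispose of the rank-one case $p=1$ by quoting \cite{KS1}, so assume $p\ge 2$. Together with $p\le q$ and $p+q\ge 5$ this gives $q\ge 3$, hence $(p-1)(q-2)\ge 1$, that is $pq-p\ge p+q-1=n-1$. By \eqref{dimso} we then have $\dim H_\C\ge\dim V-1$, so $(H_\C,\rho,V)$ satisfies \eqref{equ:prehomodimension} and occurs in Table \ref{tab:prehomogeneous} by Proposition \ref{prop:prehomogeneous}. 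Since $G$ is a real form of $\SO(n,\C)$, the group $H_\C$ preserves a nondegenerate symmetric bilinear form on $V$ (Remark \ref{remark on forms}), so only the rows carrying a $1$ in the last column survive: the adjoint representation (row $1$), $(\Sp(m,\C),\omega_2)$ (row $6$), $(\SO(n,\C),\omega_1)$ (row $7$), the spin representations of $\Spin(7,\C)$ and $\Spin(9,\C)$ (rows $13$ and $14$), $(\sG_2^\C,\omega_1)$ (row $20$) and $(\sF_4^\C,\omega_4)$ (row $21$). Row $7$ is discarded at once, as there $H_\C=\SO(n,\C)$ is not proper.

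Next I would run through the remaining rows, feeding the exact value of $\dim H_\C$ back into the sharper inequality $\dim H\ge pq-p$, and, where necessary, computing the signature of the representation of each noncompact real form by restricting an invariant symmetric form to a maximal compact subgroup and applying Lemma \ref{signature lemma} (using that split real forms carry split signatures). For row $1$ one has $n=\dim H_\C$, and $pq-p$, computed from the signature of the Killing form, exceeds $\dim\hf=n$ for every noncompact absolutely simple $\hf$ other than $\sl(2,\R)\cong\so(1,2)$, where $H_\C=\SO(3,\C)$ fails to be proper. For row $6$ ($m\ge 3$) the admissible signatures all make $pq-p$ exceed $\dim\Sp(m,\C)$, so the row is excluded by \eqref{dimso}. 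For row $14$ only the split form $\Spin(5,4)$ has a real $16$-dimensional spin representation, necessarily of signature $(8,8)$, so $G=\SO_0(8,8)$; but then $\dim\nf=120>36=\dim\spin(9,\C)$ contradicts \eqref{dimso}. For row $21$ the admissible signatures on $\C^{26}$ all give $pq-p>52=\dim\sF_4^\C$, except the definite one (yielding compact $G$) and the signature $(1,25)$ of the rank-one form $\sF_4^2$, which falls under the already excluded case $p=1$. Likewise, among the real forms of $\Spin(7,\C)$ only the split one has a real $8$-dimensional spin representation: $\Spin(6,1)$ and $\Spin(5,2)$ are excluded because the only invariant symmetric form compatible with a maximal compact subgroup is definite, contradicting noncompactness of $H$, and $\Spin(7)$ gives compact $G$.

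This leaves rows $13$ and $20$, which produce exactly the two pairs in the statement. For row $20$ the split form $\sG_2^1$ acts on $\R^7$ preserving a form of signature $(3,4)$, giving $\hf=\sG_2^1\subset\gf=\so(3,4)$; for row $13$ the split form $\Spin(3,4)$ acts on $\R^8$ preserving a split form of signature $(4,4)$, giving $\hf=\spin(3,4)\subset\gf=\so(4,4)$. In both cases $\gf$ is split, hence quasi-split, so by Lemma \ref{lemma quasi-split} the pair is real spherical precisely because the complex pair $\sG_2^\C\subset\so(7,\C)$, respectively $\spin(7,\C)\subset\so(8,\C)$, is spherical: the former lies in Kr\"amer's list (cf.\ Lemma \ref{most maximals are symmetric}), and the latter is even symmetric, being carried by triality onto $\so(7,\C)\subset\so(8,\C)$. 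Hence both pairs are absolutely spherical; moreover $\spin(3,4)\subset\so(4,4)$ is symmetric by Lemma \ref{real symmetric}, while $\sG_2^1\subset\so(3,4)$ is not, since $\sG_2\subset\so(7)$ is not a symmetric pair. The two conjugacy classes in the $\spin$ case arise from the two conjugacy classes $\spin(7,\C)_\pm$ in $\so(8,\C)$ (Remark \ref{embeddings remark}), both defined over $\R$ for $\so(4,4)$ and swapped by an outer automorphism of $\gf$.

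I expect the main obstacle to be the signature bookkeeping for rows $13$, $14$ and $21$: for each noncompact real form of $\Spin(7,\C)$, $\Spin(9,\C)$ and $\sF_4^\C$ one must determine which signatures of the relevant representation are realized over $\R$ and then test the sharp bound $\dim H\ge pq-p$ against each. These branching-to-maximal-compact computations are elementary but somewhat tedious, and this is where the real content of the proof lies.
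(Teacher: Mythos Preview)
Your approach matches the paper's: reduce to Table~\ref{tab:prehomogeneous} via the dimension bound and then eliminate rows by signature and dimension considerations. However, several of your factual claims are wrong. The $26$-dimensional representation of $\sF_4^2$ has signature $(10,16)$, not $(1,25)$ (this is computed in \cite{C}); fortunately $10\cdot 16-10=150>52$ still violates \eqref{dimso}, so the exclusion survives. Similarly $\Spin(8,1)$ (with $p-q\equiv 7\pmod 8$) also has a real $16$-dimensional spin representation, not only $\Spin(5,4)$; by the signature laws of \cite{H} it too lands in $\SO_0(8,8)$, and there $\dim\nf=8\cdot 8-8=56$ (not $120$), which still exceeds $36$. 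The paper sidesteps these hazards by first using \eqref{dimso} to restrict $(p,q)$ \emph{a priori} (for $\Spin(9,\C)$ to $p\le 3$, for $\sF_4^\C$ to $p=2$) and only then comparing with the actual signature or invoking the rank inequality \eqref{rankcondition}. Your treatment of the adjoint row also commits you to a case check over all noncompact absolutely simple real Lie algebras; the paper argues more directly that $\dim H=\dim V$ together with the strictness of \eqref{dimsoIII} for $p+q>6$ forces $\dim H_\C=6$, and no simple complex Lie algebra has that dimension.
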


Note that although the pair of Lie algebras $(\gf,\hf)$ in
\ref{so(3,4)} is symmetric, this is not the case for the space
$G/H$, since the corresponding involution does not lift to $G$.
Nevertheless $G/H$ is real spherical since the existence of an open
$P$-orbit is a property of the Lie algebras.

\begin{proof} For $p=1$ it follows from \cite{KS1} that there are no
  such spherical subgroups.  The case $\so(2,3)$ is quasi-split and
  features no type III subalgebras according to Krämer.  Hence we may
  assume here $2\leq p\leq q$, $q\geq 3$ and $p+q>5$. Then
  \[(pq-p)-(p+q)=(p-2)(q-3)+p+q-6\ge 0.\] Hence the dimension bound
  \eqref{dimso} implies
  \begin{equation}\label{dimsoIII}
    \dim H \geq pq-p\ge p+q=\dim V.
  \end{equation}
  In particular, we can apply Proposition~\ref{prop:prehomogeneous}
  and Remark \ref{remark on forms}.  We observe also that $pq-p> p+q$
  if $p+q>6$.

  \bigskip\noindent{ \it $\bullet$ $H_\C$, adjoint representation.}
  Then $\dim H=\dim V$, which is excluded unless $\dim H=6$, by the
  strictness of \eqref{dimsoIII}. Then $H=\SO_0(3,1)$ and $H_\C$ is
  not simple.

  \bigskip \noindent{ \it $\bullet$ $H_\C =\SO(m,\C)$ acting on
    $V = \C^{m}$}. This is possible, but then we would have
  $H_\C = G_\C$.

  \bigskip\noindent{ \it $\bullet$ $H_\C = \Sp(m,\C)$ acting on
    $\bigwedge^2_0 \C^{2m}$} for $m\ge 3$. Here $n = 2m^2-m-1 =p+q$.
  The dimension bound \eqref{dimso} then gives
  $2m^2+m\ge pq-p=p(2m^2-m-2-p)$. Already for $p=2$ this implies
  $m\le 2$, and hence there are no solutions.

  \bigskip\noindent{ \it $\bullet$ $H_\C = \Spin(m,\C)$ acting on a
    spin representation, $m = 7,9$}. Since the representation spaces
  are $\C^8$ and $\C^{16}$ respectively, the dimension bound leaves
  the following possibilities:
  \begin{itemize}
  \item $G = \SO_0(2,6)$, $\SO_0(3,5)$ or $\SO_0(4,4)$ if $m=7$,
  \item $G = \SO_0(2,14)$ or $ \SO_0(3,13)$ if $m=9$.
  \end{itemize}

  It follows from the signature laws of the spin representations
  \cite{H}*{Theorems 13.1 and 13.8} that only symmetric signatures
  (i.e.~of the form $(p,p)$) can occur. Hence only $G = \SO_0(4,4)$ is
  possible.  In that case
  $\hf=\spin(3,4):=\spin(7,\C)\cap\so(4,4)$. It is symmetric by Lemma
  \ref{real symmetric}, since $\spin(7,\C)$ is symmetric in
  $\so(8,\C)$.

  \bigskip\noindent{\it $\bullet$ $H_\C$ of exceptional type.} The
  case $H_\C =\sG_2^\C$ is possible; with $\hf=\sG_2^1$, the pair
  $(\so(3,4),\hf)$ is absolutely spherical (see Table
  \ref{non-symmetric real forms}). In view of
  Table~\ref{tab:prehomogeneous} we are left with
  $H_\C=\sF_4^\C$. Then $\dim V=26$ and the dimension bound implies
  that $G=\SO_0(2,24)$. The only non-compact real form of $H_\C$ with
  rank $\le 2$ is $\sF_4^2$. Its representation space
  \[ V = \{ X \in \Herm(3,\mathbb{O})_\C: TrX = 0\}. \] According to
  (2.2) in \cite{C}, the space $V$ carries an invariant symmetric
  bilinear form with signature $(10, 16)$. This is different from
  $(2,24)$.
\end{proof}

\subsubsection{The case $G=\SO^*(2m)$}

\begin{lemma}\label{typeIII-so*} Let $G=\SO^*(2m)$ for $m \geq 5$. 
  Then there exists no real spherical subgroup $H\subset G$ of type
  III.
\end{lemma}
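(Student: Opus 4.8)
**Proof plan for Lemma \ref{typeIII-so*}.**

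The plan is to run the same dimension-bound argument as in the proof of Lemma \ref{typeIII-so}, but now using the sharper bound \eqref{dimsoev} available for $G=\SO^*(2m)$. A type III subgroup has $H_\C$ simple and acting irreducibly on $V=\C^{2m}$, and since $G=\SO^*(2m)$ is a real form of $\SO(2m,\C)$, Remark \ref{remark on forms} tells us that $H_\C$ preserves a nondegenerate symmetric bilinear form on $V$. Hence $(H_\C,\rho,V)$ with $\dim V=2m$ must occur among those entries of Table \ref{tab:prehomogeneous} whose ``inv.\ form'' column is $1$; one first checks whether the dimension bound $\dim H\ge m^2-\tfrac32 m$ forces prehomogeneity at all. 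Since $m\ge 5$ gives $\dim V = 2m < m^2-\tfrac32 m = \dim H$ once $m\ge 4$, the inequality \eqref{equ:prehomodimension} is satisfied, so Proposition \ref{prop:prehomogeneous} applies and we may restrict to the entries of Table \ref{tab:prehomogeneous}.

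Next I would go through the surviving candidates. The classical family is $H_\C=\SO(m',\C)$ acting on $\C^{m'}$ (entry 7), which here would force $H_\C=\SO(2m,\C)=G_\C$, excluded since $H$ is proper; the case $H_\C$ simple with $\rho$ the adjoint (entry 1) gives $\dim V=\dim H$, which together with $\dim V=2m$ and the bound $\dim H\ge m^2-\tfrac32 m$ forces $2m\ge m^2-\tfrac32 m$, i.e.\ $m\le 3$, contradicting $m\ge 5$. Among the sporadic entries with invariant symmetric form and even dimension $2m$ we have $\Spin(7,\C)$ on $\C^8$ ($m=4$), $\Spin(9,\C)$ on $\C^{16}$ ($m=8$), $\sG_2^\C$ on $\C^7$ (odd, excluded), $\Sp(m',\C)$ on $\bigwedge^2_0\C^{2m'}$ of dimension $2m'^2-m'-1$, and $\sF_4^\C$ on $\C^{26}$ ($m=13$). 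For $\Spin(7,\C)$ we have $m=4<5$, excluded by hypothesis; for $\Spin(9,\C)$, $m=8$ and $\dim H=36$, while the bound requires $\dim H\ge 8^2-\tfrac32\cdot 8=52$, a contradiction; for $\sF_4^\C$, $m=13$, $\dim H=52$, but the bound needs $\dim H\ge 169-\tfrac{39}{2}=149.5$, again absurd; and for $\Sp(m',\C)$ on $\bigwedge^2_0$ one has $2m=2m'^2-m'-1$, so $m\approx m'^2$, and $\dim H=m'(2m'+1)\approx 2m'^2$ is far below $m^2-\tfrac32 m\approx m'^4$, a contradiction for $m'\ge 3$.

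The residual subtlety, as in Lemma \ref{typeIII-so}, is the $\Spin$ cases that survive the dimension bound by being small: here the only one that is genuinely close is $\Spin(7,\C)$ with $m=4$, and this is precisely eliminated by the standing assumption $m\ge 5$ recorded at the start of the section (the case $\so^*(8)\simeq\so(2,6)$ having been absorbed into the $\SO_0(p,q)$ analysis). So the main point to verify carefully is simply that no entry of Table \ref{tab:prehomogeneous} with a symmetric invariant form and even representation dimension $2m$, $m\ge 5$, satisfies $\dim H\ge m^2-\tfrac32 m$ except those already ruled out as improper or of the wrong rank; all of these are routine numerical checks. I expect the only place requiring a moment's thought is confirming that the real rank obstruction is not even needed — the dimension bound alone suffices for every candidate once $m\ge 5$ — which is what makes the statement of the lemma an outright nonexistence rather than a short list.
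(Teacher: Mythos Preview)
Your proposal is correct and follows essentially the same approach as the paper's proof: use the dimension bound \eqref{dimsoev} to obtain $\dim H>\dim V$, invoke Proposition~\ref{prop:prehomogeneous} together with Remark~\ref{remark on forms} to restrict to the entries of Table~\ref{tab:prehomogeneous} with a symmetric invariant form, and then eliminate each candidate (adjoint, $\SO$, $\Sp$ on $\bigwedge^2_0$, $\Spin(7)$, $\Spin(9)$, $\sG_2$, $\sF_4$) by a numerical check against the bound or by parity/properness. The only cosmetic slip is writing ``$=\dim H$'' where you mean ``$\le\dim H$'', and calling the $\Sp$ on $\bigwedge^2_0$ case ``sporadic''; neither affects the argument.
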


\begin{proof} By assumption $m\ge 2$ and hence
  \[ m^2 - \frac{3}{2}m = \frac{m}{2} \cdot (2m - 3) > 2m =
    \dim(V).  \] Hence, if $H$ is spherical it follows from
  \eqref{dimsoev} that $\dim H > \dim V$. In particular, $V$ is then a
  representation from Proposition~\ref{prop:prehomogeneous}, to which
  also Remark \ref{remark on forms} applies.

  \bigskip\noindent{ \it $\bullet$ $H_\C$ simple, adjoint
    representation.}  Since $\dim H = \dim V$, this is impossible by
  the strictness of the dimension bound.

  \bigskip \noindent{ \it $\bullet$ $H_\C =\SO(k,\C)$ acting on
    $V = \C^{k}$}. This is possible for $k = 2m$, but then we would
  have $H_\C = G_\C$.

  \bigskip\noindent{ \it $\bullet$ $H_\C = \Spin(k,\C)$ acting on a
    spin representation for $k=7,9$}. Note that $k=7$ is excluded
  since $m\ge 5$. For $\Spin(9,\C)$ on $\C^{16}$ we have $m = 8$ and
  $\dim H = 36 < 52$, so $H$ does not satisfy the dimension bound.

  \bigskip\noindent{ \it $\bullet$ $H_\C = \Sp(k,\C)$ acting on
    $\bigwedge^2_0 \C^{2k}$}. Here $2m = 2k^2-k-1$.  Since $k \geq 3$,
  we have $m \geq 7$. Hence, it follows from
  \[ \dim H = 2k^2 + k = 2m + 2k + 1 \geq m(m - \frac{3}{2}) \] and
  $k\leq m$ that $4m + 1 \geq \frac{11}{2}m$, which is impossible.

  \bigskip\noindent{ \it $\bullet$ $H_\C$ of exceptional type.}  Only
  for $\sF_4^\C$ is $\dim V$ even and not excluded by
  Table~\ref{tab:prehomogeneous}.  Then $m=13$ and \eqref{dimsoev} is
  invalid.
\end{proof}

This concludes the proof of Theorem \ref{maximals are real forms} for
$G_\C=\SO(n,\C)$.

\section{Maximal reductive real spherical subgroups for the
symplectic  groups}\label{max Sp}

We only consider the real forms $G=\Sp(p,q)$ of $G_\C=\Sp(n, \C)$,
$p+q=n$ and $0<p\leq q$, as the real form $\Sp(n,\R)$ is split. Then
$\dim(G/K)=4pq$ and $\rank_\R G=p$, so that by \eqref{db}
\begin{equation}\label{dimsp} \dim H \geq 4pq - p . \end{equation}

\subsection{About $\Sp(p,q)$}
For later reference we record some structural facts for the group
$\Sp(p,q)$. As before we identify $\Hb^n$ with $\C^{2n}$ and denote by
$h\mapsto \oline h$ the conjugation on $\Hb^n$. The group $\Sp(p,q)$
consists of the right $\Hb$-linear transformations on $\Hb^n$ which
preserve the Hermitian $\Hb$-valued form
\[ \phi (h, h')= \oline{h_1} h_1' + \ldots + \oline{h_p} h_p' -
  \oline{h_{p+1}} h_{p+1}' -\ldots - \oline{h_n}h_n' \, .\]

Similar to the $\SO^*(2m)$-case the $\C$-part of $\phi$ yields a
Hermitian form $(\cdot, \cdot)_{2p,2q}$ and the $j\C$-part a
symplectic form $\la\cdot , \cdot\ra$, both being kept invariant under
$\Sp(p,q)$.  In particular, if we view $\Sp(p,q)$ as a subgroup of
$\SL(2n,\C)$, then $\Sp(p,q)= \Sp(n,\C) \cap \SU(2p,2q)$.

The minimal flag variety is given by the isotropic right $\Hb$-flags:
\begin{equation} \label{H-flag II} G/P =\{ V_1 \subset \ldots \subset
  V_{p} \subset\Hb^{n}\mid \dim_\Hb V_{i}=i,\, \phi(V_i, V_i)=\{0\} \}
  \, .\end{equation}

\begin{lemma}\label{codim7} Let $H\subset\Sp(p,q)$ be a real spherical subgroup. 
  Then $H$ admits an orbit on $\Pb(\Hb^n)$ of real codimension at most
  $1$ and an orbit on $\Hb^n=\C^{2p+2q}$ of real codimension at most
  $5$.
\end{lemma}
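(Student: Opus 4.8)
The plan is to argue exactly as in Lemmas~\ref{orbit bound SU} and~\ref{orbit bound SU*}. First I recall from~\eqref{H-flag II} that $G/P$ is the variety of $\phi$-isotropic right quaternionic flags in $\HH^n$, and that sending a flag $V_1\subset\dots\subset V_p$ to its line $V_1$ defines a $G$-equivariant map
\[
  G/P\longrightarrow\Nc,\qquad \Nc:=\{[v]\in\Pb(\HH^n):\phi(v,v)=0\},
\]
the target being the set of $\phi$-isotropic quaternionic lines. Since $p\ge1$, Witt's theorem shows that $G=\Sp(p,q)$ acts transitively on $\Nc$, so this is a surjective $G$-equivariant bundle map. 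As $H$ is real spherical it has an open orbit on $G/P$, and the image of that orbit is then an open $H$-orbit $\mathcal O\subset\Nc$.

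Next I would compute the codimension of $\Nc$ in $\Pb(\HH^n)$. Because $\phi$ is Hermitian one has $\phi(v,v)\in\R$ for every $v$, and $\phi(vq,vq)=|q|^2\phi(v,v)$ for $q\in\HH\setminus\{0\}$; hence ``$\phi(v,v)=0$'' is a single real equation on $\Pb(\HH^n)$, and since $\phi$ is nondegenerate this equation is regular along $\Nc$. Thus $\Nc$ is a real hypersurface, $\dim_\R\Nc=\dim_\R\Pb(\HH^n)-1$, and therefore $\mathcal O$ has real codimension~$1$ in $\Pb(\HH^n)$. This is the first assertion.

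For the second assertion I would pull $\mathcal O$ back along the bundle projection $\pi\colon\HH^n\setminus\{0\}\to\Pb(\HH^n)$, whose fibres are copies of $\HH\setminus\{0\}$ and hence have real dimension~$4$. The set $\pi^{-1}(\mathcal O)$ is $H$-invariant and has real dimension $\dim_\R\mathcal O+4=\dim_\R\HH^n-1$. Fixing any $v\in\pi^{-1}(\mathcal O)$ and using that $H$ acts $\HH$-linearly, hence compatibly with $\pi$, one gets $\pi(H\cdot v)=H\cdot[v]=\mathcal O$, so that $\dim_\R H\cdot v\ge\dim_\R\mathcal O=\dim_\R\HH^n-5$; equivalently $H\cdot v$ has real codimension at most~$5$ in $\HH^n=\C^{2p+2q}$.

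There is no real obstacle in this argument: once the structural facts about $\phi$ and about $G/P$ recorded above are in place, it is pure dimension bookkeeping. The only points that require a little care are that $\phi(v,v)$ is genuinely $\R$-valued --- so that $\Nc$ has real, rather than complex, codimension~$1$ --- and the repeated passage between quaternionic and real dimensions, with the fibre $\HH\setminus\{0\}$ contributing~$4$.
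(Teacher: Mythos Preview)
Your proof is correct and follows the same approach as the paper: pass from $G/P$ to the variety $\Nc$ of $\phi$-isotropic quaternionic lines, use that $\phi(v,v)$ is real-valued so $\Nc$ has real codimension~$1$ in $\Pb(\HH^n)$, and then lift to $\HH^n$ via the $4$-dimensional fibre $\HH\setminus\{0\}$. The paper's proof is simply a more compressed version of exactly this argument.
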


\begin{proof} Let ${\mathcal L}$ be the variety of $\phi$-isotropic
  $\Hb$-lines. According to \eqref{H-flag II} ${\mathcal L}$ is a
  $G$-quotient of $G/P$ and hence a real spherical subgroup
  $H\subset G$ must admit an open orbit on ${\mathcal L}$.  Observe
  that a line $v\Hb$ is isotropic if and only if the real valued
  function $v \mapsto \phi(v,v)$ vanishes.  From that the assertion
  follows.
\end{proof}

\subsection{Type I maximal subgroups}

Let $H\subset G$ be a maximal subgroup of type I. Then
$H_\C=\Sp(r,\C) \times \Sp(s,\C))$, $r,s>0$ and $n=r+s$ or
$H_\C=\GL(n, \C)$.  In both cases $H_\C$ is a symmetric subgroup of
$G_\C$. Hence with Lemma \ref{real symmetric} and Berger's list
\cite{Berger} we obtain:

\begin{lemma} \label{typeI-sp}Let $H\subset\Sp(p,q)$ be a subgroup of
  type I. Then $H$ is symmetric and up to conjugation one of the
  following:
  \begin{enumerate}
  \item $\Sp(p_1,q_1) \times \Sp(p_2,q_2)$ with $p_1+p_2=p$ and
    $q_1+q_2=q$,
  \item $\Sp(p,\C)$ if $p=q$,
  \item $\operatorname{U}(p,q)$,
  \item $\GL(p,\Hb)$ for $p=q$.
  \end{enumerate}
\end{lemma}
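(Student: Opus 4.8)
This statement is essentially a consequence of Dynkin's list combined with Berger's classification, so the plan is to spell out that reduction.

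First I would recall that, by the description of the type I maximal reductive subgroups of $G_\C=\Sp(n,\C)$ ($n=p+q$) given above, $H_\C$ is conjugate either to $\Sp(n_1,\C)\times\Sp(n_2,\C)$ with $n_1+n_2=n$ and $n_i>0$, or to $\GL(n,\C)$ (with $\C^{2n}=V_1\oplus V_1^*$ and both summands Lagrangian). In the first case $H_\C$ is the fixed-point group of $\Ad(\diag(I_{2n_1},-I_{2n_2}))$ and in the second of the involution swapping $V_1$ and $V_1^*$, so $H_\C$ is symmetric in $G_\C$ in both cases. Since $\gf=\sp(p,q)$ is semisimple, Lemma \ref{real symmetric} applies and shows that the defining involution of $\hf_\C$ preserves $\gf$; hence $(\gf,\hf)$ is symmetric and $H$ is a symmetric subgroup of $G$.

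It then remains to list the real forms of these two complex symmetric subalgebras that embed in $\sp(p,q)$, and I would read these off from Berger's Tableaux II \cite{Berger}, using two constraints to fix the parameters. For $\hf_\C=\sp(n_1,\C)\oplus\sp(n_2,\C)$: either the conjugation defining $\gf$ preserves each factor, and then $\phi$ restricts to nondegenerate Hermitian $\Hb$-forms on the two corresponding quaternionic subspaces, with signatures $(p_i,q_i)$ summing to $(p,q)$, giving case (1); or it interchanges two isomorphic factors, which forces $n_1=n_2$, $p=q$, and the complex form $\sp(p,\C)$, case (2). For $\hf_\C=\gl(n,\C)$: among its real forms $\uf(k,l)$ (with $k+l=n$), $\gl(n,\R)$, and $\gl(n/2,\Hb)$ (for $n$ even), the split form $\gl(n,\R)$ is excluded by the rank inequality \eqref{rankcondition}; comparing $\phi$ with the $\uf(k,l)$-invariant Hermitian form via Lemma \ref{signature lemma} forces $(k,l)=(p,q)$, case (3); and $\gl(n/2,\Hb)$, of real rank $n/2$, can occur only when $n/2=p=q$, case (4). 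Conversely, each of (1)--(4) does appear in Berger's list as a symmetric subalgebra of $\sp(p,q)$.

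The one part needing genuine care — the main obstacle, modest as it is — is this last step of matching real forms: eliminating the spurious ones and pinning down the admissible parameters from the signature of $\phi$ (Lemma \ref{signature lemma}) and from the rank inequality \eqref{rankcondition}. The rest is a direct transcription, entirely parallel to the type I analyses for $\SU(p,q)$, $\SO^*(2m)$, and $\SO_0(p,q)$ carried out in Lemmas \ref{typeI-su}, \ref{typeI-so*}, and \ref{typeI-so}.
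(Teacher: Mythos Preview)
Your argument is correct and follows the paper's route exactly: the type~I complexifications are symmetric in $\gf_\C$, Lemma~\ref{real symmetric} then makes $(\gf,\hf)$ symmetric, and Berger's list gives the four cases. The paper's proof is in fact just that sentence, without your supplementary parameter-fixing; note in passing that your appeal to Lemma~\ref{signature lemma} for $\uf(k,l)$ is not quite clean, since $\uf(k,l)$ acts reducibly on $\C^{2n}$---but this is harmless, as your citation of Berger is the actual argument and the rank and signature considerations are only illustrative.
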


\subsection{Type II maximal subgroups}

In this situation we have $H_\C = \Sp(r, \C) \otimes \SO(s,\C)$ with
$s\geq 3$, $s\neq 4$ or $(r,s)=(1,4)$.

\begin{lemma}\label{typeII-sp} There are no real spherical subgroups
  $H\subset\Sp(p,q)$ of type II.
\end{lemma}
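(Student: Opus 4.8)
The argument follows the pattern of the proofs of Lemmas \ref{typeII-su} and \ref{typeII-so}. First I would pin down the structure of $H$. Since $H$ is of type II we have $\hf_\C=\sp(r,\C)\oplus\so(s,\C)$ with $rs=n$ and $\C^{2n}=\C^{2r}\otimes\C^s$, where $s\ge 3$ and $s\neq 4$ unless $r=1$. Every real form of $\hf_\C$ respects the decomposition into the two ideals, and neither ideal can carry the complex structure of $\gf$ by Remark \ref{rem:typeII-complexsubgroup}; hence $H=H_1\otimes H_2$ with $H_1$ a real form of $\Sp(r,\C)$ and $H_2$ a real form of $\SO(s,\C)$. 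Each such real form preserves a non-zero Hermitian form on its standard representation, of signature $(2r_1,2r_2)$ for $H_1=\Sp(r_1,r_2)$, $(r,r)$ for $H_1=\Sp(r,\R)$, $(s_1,s_2)$ for $H_2=\SO_0(s_1,s_2)$, and $(\tfrac s2,\tfrac s2)$ for $H_2=\SO^*(s)$. As $H$ acts irreducibly on $\C^{2n}$ and $\Sp(p,q)$ preserves a Hermitian form of signature $(2p,2q)$, Lemma \ref{signature lemma} forces that form to be proportional to the tensor product of the two factor forms, which pins down $\{2p,2q\}$ as the signature of that tensor product.

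Next I would feed this into the dimension bound \eqref{dimsp}, using $\dim H=r(2r+1)+\tfrac12 s(s-1)$. Because $4pq-p$ is smallest when the signature $(2p,2q)$ is most unbalanced, a short case distinction over the possible signatures of $b_1$ and $b_2$ — with \eqref{rankcondition} as a convenient shortcut in some subcases — eliminates every possibility except $H=\Sp(r)\otimes\SO_0(s_1,s_2)$ with $1\le s_1<s_2$, so that $G=\Sp(rs_1,rs_2)$; moreover the bound survives only when $s$ is large compared with $r$ and $s_1$.

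For these remaining families I would show directly that $H$ has no open orbit on the flag variety $G/P$, which by \eqref{H-flag II} is the variety of isotropic right $\Hb$-flags of length $p=rs_1$ in $\Hb^{rs}=\C^{2r}\otimes\C^s$. Expressing such a flag through its generating vectors and contracting those vectors, together with their complex conjugates, against the invariant tensors on the $\C^s$-factor — the complex symmetric form preserved by $\SO(s,\C)$ and the associated Hermitian form of signature $(s_1,s_2)$ — produces a family of $\SO_0(s_1,s_2)$-invariant scalar functions on $G/P$. A dimension count then shows that there are strictly more functionally independent functions of this kind than the dimension of the residual symmetry $\Sp(r)\times(\text{flag rescalings})$ that still acts on them; hence at least one non-constant $H$-invariant rational function survives on $G/P$, and by Lemma \ref{independent} there is no open $H$-orbit. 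Thus no type II subgroup of $\Sp(p,q)$ is real spherical.

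The main obstacle is this last step: turning the heuristic dimension count on the space of $\SO_0(s_1,s_2)$-invariant flag functions into an honest bound. As in the proofs of Lemmas \ref{typeII-su} and \ref{typeII-so}, this is best accomplished by exhibiting one explicit isotropic $\Hb$-flag at which the Jacobian of a concrete collection of such invariants — built in the spirit of the matrix submersion \eqref{matrix submersion} — has the required rank. The enumeration of the surviving triples $(r,s_1,s_2)$ in the dimension step is routine but somewhat lengthy.
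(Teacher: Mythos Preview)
Your elimination step has a genuine gap. The dimension bound together with \eqref{rankcondition} does \emph{not} reduce to the single family $H=\Sp(r)\otimes\SO_0(s_1,s_2)$: the case $H=\Sp(p_1,q_1)\otimes\SO(s)$ with $p_1,q_1\ge1$ (noncompact symplectic factor, compact orthogonal factor) also survives when $r=p_1+q_1$ is large relative to $s$. For instance, with $s=3$, $p_1=1$, $q_1=16$ one has $G=\Sp(3,48)$, $\dim H=598$, and $4pq-p=573$, so \eqref{dimsp} holds; the rank inequality $\rank_\R G=3\ge1=\rank_\R H$ is no obstruction either. Your final invariant-construction step is tailored to the other family and does not cover this one.

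The key device you are missing is the matrix-submersion argument, exactly as in \eqref{matrix submersion} combined with Lemma~\ref{codim7}. For $H=\Sp(p_1,q_1)\otimes\SO(s)$ with $2r\ge s$ (the only range in which \eqref{dimsp} can survive), the map $X\mapsto X^*I_{2p_1,2q_1}X$ from $\Mat_{2r,s}(\C)$ to $\Herm(s,\C)$ shows that $\SO(s)$ would need an orbit of real codimension $\le5$ on $\Herm(s,\C)$, whereas the generic codimension is $\tfrac12 s(s+1)\ge6$. The same trick, applied to your surviving family $H=\Sp(r)\otimes\SO_0(p_2,q_2)$, forces $\Sp(r)$ to have an orbit of codimension $\le5$ on $\Herm(2r,\C)$ and hence $r=1$. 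Only after this reduction does the paper build explicit invariants on $G/P$, and then four invariants (coming from the single $H'$-bilinear form on $\C^{p_2+q_2}$) already suffice to exclude $H=\Sp(1)\otimes\SO_0(p_2,q_2)$. So rather than a general dimension count over all $(r,s_1,s_2)$, the argument collapses to a single concrete case.
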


\begin{proof} We first claim that the real forms
  $H_1=\Sp(r, \R) \otimes \SO_0(p_2, q_2)$ with $p_2+q_2=s$ and
  $H_2=\Sp(r,\R)\otimes \SO^*(2m)$ with $2m=s$ are not possible.  Note
  that $\Sp(r,\R)\subset \SU(r,r)$,
  $\SO_0(p_2,q_2)\subset \SU(p_2,q_2)$, $\SO^*(2m)\subset \SU(m, m)$.
  It follows that both $H_1$ and $H_2$ leave a Hermitian form on
  $\C^n=\C^{2r}\otimes\C^s$ invariant which is of type $(rs,rs)$ and
  hence $p=q=\frac12rs$ by Lemma \ref{signature lemma}.  The dimension
  bound \eqref{dimsp} then gives
  \[ 2r^2 +r + \frac12s(s-1)\geq 4p^2-p\] with $rs=2p$. This has no
  solutions since for $r\ge 1$, $s\ge 3$ we find
  \[ 2r^2 +r + \frac12 s^2\le \frac29 r^2s^2+ \frac 19 r^2s^2+\frac12
    r^2s^2=\frac56 r^2s^2\le r^2s^2-\frac12rs\] and thus neither $H_1$
  nor $H_2$ can be spherical.

  The case where $H=\Sp(p_1,q_1)\otimes \SO^*(2m)$ is similar, as $H$
  leaves a Hermitian form of equal parity invariant.

  This leaves us with the last case where
  $H=\Sp(p_1,q_1) \otimes \SO_0(p_2,q_2)$. It requires a more detailed
  investigation.  We request $p_1\le q_1$ and $q_2\le p_2$.  Then
  $p_1p_2 + q_1 q_2\le p_1 q_2 + p_2q_1$ and
  \[( 2p_1p_2 + 2q_1 q_2, 2p_1 q_2 + 2p_2q_1)=(2p, 2q)\] as $H$ leaves
  invariant a Hermitian form of this signature (cf. Lemma
  \ref{signature lemma}).  Inserting that in the dimension bound
  \eqref{dimsp} gives
  \[ 2r^2+r+\frac12 s(s-1) \ge
    4p_2q_2(p_1^2+q_1^2)+4p_1q_1(p_2^2+q_2^2) -(p_1p_2+q_1q_2).\]

  As in \eqref{dimbd} we deduce that one factor must be compact.
  Suppose first that $q_2=0$ hence $H=\Sp(p_1,q_1) \otimes \SO(s)$
  with $s=p_2$.  The dimension bound in this case is:
  \[ 2r^2 +r + \frac12 {s(s-1)} \geq 4 q_1p_1 s^2 - p_1 s\,.\] There
  are no solutions for $2r\leq s$. For $s\leq 2r$, a matrix
  computation (use an analogue of the map \eqref{matrix submersion})
  combined with Lemma \ref{codim7} yields that $\SO(s)$ needs to have
  an orbit of real codimension at most $5$ on $\Herm(s,\C)$. The
  orbits of maximal dimension are in $\Sym(s,\R)$, and they have
  codimension $s$ in this space, hence $\frac12s(s+1)$ in
  $\Herm(s,\C)$. It follows that no $s\geq 3$ meets the requirement.

  Finally we investigate the case where $p_1=0$. Then $r=q_1$ and
  $H=\Sp(r) \otimes \SO_0(p_2,q_2)$ and the dimension inequality
  becomes:
  \[ 2r^2 +r + \frac12{s(s-1)} \geq 4 q_2p_2 r^2 - p_2r \] There is no
  solution if $3\le s\leq 2r$ so we may assume that $s> 2r$.  With the
  matrix computations similar to the $\SU(p,q)$-case (see \eqref{matrix
    submersion}) combined with Lemma \ref{codim7} this reduces
  matters to study $\Sp(r)$-orbits on $\Herm(2r, \C)$ with
  codimension at most $5$.  This implies $r=1$,
  i.e.~$H=\Sp(1)\otimes\SO_0(p,q)$ with $p+q\ge 3$.  We now proceed as
  in Lemma \ref{typeII-su}: Consider
  $H':= \1 \otimes\SO_0(p,q) \subset H$.  Then $H'$ is required to
  have an orbit on $G/P$ of codimension at most $3$.  We now produce
  many $H'$-invariant functions on $G/P$.  First we decompose
  $V=\C^n = \C^{p +q} \oplus \C^{p+q}$ into $H'$-orthogonal summands
  and write $p_i: V \to \C^{p+q}$, $1\leq i\leq 2$ for the two
  $H'$-equivariant projections.  Now given a flag
  $\F=\{V_1 \subset\ldots\subset V_{p}\}$, we choose an orthonormal
  basis $v_1, \ldots, v_{2p}$ of $V_{p}$ with respect to the standard
  Hermitian inner product on $\C^n$ such that $V_{i}$ is spanned by
  $v_1, \ldots, v_{2i}$.  Denote by $(\cdot,\cdot)_{p,q}$ the complex
  bilinear form on $\C^{p+q}$ which is invariant for $H'$.

  Then for $1\leq m\leq p$ and $1\leq j, k\leq 2$ we consider the
  function
  \[ g_{mjk}(\F):= (p_j(v_1\wedge \ldots\wedge v_{2m}),
    p_k(v_1\wedge\ldots\wedge v_{2m}))_{p,q}\, .\] Similarly as before
  the rational functions
  \[ f_{mjk}:= \re \left(\frac {g_{mjk}}{g_{m11}} \right)\quad
    \hbox{and}\quad f_{mjk}':= \im
    \left(\frac{g_{mjk}}{g_{m11}}\right)\] are all $H'$-invariant.
  Already for $m=1$ we obtain 4 independent invariants this way, and
  thus $H'$ cannot have an orbit of codimension 3 by Lemma
  \ref{independent}.
\end{proof}

\subsection{Type III maximal subgroups}

\begin{lemma}\label{typeIII-sp} Let $G=\Sp(p,q)$. Then there exist no
  real spherical subgroups of type III.
\end{lemma}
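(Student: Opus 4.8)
The plan is to run through the classification of Type III maximal reductive subgroups $H_\C$ of $\Sp(n,\C)$, exactly as in the $\SO$ and $\SL$ cases, and rule each of them out by the dimension bound \eqref{dimsp}, occasionally supplemented by the codimension estimates of Lemma~\ref{codim7} and the rank inequality \eqref{rankcondition}. A Type III subgroup has $H_\C$ simple and acting irreducibly on $V=\C^{2n}$, so by Remark~\ref{remark on forms} it must preserve a non-degenerate \emph{skew-symmetric} bilinear form on $V$, i.e.\ it comes from one of the entries of Table~\ref{tab:prehomogeneous} whose final column is marked~$2$ (or it is too small to appear there, but then the dimension bound is hopeless). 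First I would observe that the relevant entries are $\Sp(m,\C)$ on $\omega_1$ (which gives $H_\C=G_\C$, excluded since $H$ must be proper), $\SL(2,\C)$ on $3\omega_1$ ($\dim V=4$), $\SL(6,\C)$ on $\omega_3$ ($\dim V=20$), $\Sp(3,\C)$ on $\omega_3$ ($\dim V=14$), $\Spin(11,\C)$ on the spin rep ($\dim V=32$), $\Spin(12,\C)$ on a half-spin rep ($\dim V=32$), and $\sE_7^\C$ on $\omega_7$ ($\dim V=56$).

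For each of these I would check the dimension bound $\dim H\ge 4pq-p$ with $p+q=n=\frac12\dim V$ and $1\le p\le q$. In the series cases the right-hand side is large: for $\SL(6,\C)$ one has $n=10$, and $4pq-p\ge 4\cdot5\cdot5-5=95>35=\dim\SL(6,\C)$; for $\sE_7^\C$, $n=28$ so $4pq-p\ge 4\cdot14\cdot14-14=770>133=\dim\sE_7^\C$; the $\Spin(11,\C)$ and $\Spin(12,\C)$ cases have $n=16$ with $4pq-p\ge 4\cdot8\cdot8-8=248$, far exceeding $55$ and $66$; $\Sp(3,\C)$ on $\omega_3$ has $n=7$ with $4pq-p\ge 4\cdot3\cdot4-3=45>21$. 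The only genuinely small case is $\SL(2,\C)$ on $3\omega_1$: here $\dim V=4$, so $n=2$, $(p,q)=(1,1)$, and the bound reads $\dim H\ge 4-1=3=\dim\SL(2,\C)$, so the dimension bound alone does not exclude it. In that case I would rule out the real forms by hand: the real forms of $\sl(2,\C)$ acting on the $4$-dimensional representation are $\su(2)$, $\su(1,1)$, and $\sl(2,\R)\cong\su(1,1)$; the compact form $\Sp(1)\hookrightarrow\Sp(2)$ lies inside the maximal compact $\Sp(1)\times\Sp(1)$ of $\Sp(1,1)$, hence cannot give an open $P$-orbit (it is compact and of too small dimension, or one invokes Lemma~\ref{factor compact} after the local structure theorem), while the non-compact real form $\SU(1,1)$ inside $\Sp(1,1)$ has real rank $1=\rank_\R\Sp(1,1)$ but can be excluded because the irreducible $4$-dimensional symplectic representation of $\SU(1,1)$ preserves a Hermitian form of signature $(2,2)$, which matches $(2,2)=(2p,2q)$, so that one still has to look more carefully: here I would use Lemma~\ref{codim7}, that $H$ must have an orbit of real codimension $\le 1$ on $\Pb(\HH^2)\cong\Pb^1(\HH)$, which is $4$-real-dimensional, so $H$ needs a $3$-dimensional orbit there; a direct check shows the $3$-dimensional group $\SU(1,1)$ acting on $\Pb^1(\HH)$ has no open orbit (its orbits on the quaternionic projective line are at most $2$-dimensional, since the action factors through the $\SU(1,1)$-action on $S^4$ coming from the spin cover $\SU(1,1)\to\SO_0(1,2)$ composed with an $\SO_0(1,2)\hookrightarrow\SO_0(1,4)$-type embedding). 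Either way the case is eliminated.

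I would also note, as a shortcut for most of the series cases, that one can combine the bound with the rank inequality $\rank_\R H\le\rank_\R G=p$: the smallest possible value of $p$ compatible with $n=\frac12\dim V$ forces $p$ to be small, yet the non-compact real forms of $\SL(6,\C)$, $\Sp(3,\C)$, $\Spin(11,\C)$, $\Spin(12,\C)$, $\sE_7^\C$ of rank $\le p$ are quite restricted, and in each instance the dimension bound is violated by a wide margin anyway; so the rank inequality is not even needed once the numerics above are in place.

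\textbf{Main obstacle.} The only subtle point is the single small case $H_\C=\SL(2,\C)$ acting via $3\omega_1$ on $\C^4=\HH^2$, where the dimension bound is exactly tight and must be supplemented by either the projective-orbit estimate of Lemma~\ref{codim7} or an explicit description of the orbit structure of the real forms of $\SL(2,\C)$ on $\Pb^1(\HH)$. All the other cases are immediate from the dimension bound \eqref{dimsp} together with Remark~\ref{remark on forms}.
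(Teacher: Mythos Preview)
Your argument contains a genuine numerical error that leaves several cases unaddressed. The dimension bound \eqref{dimsp} reads $\dim H\ge 4pq-p$, and to rule out a candidate $H_\C$ for \emph{every} real form $\Sp(p,q)$ with $p+q=n$ you must show $\dim H<\min_{1\le p\le q}(4pq-p)$, not $\dim H<\max_{1\le p\le q}(4pq-p)$. The function $p\mapsto 4p(n-p)-p$ is concave with maximum near $p=n/2$; its minimum on $1\le p\le\lfloor n/2\rfloor$ is at $p=1$, where it equals $4n-5$. Thus for $\SL(6,\C)$ on $\omega_3$ ($n=10$) you need $35<4\cdot10-5=35$, which fails; for $\Spin(12,\C)$ ($n=16$) you need $66<59$, which fails; for $\sE_7^\C$ ($n=28$) you need $133<107$, which fails. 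All of these survive the dimension bound at $p=1$, and you have not treated them.

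Your handling of the one case you did notice, $\SL(2,\C)$ on $3\omega_1$, is also not correct. The principal $\SL(2,\C)\subset\Sp(2,\C)$ corresponds under the isomorphism $\sp(2,\C)\cong\so(5,\C)$ to the principal $\sl_2$ acting irreducibly on $\C^5$ via $4\omega_1$; it does \emph{not} factor through a standard $\so(3)\hookrightarrow\so(5)$. In fact no real form of this $\sl_2$ lands in $\sp(1,1)\cong\so(1,4)$: the invariant symmetric form on the $5$-dimensional irreducible has signature $(5,0)$ for $\su(2)$ and $(3,2)$ for $\sl(2,\R)$, matching $\so(5)$ and $\so(2,3)\cong\sp(2,\R)$ respectively, never $\so(1,4)$. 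So your orbit argument is both unnecessary and based on a wrong description of the embedding.

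The paper avoids the whole $p=1$ morass by quoting \cite{KS1}, where all real spherical reductive subalgebras of real-rank-one groups are classified; for $p\ge2$ one checks that $4pq-p>2(p+q)=\dim V$, so $\dim H>\dim V$ strictly, and then every sporadic entry of Table~\ref{tab:prehomogeneous} with skew form is immediately excluded (the minimum of $4pq-p$ for $p\ge2$ is $8n-18$, which exceeds $\dim H_\C$ in each case). If you want to proceed without citing \cite{KS1}, you must supply separate arguments for $H_\C=\SL(6,\C)$, $\Spin(12,\C)$, and $\sE_7^\C$ inside $\Sp(1,9)$, $\Sp(1,15)$, $\Sp(1,27)$; this can be done (e.g.\ via signatures of the invariant Hermitian forms on the respective representations, as in Lemma~\ref{signature lemma}), but it is exactly the work that the citation saves.
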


\begin{proof} We may assume that $1<p$ as it is known for $p=1$ by
  \cite{KS1}.  Then $2\le p\le q$ implies $3p+2q\le5q<8q\le4pq$ and
  hence \[ 4pq - p > \dim V = 2p + 2q. \] Hence we get from
  \eqref{dimsp} the strict inequality $\dim H_\C >\dim V$, and again
  we can use Proposition~\ref{prop:prehomogeneous} and Remark
  \ref{remark on forms}.  We are thus left with testing some sporadic
  cases, and it is easy to see that they never satisfy the dimension
  bound.
\end{proof}

This concludes the proof of Theorem \ref{maximals are real forms} for
$G_\C=\Sp(n,\C)$.

\section{The maximal real spherical subalgebras of the exceptional Lie
  algebras}\label{max ex}

Here $\gf$ is such that $\gf_\C$ is exceptional simple.  We assume
that $\gf$ is not compact.

\begin{lemma}\label{max is symmetric} 
  Let $\gf$ be a non-complex exceptional non-compact simple real Lie
  algebra and $\hf$ a real spherical maximal reductive subalgebra.
  Then,

  \begin{enumerate}
  \item If $\gf\neq \sG_2^1$, then $\hf$ is symmetric.
  \item If $\gf=\sG_2^1$, then $\hf$ is symmetric or conjugate to
    either $\hf_1=\su(2,1)$ or $\hf_2=\sl(3,\R)$ which are both
    absolutely spherical but not symmetric.
  \end{enumerate}
\end{lemma}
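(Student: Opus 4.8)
The plan is to reduce, via Corollary \ref{Komrakov cor}, to a finite explicit list of candidates, then to split off the symmetric ones (which are automatically real spherical) and to eliminate the remaining ones by dimension counting.

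By Corollary \ref{Komrakov cor} the complexification $\hf_\C$ is a maximal reductive subalgebra of the exceptional complex simple Lie algebra $\gf_\C$, so it occurs in Dynkin's classification \cite{D} of maximal reductive subalgebras of the complex exceptional simple Lie algebras (equivalently, $(\gf,\hf)$ occurs in Komrakov's tables of maximal reductive subalgebras of the real exceptional Lie algebras). If $\hf_\C$ is a symmetric subalgebra of $\gf_\C$, then $(\gf,\hf)$ is a real form of a complex symmetric pair with $\gf$ semisimple, so Lemma \ref{real symmetric} shows that $(\gf,\hf)$ is itself symmetric and listed by Berger \cite{Berger}; by Lemma \ref{real form} such $\hf$ are real spherical, in accordance with the conclusion. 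It therefore remains to examine the non-symmetric maximal reductive subalgebras $\hf_\C\subset\gf_\C$. Up to conjugacy these are few: $\sl(3,\C)\subset\sG_2^\C$; $\sl(3,\C)^{\oplus2}$ and $\sl(2,\C)\oplus\sG_2^\C$ in $\sF_4^\C$; $\sl(3,\C)^{\oplus3}$ and $\sl(3,\C)\oplus\sG_2^\C$ in $\sE_6^\C$; $\sl(2,\C)\oplus\sF_4^\C$, $\sl(3,\C)\oplus\sl(6,\C)$ and $\sp(3,\C)\oplus\sG_2^\C$ in $\sE_7^\C$; $\sl(3,\C)\oplus\sE_6^\C$, $\sF_4^\C\oplus\sG_2^\C$, $\sl(5,\C)^{\oplus2}$ and $\sl(9,\C)$ in $\sE_8^\C$; together with a few small $S$-subalgebras of type $A_1$, $A_2$ and $B_2$.

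For each such $\hf_\C$ and each non-compact, non-complex real form $\gf$ of $\gf_\C$ I would apply the dimension bound \eqref{db}, reading off $\dim(\gf/\kf)$ and $\rank_\R\gf$ from Table V of \cite{Helgason}. Already for the split real form this excludes everything except $\sl(3,\C)\subset\sG_2^\C$, and the same bound settles all other real forms $\gf$ apart from a handful of borderline pairs in which $\dim\hf$ only just reaches $\dim\nf$, namely (real forms of) $\sl(3,\C)^{\oplus2}$ and $\sl(2,\C)\oplus\sG_2^\C$ against $\gf=\sF_4^2$, of $\sl(3,\C)^{\oplus3}$ against $\gf=\sE_6^4$, and of $\sl(2,\C)\oplus\sF_4^\C$ against $\gf=\sE_7^3$. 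These residual cases are removed by finer invariants: the rank inequality \eqref{rankcondition} excludes every real form $\hf$ with too many split simple factors; for the surviving real forms, any compact ideal of $\hf$ must be conjugate into a maximal compact subalgebra $\kf$ of $\gf$, and a computation of centralizers inside $\kf$, combined with the known centralizer inside $\gf_\C$, shows that $\hf$ does not embed into $\gf$ at all; and in any case not settled this way one applies Lemma \ref{independent} to a suitable $G$-quotient of $G/P$ to contradict the existence of an open $P$-orbit. I expect this borderline bookkeeping — deciding precisely which real forms $\hf$ embed into which real form $\gf$, and excluding each — to be the main obstacle.

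When this is done only $\hf_\C=\sl(3,\C)\subset\gf_\C=\sG_2^\C$ survives, and its unique non-compact real form is $\gf=\sG_2^1$, which is split and hence quasi-split. Since $(\sG_2^\C,\sl(3,\C))$ is spherical by Krämer \cite{Kr}, Lemma \ref{lemma quasi-split} shows that every real form $\hf$ of $\sl(3,\C)$ lying in $\sG_2^1$ is real spherical. The real forms of $\sl(3,\C)$ are $\su(3)$, $\su(2,1)$, $\sl(3,\R)$, and $\sl(3,\C)$ viewed as a real Lie algebra; the first is compact of dimension $8$, exceeding the dimension $6$ of a maximal compact subalgebra of $\sG_2^1$, and so cannot occur, while the last has real dimension $16>14=\dim_\R\sG_2^1$ and so cannot occur either. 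Hence $\hf$ is conjugate to $\hf_1=\su(2,1)$ or to $\hf_2=\sl(3,\R)$; both do occur in $\sG_2^1$ (see Table \ref{non-symmetric real forms}), both are absolutely spherical because their common complexification $(\sG_2^\C,\sl(3,\C))$ is spherical, and neither is symmetric because $\sl(3,\C)$ is not a symmetric subalgebra of $\sG_2^\C$. This establishes the second assertion of the lemma, and since this was the only source of non-symmetric $\hf$, it establishes the first as well.
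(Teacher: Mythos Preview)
Your overall architecture matches the paper's: reduce via Corollary~\ref{Komrakov cor} to Dynkin's list of maximal reductive $\hf_\C\subset\gf_\C$, note that symmetric $\hf_\C$ force symmetric $\hf$ by Lemma~\ref{real symmetric}, and eliminate the non-symmetric candidates by the dimension bound~\eqref{db}. Your treatment of $\gf=\sG_2^1$ is also fine.

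The genuine gap is the ``borderline bookkeeping'' you yourself flag. You correctly isolate the survivors of the dimension bound --- real forms of $\sA_2^{\oplus2}$ and $\sA_1\oplus\sG_2^\C$ in $\sF_4^2$, of $\sA_2^{\oplus3}$ in $\sE_6^4$, and of $\sA_1\oplus\sF_4^\C$ in $\sE_7^3$ --- but you do not actually dispose of them. The three tools you propose are not sufficient as stated: the rank inequality~\eqref{rankcondition} does not exclude the low-rank real forms (e.g.\ $\su(3)^{\oplus3}$ or $\su(2)\oplus\sF_4^2$); arguing that $\hf$ fails to embed in $\gf$ is the wrong target, since what must be shown is that any embedding that \emph{does} exist is not real spherical; and invoking Lemma~\ref{independent} ``on a suitable $G$-quotient of $G/P$'' is too vague to constitute an argument.

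The paper closes this gap differently. For $\sF_4^2$ it simply quotes the rank-one classification \cite{KS1}, which removes two of your four borderline cases at a stroke. For $(\sE_6^4,\sA_2^{\oplus3})$ and $(\sE_7^3,\sA_1\oplus\sF_4^\C)$ it observes from the Satake diagram that the minimal parabolic $P$ lies inside the maximal parabolic $P_{\max,\C}$ attached to the $27$-, respectively $56$-dimensional fundamental representation of $G_\C$; real sphericity would then force $\C^{27}$, respectively $\C^{56}$, to be prehomogeneous for $H_\C$. The first fails because $\dim H_\C=24<26$, and the second because $\sF_4^\C$ admits no prehomogeneous vector space in the sense of Sato--Kimura. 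This prehomogeneity argument is the missing idea in your sketch.
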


\begin{proof} Recall from Corollary \ref{Komrakov cor} that $\hf_\C$
  is maximal reductive in $\gf_\C$.  If $\gf$ is quasi-split, then the
  lemma follows from Lemma \ref{lemma quasi-split} combined with the
  work of Krämer \cite{Kr} (see Table \ref{non-symmetric spherical}).
  In particular $\sG_2^1$, the only non-compact real form of
  $\sG_2^\C$, is split, and thus the assertion (2) is obtained with
  Table \ref{non-symmetric real forms}.

  {}From now on we assume that $\gf$ is not quasi-split. For
  $\gf_\C=\sF_4^\C$ the only non-split real form $\sF_4^2$ has real
  rank one, and for that the result is given in \cite{KS1}*{Lemma 6.2}.
  This leaves us to consider for $\gf$ only the real forms $\sE_6^3$,
  $\sE_6^4$ of $\sE_6^\C$, $\sE_7^2$, $\sE_7^3$ of $\sE_7^\C$ and
  $\sE_8^2$ of $\sE_8^\C$.

  We follow \cite{Mik}.  According to Dynkin \cite{Dynk}, a subalgebra
  $\hf_\C$ of $\gf_\C$ is called {\it regular}, if it is normalized by
  a Cartan subalgebra of $\gf_\C$. On the other hand, $\hf_\C$ is
  called an {\it S-subalgebra} of $\gf_\C$ if it is not contained in
  any proper regular subalgebra of $\gf_\C$.

  Let $\hf_\C$ be a maximal reductive subalgebra of $\gf_\C$. Then it
  is either regular or an S-subalgebra. According to
  \cite{Dynk}*{Theorem 14.1}, the pairs $(\gf_\C: \hf_\C)$, where
  $\hf_\C$ is non-symmetric and a maximal S-subalgebra of $\sE_6^\C$,
  $\sE_7^\C$ or $\sE_8^\C$, are given by:
  \begin{align*}
    &(\sE_6^\C: ~ \sA_1, ~ \sG_2^\C, ~ \sA_2 \oplus \sG_2^\C),\\ 
    &(\sE_7^\C: ~ \sA_1, \sA_1 \oplus \sA_1,~ \sA_2, ~ \sG_2^\C \oplus \sC_3, ~ 
      \sA_1 \oplus \sF_4^\C, ~ \sA_1 \oplus \sG_2^\C),\\
    &(\sE_8^\C: ~ \sA_1, ~ \sA_1 \oplus \sA_2, ~ \sB_2, ~ \sG_2^\C \oplus \sF_4^\C),
  \end{align*}
  and by \cite{Dynk}*{Theorem 5.5} (together with the correction on
  p. 311 of the selected works) the pairs $(\gf_\C: \hf_\C)$, where
  $\hf_\C$ is non-symmetric, semisimple, and a maximal regular
  subalgebra, are given by:
  \begin{align*}
    &(\sE_6^\C: ~ \sA_2 \oplus \sA_2 \oplus \sA_2), \\
    &(\sE_7^\C: ~ \sA_2 \oplus \sA_5), \\
    &(\sE_8^\C: ~ \sA_8, ~ \sA_4 \oplus \sA_4, ~\sA_2 \oplus \sE_6^\C).
  \end{align*}

  Note that a maximal reductive subalgebra of $\gf_\C$ is either a
  semisimple maximal subalgebra or a maximal Levi subalgebra of
  $\gf_\C$.  From the Dynkin diagram of $\gf$ we can read the maximal
  Levi subalgebras and deduce that they are either symmetric or are
  contained in a semisimple maximal regular subalgebra listed above
  (see \cite{BdS}*{Table on p.\ 219}).  Hence the two lists together
  consist in fact of all maximal reductive subalgebras which are not
  symmetric.

  Next we record the dimension bounds obtained from \eqref{db}:
  \begin{align}
    \label{dimE_6^3} \dim H &\geq 30 \qquad  & (\gf=\sE_6^3)\\ 
    \label{dimE_6^4}  \dim H &\geq 24 \qquad &(\gf=\sE_6^4)\\
    \label{dimE_7^2}  \dim H &\geq 60 \qquad &(\gf=\sE_7^2)\\
    \label{dimE_7^3} \dim H &\geq 51 \qquad  &(\gf=\sE_7^3)\\ 
    \label{dimE_8^2}  \dim H &\geq 108  \qquad &(\gf=\sE_8^2)
  \end{align}

  Going through the lists of maximal regular- and S-subalgebras of
  $\gf_\C$, we see that only the following two pairs $(G,H_\C)$
  satisfy the bound and thus may correspond to real spherical pairs:
  \[(\sE_6^4, \sA_2 \oplus \sA_2 \oplus \sA_2)\qquad\hbox{and} \qquad
    (\sE_7^3, \sA_1\oplus \sF_4^\C)\, .\]

  We claim that $G=\sE_6^4$ and a real form in $G$ of
  $H_\C=\SL(3,\C) \times\SL(3,\C) \times\SL(3,\C)$ cannot correspond
  to a real spherical pair.  By inspecting the Satake diagram of
  $\sE_6^4$ we see that the minimal parabolic $P$ of $G$ is contained
  in the maximal parabolic $P_{{\rm max},\C}$ of $G_\C$, which is
  related to the $27$-dimensional fundamental representation of
  $G_\C$. This representation is prehomogeneous, see case (\Esix) in
  Table \ref{tab:prehomogeneous} of Proposition
  \ref{prop:prehomogeneous}.  If the pair was spherical then $\C^{27}$
  would thus become a prehomogeneous vector space for $H_\C$. As
  $\dim H_\C=24<26$ this is excluded.

  This leaves us with the case $(\sE_7^3, \sA_1\oplus \sF_4^\C)$. An
  inspection of the Satake diagram of $G=\sE_7^3$ shows that the
  minimal parabolic $P$ of $G$ is contained in the maximal parabolic
  $P_{{\rm max},\C}$ of $G_\C$, which is related to the
  $56$-dimensional fundamental representation of $G_\C$. Again this is
  prehomogeneous, see case (\Eseven) in Table
  \ref{tab:prehomogeneous}.  If the pair was spherical then $\C^{56}$
  would thus be a prehomogeneous vector space for
  $H_\C = \SL(2,\C) \times \sF_4^\C $.  In particular, every
  irreducible $H_\C$-submodule of $\C^{56}$ is then prehomogeneous.
  We recall from \cite{SK}*{Thm.~54}, that $\sF_4^\C$ does not admit a
  prehomogeneous vector space in the generalized sense: for no
  non-trivial irreducible representation $V$ of $\sF_4^\C$ and for no
  $n\in \N$ does $\sF_4^\C\times \GL(n,\C)$ admit an open orbit on
  $V \otimes \C^n$.  In particular, $\C^{56}$ cannot be prehomogeneous
  for $H_\C$.
\end{proof}

The lemmas in Sections \ref{max SL}-\ref{max ex} together with the
list of Krämer \cite{Kr} finally conclude the proofs of Theorem
\ref{maximals are real forms} and Lemma \ref{most maximals are
  symmetric}.

\section{Tables for $L\cap H$}\label{section:LcapH}

Let $(\gf,\hf)$ be a real spherical pair and recall from Section
\ref{Towers} the parabolic subgroup $Q=L \ltimes U$ adapted to $Z=G/H$
and $P$.  In view of Proposition \ref{tower-factor} the Lie algebra
$\lf\cap \hf$ is of central importance to us.  In the following Tables
\ref{lcaph1}-\ref{lcaph2} (classical and exceptional) we list all
symmetric pairs $(\gf,\hf)$ (from Berger's list \cite{Berger}) with
$\gf$ not quasi-split nor compact, together with the associated
subalgebra $\lf\cap\hf$.

\begin{table}[ht]
  {\small 
    \[\begin{array}{llllll}
        &\gf&\hf&\lf\cap\hf\\
        \hline
        \zz&\su(p,q)&\so(p,q)&\so(q-p)&1\le p\le q-2\\
        \zz&\sl(n,\HH)&\so^*(2n)&\uf(1)^n&n\ge3\\
        \zz&\su(2p,2q)&\sp(p,q)&\sp(q-p)+\sl(2,\C)^p&1\le p\le q-1\\
        \zz&\sl(n,\HH)&\sp(n-k,k)&\sp(1)^n&0\le k\le\frac n2\\

        \zz&\su(p,q)&\sf[\uf(p_1,q_1)+\uf(p_2,q_2)]&
                                                        \begin{Cases}\sf[\uf(p_2-q_1,q_2-p_1)+\uf(1)^{p_1+q_1}]\\
                                                          \sf[\uf(q_1-p_2)+\uf(q_2-p_1)+\uf(1)^{p_1+p_2}]\end{Cases}
        &\!\!\begin{array}{l}p_2\ge q_1\\p_2\le q_1\end{array}\\
        (\text{6a})&\sl(n,\HH)&\sf[\gl(n-k,\HH)+\gl(k,\HH)]&\sf[\gl(n-2k,\HH)+\gl(1,\HH)^k]&0\le k\le\frac n2\\
        (\text{6b})&\sl(n,\HH)&\sl(n,\C)+\uf(1)&\begin{Cases}\sf[\gl(1,\HH)^{\frac n2}]\\\sf[\gl(1,\HH)^{[\frac n2]}]+\uf(1)\end{Cases}&\!\!\begin{array}{l}n\text{ even}\\n\text{ odd}\end{array}\\
        \hline \stepcounter{zeile}

        \zz&\so(2p,2q)&\uf(p,q)&\uf(q-p)+\su(1,1)^p&1\le p\le q-2\\
        (\text{8a})&\so^*(2n)&\uf(n-k,k)&
                                          \begin{Cases}\su(2)^{\frac n2},~n\text{ and }k\text{ even}\\
                                            \su(2)^{\frac n2-1}+\su(1,1)+\uf(1),~n\text{ even},k\text{ odd }\\
                                            \su(2)^{\frac{n-1}2}+\uf(1),~n\text{
                                              odd}
                                          \end{Cases}
        &0\leq k\le \frac n2,~n\ge4\\
        (\text{8b})&\so^*(2n)&\gl({\textstyle\frac n2},\HH)&\su(2)^{\frac n2}&n\ge4\text{ even}\\
        \stepcounter{zeile}

        \zz&\so(p,q)&\so(p_1,q_1)+\so(p_2,q_2)&\begin{Cases}\so(p_2-q_1,q_2-p_1)\\\so(q_1-p_2)+\so(q_2-p_1)\end{Cases}&\!\!\begin{array}{l}p_2\ge q_1\\p_2\le q_1\end{array}\\
        (\text{10a})&\so^*(2n)&\so^*(2n-2k)+\so^*(2k)&\so^*(2n-4k)+\uf(1)^k&1\le k\le\frac n2\\
        (\text{10b})&\so^*(2n)&\so(n,\C)&\uf(1)^{[\frac n2]}&n\ge 4\\\hline
        \stepcounter{zeile}

        \zz&\sp(p,q)&\begin{Cases}\uf(p,q)\\\gl(p,\HH),~p=q\end{Cases}&\uf(q-p)+\uf(1)^p&1\le p\le q\\
        (\text{12a})&\sp(p,q)&\sp(p_1,q_1)+\sp(p_2,q_2)&\begin{Cases}\sp(p_2-q_1,q_2-p_1)+\sp(1)^{p_1+q_1}\\\sp(q_1-p_2)+\sp(q_2-p_1)+\sp(1)^{p_1+p_2}\end{Cases}&\!\!\begin{array}{l}p_2\ge q_1\\p_2\le q_1\end{array}\\
        (\text{12b})&\sp(p,p)&\sp(p,\C)&\sp(1)^p&\\\hline
      \end{array}\]
    \smallskip
  }
  \centerline{\rm\Tabelle{lcaph1}}
\end{table}

\smallskip

The notation in Table~\ref{lcaph1} follows certain conventions: in
each row where the letters appear one has
$p=p_1 + p_2\leq q=q_1 + q_2$ and $p_1 + q_1 \leq p_2 + q_2$ (whence
$p_1\le q_2$).

Following are some remarks on how the intersections $\lf\cap\hf$ have
been calculated. For this we made extensive use of \cite{KK},
especially \S10. The complexification $Z_\C=G_\C/H_\C$ of the
symmetric space $Z=G/H$ is (complex) spherical.  We may assume that
$Z_\C$ admits a wonderful compactification, see Remark \ref{remark
  curlyQ}. Its Luna diagram is a collection of various data of which
we need only two. First, a subset $S^{(p)}$ of the set $S$ of simple
roots of $G$ whose elements are called the \emph{parabolic roots of
  $Z$}. Second, a finite set $\Sigma_Z$ of characters of $G$, called
the \emph{spherical roots of $Z$}. Each spherical root is an
$\N$-linear combination of simple roots.

The real structure provides us with the set $S^0\subset S$ of compact
simple roots (the black dots in the Satake diagram). Then, as
mentioned already in Remark \ref{remark curlyQ} , the set of simple
roots of $L$ is the union $S^0\cup S^{(p)}$. Now let
$\Sigma_Z^0\subset \Sigma_Z$ be the set of those spherical roots which
lie in the span of $S^0\cup S^{(p)}$. Then \cite{KK}*{Cor.~10.16}
implies that $Z^0:=L/L\cap H$ is an absolutely spherical variety whose
Luna diagram has still $S^{(p)}$ as set of parabolic roots and
$\Sigma_Z^0$ as set of spherical roots. Since $L\cap H$ is reductive,
these two data suffice to determine the isomorphism type of the
derived subgroup $(L\cap H)'$ by use of tables in \cite{BP}.

To determine the connected center $C$ of $L\cap H$ it suffices to know
its dimension and its real rank. The local structure theorem implies
that $L/L\cap H$ is an open subset of the double coset space
$U\backslash G/H$ where $U$ is the unipotent radical of the adapted
parabolic of $Z$. From this we get
\[
  \dim L\cap H=\dim H-\dim U=\dim H-\frac12(\dim G-\dim L).
\]
Knowing $(L\cap H)'$ we get $\dim C$. For its real rank, we use
\[
  \rank_\R C=\rank_\R L-\rank_\R Z=\rank_\R
  L-\dim\langle\text{res}_A\sigma\mid\sigma\in\Sigma_Z\rangle_\Q.
\]
(see \cite{KK}*{Lemma~4.18}). Here $A\subset L$ is a maximally split
subtorus.

\begin{table}[!h]
  {\small 
    \[\begin{array}{lll}
        \gf&\hf&\lf\cap\hf\\
        \hline
        \sE_6^3&\sp(2,2)&\so(4)\\
        \sE_6^4&\sp(3,1)&\so(4)+\so(4)\\
        \hline
        \sE_6^3&\begin{cases}\su(4,2)+\su(2)\\\su(5,1)+\sl(2,\R)\end{cases}&\uf(2)+\uf(2)\\
        \sE_6^4&\sl(3,\HH)+\su(2)&\so(5)+\so(3)+\gl(1,\R)\\
        \hline
        \sE_6^3&\begin{cases}\so(10)+\uf(1)\\\so(8,2)+\uf(1)\\\so^*(10)+\uf(1)\end{cases}&\uf(4)\\
        \sE_6^4&\so(9,1)+\gl(1,\R)&\spin(7)+\gl(1,\R)\\
        \hline
        \sE_6^3&\sF_4^2&\so(7,1)\\
        \sE_6^4&\begin{cases}\sF_4^2\\\sF_4\end{cases}&\so(8)\\
        \hline
        \sE_7^2&\begin{cases}\su(6,2)\\\su(4,4)\end{cases}&\so(2)^3\\
        \sE_7^3&\begin{cases}\su(6,2)\\\sl(4,\HH)\end{cases}&\so(4)+\so(4)\\
        \hline
        \sE_7^2&\begin{cases}\so(12)+\su(2)\\\so(8,4)+\su(2)\\\so^*(12)+\sl(2,\R)\end{cases}&\su(2)^3\\
        \sE_7^3&\begin{cases}\so(10,2)+\sl(2,\R)\\\so^*(12)+\su(2)\end{cases}&\so(6)+\so(2)+\sl(2,\R)\\
        \hline
        \sE_7^2&\begin{cases}\sE_6^2+\uf(1)\\\sE_6^3+\uf(1)\end{cases}&\so(6,2)+\so(2)\\
        \sE_7^3&\begin{cases}\sE_6^3+\uf(1)\\\sE_6^4+\gl(1,\R)\\\sE_6+\uf(1)\end{cases}&\so(8)\\
        \hline
        \sE_8^2&\begin{cases}\so(12,4)\\\so^*(16)\end{cases}&\so(4)+\so(4)\\
        \hline
        \sE_8^2&\begin{cases}\sE_7^2+\su(2)\\\sE_7^3+\sl(2,\R)\\\sE_7+\su(2)\end{cases}&\so(8)\\
        \hline
        \sF_4^2&\sp(2,1)+\su(2)&\so(4)+\so(3)\\
        \hline
        \sF_4^2&\begin{cases}\so(9)\\\so(8,1)\end{cases}&\spin(7)\\
      \end{array}\]
    \smallskip
  }
  \centerline{\Tabelle{lcaph2}}
\end{table}

In most cases, it is not necessary to know the embedding of
$\lf\cap\hf$ into $\hf$ but in some it does matter. For this, the
following lemma is useful.

\begin{lemma}\label{lcaph}
  Let $\hf_1$, $\hf_2$ be two self-normalizing real spherical
  subalgebras of $\gf$ with adapted parabolic subalgebras $\qf_1$ and
  $\qf_2$ corresponding to minimal parabolic subalgebras $\pf_1$ and
  $\pf_2$.  Suppose that $\hf_{1,\C} =\Ad(x) \hf_{2,\C}$ for some
  $x\in G_\C$.  Then there exists an element $g\in G_\C$ of the form
  $g=tg_0$ with $g_0\in G$ and $t\in Z(L_{2,\C})$ such that $\Ad(g)$
  maps $\hf_{1,\C}$, $\qf_{1,\C}$, and $\lf_1\cap\hf_1$ onto
  $\hf_{2,\C}$, $\qf_{2,\C}$, and $\lf_2\cap\hf_2$, respectively.
\end{lemma}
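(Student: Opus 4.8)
The plan is to start from the hypothesis $\hf_{1,\C} = \Ad(x)\hf_{2,\C}$ and to improve $x$ step by step until it has the desired form $g = tg_0$ with $g_0\in G$ and $t\in Z(L_{2,\C})$, and until $\Ad(g)$ additionally carries the two extra pieces of data ($\qf_\C$ and $\lf\cap\hf$) correctly. First I would observe that since $\hf_1$ is a real form of $\hf_{1,\C}$ and likewise for $\hf_2$, the element $x$ conjugates one real structure into (a conjugate of) the other; concretely, if $\tau_j$ denotes the conjugation of $\gf_\C$ with fixed algebra $\gf$, then $\Ad(x)^{-1}\tau_1\Ad(x)$ and $\tau_2$ are two conjugations of $\gf_\C$ both preserving $\hf_{2,\C}$ and having real forms of the same $\hf_{2,\C}$-structure, hence they differ by an inner automorphism of $\gf_\C$ normalizing $\hf_{2,\C}$ — and after adjusting $x$ on the right by an element of $N_{G_\C}(\hf_{2,\C})$ we may assume $\Ad(x)$ intertwines $\tau_1$ with $\tau_2$, i.e.\ $\Ad(x)$ maps $\gf$ onto $\gf$, so that $x$ can be taken in $G_\C$ with $\Ad(x)(\gf)=\gf$. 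Since $\hf_2$ is self-normalizing, the ambiguity in $x$ is now exactly right multiplication by $N_{G_\C}(\hf_{2,\C})\cap\{\text{elements preserving }\gf\}=N_G(\hf_2)\cdot(\text{something central})$; this is the source of the $t\in Z(L_{2,\C})$ factor.

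Next I would bring in the adapted parabolics. By Lemma \ref{adapQ} together with Remark \ref{remark curlyQ}, the parabolic $\qf_{j,\C}$ is obtained from the $Z_\C$-adapted parabolic ${\mathcal Q}_\C$ of $G_\C$ — which depends only on the $G_\C$-conjugacy class of $\hf_{j,\C}$ and is therefore the same (up to $\Ad(x)$) for both — together with $\mf_{j,\C}$, i.e.\ with the choice of minimal parabolic $\pf_j$ of $G$. Since all minimal parabolic subalgebras of $G$ are $G$-conjugate, after further adjusting $x$ on the right by an element of $G$ (which preserves $\gf$ and still conjugates $\hf_{1,\C}$ to $\hf_{2,\C}$ up to an element of $N_G(\hf_2)$) I may assume $\Ad(x)\pf_{1,\C}=\pf_{2,\C}$, hence $\Ad(x)\qf_{1,\C}=\qf_{2,\C}$ and $\Ad(x)\lf_{1,\C}=\lf_{2,\C}$. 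At this stage $\Ad(x)$ maps $(\hf_{1,\C},\qf_{1,\C})$ to $(\hf_{2,\C},\qf_{2,\C})$ and preserves $\gf$; the residual ambiguity lies in $N_{G_\C}(\hf_{2,\C})\cap N_{G_\C}(\qf_{2,\C})$ restricted to elements preserving $\gf$.

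Finally I would pin down the $\lf\cap\hf$ part and extract the claimed normal form. By the local structure theorem, $\lf_j\cap\hf_j = \qf_j\cap\hf_j$, so once $\Ad(x)$ matches $\hf_{1,\C}$ with $\hf_{2,\C}$ and $\qf_{1,\C}$ with $\qf_{2,\C}$ it automatically matches $\lf_1\cap\hf_1$ with $\lf_2\cap\hf_2$ after complexifying; and since $\Ad(x)$ preserves $\gf$ it matches the real forms. It remains to see that the residual ambiguity can be absorbed into a factor $t\in Z(L_{2,\C})$: an element $n\in G_\C$ normalizing both $\hf_{2,\C}$ and $\qf_{2,\C}$ normalizes $\lf_{2,\C}$; modulo $L_{2,\C}$ such an $n$ acts trivially (since $N_{G_\C}(\qf_{2,\C})=Q_{2,\C}$ and the unipotent radical can be removed using that it is already built into the picture), and within $L_{2,\C}$ the subgroup normalizing the spherical self-normalizing subalgebra $\lf_2\cap\hf_2$ of $\lf_2$ reduces — by the self-normalizing hypothesis on the $\hf_j$ and Remark \ref{remark curlyQ} — to $(L_2\cap H_2)\cdot Z(L_{2,\C})$; the first factor may be pushed into $g_0\in G$, leaving exactly $t\in Z(L_{2,\C})$. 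The main obstacle I anticipate is the bookkeeping in this last step: correctly identifying the normalizer of $\hf_{2,\C}$ inside $L_{2,\C}$ and checking that, modulo genuine symmetries of the real space (which go into $g_0$), the only remaining freedom is a central torus element of $L_{2,\C}$ — this is where the self-normalizing hypothesis and the precise structure of wonderful compactifications via \cite{BP}, \cite{Kn} must be used carefully.
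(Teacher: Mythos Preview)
The paper does not actually prove this lemma: its entire proof reads ``See \cite{KK}*{Section 13}.'' So there is no argument in the paper to compare yours against directly; the content lives in the cited reference on reductive group actions.

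That said, your proposal has a genuine gap at the first step. You claim that after right-multiplying $x$ by an element of $N_{G_\C}(\hf_{2,\C})$ one may arrange that $\Ad(x)$ preserves $\gf$, i.e.\ that $x$ can essentially be taken in $G$. This is false in general: the lemma does \emph{not} assert that $\hf_1$ and $\hf_2$ are isomorphic as real Lie algebras, only that their complexifications are $G_\C$-conjugate. A concrete counterexample appears in Table~\ref{lcaph2}: for $\gf=\sE_6^4$ both $\hf_1=\sF_4^2$ (non-compact) and $\hf_2=\sF_4$ (compact) are symmetric, self-normalizing, real spherical subalgebras with $\hf_{1,\C}\cong\hf_{2,\C}\cong\sF_4^\C$, yet no automorphism of $\gf_\C$ preserving $\gf$ can carry $\hf_1$ to $\hf_2$. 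In such cases the factor $t\in Z(L_{2,\C})$ is doing real work: $\Ad(t)$ genuinely moves $\gf$ inside $\gf_\C$, and it is only the intersection $\lf_1\cap\hf_1$ (which lands in $\lf_{2,\C}$, where $\Ad(t)$ acts trivially) that ends up matching its real counterpart $\lf_2\cap\hf_2$. Your argument, by trying to force $\Ad(x)(\gf)=\gf$, collapses exactly the degree of freedom that $t$ is meant to supply.

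The shape of the argument in \cite{KK} is rather the reverse of yours: one first uses $g_0\in G$ to align the minimal parabolics $\pf_1,\pf_2$ (these are always $G$-conjugate), then invokes Lemma~\ref{adapQ} together with the fact that $G_\C$-conjugate self-normalizing spherical subalgebras share the same Luna data to conclude $\qf_1=\qf_2$ and $\lf_1=\lf_2$; finally, the classification of self-normalizing spherical subgroups with prescribed open $B_\C$-orbit (this is where \cite{Kn} and the wonderful-compactification machinery enter) shows that $\hf_{1,\C}$ and $\hf_{2,\C}$ differ by an element of the torus $Z(L_\C)$. The real element $g_0$ and the complex toral element $t$ thus arise from separate considerations, not from successively refining a single $x$.
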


\begin{proof} See \cite{KK}*{Section 13}.
\end{proof}

In particular, the lemma says that the complexification of the
embedding $\lf\cap\hf\hookrightarrow\hf$ does not depend on the
particular real form $\hf$. This is used in part (a) of the following
remark.

\begin{remark}\label{lcaph embedding}
  (a) In Table \ref{lcaph2} there is ambiguity how $\lf\cap \hf$ is
  embedded into $\hf$ in some cases where $\hf=\hf_1 \oplus \hf_2$
  consists of two factors.  However, with Lemma \ref{lcaph} one can
  derive the following additional data from the table:
  \begin{itemize}
  \item[($\rm a_1$)] For $\gf=\sE_6^3$ and
    $\hf_\C=\sl(6,\C) \oplus \sl(2,\C)$ one has
    $[\lf\cap \hf, \lf\cap \hf]\subset \hf_1$.
  \item[($\rm a_2$)] For $\gf=\sE_7^2$ and
    $\hf_\C=\so(12,\C) \oplus \sl(2,\C)$ one has
    $\lf \cap \hf\subset \hf_1$.
  \item[($\rm a_3$)] For $\gf=\sE_7^3$ and
    $\hf_\C=\so(12,\C) \oplus \sl(2,\C)$, one has
    $[\lf\cap \hf, \lf\cap \hf]\subset \hf_1$.
  \end{itemize}

  To see that, we discuss the case ($\rm a_1$). The arguments for
  ($\rm a_2$) and ($\rm a_3$) are similar.  Table \ref{lcaph2} shows
  that there are two symmetric subalgebras in $\gf$, say
  $\hf'=\hf_1' \oplus \hf_2'=\su(4,2)\oplus\su(2)$ and
  $\hf''=\hf_1'' \oplus\hf_2''=\su(5,1)\oplus\sl(2,\R)$ with
  isomorphic complexifications.  By Lemma \ref{lcaph} there is an
  isomorphism of $\gf_\C$ which carries $\hf''_\C$ to $\hf'_\C$ and
  $\lf''\cap\hf''$ to $\lf'\cap\hf'$.  Table \ref{lcaph2} shows that
  $\lf\cap\hf$ is of compact type, and hence
  $[\lf''\cap\hf'',\lf''\cap\hf'']\subset \hf_1''$ by Schur's lemma.
  It then follows that $[\lf'\cap\hf',\lf'\cap\hf']\subset \hf_1'$.

  (b) For $\gf=\sF_4^2$ and $\hf=\sp(2,1) \oplus\su(2)$ the algebra
  $\lf\cap \hf$ surjects onto $\hf_2$.  To see this, let $V$ be the
  $52$-dimensional irreducible (adjoint) representation of
  $\sF_4$. Then we claim that $\dim V^{\lf\cap\hf}=1$. This can be
  shown by branching $V$ (with highest weight $\omega_1$) to
  $\lf\cap\hf$ by using the following chain of maximal subalgebras
  \[
    \lf\cap\hf=\so(4)+\so(3)\subset\lf'=\so(7)\subset\so(8)\subset\so(9)\subset\sF_4.
  \]
  One can do that either by hand (starting with
  $\Res_{\so(9)}^{\sF_4}V=L(\omega_2)+L(\omega_4)$) or by help of a
  computer algebra package. We used \textsf{LiE}, \cite{LiE}, with the
  functions \texttt{resmat()} to generate the restriction matrices and
  \texttt{branch()} to perform the branching.

  On the other hand, $\mathrm{res}^\gf_\hf V$ contains the
  $3$-dimensional $\sp(3)\oplus\sl(2)$-module
  $\C\otimes S^2\C^2$. This cannot happen if the projection of
  $\lf\cap\hf$ to $\hf_2$ were trivial.

  (c) In the classical case (Table \ref{lcaph1}) there are also some
  situations where $\hf$ is not simple, and where it is of interest
  how certain factors of $\lf\cap\hf$ are embedded into $\hf$.  These
  are:

  \begin{itemize}
  \item In (5) $\uf(1)^{p_1+q_1}$, resp. $\uf(1)^{p_1+p_2}$, is
    diagonally embedded into $\hf=\hf_1 +\hf_2$,
  \item In (6a) $\gl(1,\Hb)^k$ is diagonally embedded into
    $\hf=\hf_1 +\hf_2$,
  \item In (10a) $ \uf(1)^k$ is diagonally embedded into
    $\hf=\hf_1 +\hf_2$,
  \item In (12a) $\sp(1)^{p_1+q_1}$, resp. $\sp(1)^{p_1+p_2}$,
    is diagonally embedded into $\hf=\hf_1 +\hf_2$.
  \end{itemize}

  This can be verified as follows: Let $\sigma$ be the involution
  which determines $\hf$ and let $\theta$ be the standard Cartan
  involution which commutes with $\sigma$. Let $\kf$ be the fixed
  point set of $\theta$.  Then $\lf$ can be chosen as the centralizer
  of a generic element $X\in \hf^\perp \cap\kf^\perp$ where $\perp$
  refers to the orthogonal complement with respect to the
  Cartan-Killing form of $\gf$.  Simple matrix computations then
  verify the bulleted assertions.

  (d) In the last two lines of Table \ref{lcaph2} we have
  $\lf\cap\hf=\spin(7)$. That it is the spin embedding (and not
  $\so(7)$) is seen in both cases from the fact that the complement in
  $\gf$ contains the spin representation.

\end{remark}

\section{The classification of reductive real spherical pairs}

Now that we have classified all maximal spherical subalgebras which
are reductive, we can complete the classification.

We recall the adapted parabolic $Q=L\ltimes U\supset P$ of a real
spherical space.  We set $L_H:=L\cap H$ and denote its Lie algebra by
$\lf_\hf$. Further we may assume that $MA\subset L$.

The general strategy is as follows. Given $G$ and a maximal reductive
real spherical subgroup $H$ we let $H'\subset H$ be a proper reductive
subgroup.  According to Proposition \ref{tower-factor} the space
$G/H'$ is real spherical if and only if $H/H'$ is a real spherical
$L_H$-variety.  In particular,
\begin{equation} \label{factor condition} \hf = \hf'
  +\lf_\hf\end{equation} needs to hold by Corollary \ref{cor
  tower-factor}. By Lemma \ref{most maximals are symmetric}, $H$ is
symmetric in almost all cases, and hence $\lf_\hf$ is given by the
tables in Section \ref{section:LcapH}. By Proposition \ref{Oni2} we
can then determine whether \eqref{factor condition} is valid and thus
limit the number of subgroups $H'$ to consider.

After the following preliminary result this section will be divided
into two parts: classical and exceptional.

\subsection{Almost absolutely spherical pairs}
In addition to \eqref{factor condition} there is a second general fact
which will be useful in the classification.  Let us call $\hf$
\emph{almost absolutely spherical} if it is real spherical and there
exists an absolutely spherical subalgebra $\fhq$ of $\gf$ with
$[\fhq,\fhq]\subset \hf\subset \fhq$.

\begin{lemma} \label{center removed} Let $\gf$ be a non-compact and
  non-complex simple Lie algebra and $\hf$ a reductive subalgebra
  which is not absolutely spherical. Then $(\gf,\hf)$ is almost
  absolutely spherical if and only if it is isomorphic to one of the
  pairs in Table \ref{table real spherical} of Theorem \ref{theorem
    classification} which are marked by an asterisk.
\end{lemma}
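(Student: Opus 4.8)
The plan is to exploit the factorization criterion \eqref{factor condition} together with Onishchik's classification (Proposition \ref{Oni2}). First I would set up the situation: suppose $(\gf,\hf)$ is almost absolutely spherical and not absolutely spherical, so there is an absolutely spherical $\fhq$ with $[\fhq,\fhq]\subset\hf\subsetneq\fhq$. Since $\hf$ is not absolutely spherical while $\fhq$ is, $\hf$ is a proper subalgebra, and because $[\fhq,\fhq]\subset\hf$ the quotient $\fhq/\hf$ is abelian; thus $\hf$ has codimension one in $\fhq$ (if the codimension were larger one could still have $[\fhq,\fhq]\subset\hf$, but then one checks via Lemma \ref{real form} that $(\gf,\hf)$ would already be absolutely spherical, a contradiction — this is the first point to nail down carefully). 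In particular $\fhq$ is reductive with one-dimensional center, $\hf=[\fhq,\fhq]+\ff$ with $\ff$ a proper subalgebra of the center $\zf(\fhq)$, and $\fhq$ appears on Berger's list or among the non-symmetric absolutely spherical pairs (Tables \ref{non-symmetric spherical}, \ref{complex forms}, \ref{non-symmetric real forms}).

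Next I would run through these candidate $\fhq$. For each one with a nontrivial center, I must decide whether the codimension-one subalgebra $\hf$ (namely $[\fhq,\fhq]$, and more generally $[\fhq,\fhq]+\ff$) is still real spherical in $\gf$. The tool is Proposition \ref{tower-factor}/Corollary \ref{cor tower-factor}: writing $Q=L\ltimes U$ for the parabolic adapted to $G/\bar Q$ and $P$, real sphericity of $G/H'$ for $H'=\hf$ is equivalent to $\fhq = \hf + (\lf\cap\fhq)$. Since $\hf\supset[\fhq,\fhq]$ and $\fhq/\hf$ is abelian of dimension one, this holds precisely when $\lf\cap\fhq$ is not contained in $\hf$, i.e. when $\lf\cap\fhq$ meets the center $\zf(\fhq)$ nontrivially (more precisely, when the projection of $\lf\cap\fhq$ to $\fhq/[\fhq,\fhq]$ covers the line $\fhq/\hf$). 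The subalgebras $\lf\cap\fhq$ for symmetric $\fhq$ are recorded in Tables \ref{lcaph1}--\ref{lcaph2}, and for the non-symmetric absolutely spherical pairs they can be computed the same way (via Remark \ref{remark curlyQ} and Lemma \ref{adapQ}), so this is a finite check. The cases where the center of $\fhq$ survives in $\lf\cap\fhq$ are exactly those producing the asterisked entries in Table \ref{table real spherical} (cases (2), (3), (9), (16), (17), and the $\sE_6^4$, $\sE_7^2$, $\sF_4^2$ entries); in all other cases $\lf\cap\fhq\subset\hf$ and $G/H'$ fails to be spherical.

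Conversely, for each asterisked pair in Table \ref{table real spherical} I would exhibit the absolutely spherical overalgebra $\fhq$ — in each case obtained by adjoining the missing part of the center, e.g. $\su(n-2q,1)+\sp(q)+\uf(1)\subset\fhq$ or $\so^*(2n-2)\subset\so^*(2n-2)+\zf$ — verify $[\fhq,\fhq]\subset\hf\subset\fhq$ by inspection, and confirm using the tables of Section \ref{section:LcapH} that $\lf\cap\fhq$ contains the relevant central direction, so that \eqref{factor condition} holds and hence $\hf$ is real spherical by Corollary \ref{cor tower-factor}. The parenthetical exceptions in Remark \ref{remark intro}.4 (cases (2) and (9) with $\ff=0$ and $n=2q$) must be addressed separately: there $\fhq$ is already symmetric so $(\gf,\hf)$ is absolutely spherical and thus excluded from the statement by hypothesis. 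The main obstacle I anticipate is the bookkeeping in the converse direction — making sure that for every asterisked pair the correct $\lf\cap\fhq$ is read off and genuinely contains the central summand, and that no asterisked pair is accidentally absolutely spherical — rather than any single deep argument; the conceptual content is entirely carried by Corollary \ref{cor tower-factor} and the $\lf\cap\hf$ tables.
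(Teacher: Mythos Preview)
Your approach is genuinely different from the paper's. The paper does not use the local structure theorem and the $\lf\cap\hf$ tables at all for this lemma; instead it invokes the real Vinberg--Kimel'feld criterion and analyzes the extended weight monoid $\overline{\cM}$ of $X=G_\C/H_\C$ as a $G_\C\times T_0$-variety, where $T_0=\Hq_\C/H_\C\cong\C^*$. For each of the eight complexified pairs $(\gf_\C,\hf_\C)$ arising from Kr\"amer's list, the paper identifies a single simple root $\alpha$ such that $G/H$ is real spherical if and only if $\alpha$ is a compact root of $\gf$; the asterisked cases then fall out by inspecting Satake diagrams. This is cleaner and uniform: no case splits between symmetric and non-symmetric $\fhq$, and no need to know $\lf\cap\fhq$.

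Your route via Proposition~\ref{tower-factor} is valid in principle, since $\Hq/H$ is one-dimensional abelian and hence real spherical for $L\cap\Hq$ iff $\lf\cap\fhq$ surjects onto $\fhq/\hf$. But three points need repair. First, the codimension-one claim cannot be obtained from Lemma~\ref{real form}, which only says absolutely spherical implies real spherical; the correct argument is that every absolutely spherical $\fhq_\C$ with $[\fhq_\C,\fhq_\C]$ not spherical has $\dim\zf(\fhq_\C)=1$, which one reads off Kr\"amer's list. Second, your list of asterisked entries is wrong: the marked cases in Table~\ref{table real spherical} are (1), (4), (5), (6), (7), (8), (16), (17), (19), not (2), (3), (9), (18), (20). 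Third, Tables~\ref{lcaph1}--\ref{lcaph2} cover only symmetric $\fhq$, whereas for (6), (8), and (17) the overalgebra $\fhq$ is non-symmetric absolutely spherical (e.g.\ $\sp(p-1,q)+\uf(1)\subset\sp(p,q)$), so you must compute $\lf\cap\fhq$ separately in those cases --- doable via Remark~\ref{remark curlyQ}, but genuine extra work that the paper's weight-monoid argument bypasses.
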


\begin{proof}
  We use the real version of the Vinberg-Kimel'feld criterion (see
  \cite{KKS}*{Prop.~3.7}): the subalgebra $\hf$ is real spherical if and
  only if $\dim V^\hf\le 1$ for all simple representations of $G$, for
  which there exists a $P$-semiinvariant vector.  Observe, that it
  suffices to check this condition over $\C$.

  Now let $\fhq\subset \gf$ be an absolutely spherical subalgebra in
  which $\hf$ is coabelian. Because $\hf$ is not absolutely spherical
  the complexified pair $(\gf_\C,\hf_\C)$ will appear (according to
  Krämer \cite{Kr}) in the following table:
  \[
    \begin{array}{lllll}
      \gf_\C&\hf_\C&\overline\cM&\alpha\\
      \hline
      \sl(2n,\C)&\sl(n,\C)+\sl(n,\C)&\omega_1,\ldots,\omega_{n-1},\omega_n\pm\epsilon,\omega_{n+1},\ldots,\omega_{2n-1}&\alpha_n\\
      \so(4n,\C)&\sl(2n,\C)&\omega_2,\omega_4,\ldots,\omega_{2n-2},\omega_{2n}\pm\epsilon&\alpha_{2n}\\
      \so(2n+1,\C)&\sl(n,\C)&\omega_1,\ldots,\omega_{n-1},\omega_n\pm\epsilon&\alpha_n\\
      \so(n,\C)&\so(n-2,\C)&\omega_1\pm\epsilon,\omega_2&\alpha_1\\
      \so(10,\C)&\spin(7,\C)&\omega_1\pm2\epsilon,\omega_2,\omega_4+\epsilon,\omega_5-\epsilon&\alpha_1\\
      \sp(n,\C)&\sl(n,\C)&2\omega_1,\ldots,2\omega_{n-1},\omega_n\pm\epsilon&\alpha_n\\
      \sp(n+1,\C)&\sp(n,\C)&\omega_1\pm\epsilon,\omega_2&\alpha_1\\
      \sE_7^\C&\sE_6^\C&\omega_1,\omega_6,\omega_7\pm\epsilon&\alpha_7\\
    \end{array}
  \]

  \noindent Since $\Hq_\C$ normalizes $X=G_\C/H_\C$, there is a right
  action of $T_0:=\Hq_\C/H_\C\cong\C^*$ on $X$. Moreover, because
  $G_\C/\Hq_\C$ is (absolutely) spherical, $X$ is spherical as
  $\Gq=G_\C\times T_0$-variety. The corresponding weights
  (i.e.~highest weights of irreducible $\Gq$-modules $V$ containing a
  non-trivial $\Hq_\C$-fixed vector) are called the extended weights of
  $X$. They are characters of $B\times T_0$, where
  $B\subset P_\C\subset G_\C$ is a Borel subgroup, and form a monoid
  $\overline\cM$. Now the third column shows the generators of this
  monoid. Here, $\epsilon$ generates the character group of $T_0$. The
  expansion of a character is given w.r.t.~the fundamental weights
  following the Bourbaki notation. The set of weights $\cM$ of $X$ as
  a $G$-variety is obtained by dropping the $T_0$-components, i.e., by
  setting $\epsilon=0$. This way we get a surjective map
  $\pi:\overline\cM\to\cM$. Let $\overline\cM_P\subset \overline\cM$
  be the submonoid of weights whose first component (i.e.~its
  restriction to $B$) is a weight of $P_\C$. Then the
  Vinberg-Kimel'feld criterion implies that $G/H$ is real spherical if
  and only if the restriction of $\pi$ to $\overline\cM_P$ is
  injective.

  To decide injectivity one checks that in every case there is a
  unique simple root $\alpha$ of $\gf_\C$ (given in the fourth column)
  with the property that the restriction of $\pi$ to
  $\overline\cM\cap H_\alpha$ is injective where $H_\alpha$ is the
  hyperplane perpendicular to $\alpha$.  We claim that $G/H$ is real
  spherical if and only if $\alpha$ is a compact simple root of
  $G$. Indeed, if $\alpha$ is compact then
  $\overline\cM_P\subset H_\alpha$ and the restriction of $\pi$ is
  injective. Conversely, if $\alpha$ is non-compact then the unique
  fundamental weight $\omega$ with
  $\langle\omega,\alpha^\vee\rangle=1$ is a weight of $P$. Moreover,
  by inspection of the table one sees that there is $d\ge1$ with
  $\omega\pm d\epsilon\in\overline\cM$. Thus the restriction of $\pi$
  is not injective.

  Finally, the lemma is proved by simply finding all real forms of
  $(\gf_\C,\fhq_\C)$, for which $\alpha$ is a compact root of $\gf$.
  For this we use Berger's list together with Table \ref{non-symmetric
    real forms}.  For example the first item yields among others the
  pair $(\sl(n,\HH),\sl(n,\C))$ which is real spherical if and
  only if $\alpha_n$ is compact for $\sl(n,\HH)$, hence if and only
  if $n$ is odd.
\end{proof}

\subsection{The classical cases}

\begin{proposition}\label{classification su(p,q)} 
  Let $\gf=\su(p,q)$, $1\leq p\leq q$, and let $\hf$ be a reductive
  subalgebra.  Then $(\gf,\hf)$ is real spherical if and only if
  either it is absolutely spherical or $\hf$ is conjugate to one of
  the following: {\renewcommand{\theenumi}{\roman{enumi}}
    \begin{enumerate}
    \item\label{h1h2}
      $\hf=\hf_1\oplus\hf_2=\su(p_1,q_1)\oplus\su(p_2,q_2)$ with
      $p_1+p_2=p, q_1+q_2=q$ and $p_1+q_1=p_2+q_2$, but
      $(p_1,q_1)\neq(q_2,p_2)$, or
    \item\label{1qsp} $p=1$, $q=q_1+q_2$, $q_2$ even and
      $\hf=\su(1, q_1) \oplus \symp(q_2/2)\oplus \ff$ with
      $\ff\subset \uf(1)$.
    \end{enumerate}}
\end{proposition}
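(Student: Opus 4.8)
The plan is to run the tower/factorization machinery of Proposition~\ref{tower-factor} and Corollary~\ref{cor tower-factor} downward from the maximal reductive real spherical subalgebras of $\gf=\su(p,q)$, reading off $\lf\cap\hf$ from Table~\ref{lcaph1}, using Onishchik's Proposition~\ref{Oni2} to constrain each descent, and using Lemma~\ref{center removed} to decide sphericity at the bottom. First I would dispose of the degenerate cases: if $p=1$ then $\gf$ has real rank one and the classification of reductive real spherical subalgebras is contained in \cite{KS1}, producing precisely family~\ref{1qsp} together with the absolutely spherical ones; if $q\le p+1$ then $\gf$ is quasi-split and Lemma~\ref{lemma quasi-split} identifies real sphericity with absolute sphericity, so there is nothing to prove. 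Hence assume $p\ge 2$ and $q\ge p+2$, and let $\hf$ be reductive, real spherical, and not absolutely spherical. Then $\hf$ is \emph{properly} contained in a maximal reductive subalgebra $\hf_{\max}$ (if $\hf=\hf_{\max}$ it would be symmetric, hence absolutely spherical). Since $\gf=\hf+\pf$ gives $\gf=\hf_{\max}+\pf$, the subalgebra $\hf_{\max}$ is itself real spherical, so by Theorem~\ref{maximals are real forms}, Lemma~\ref{most maximals are symmetric} and Lemmas~\ref{typeI-su}, \ref{typeII-su}, \ref{typeIII-su}, it is symmetric and --- in the range $q\ge p+2$, which eliminates all the possibilities requiring $p=q$ and the type~II pairs available only at $p=q=2$ --- conjugate to $\sf[\uf(p_1,q_1)+\uf(p_2,q_2)]$ (type~I), $\so(p,q)$ (type~III), or $\sp(\tfrac p2,\tfrac q2)$ with $p,q$ even (type~III).

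For the two type~III maximals the descent stops immediately. If $\hf_{\max}=\so(p,q)$, Table~\ref{lcaph1} gives $\lf\cap\hf_{\max}=\so(q-p)$, which is compact; so the factorization $\hf_{\max}=\hf'+(\lf\cap\hf_{\max})$ forced by Corollary~\ref{cor tower-factor} for any reductive real spherical $\hf'\subsetneq\hf_{\max}$ is trivial by Lemma~\ref{factor compact}, a contradiction. If $\hf_{\max}=\sp(\tfrac p2,\tfrac q2)$, the complexified factorization reads $\sp(\tfrac{p+q}{2},\C)=\hf'_\C+(\lf\cap\hf_{\max})_\C$ with the second factor of strictly smaller rank, hence proper; since symplectic Lie algebras admit no nontrivial reductive factorization (Proposition~\ref{Oni2} and Remark~\ref{embeddings remark}), again $\hf'=\hf_{\max}$. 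So these two maximals contribute only themselves, and they are absolutely spherical.

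The substantive case is $\hf_{\max}=\sf[\uf(p_1,q_1)+\uf(p_2,q_2)]$, with derived subalgebra $\su(p_1,q_1)+\su(p_2,q_2)$ and one-dimensional center. I would first show that the only reductive $\hf'\subsetneq\hf_{\max}$ that can give a real spherical pair is $\hf'=\su(p_1,q_1)+\su(p_2,q_2)$: any $\hf'$ with trivial image in $\hf_{\max}/[\hf_{\max},\hf_{\max}]$ lies inside this derived subalgebra (handled below), while for $\hf'$ surjecting onto the center the factorization condition \eqref{factor condition}, combined with the explicit form of $\lf\cap\hf_{\max}$ in Table~\ref{lcaph1} and the diagonal embedding of its torus factors recorded in Remark~\ref{lcaph embedding}(c), forces $\hf'\supseteq\su(p_1,q_1)+\su(p_2,q_2)$, i.e.\ equality. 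For $\hf'=\su(p_1,q_1)+\su(p_2,q_2)$ itself: if $p_1+q_1\ne p_2+q_2$ the pair is absolutely spherical (its complexification appears in Krämer's list \cite{Kr}); if $p_1+q_1=p_2+q_2=:n$ then $\hf'$ is coabelian in the absolutely spherical $\hf_{\max}$ and not absolutely spherical, so Lemma~\ref{center removed} (first row of the table in its proof) applies: $(\gf,\hf')$ is real spherical if and only if the simple root $\alpha_n$ of $\gf_\C$ is compact for $\gf$, which by inspection of the Satake diagram of $\su(p,q)$ means exactly $(p_1,q_1)\ne(q_2,p_2)$. This is family~\ref{h1h2}.

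It remains to exclude every reductive $\hf'$ strictly contained in $\su(p_1,q_1)+\su(p_2,q_2)$. For such $\hf'$, real sphericity of $G/\hf'$ forces real sphericity of $G/(\su(p_1,q_1)+\su(p_2,q_2))$, so the signature condition above already holds, and a second application of Proposition~\ref{tower-factor} with $H=\su(p_1,q_1)+\su(p_2,q_2)$ yields a factorization of $H$ by $\hf'$ and by $\lf'\cap H$, where --- since $H$ differs from the symmetric $\hf_{\max}$ only by a central torus --- the adapted parabolic is inherited (consistency relations, Lemma~\ref{adapQ} and Remark~\ref{remark curlyQ}), so that $\lf'\cap H=(\lf\cap\hf_{\max})\cap H$ is computed from Table~\ref{lcaph1} by deleting the central directions. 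Projecting this factorization onto each simple factor $\su(p_i,q_i)$ and applying Proposition~\ref{Oni2} (the only factorizations of $\sl(k,\C)$ are $\sl(k-1,\C)+\zf$ against $\sp(\tfrac k2,\C)$) shows no nontrivial factorization exists, because $\lf'\cap H$ has in each factor rank too small to contain a copy of either complementary factor; hence $\hf'=H$, a contradiction. \textbf{The main obstacle} is exactly this last step: one must pin down $(\lf\cap\hf_{\max})\cap H$ correctly --- in particular that its projections to the two factors $\su(p_i,q_i)$ are small ``torus-plus-$\su$'' pieces and \emph{not} the larger complementary factors in Onishchik's list --- so as to rule out the Onishchik factorizations of $\su(p_i,q_i)$ that would otherwise keep the descent alive; carelessness here leaves spurious candidates of the shape $\sp(k)\oplus\su(p_2,q_2)$ apparently surviving, and the point is that these fail the factorization condition precisely on dimension grounds.
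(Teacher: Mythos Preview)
Your overall architecture---descend from a maximal symmetric $\hf_{\max}$, apply Corollary~\ref{cor tower-factor} with Table~\ref{lcaph1}, use Onishchik, finish with Lemma~\ref{center removed}---matches the paper's. But there are two concrete gaps in your execution of the type~I case $\hf_{\max}=\sf[\uf(p_1,q_1)+\uf(p_2,q_2)]$.

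\textbf{First}, your claim that ``for $\hf'$ surjecting onto the center the factorization condition forces $\hf'\supseteq\su(p_1,q_1)+\su(p_2,q_2)$'' is false. Take $p_1+q_1=1$ (which is allowed even under your standing hypothesis $p\ge2$: e.g.\ $(p_1,q_1)=(1,0)$ gives $\hf_{\max}\cong\uf(p-1,q)$). Then $\lf\cap\hf_{\max}=\sf[\uf(p_2-q_1,q_2-p_1)+\uf(1)]$ projects onto an $\sl(s-1,\C)+\zf$ inside $\hf_{2,\C}=\sl(s,\C)$, which \emph{is} one of Onishchik's factors (line~(1) of Table~\ref{Onishchik}); the complementary factor is $\sp(s/2,\C)$, giving real spherical $\hf'=\sp(\tfrac{p_2}{2},\tfrac{q_2}{2})\oplus\ff$ with $\ff\subset\uf(1)$, which does \emph{not} contain $[\hf_{\max},\hf_{\max}]$. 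These exceptions are absolutely spherical (Table~\ref{non-symmetric real forms}), so they do not alter your final list, but your argument as written misses them. The paper handles this by observing that $\lf\cap\hf_{\max}$ projects \emph{injectively} to $\hf_2$, then applying Onishchik directly: this pins down $r=1$ and the symplectic shape in one stroke, and the cases are discarded as absolutely spherical.

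\textbf{Second}, your separate treatment of $\hf'\subsetneq[\hf_{\max},\hf_{\max}]$ via a second tower step is both unnecessary and unjustified. You need the adapted parabolic of $G/[\hf_{\max},\hf_{\max}]$, but $[\hf_{\max},\hf_{\max}]$ is not symmetric, so Table~\ref{lcaph1} does not apply, and Lemma~\ref{adapQ}/Remark~\ref{remark curlyQ} compare real and complex adapted parabolics for a \emph{fixed} subgroup, not for two different ones. The paper avoids this entirely: the single factorization $\hf_{\max}=\hf'+(\lf\cap\hf_{\max})$ already covers every reductive $\hf'\subset\hf_{\max}$, whether or not $\hf'$ lies in the derived subalgebra. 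Once the injective-projection observation is made, Onishchik applied factor-by-factor shows that any $\hf'\ne[\hf_{\max},\hf_{\max}]$ forces $r=1$ (case $p_2\ge q_1$) or $p=1$ (case $p_2\le q_1$), and the analysis terminates. Your final ``main obstacle'' paragraph is chasing a difficulty that disappears if you run the factorization once, at the level of $\hf_{\max}$, with the correct projection information from Remark~\ref{lcaph embedding}(c).
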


\begin{proof} Since we shall refer to Proposition \ref{tower-factor}
  it will be convenient to replace the notation $\hf$ in the above
  statement by $\hf'$, and let $\hf$ instead denote a maximal
  reductive subalgebra with $\hf'\subset\hf\subset\gf$. Then $\hf$ is
  symmetric by Lemma \ref{most maximals are symmetric}.

  We need to consider the cases (1), (3) and (5) from Table
  \ref{lcaph1}.  For case (1) we observe that $\lf_\hf$ is compact but
  not $\hf$. Hence by Lemma \ref{factor compact} there is no proper
  real spherical subalgebra $\hf'$ of $\hf$.  We can argue similarly
  for (3) as symplectic algebras do not admit factorizations by
  Proposition \ref{Oni2}.

  This leaves us with (5), i.e.
  \[\hf=\hf_1 \oplus \hf_2\oplus \hf_3= \su(p_1,q_1) \oplus
    \su(p_2,q_2) \oplus \uf(1)\] with $p_1+q_1, p_2+q_2>0$ and
  $p_1+p_2=p$, $q_1+q_2=q$. Set $r:=p_1 +q_1$ and $s:=p_2 +q_2$.  We
  may assume that $r\leq s$.  Note that since $p\leq q$ this implies
  that $q_2-p_1\geq |p_2-q_1|$.

  According to Table \ref{lcaph1} we have
  \begin{equation} \label{supq1}\lf_\hf=\sf[\uf(p_2-q_1,
    q_2-p_1)\oplus \uf(1)^r]\, \end{equation} when $p_2\geq q_1$, and
  when $p_2\leq q_1$ we have
  \begin{equation}\label{supq2} \lf_\hf = \sf[\uf(q_1-p_2)\oplus
    \uf(q_2 -p_1)\oplus \uf(1)^p].\end{equation}

  Let us first consider the case where $\hf'\neq [\hf,\hf]$ and start
  with $p_2\geq q_1$.  The embedding into $\hf$ of \eqref{supq1} is
  such that the projection of $\lf_\hf$ to $\hf_2$ is injective. Hence
  we deduce from \eqref{factor condition} and Proposition \ref{Oni2}
  that $r= p_1+q_1=1$ and $\hf'=\sp(\frac{p_2}{2}, \frac{q_2}{2})$
  or $\hf'=\sp(\frac{p_2}{2}, \frac{q_2}{2})\oplus \uf(1)$ both of
  which are absolutely spherical according to Table \ref{non-symmetric
    real forms}.

  Next we consider the case where $p_2\leq q_1$ with $\lf_\hf$ given
  by \eqref{supq2}. Note that $\uf(1)^p$ projects injectively to both
  factors $\hf_1$ and $\hf_2$. Hence we deduce from \eqref{factor
    condition} and Proposition \ref{Oni2} that $p= p_1+p_2=1$.
  Without loss of generality let $p_1=1$ and $p_2=0$,
  i.e.~$\gf=\su(1,q)$ and
  $\hf=\su(1,q_1) \oplus \su(q_2) \oplus\uf(1)$.  Proposition
  \ref{Oni2} forces $q_2$ to be even and shows that
  $\hf'=\hf_1 \oplus \sp(q_2/2)$ or
  $\hf'=\hf_1 \oplus \sp(q_2/2)\oplus \uf(1)$, both of which are
  real spherical. This is case \ref{1qsp}.

  Finally let us consider the case where $\hf'=[\hf,\hf]=\hf_1+\hf_2$.
  According to Table \ref{non-symmetric real forms} this is absolutely
  spherical provided that $r<s$. For $r=s$, case \ref{h1h2} follows
  from Lemma \ref{center removed}.
\end{proof}

\begin{proposition} \label{classification su*} Let $\gf=\sl(m,\Hb)$
  for $m\geq 3$, and let $\hf$ be a reductive subalgebra.  Then
  $(\gf,\hf)$ is a real spherical pair if and only if it is absolutely
  spherical or, up to conjugation,
  \begin{enumerate}
  \item $\hf=\sl(m-1,\Hb)\oplus \ff $ with $\ff \subset \C$, or
  \item $\hf=\sl(m,\C)$ with $m$ odd.
  \end{enumerate}
\end{proposition}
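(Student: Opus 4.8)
The plan is to run the strategy already used for Proposition~\ref{classification su(p,q)}. Write $\hf'$ for the subalgebra in the statement and choose a maximal reductive subalgebra $\hf$ with $\hf'\subset\hf\subset\gf$. Since $\gf=\sl(m,\Hb)$ carries no complex structure and $\gf_\C=\sl(2m,\C)$ is not isomorphic to $\so(7,\C)$, Lemma~\ref{most maximals are symmetric} shows $\hf$ is symmetric, and by Lemmas~\ref{typeI-slmH}, \ref{typeII-slmH} and~\ref{typeIII-slmH} the maximal symmetric reductive subalgebras of $\gf$ for $m\ge3$ are, up to conjugacy, $\so^*(2m)$, the $\sp(p,q)$ with $p+q=m$, the $\sf[\gl(n_1,\Hb)+\gl(n_2,\Hb)]$ with $n_1+n_2=m$ and $1\le n_1\le n_2$, and $\sl(m,\C)\oplus\uf(1)$. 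By Corollary~\ref{cor tower-factor} a necessary condition for $G/H'$ to be real spherical is the factorization \eqref{factor condition}, i.e.\ $\hf=\hf'+\lf_\hf$, with $\lf_\hf$ read off from lines (2),(4),(6a),(6b) of Table~\ref{lcaph1}; in each case $\hf$ is itself absolutely spherical, so it remains to find the proper $\hf'$.

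For $\hf=\so^*(2m)$ and $\hf=\sp(p,q)$ one has $\lf_\hf=\uf(1)^m$, respectively $\sp(1)^m$, which is compact while $\hf$ is simple, so Lemma~\ref{factor compact} forces $\hf'=\hf$ and nothing new occurs. For $\hf=\sl(m,\C)\oplus\uf(1)$ we have $\hf_\C=\sl(m,\C)\oplus\sl(m,\C)\oplus\C$, and by Table~\ref{lcaph1}(6b) together with Remark~\ref{lcaph embedding} the semisimple part of $(\lf_\hf)_\C$ is a block-diagonal product of copies of $\sl(2,\C)$ inside each simple factor. Since for $m\ge3$ the algebra $\sl(m,\C)$ has no factorization admitting such a subalgebra as a factor --- in Table~\ref{Onishchik} the only $\sl$-type factors are $\sl(2n-1,\C)\oplus\zf$ and $\sp(n,\C)$ inside $\sl(2n,\C)$ --- projecting \eqref{factor condition} to the simple factors shows $\hf'$ surjects onto $[\hf,\hf]=\sl(m,\C)$, and a dimension count rules out a diagonal copy; hence $\hf'=\sl(m,\C)$ or $\hf'=\hf$. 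The proper option $\hf'=\sl(m,\C)$ is coabelian in the symmetric $\hf$ and is not absolutely spherical (its complexification $\sl(m,\C)\oplus\sl(m,\C)$ sits block-diagonally, hence non-spherically, in $\sl(2m,\C)$), so Lemma~\ref{center removed} applies and yields real sphericity exactly when $m$ is odd.

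The main case is $\hf=\sf[\gl(n_1,\Hb)+\gl(n_2,\Hb)]$, $1\le n_1\le n_2$, where $\hf_\C=\sl(2n_1,\C)\oplus\sl(2n_2,\C)\oplus\C$ and, by Table~\ref{lcaph1}(6a) and Remark~\ref{lcaph embedding}(c), $(\lf_\hf)_\C=\sf[\gl(2n_2-2n_1,\C)+\gl(2,\C)^{n_1}]$ with $\gl(2n_2-2n_1,\C)$ a corner of $\sl(2n_2,\C)$ and each $\gl(2,\C)$ sitting diagonally in $\sl(2n_2,\C)$ and in $\sl(2n_1,\C)$. If $n_1\ge2$, the image of $(\lf_\hf)_\C$ in each simple factor $\sl(2n_i,\C)$ is a proper block-diagonal reductive subalgebra which, as above, is not an Onishchik factor, so projecting \eqref{factor condition} forces $\hf'$ to surject onto both simple factors of $\hf$; when $n_1\ne n_2$ the simplicity of the $\sl(n_i,\Hb)$ then gives $[\hf',\hf']=[\hf,\hf]$, and when $n_1=n_2$ a diagonal copy of $\sl(n_1,\Hb)$ is excluded by comparing $\dim\hf$ with $\dim\hf'+\dim\lf_\hf$. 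Thus $\hf'\supseteq[\hf,\hf]$; but $[\hf,\hf]=\sl(n_1,\Hb)\oplus\sl(n_2,\Hb)$ is coabelian in $\hf$, not absolutely spherical, and not among the asterisked entries of Table~\ref{table real spherical}, so Lemma~\ref{center removed} leaves only $\hf'=\hf$. Hence $n_1=1$, so $\hf=\sl(m-1,\Hb)\oplus\sp(1)\oplus\R$ and $\lf_\hf=\sl(m-2,\Hb)\oplus\sp(1)\oplus\R$; projecting \eqref{factor condition} onto the factor $\sl(2m-2,\C)$, which is still not split off by $(\lf_\hf)_\C$, shows $\hf'$ surjects onto $\sl(m-1,\Hb)$, hence $[\hf',\hf']=\sl(m-1,\Hb)$ and $\hf'=\sl(m-1,\Hb)\oplus\ff$ with $\ff$ abelian in the centralizer $\sp(1)\oplus\R$ of $\sl(m-1,\Hb)$ in $\hf$; any such $\ff$ lies in a maximal torus $\uf(1)\oplus\R\cong\C$, which is the asserted form. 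Conversely, these subalgebras, as well as $\sl(m,\C)$ with $m$ odd, are real spherical by Proposition~\ref{tower-factor}: $H/H'$ is represented by a quotient of $\Sp(1)\times\R$ of dimension $\le3$ on which $L\cap H$ acts through a compact $\Sp(1)\subset M$ and a one-parameter split torus, and this acts transitively.

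The step I expect to be the crux is the exclusion, for every $n_1\ge2$, of the block-diagonal subalgebras $\pi\big((\lf_\hf)_\C\big)\subset\sl(2n_i,\C)$ as Onishchik factors --- a dimension-and-rank comparison against Table~\ref{Onishchik} which must be carried out with some care --- together with the dual verification that the tower criterion of Proposition~\ref{tower-factor} really is met for $\sl(m-1,\Hb)+\ff$ and for $\sl(m,\C)$ with $m$ odd.
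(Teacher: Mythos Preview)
Your approach is the same as the paper's: reduce to proper subalgebras of a maximal symmetric $\hf$ and test the factorization \eqref{factor condition} against Onishchik's list and Lemma~\ref{center removed}. In fact you are slightly more careful than the paper in one place: you treat the maximal symmetric subalgebra $\hf=\so^*(2m)$ (row~(2) of Table~\ref{lcaph1}), which the paper's proof omits; your appeal to Lemma~\ref{factor compact} disposes of it correctly since $\so^*(2m)$ is non-compact simple and $\lf_\hf=\uf(1)^m$ is compact.

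Two small points to tighten. First, Lemma~\ref{factor compact} requires $\hf$ to have no compact ideals, so it does not cover the compact member $\hf=\sp(m)$ of the family $\sp(p,q)$; there you should instead use, as the paper does, that $\sp(m,\C)$ admits no nontrivial factorization (Remark~\ref{embeddings remark}(iii)). Second, for $\hf=\sl(m,\C)\oplus\uf(1)$ the paper bypasses your Onishchik analysis with a one-line dimension bound: every proper reductive subalgebra $\hf_2'\subsetneq\sl(m,\C)$ (complex or a real form) has real dimension at most $2(m-1)^2$, and $2(m-1)^2+1<2m^2-2m$ for $m\ge2$, so \eqref{dimsl} already forces $\hf'\supset\sl(m,\C)$; this is cleaner than projecting to the two complex simple factors. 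Finally, in the $n_1=1$ case note explicitly that $\ff=\sp(1)$ and $\ff=\sp(1)+\R$ give the absolutely spherical pairs $\sl(m-1,\HH)+\sp(1)$ (Table~\ref{non-symmetric real forms}) and $\hf$ itself, so that the remaining (non--absolutely-spherical) $\ff$ are indeed abelian; and for the converse it suffices to observe, as in the paper, that the diagonal $\gl(1,\HH)\subset\lf_\hf$ already lies in $\mf+\af\subset\pf$ and surjects onto the factor $\sp(1)+\R$ of $\hf$, so $P\cap H$ acts transitively on $H/H'$.
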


\begin{proof} We need to treat the cases (4) and (6) from Table
  \ref{lcaph1}.  Now, since symplectic algebras do not admit
  factorizations, we are left with the two cases in (6).

  We begin with $\hf=\sf(\gl(m_1,\Hb)\oplus \gl(m_2,\Hb))$,
  $m=m_1+m_2$, $m_1\geq m_2$.  Set $\hf_1=\sl(m_1,\Hb)$,
  $\hf_2=\sl(m_2,\Hb)$ and $\hf_3=\zf(\hf)=\gl(1,\R)$.

  Here we have from Table \ref{lcaph1}
  \[\lf_\hf=\sf[\gl(m_1 -m_2,\Hb) \oplus \gl(1,\Hb)^{m_2}]\]
  with $\sl(m_1-m_2,\Hb)\subset \hf_1$ and $\gl(1,\Hb)^{m_2}$
  diagonally embedded. According to \cite{Kr}, $[\hf,\hf]$ is
  absolutely spherical if and only if $m_1\neq m_2$.  If $m_1=m_2$,
  then $\lf_\hf$ does not surject to the center of $\hf$ and hence
  $[\hf,\hf]$ is not spherical.  If $m_2>1$, then we obtain via
  \eqref{factor condition} and Proposition \ref{Oni2} that the only
  possible spherical subalgebra contained in $\hf$ is $[\hf,\hf]$.

  In case $m_2=1$, $\mf\cap\hf$ surjects onto $\hf_2=\su(2)$ and we
  obtain the cases listed in (1).

  The second possibility for $\hf$ is $\hf=\uf(1)\oplus
  \sl(m,\C)$. For that we first note that the dimension bound
  excludes $\hf':=\uf(1) \oplus \hf_2'$ with $\hf_2$ a proper
  reductive subalgebra of $\sl(m,\C)$ to be real spherical.  The
  cases where $[\hf,\hf]$ are spherical are deduced from Lemma
  \ref{center removed}.
\end{proof}

\begin{proposition}\label{classification so*}
  Let $\gf=\so^*(2m)$ for $m\geq 5$. Then a reductive subalgebra is
  real spherical if and only if it is absolutely spherical or
  conjugate to one of the following:
  \begin{enumerate}
  \item $\hf=\so^*(2m-2)$, or
  \item $m=5$, $\hf=\spin(5,2)$ or $\spin(6,1)$.
  \end{enumerate}
\end{proposition}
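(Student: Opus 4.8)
The plan is to run the standard strategy of this section. Rename the subalgebra in the statement to $\hf'$, and fix a maximal reductive subalgebra $\hf$ with $\hf'\subseteq\hf\subseteq\gf=\so^*(2m)$. Since $\gf=\hf'+\pf\subseteq\hf+\pf$, the algebra $\hf$ is itself real spherical, hence absolutely spherical by Theorem \ref{maximals are real forms}, and then symmetric by Lemma \ref{most maximals are symmetric} (the two exceptional $\sG_2$-options there cannot occur for $\so^*(2m)$). As type II and type III maximal subalgebras are excluded for $m\geq5$ by Lemmas \ref{typeII-so*} and \ref{typeIII-so*}, Lemma \ref{typeI-so*} leaves exactly four possibilities for $\hf$: $\so(m,\C)$, $\gl(m/2,\HH)$, $\uf(k,l)$ with $k+l=m$, and $\so^*(2m_1)\oplus\so^*(2m_2)$ with $m_1+m_2=m$. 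For each, Table \ref{lcaph1} (rows (10b), (8b), (8a), (10a) respectively) supplies $\lf_\hf=\lf\cap\hf$, and by \eqref{factor condition} the relation $\hf=\hf'+\lf_\hf$ is necessary. So the work is to list, for each $\hf$, the proper reductive $\hf'$ satisfying this factorization, and then test the survivors for sphericity by Lemma \ref{center removed}.

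For the first three families $\lf_\hf$ is compact modulo a central torus: a torus $\uf(1)^{[\frac m2]}$ for $\so(m,\C)$, a product of copies of $\su(2)$ together with a torus for $\gl(m/2,\HH)$, and a product of copies of $\su(2)$ together with at most one $\su(1,1)$ and a torus for $\uf(k,l)$. I would project the factorization onto $[\hf,\hf]$ — which is $\so(m,\C)$, $\sl(m/2,\HH)$ or $\su(k,l)$, always semisimple without compact ideals — and invoke Lemma \ref{factor compact} in the orthogonal case, or, for the other two, complexify and note that a product of copies of $\sl(2,\C)$ with a torus attached is not conjugate to a factor in any entry of Onishchik's Table \ref{Onishchik} for $\sl(m/2,\C)$ or $\sl(m,\C)$ when $m\geq5$; in either case the induced factorization is trivial, so $\hf'\supseteq[\hf,\hf]$. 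Since $\hf/[\hf,\hf]$ is at most one-dimensional, this leaves only $\hf'=\hf$ or $\hf'=[\hf,\hf]$, and $[\hf,\hf]$ is either absolutely spherical (for $\su(k,l)$ this happens exactly when $m$ is odd, when $\sl(m,\C)\subset\so(2m,\C)$ is complex spherical) or else is ruled out by Lemma \ref{center removed}, since none of $(\so^*(2m),\so(m,\C))$, $(\so^*(2m),\sl(m/2,\HH))$, $(\so^*(2m),\su(k,l))$ is asterisked in Table \ref{table real spherical}. Thus the first three families give nothing new.

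The substantive case is $\hf=\so^*(2m_1)\oplus\so^*(2m_2)$ with $m_1\geq m_2\geq1$, $m_1+m_2=m$, and $\lf_\hf=\so^*(2(m_1-m_2))\oplus\uf(1)^{m_2}$ with $\uf(1)^{m_2}$ embedded diagonally in $\hf_1\oplus\hf_2$ (Remark \ref{lcaph embedding}(c)). For $m_2\geq2$ I would check that $\hf'$ surjects onto each simple ideal of $\hf$: the projection of $\lf_\hf$ onto $\hf_1=\so^*(2m_1)$ is $\so^*(2(m_1-m_2))$ together with a torus of dimension $m_2\geq2$, and onto $\hf_2=\so^*(2m_2)$ it is a Cartan subalgebra; since the image of $\hf'$ under such a projection is reductive, a proper complement is impossible — for the $\so^*$-type ideals because no factor in Table \ref{Onishchik} has a center of dimension $\geq2$, so Proposition \ref{Oni2} forces the induced factorization of $\so(2m_i,\C)$ to be trivial, and for the small ideals $\su(2),\sl(2,\R)$ that occur inside $\so^*(4)$ when $m_2=2$ by a dimension count. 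A Goursat-type argument then yields $\hf'=\hf$, except that for $m$ even $\hf'$ could be the diagonal $\so^*(m)$, which violates the dimension bound \eqref{dimsoev} for $m\geq5$. For $m_2=1$ we have $\hf=\so^*(2m-2)\oplus\uf(1)$ and $\lf_\hf=\so^*(2m-4)\oplus\uf(1)$, and \eqref{factor condition} complexifies (modulo the one-dimensional center) to $\so(2m-2,\C)=\overline{\hf'}_\C+\big(\so(2m-4,\C)\oplus\so(2,\C)\big)$. Here I would read off Table \ref{Onishchik}: for $m\geq6$ the subalgebra $\so(2m-4,\C)\oplus\so(2,\C)$ is not conjugate to any factor occurring there, so the factorization is trivial and $\hf'\in\{\hf,\,\so^*(2m-2)\}$; for $m=5$ there is the single additional option given by row (7), $\so(8,\C)=\big(\so(6,\C)\oplus\so(2,\C)\big)+\spin(7,\C)$, in which $\overline{\hf'}_\C=\spin(7,\C)$ and $\hf'$ is a real form of $\spin(7,\C)$ lying in $\so^*(8)$.

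It remains to decide sphericity of the survivors. The pair $(\so^*(2m),\so^*(2m-2))$ is real spherical for all $m\geq5$ by Lemma \ref{center removed} (it is the asterisked pair in row (16) of Table \ref{table real spherical}), which is case (1). For $m=5$, a signature analysis of the eight-dimensional spin module exactly as in the proof of Lemma \ref{typeIII-so}, together with triality in $\so(8,\C)$, should identify the real forms of $\spin(7,\C)$ contained in $\so^*(8)=\so(8,\C)\cap\su(4,4)$ as $\spin(6,1)$ and $\spin(5,2)$, one for each of the two conjugacy classes $\spin(7,\C)_\pm\subset\so(8,\C)$; both are real spherical in $\so^*(10)$, again by Lemma \ref{center removed} (row (17)), which is case (2). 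The converse direction is immediate: the absolutely spherical subalgebras are real spherical by Lemma \ref{real form}, and (1)–(2) by the same applications of Lemma \ref{center removed}. I expect the main obstacle to be the $m=5$ analysis — pinning down precisely which two real forms of $\spin(7,\C)$ sit in $\so^*(8)$, verifying that \eqref{factor condition} genuinely holds for them over $\R$ (this follows by passing to real points in row (7) of Table \ref{Onishchik}, once one observes that the diagonal $\uf(1)\subset\lf_\hf$ meets both ideals of $\hf$), and checking that the two conjugacy classes remain inequivalent under outer automorphisms of $\so^*(8)$.
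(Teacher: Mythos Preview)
Your proposal is correct and follows the same strategy as the paper: reduce to the symmetric maximal subalgebras of Lemma~\ref{typeI-so*}, use Table~\ref{lcaph1} for $\lf_\hf$, and apply \eqref{factor condition} together with Onishchik's list. The paper is terser in places---for $\hf=\so(m,\C)$ it simply invokes the dimension bound \eqref{dimsoev} rather than a factorization argument, and for the sphericity of $\so^*(2m-2)$ it observes directly that $\mf\cap\hf$ surjects onto the $\so(2)$ factor instead of appealing to Lemma~\ref{center removed}.

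Your anticipated obstacle at $m=5$ dissolves once you recall the exceptional isomorphism $\so^*(8)\simeq\so(6,2)$ (already noted at the start of Section~\ref{max SO}). Under triality the two classes $\spin(7,\C)_\pm\subset\so(8,\C)$ correspond to standard $\so(7,\C)$'s, so the real forms sitting inside $\so(6,2)$ are immediately $\spin(6,1)$ and $\spin(5,2)$; no separate signature computation is needed. One small slip: in your treatment of $\gl(m/2,\HH)$ the complexified semisimple part is $\sl(m,\C)$, not $\sl(m/2,\C)$, though this does not affect the argument.
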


\begin{proof} We need to consider the cases (8) and (10) from Table
  \ref{lcaph1}.  In case of (8) there are no proper real spherical
  subalgebras of $\gf$ contained in $\hf$ by \eqref{factor condition}
  and Proposition \ref{Oni2}.  In case (10) with $\hf=\so(m,\C)$ the
  dimension bound excludes a proper reductive subalgebra of $\hf$ to
  be real spherical.

  Finally we need to treat the case where
  $\hf=\hf_1 \oplus\hf_2= \so^*(2m_1)\oplus \so^*(2m_2)$ with
  $m_1\leq m_2$, $m_1+m_2=m$. Here
  $\lf_\hf=\so^*(2(m_2-m_1))\oplus \so(2)^{m_1}$ with
  $\so^*(2(m_2-m_1))\subset\so^*(2m_2)$. When $m_1>1$ we deduce from
  Propositions \ref{tower-factor} and \ref{Oni2} that there exist no
  proper reductive subalgebras of $\hf$ which are real spherical.  If
  $m_1=1$, then $\hf=\so(2,\R) \oplus \so^*(2m_2)$, $\mf\cap\hf$
  surjects onto $\so(2,\R)$ and thus $\so^*(2m_2)$ is real spherical.
  Further factorizations are only possible for $m_2=4$ which results
  in the real spherical subalgebras $\spin(6,1)$ and $\spin(5,2)$ in
  $\hf_2=\so^*(8)\simeq \so(6,2)$.
\end{proof}

\begin{proposition}\label{classification so(p,q)}
  Let $\gf=\so(p,q)$, $1\leq p\leq q$, $p+q\geq 6$. Then a reductive
  subalgebra is real spherical if and only if it is either absolutely
  spherical or conjugate to one of the following:
  \begin{enumerate}
  \item\label{un} $p$, $q$, and $\frac{p+q}{2}$ all even, $p\neq q$,
    and $\hf=\su(\frac{p}{2},\frac{q}{2})$.
  \item\label{deux} $p=2r$, $q=2s+1$, $r\neq s$, and $\hf=\su(r,s)$.
  \item\label{trois} $p=2r+1$, $q=2s$, $r\neq s$, and $\hf=\su(r,s)$.
  \item\label{quatre} $p=1$, $q=q_1+4$ and $\hf=\so(1,q_1) +\hf'$ with
    $\hf'\subsetneq \so(4)\simeq \so(3)\times \so(3)$ a subalgebra
    such that $\hf'+ \diag\so(3) =\so(4)$.
  \item\label{cinq} $p=1$, $q=q_1+q_2$, $q_2\geq 5$, and
    $\hf = \so(1, q_1) \oplus \hf_2$ with $\hf_2\subsetneq \so(q_2)$ a
    subalgebra such that $\hf_2 +\so(q_2-1)=\so(q_2)$ (see Proposition
    \ref{Oni2}).
  \item\label{six} $p=2$, $q=q_1+7$ and
    $\hf = \so(2, q_1) \oplus \sG_2$.
  \item\label{sept} $p=2$, $q=q_1 +8$ and
    $\hf = \so(2, q_1) \oplus \spin(7)$.
  \item\label{huit} $p=3$, $q=q_1+8$ and
    $\hf=\so(3,q_1) \oplus \spin(7)$.
  \item\label{neuf} $p=3$, $q=6$ and $\hf=\so(2) \oplus \sG_2^1$.
  \item\label{dix} $p=4$, $q=7$ and $\hf=\so(3)\oplus \spin(3,4)$.
  \end{enumerate}
\end{proposition}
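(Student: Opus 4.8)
The plan is to follow the strategy laid out at the start of this section. Write $\hf'$ for the subalgebra appearing in the statement and fix a maximal reductive subalgebra $\hf$ with $\hf'\subseteq\hf\subsetneq\gf$. If $q\le p+1$ then $\gf$ is quasi-split, so by Lemma~\ref{lemma quasi-split} real sphericity of $(\gf,\hf')$ is equivalent to sphericity of $(\gf_\C,\hf'_\C)$, and then every real spherical reductive $\hf'$ is absolutely spherical; we may therefore assume $q\ge p+2$. By Lemma~\ref{most maximals are symmetric}, $\hf$ is symmetric apart from the two possibilities $\hf=\sG_2^1\subset\so(3,4)$ and $\hf=\spin(3,4)\subset\so(4,4)$ (the latter symmetric through triality). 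Both are discarded at once: every proper reductive subalgebra of $\sG_2^1$, resp.\ of $\spin(3,4)\cong\so(3,4)$, has dimension strictly below $\dim\nf$ (compare \eqref{dimso}), so none can be real spherical in $\gf$, while $\hf$ itself is absolutely spherical. Thus we may take $\hf$ symmetric, so that up to conjugacy it is $\so(p_1,q_1)\oplus\so(p_2,q_2)$ with $p_1+p_2=p$, $q_1+q_2=q$, or $\uf(\tfrac p2,\tfrac q2)$ for $p,q$ even (the remaining symmetric subalgebras $\so(p,\C)$ and $\gl(p,\R)$ occur only for $p=q$, which is excluded). In each case $\lf_\hf=\lf\cap\hf$ is read off from Table~\ref{lcaph1}, and by Proposition~\ref{tower-factor} and Corollary~\ref{cor tower-factor} the subalgebra $\hf'$ is real spherical precisely when $\hf=\hf'+\lf_\hf$ holds and $H/H'$ is a spherical $L_H$-variety, the factorization alone sufficing when $Q=P$.

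I would first treat $\hf=\so(p_1,q_1)\oplus\so(p_2,q_2)$ with $p_1+q_1\le p_2+q_2$. Table~\ref{lcaph1}(9) gives $\lf_\hf$ equal to $\so(p_2-q_1,q_2-p_1)$ or $\so(q_1-p_2)\oplus\so(q_2-p_1)$, the second summand lying inside $\hf_2=\so(p_2,q_2)$. Since the $\hf_1$-component of $\lf_\hf$ is at most $\so(q_1-p_2)$, the projection of $\hf'$ onto $\hf_1$ is onto, and (arguing with the dimension bound \eqref{dimso} and reductivity, as in the previous propositions) $\hf'=\hf_1\oplus\hf_2'$ with $\hf_2'+\so(p_2-q_1,q_2-p_1)=\hf_2$. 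This is a factorization of an orthogonal Lie algebra by the orthogonal subalgebra displayed in the table, i.e.\ a subgroup acting transitively on the corresponding (pseudo-)Stiefel variety $H_2/(L\cap H)$. I would then run through the factorizations of $\so(m)$ by $\so(m-1)$, $\so(m-2)$, $\so(m-3)$ — the transitive actions on spheres and on the first Stiefel manifolds, which by Onishchik's Table~\ref{Onishchik} are given by $\su$, $\uf$, $\sp+\ff$, $\sG_2^\C$, $\spin(7,\C)$, $\spin(9,\C)$ — retaining only those real forms whose invariant form carries the signature needed to embed in $\hf_2$. This produces the families \ref{quatre}--\ref{huit}; the two sporadic items \ref{neuf}, \ref{dix} come out of the low-rank factorizations $\so(7,\C)=\so(5,\C)+\sG_2^\C$ and $\so(8,\C)=\so(5,\C)+\spin(7,\C)$ of Table~\ref{Onishchik}, which exhibit $\sG_2^1$ resp.\ $\spin(3,4)$ (spin embedding, matched via Lemma~\ref{lcaph}) as a factor of $\hf_2$.

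For $\hf=\uf(\tfrac p2,\tfrac q2)$ with $p,q$ even, Table~\ref{lcaph1}(7) shows $\lf_\hf$ is compact modulo its centre, so Lemma~\ref{factor compact} leaves only $\hf'=[\hf,\hf]=\su(\tfrac p2,\tfrac q2)$ to consider; the almost-absolutely-spherical criterion of Lemma~\ref{center removed}, applied to the absolutely spherical pair $(\so(2n,\C),\gl(n,\C))$ with $n=\tfrac{p+q}2$, then shows it is real spherical iff the distinguished simple root is compact in $\so(p,q)$, which by inspection of the Satake diagram means $\tfrac{p+q}2$ even: this is case~\ref{un}. Cases~\ref{deux} and~\ref{trois} are the same mechanism for the absolutely spherical pair $(\so(2n+1,\C),\gl(n,\C))$: here $\su(r,s)$ is coabelian in $\uf(r,s)$, which is absolutely spherical in both $\so(2r,2s+1)$ and $\so(2r+1,2s)$, and Lemma~\ref{center removed} reduces real sphericity to compactness of one simple root, equivalently to $\gf$ being non-split, giving the stated rank conditions. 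Sufficiency of every listed pair follows from Corollary~\ref{cor tower-factor} when $Q=P$, and otherwise from Proposition~\ref{tower-factor}: for \ref{quatre}--\ref{dix} the space $H/H'$ is exactly the (pseudo-)Stiefel variety on which the chosen group acts transitively, hence $L_H$-spherical, while for \ref{un}--\ref{trois} it is part of Lemma~\ref{center removed}.

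The step I expect to be the real obstacle is the bookkeeping in the second paragraph: for each complex Onishchik factorization one must enumerate the admissible pairs of real forms and decide, using Lemma~\ref{real symmetric}, the signatures of the invariant forms, and Table~\ref{non-symmetric real forms}, which combinations genuinely lie in the prescribed $\so(p,q)$ and are non-symmetric — this is what pins down the parameter ranges in \ref{quatre}--\ref{dix}. A subsidiary technical point is the precise identification of the spin embeddings in \ref{neuf}, \ref{dix} and of the distinguished simple roots in \ref{un}--\ref{trois}, for which Lemma~\ref{lcaph} together with the Satake diagrams of the split and of the non-split orthogonal real forms are required.
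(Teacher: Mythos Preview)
Your overall strategy matches the paper's, but two steps contain genuine errors. First, for $\hf=\uf(\tfrac p2,\tfrac q2)$ you claim that $\lf_\hf$ is compact modulo its centre and invoke Lemma~\ref{factor compact}. This is false: Table~\ref{lcaph1}(7) gives $\lf_\hf=\uf(\tfrac{q-p}{2})\oplus\su(1,1)^{p/2}$, and the $\su(1,1)$ summands are noncompact semisimple. The paper's argument is different: the compact piece $\uf(\tfrac{q-p}{2})\subset\mf$ surjects onto $\zf(\hf)$, so $[\hf,\hf]$ is spherical (whence~\ref{un} via Lemma~\ref{center removed}); further descent inside $\hf$ is then blocked because the projection of $(\lf_\hf)_\C$ into $[\hf,\hf]_\C=\sl(\tfrac{p+q}{2},\C)$ is not one of the Onishchik cofactors in Table~\ref{Onishchik}.

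Second, and more seriously, your sufficiency argument for \ref{neuf} and \ref{dix} conflates transitivity with sphericity. In these cases $p_2>q_1$, so $\lf_\hf=\so(p_2-q_1,q_2-p_1)$ is noncompact and $Q\ne P$; by Proposition~\ref{tower-factor} one must check that $H_2/H_2'$ is \emph{real spherical} for $L_H$, not merely that $L_H$ acts transitively. The factorization $\hf_2=\hf_2'+\lf_\hf$ yields only transitivity, identifying $H_2/H_2'\cong L_H/(L_H\cap H_2')$, and one still needs $(L_H,L_H\cap H_2')$ to be real spherical. The paper establishes \ref{dix} by lifting to $\Spin(4,4)$ and applying triality, which converts the pair $(\Spin(3,4),\SO_0(1,4))$ into $(\SO_0(4,3),\Sp(1,1))$, so that $\SO_0(4,4)/\SO_0(4,3)\cong\Sp(1,1)/\Sp(0,1)$ is recognised as almost absolutely spherical via Lemma~\ref{center removed}; case~\ref{neuf} then reduces to \ref{dix} through the isomorphism $\SO_0(3,4)/\sG_2^1\cong\SO_0(4,4)/\Spin(3,4)$ (Table~\ref{Onishchik}, line~8). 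This triality step is the essential content you are missing. Two smaller lacunae: you omit the exclusion of diagonal $\hf'\cong\hf_1\hookrightarrow\hf_1\oplus\hf_2$ when $\hf_1\cong\hf_2$ (the paper does this separately via Lemma~\ref{factor compact}), and the deduction that $\hf'$ must split as $\hf_1\oplus\hf_2'$ does not follow from surjectivity of the projection to $\hf_1$ alone.
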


\begin{proof} According to Lemma \ref{typeII-so} and Lemma
  \ref{typeIII-so} there are no maximal spherical subalgebras of type
  II or III unless $\gf$ is split. Moreover, by Lemma \ref{typeI-so}
  all maximal subalgebras of type I are symmetric. As we may assume
  that $\gf$ is not quasisplit, we need only to consider subalgebras
  of the symmetric subalgebras $\hf$ from cases (7) and (9) in Table
  \ref{lcaph1}.

  We begin with the first of these,
  i.e.~$\hf=\uf(\frac{p}{2},\frac{q}{2})$ with $p\neq q$ even and
  $\lf_\hf =\uf(\frac{q-p}{2}) \oplus \sl(2,\R)^{\frac{q}{2}}$
  where $\uf(\frac{q-p}{2})\subset \so(q-p)=\mf$. In particular,
  $\uf(\frac{q-p}{2})$ surjects onto the center of $\hf$ and we deduce
  that $[\hf,\hf]$ is spherical. According to Krämer this is
  absolutely spherical if and only if $\frac{p+q}{2}$ is odd, and we
  obtain \ref{un}. From the structure of $\lf_\hf$ we deduce from
  Proposition \ref{Oni2} and Proposition \ref{tower-factor} that no
  other reductive proper subalgebra $\hf'\subset\hf$ is real spherical
  in $\gf$.

  This leaves us with the case (9) from Table \ref{lcaph1}, where
  $\hf=\hf_1 \oplus \hf_2=\so(p_1,q_1) \oplus \so (p_2,q_2)$ with
  $p=p_1+p_2\leq q=q_1+q_2$, $r=p_1+q_1\le s=p_2+q_2$, and
  $p_1\leq q_2$.  In case $p_2\leq q_1$ we have $P=Q$ and
  \begin{equation}\label{sopq2} \lf_\hf = \so(q_1-p_2)\oplus \so(q_2
    -p_1)\, .\end{equation}
  In case $p_2\geq q_1$ we have  
  \begin{equation} \label{sopq1}\lf_\hf=\so(p_2-q_1,
    q_2-p_1)\subset\hf_2\, .\end{equation}

  To start with we exclude the diagonal case where $\hf_1\simeq \hf_2$
  and $\hf'\simeq \hf_1$ is ``diagonally'' embedded into
  $\hf=\hf_1 \oplus \hf_2$. For that we note that $\hf_1\simeq \hf_2$
  either means that $\hf_1=\hf_2$ or $(p_1,q_1)=(q_2,p_2)$.  In the
  latter case $\gf$ is split and we are left with $\hf_1=\hf_2$.  In
  particular $\hf$ is non-compact and semisimple, but
  $\lf_\hf=\so(q_1-p_1)\oplus \so(q_1 -p_1)$ is compact.  Thus
  $H=H'L_H$ is not possible by Lemma \ref{factor compact}.

  We now begin with the case of $p_2\leq q_1$ and \eqref{sopq2}.
  Suppose that $\hf_1\oplus \hf_2'$ is a spherical subalgebra of
  $\gf$, with $\hf'_2\subsetneq \hf_2$.  Then
  $\hf_2=\hf_2' +\lf_\hf^2$ with $\lf_\hf^2=\so(q_2-p_1)$, i.e.
  \begin{equation}\label{factorization of h2}
    \hf_2'+ \so(q_2-p_1)  = \hf_2= \so(p_2,q_2)\, .
  \end{equation}

  Suppose first that $\hf_2$ is simple. Then this is a factorization
  of of $\so(p_2,q_2)$ with one factor compact and we deduce from
  Lemma \ref{factor compact} that $\so(p_2,q_2)$ is compact as well,
  i.e.~$p_2=0$ or $q_2=0$.  If $q_2=0$, then $p_1=0$, and
  $\hf_2'=\hf_2$.  Hence we may assume that $p_2=0$ and then
  \[\hf_2'+\so(q_2-p_1) =\so(q_2)\, .\]
  We deduce from Proposition \ref{Oni2} that $p_1=1,2,3$ and read off
  the possibilities \ref{cinq}, \ref{six}, \ref{sept}, \ref{huit}
  for $\hf_2'$.

  In case $\hf_2$ is not simple possibilities are $\hf_2=\so(2,2)$ and
  $\hf_2=\so(4)$. Only the latter is possible with \eqref{factorization
    of h2}, which in that case gives $p_1=1$ and
  $\hf_2'\simeq \so(3)$ and leads to \ref{quatre}.

  The case where $\hf_1'\oplus \hf_2$ with $\hf_1'\subsetneq \hf_1$ is
  analogous, and leads to the same results but with $r$ and $s$
  interchanged.  This finishes the treatment of $p_2\leq q_1$ and
  \eqref{sopq2}.

  We now treat the case of $p_2\geq q_1$ and \eqref{sopq1}, where
  $\lf_\hf\subset\hf_2$.  Let $\hf':= \hf_1\oplus \hf_2'\subset \hf$
  with $\hf_2'\subset\hf_2$ be a spherical subalgebra of $\gf$.  The
  condition is that $H_2/H_2'$ is a real spherical space for the
  action of $L_H$.  In particular we must have (see Corollary \ref{cor
    tower-factor})
  \begin{equation} \label{sopq4} \hf_2' + \so(p_2 -q_1,q_2-p_1) =
    \hf_2= \so(p_2,q_2)\, \end{equation} and $p_2$, $q_2$ both
  non-zero.  Hence $\hf_2$ is non-compact and we may assume it is
  simple.  According to Proposition \ref{Oni2} we derive that
  $p_1+q_1$ equals $1,2,3$.  Suppose first that $\hf_2'$ is of Type I
  in $\hf_2$, see the four cases in Lemma \ref{typeI-so}.  Onishchik's
  list, Table \ref{Onishchik}, shows that only
  $\hf_2'=\uf(\frac {p_2}{2},\frac{q_2}{2})$ can be compatible with
  \eqref{sopq4}, and then $p_1+q_1=1$.  Hence $\hf=\hf_2$ and
  $\hf'=\hf_2'$ is real spherical according to Krämer.  Further
  subalgebras of type $\hf'':=\uf(1)\oplus \hf_2''$ with
  $\hf_2''\subset[\hf_2',\hf_2']$ a proper maximal spherical
  subalgebra are excluded. Indeed \eqref{sopq4} and Proposition
  \ref{Oni2} only allow $\hf_2''=\sp(\frac{p_2}{4}, \frac{q_2}{4})$
  and we arrive at the tower
  \[\gf=\so(p_2+1,q_2) \supset \uf(\frac{p_2}{2},\frac{q_2}{2})\supset
    \hf''=\sp(\frac{p_2}{4}, \frac{q_2}{4})+\uf(1)\, .\] Now the
  real spherical pair
  $(\gf,\hf)=(\so(p_2+1, q_2), \uf(\frac{p_2}{2},\frac{q_2}{2}))$ has
  structural algebra $\lf_\hf=\uf(\frac{|q_2-p_2|}{2})$ and we deduce
  that $(\gf, \hf'')$ is not real spherical by Proposition
  \ref{tower-factor} and Proposition \ref{Oni2}.  This leads us to
  decide whether
  $\hf_3:=[\hf_2',\hf_2']= \su(\frac {p_2}{2},\frac{q_2}{2})$ is real
  spherical.  According to Krämer, $\hf_3$ is not absolutely
  spherical. Without loss of generality we may assume $p_1=1$ and
  $q_1=0$ and then $\lf_\hf=\so(p_2, q_2-1)$. Observe that $\hf_3$ is
  real spherical if and only if $\hf\cap \mf$ surjects onto the center
  of $\hf_2'$.  This is the case precisely when $p_2\neq q_2$ and
  $p_2\neq q_2-1$ (see Lemma \ref{center removed}), and it leads to
  cases \ref{deux}--\ref{trois}.

  Finally assume that $\hf_2'$ is of type II or III in $\hf_2$.  The
  type II subalgebras appear only for $\hf_2=\so(4,4)$ (see Lemma
  \ref{typeII-so}) and are excluded by the dimension bound
  \eqref{dimso}.  This leaves us with the examination of the two type
  III cases from Lemma \ref{typeIII-so}.  We begin with
  $\hf_2'=\spin(4,3)$ in $\hf_2=\so(4,4)=\so(p_2,q_2)$. Recall that
  $p_1+q_1=1,2,3$ and note that $p_1$ and $q_1$ have to differ by
  three in order for $\gf$ not to be quasisplit. Hence we may assume
  that $\hf_1=\so(3)$, i.e.~$\gf=\so(4,7)$ and
  $\hf'=\so(3)\oplus \spin(4,3) \subset\so(3)\oplus \so(4,4)$.

  We claim that $\hf'$ is real spherical.  For that we need to show
  that the $L_H=\SO_0(1,4)$-space $H_2/H_2'=\SO_0(4,4)/\Spin(4,3)$ is
  real spherical. To this end we lift to $\Spin(4,4)$, apply the
  exceptional outer automorphism which swaps the simple roots
  $\alpha_1$ and $\alpha_4$, and go back to $\SO_0(4,4)$. Then
  $\Spin(4,3)$ and $\SO_0(1,4)\subset \SO_0(2,4)$ are converted to
  $\SO_0(4,3)$ and $\Sp(1,1)\subset \SU(2,2)$, respectively. The
  complexification of this situation is the third case of Table
  \ref{Onishchik} with $n=2$. Using the last column of the table we
  get $\SO_0(4,4)/\SO_0(4,3)=\Sp(1,1)/\Sp(0,1)$, which real spherical
  as a $\Sp(1,1)$-variety by Lemma \ref{center removed}.  This proves
  the claim and furnishes case \ref{dix}.

  Next we move on to the case where $\hf_2'=\sG_2^1$ and
  $\hf_2=\so(3,4)$.  As before $p_1 +q_1=1,2,3$.  The case $p_1+q_1=3$
  is excluded by the dimension bound and the case with $p_1 +q_1=1$
  leads to absolutely spherical pairs.  The case $p_1=q_1=1$ is
  quasi-split. This leaves us with $\hf'=\so(2)\oplus \sG_2^1$ in
  $\gf=\so(3,6)$.

  We claim that this case is real spherical.  Here we have to show
  that $H_2/ H_2'=\SO_0(3,4)/\sG_2^1$ is spherical as
  $L_H=\SO_0(1,4)$-variety. But that follows immediately from the
  isomorphism $\SO_0(3,4)/\sG_2^1\cong\SO_0(4,4)/\Spin(3,4)$ (eighth
  case of Table \ref{Onishchik}) and the proof of case \ref{dix}
  above. This yields case \ref{neuf}.
\end{proof}

\begin{proposition}\label{classification sp(p,q)} 
  Let $\gf=\symp(p,q)$ and let $\hf$ be a reductive subalgebra. Then
  $\hf$ is real spherical if and only if it is absolutely spherical or
  conjugate to one of the following:
  \begin{enumerate}
  \item $\sp(p-1,q)$,
  \item $\sp(p,q-1)$,
  \item $\su(p,q)$ with $p\neq q$.
  \end{enumerate}
\end{proposition}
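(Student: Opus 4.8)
The plan is to follow the general strategy of this section. Write $\hf'$ for the subalgebra appearing in the statement and fix a maximal reductive subalgebra $\hf$ with $\hf'\subseteq\hf\subseteq\gf$. Since $\gf=\symp(p,q)$ with $0<p\le q$ is non-compact, not split, not quasi-split, and is not a real form of $\sG_2^\C$ or of $\so(7,\C)$, Lemma~\ref{most maximals are symmetric} (its $\sG_2$-exceptions not arising here) shows $\hf$ is symmetric, and by Lemmas~\ref{typeI-sp}, \ref{typeII-sp}, \ref{typeIII-sp} it is one of $\symp(p_1,q_1)\oplus\symp(p_2,q_2)$ (with $p_1+p_2=p$, $q_1+q_2=q$), $\symp(p,\C)$ (if $p=q$), $\uf(p,q)$, or $\gl(p,\HH)$ (if $p=q$). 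For each I read off $\lf_\hf=\lf\cap\hf$ from Table~\ref{lcaph1} (rows 11, 12a, 12b) and invoke Proposition~\ref{tower-factor} and Corollary~\ref{cor tower-factor}: a reductive $\hf'\subsetneq\hf$ is real spherical in $\gf$ only if the factorization \eqref{factor condition} holds, and it is real spherical precisely when $H/H'$ admits an open orbit for the minimal parabolic $P\cap H$ of $L_H$ (Lemma~\ref{min psgp LcapH}). Together with the fact that symplectic Lie algebras admit no factorizations (Proposition~\ref{Oni2}, Remark~\ref{embeddings remark}) and with Lemma~\ref{factor compact}, the condition \eqref{factor condition} cuts the candidates down to a short list in each case.

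For $\hf=\symp(p,\C)$ and $\hf=\gl(p,\HH)$ (both forcing $p=q$) the algebra $\lf_\hf$ is compact — namely $\symp(1)^p$, respectively $\uf(1)^p$, the latter lying inside $\sl(p,\HH)$ with trivial projection to the centre $\R I$ of $\gl(p,\HH)$. Applying Lemma~\ref{factor compact} (to $\hf$, or to its semisimple part, which has no compact ideals) one sees \eqref{factor condition} forces $\hf'=\hf$, so these cases contribute nothing. For $\hf=\uf(p,q)=\su(p,q)\oplus\uf(1)$ one has $\lf_\hf=\uf(q-p)\oplus\uf(1)^p$, again compact; projecting \eqref{factor condition} to the simple ideal $\su(p,q)$ and using Lemma~\ref{factor compact} leaves only $\hf'=[\hf,\hf]=\su(p,q)$. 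Since $\su(p,q)$ is coabelian in the absolutely spherical subalgebra $\uf(p,q)$ — the complexification being $\sl(n,\C)\subset\gl(n,\C)\subset\symp(n,\C)$, $n=p+q$ — Lemma~\ref{center removed} applies and shows $\su(p,q)$ is real spherical exactly when the distinguished root $\alpha_n$ (fourth column of the table in the proof of Lemma~\ref{center removed}) is compact for $\gf$; inspecting the Satake diagram of $\symp(p,q)$, whose white nodes are $\alpha_2,\alpha_4,\dots,\alpha_{2p}$, this holds iff $n\neq 2p$, i.e.\ $p\neq q$. As $(\symp(n,\C),\sl(n,\C))$ fails the dimension bound, $\su(p,q)$ is never absolutely spherical, and we obtain case (iii).

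There remains $\hf=\hf_1\oplus\hf_2=\symp(p_1,q_1)\oplus\symp(p_2,q_2)$ with, say, $p_1+q_1\le p_2+q_2$; here $[\hf,\hf]=\hf$. By Goursat's lemma a reductive $\hf'\subsetneq\hf$ is either of the form $\hf_1'\oplus\hf_2'$ with $\hf_i'\subseteq\hf_i$, or — when $\hf_1\cong\hf_2$ — the diagonal $\Delta\hf_1$ (nothing can be adjoined, the $\hf_i$ being simple). I first dispose of the diagonal case: then the non-compact $\symp(\cdot,\cdot)$-summands of $\lf_\hf$ in Table~\ref{lcaph1} degenerate, so $\lf_\hf$ is compact while $\hf$ is non-compact semisimple (the one exception, $\hf=\kf$ for $\gf=\symp(k,k)$, being eliminated by the dimension bound \eqref{db}), and Lemma~\ref{factor compact} forbids \eqref{factor condition}. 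For $\hf'=\hf_1'\oplus\hf_2'$, since symplectic algebras admit no factorizations each $\hf_i'$ must be $0$ or $\hf_i$: one needs $\hf_i=\hf_i'+(\text{projection of }\lf_\hf\text{ to }\hf_i)$, and by Remark~\ref{lcaph embedding}(c) that projection is a compact subalgebra of a maximal compact subalgebra of $\hf_i$ (the diagonally embedded $\symp(1)$-factors, together with a possible $\symp(\cdot)$-block), so Lemma~\ref{factor compact} applies unless the block is present, while a crude dimension count shows $\symp(a,b)\oplus\symp(1)^k=\symp(p_i,q_i)$ can hold only with $k\le1$. Tracking which factor this forces to equal the compact $\symp(1,0)$ or $\symp(0,1)$, one is left with $\hf'=\hf_2\in\{\symp(p-1,q),\symp(p,q-1)\}$, arising from the symmetric subalgebras $\symp(1,0)\oplus\symp(p-1,q)$ and $\symp(0,1)\oplus\symp(p,q-1)$. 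For these, $H/H'\cong\Sp(1)$ is compact, and $L_H$ contains the diagonal copy of $\Sp(1)$ (Table~\ref{lcaph1}), which lies in $P\cap H$ and surjects onto $H/H'$; hence $P\cap H$ acts transitively on $H/H'$, so by Proposition~\ref{tower-factor} they are real spherical, and they are not absolutely spherical because $(\symp(n,\C),\symp(n-1,\C))$ does not occur in Krämer's list \cite{Kr}. This yields cases (i) and (ii); conversely absolutely spherical subalgebras are real spherical by Lemma~\ref{real form}, completing the equivalence.

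I expect the main difficulty to lie in the last paragraph. One must be precise about how $\lf_\hf$ sits inside $\hf_1\oplus\hf_2$ according to Table~\ref{lcaph1} and Remark~\ref{lcaph embedding} — in particular which factor carries the non-compact $\symp(\cdot,\cdot)$-summand and that the $\symp(1)$-factors are diagonal — and, because \eqref{factor condition} is only necessary in general, one must genuinely verify that $H/H'$ is spherical for the minimal parabolic $P\cap H$ of $L_H$, which for (i)/(ii) rests on the compactness of the quotient $\Sp(1)$. There is also routine but error-prone bookkeeping in matching the ordering conventions of Table~\ref{lcaph1} and in keeping the three listed families separate from the absolutely spherical cases.
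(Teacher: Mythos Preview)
Your overall strategy coincides with the paper's, and the cases $\hf=\sp(p,\C)$, $\gl(p,\HH)$, and $\uf(p,q)$ are handled correctly. The gap is in your treatment of $\hf=\sp(p_1,q_1)\oplus\sp(p_2,q_2)$, where the appeal to Goursat's lemma is not valid as stated: a reductive subalgebra of $\hf_1\oplus\hf_2$ need not be a direct sum $\hf_1'\oplus\hf_2'$ or a full diagonal---it could, for instance, be the graph of an embedding of a proper reductive subalgebra of $\hf_1$ into $\hf_2$. The correct argument (which the paper carries out more tersely) is to project the complexified factorization condition to each simple factor $\hf_{i,\C}=\sp(n_i,\C)$; since $\sp(n_i,\C)$ admits no nontrivial factorization, one obtains $\pi_i(\hf'_\C)=\hf_{i,\C}$ or $\pi_i((\lf_\hf)_\C)=\hf_{i,\C}$ for each $i$. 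The second alternative forces $\hf_i=\sp(1)$ by a dimension count. Only \emph{after} this reduction does Goursat apply (to subalgebras surjecting onto both factors, or onto one factor together with a map into $\sp(1)$), and it then yields the dichotomy you want.

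Two smaller points follow from this. First, once $\hf_1=\sp(1)$, the subalgebras $\hf'=\ff\oplus\hf_2$ with $\ff=\uf(1)$ also satisfy \eqref{factor condition}; your claim ``$\hf_i'$ must be $0$ or $\hf_i$'' overlooks them. They are absolutely spherical (Table~\ref{non-symmetric real forms}, row $\sp(p-1,q)+i\R$), so the statement of the proposition is unaffected, but they should be accounted for. Second, your dimension-bound disposal of the diagonal in $\kf=\sp(k)\oplus\sp(k)\subset\sp(k,k)$ fails at $k=1$, where the bound is met with equality; there one observes instead that every diagonal $\sp(1)$ is conjugate to $\lf_\hf$ itself, so \eqref{factor condition} fails directly.
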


\begin{proof} We need to consider subalgebras of the following cases
  from (11) and (12) in Table \ref{lcaph1}:
  \[\hf=\symp(p,\C), \ \symp(p_1, q_1) \times \symp(p_2,q_2), \
    \uf(p,q), \ \gl(p,\Hb) \] where $q=p$ in the first and last cases.
  Since symplectic algebras admit no factorizations by Proposition
  \ref{Oni2} the first case is excluded with Proposition
  \ref{tower-factor}.  We can argue similarly in the second case,
  except when $\lf_\hf$ surjects onto one of the factors of
  $\hf=\hf_1\oplus\hf_2$.  According to Table \ref{lcaph1} this
  happens if and only if $\hf_1$ or $\hf_2$ is $\symp(1)$, in which
  case the other factor $\hf'$ of $\hf$ is $\sp(p-1,q)$ or
  $\sp(p,q-1)$.  Then $\mf$ belongs to $\hf$ and surjects onto
  $\symp(1)$ along $\hf'$. Hence $\hf'$ is spherical. Further we
  observe that it is not absolutely spherical, but any strictly larger
  subalgebra is.  This gives (1)-(2).

  For the third case, $\hf=\uf(p,q)$, we note that $\lf\cap\hf$ is
  compact according to Table \ref{lcaph1}. Hence if $\hf'\subset \hf$
  satisfies \eqref{factor condition} then Lemma \ref{factor compact}
  implies $\su(p,q)\subset\hf'$.  With Lemma \ref{center removed} we
  conclude (3).

  Finally, in the fourth case $\hf=\gl(p,\Hb)$ we have
  $\lf_\hf =\uf(1)^p$ and hence no proper factorization is possible.
\end{proof}

\subsection{The exceptional cases}

For convenience we record here Cartan's list of the nine symmetric
subgroups in the complex exceptional Lie groups of type E:

\smallskip
\noindent
\begin{tabular}{l|c|c|c}
  $G_\C$&$\sE_6^\C$&$\sE_7^\C$&$\sE_8^\C$\\\hline
  $H_{1\C}$&$\Sp(4,\C)$&$\SL(8,\C)$&$\SO(16,\C)$\\
  $H_{2\C}$&$\SL(6,\C)\times\SL(2,\C)$&$\SO(12,\C)\times\SL(2,\C)$&$\sE_7^\C\times\SL(2,\C)$\\
  $H_{3\C}$&$\SO(10,\C)\times\SO(2,\C)$&$\sE_6^\C\times\SO(2,\C)$&\\
  $H_{4\C}$&$\sF_4^\C$&&\\
\end{tabular}

\smallskip For the list of real symmetric subgroups we shall refer to
\cite{Berger}.

\begin{proposition}\label{classification exceptional} 
  Let $\gf$ be a non-complex and non-compact simple exceptional Lie
  algebra and let $\hf\subset\gf$ be reductive.  Then $(\gf, \hf)$ is
  real spherical if and only if it is absolutely spherical or, up to
  conjugation,
  \begin{enumerate}
  \item $\gf=\sF_4^2$ and $\hf=\symp(2,1)\oplus \ff $ with
    $\ff \subset \uf(1)$, or
  \item $\gf=\sE_6^4$ and $\hf=\sl(3,\Hb)\oplus \ff$ with
    $\ff\subset\uf(1)$, or
  \item $\gf=\sE_7^2$ and $\hf=\sE_6^2$ or $\sE_6^3$.
  \end{enumerate}
\end{proposition}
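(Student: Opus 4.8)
The plan is to run the general strategy of this section: handle the quasi-split forms by a shortcut and the others by the tower argument, reading $\lf\cap\hf$ off the tables of Section~\ref{section:LcapH} and matching factorizations against Onishchik's list. For the quasi-split real forms $\sG_2^1$, $\sF_4^1$, $\sE_6^1$, $\sE_7^1$, $\sE_8^1$, Lemma~\ref{lemma quasi-split} identifies real sphericity of $(\gf,\hf)$ with complex sphericity of $(\gf_\C,\hf_\C)$, so every reductive real spherical subalgebra of such a $\gf$ is by definition absolutely spherical and none of (1)--(3) occurs. This leaves the non-quasi-split forms $\sF_4^2$, $\sE_6^3$, $\sE_6^4$, $\sE_7^2$, $\sE_7^3$, $\sE_8^2$; for $\sF_4^2$, of real rank one, the statement is also contained in the classification of rank-one real spherical pairs, but the argument below is uniform.

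Fix one such $\gf$. A reductive real spherical $\hf$ lies in a maximal reductive subalgebra $\hf^{\max}$, which is then itself real spherical, hence symmetric by Lemma~\ref{max is symmetric} (recall $\gf\ne\sG_2^1$). Berger's list~\cite{Berger} --- for the E-types its complex members are recalled in the table preceding the proposition --- gives the finitely many $\hf^{\max}$, and Table~\ref{lcaph2} supplies $\lf\cap\hf^{\max}$ in each case. By Corollary~\ref{cor tower-factor} a reductive real spherical $\hf\subsetneq\hf^{\max}$ must satisfy the factorization $\hf^{\max}=\hf+(\lf\cap\hf^{\max})$; I would combine this with four inputs to cut down the list: Onishchik's classification of factorizations of complex simple Lie algebras (Proposition~\ref{Oni2}), in particular that simple algebras of symplectic or exceptional type admit no nontrivial factorization (Remark~\ref{embeddings remark}); Lemma~\ref{factor compact}, by which a factorization of a semisimple algebra with no compact ideals by a compact subalgebra is trivial; the embedding assertions of Remark~\ref{lcaph embedding}, which record into which simple and central factors of $\hf^{\max}$ the summands of $\lf\cap\hf^{\max}$ are sent; and the dimension bound~\eqref{db}. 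Going through the symmetric $\hf^{\max}$, one finds that whenever $\hf^{\max}$ is simple of symplectic or exceptional type, or decomposes as $\hf_1\oplus\hf_2$ with $\hf_2$ a small factor (a one-dimensional torus, or a copy of $\su(2)$ or $\sl(2,\R)$) and $\lf\cap\hf^{\max}$ contained in $\hf_1$, the factorization either has no proper solution or forces $\hf=\hf_1$ with $\hf_1$ absolutely spherical, so nothing new arises; and otherwise $\lf\cap\hf^{\max}$ has a summand mapping onto $\hf_2$ and the only candidates are $\hf=\hf_1\oplus\ff$ with $\ff\subseteq\uf(1)\subseteq\hf_2$.

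The cases producing a genuinely new $\hf$ are: $\symp(2,1)\oplus\su(2)\subset\sF_4^2$ and $\sl(3,\Hb)\oplus\su(2)\subset\sE_6^4$, where an $\so(3)$-summand of $\lf\cap\hf^{\max}$ maps onto the $\su(2)$-factor (for $\sF_4^2$ this is exactly the assertion of Remark~\ref{lcaph embedding}), giving $\hf=\symp(2,1)\oplus\ff$, resp.\ $\hf=\sl(3,\Hb)\oplus\ff$, with $\ff\subset\uf(1)$; and $\sE_6^2\oplus\uf(1)$ or $\sE_6^3\oplus\uf(1)\subset\sE_7^2$, where the $\so(2)$-summand of $\lf\cap\hf^{\max}=\so(6,2)\oplus\so(2)$ maps onto the central $\uf(1)$, giving $\hf=\sE_6^2$ or $\sE_6^3$. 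These are exactly (1)--(3). For each I would confirm real sphericity by Proposition~\ref{tower-factor}: $H^{\max}/H$ is $\SU(2)/F$ with $F\subset\Unitary(1)$ in the first two cases and $\Unitary(1)$ in the third, and $L\cap H^{\max}$ acts on it through its $\SO(3)$-, resp.\ $\SO(2)$-summand with an open orbit; alternatively $(\sE_7^2,\sE_6^2)$ and $(\sE_7^2,\sE_6^3)$ are covered by Lemma~\ref{center removed}, being the asterisked entries of Table~\ref{table real spherical}. That these pairs are not absolutely spherical follows from Kr\"amer's list~\cite{Kr}, as the complexified pairs $(\sF_4^\C,\symp(3,\C))$, $(\sE_6^\C,\sl(6,\C))$ and $(\sE_7^\C,\sE_6^\C)$ do not occur there.

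The hardest step is the reduction in the second paragraph, specifically deciding, for each decomposable symmetric $\hf^{\max}=\hf_1\oplus\hf_2$ and each of its real forms, whether $\lf\cap\hf^{\max}$ genuinely has a summand surjecting onto the small factor $\hf_2$ --- equivalently, whether $[\hf^{\max},\hf^{\max}]$ is itself real spherical --- since this governs whether the factorization $\hf^{\max}=\hf_1+(\lf\cap\hf^{\max})$ is available at all, and the answer is sensitive to the real structure (it differs, e.g.\ between $\sE_7^2$ and $\sE_7^3$). This requires careful use of the embedding information in Remark~\ref{lcaph embedding} and of the Satake diagrams, followed by a case-by-case comparison with Onishchik's table~\cite{Oni62}. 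By contrast, the positive verifications via Proposition~\ref{tower-factor} and the checks that the new pairs fail to be absolutely spherical are routine.
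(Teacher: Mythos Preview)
Your overall strategy matches the paper's: quasi-split forms via Lemma~\ref{lemma quasi-split}, then for each non-quasi-split $\gf$ run the tower argument through the symmetric maximal $\hf^{\max}$, reading off $\lf\cap\hf^{\max}$ from Table~\ref{lcaph2} and testing the factorization \eqref{factor condition} against Onishchik. The cases you single out as producing (1)--(3) are the right ones, and Lemma~\ref{center removed} is indeed how the paper confirms~(3).

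There is, however, a genuine gap at the point you yourself flag as hardest. For $\gf=\sE_6^4$ and $\hf^{\max}=\sl(3,\Hb)\oplus\su(2)$ you assert that an $\so(3)$-summand of $\lf\cap\hf^{\max}=\so(5)\oplus\so(3)\oplus\gl(1,\R)$ surjects onto the $\su(2)$-factor, and you cite Remark~\ref{lcaph embedding} as the source of such embedding data. But that remark says nothing about this case: part~(a) covers only $\sE_6^3$, $\sE_7^2$, $\sE_7^3$ (and there asserts the \emph{opposite} phenomenon, $[\lf\cap\hf,\lf\cap\hf]\subset\hf_1$), and part~(b) covers only $\sF_4^2$. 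The paper supplies a separate argument for $\sE_6^4$: if $\lf_{\hf}$ did lie in $\hf_1=\sl(3,\Hb)$, then the $40$-dimensional complement $V=\bigwedge^3\C^6\otimes\C^2$ of $\hf_\C$ in $\gf_\C$ would, as an $\lf_\hf$-module, split as two copies of $\bigwedge^3\C^6$; since $V^{\lf_\hf}\neq 0$ (it contains $\zf(\lf)\cap V$), this would force $\bigwedge^3\C^6$ to be spherical for $\sp(2,\C)\oplus\sp(1,\C)$, which contradicts the branching $\bigwedge^3\C^6|_{\sp(3,\C)}=V(\omega_1)\oplus V(\omega_3)$ together with Kr\"amer's multiplicity tables. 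This representation-theoretic step is the actual content behind case~(2) and is not recoverable from the Satake diagram alone.

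A smaller point on the negative side: your blanket clause ``simple of symplectic or exceptional type'' does not cover $\hf^{\max}_\C=\sl(8,\C)$ in $\sE_7^3$ or $\so(16,\C)$ in $\sE_8^2$, both of which \emph{do} admit Onishchik factorizations (lines~(1) and~(10) of Table~\ref{Onishchik}). The paper excludes these by dimension: for $\sE_8^2$ no proper reductive subalgebra of $\hf_1$ meets the bound~\eqref{dimE_8^2}, and for $\sE_7^3$ one checks $Q=P$, computes $\dim(\lf\cap\hf_1)=12$, and observes that no factor in the $\sl(8,\C)$-factorizations has that dimension. You list the dimension bound among your tools, but your summary sentence does not make clear that it is doing the work in these two places.
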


\begin{proof} If $\gf$ is quasi-split we apply Lemma \ref{lemma
    quasi-split}. In particular we can then assume
  $\gf_\C\neq \sG_2^\C$.

  This leaves us with the $\sE$ and $\sF$-cases which are not
  quasi-split.  As before we shall use the notation $\hf'$ for a given
  candidate of a real spherical subalgebra of $\gf$.  It follows from
  Lemma \ref{max is symmetric} that we may assume $\hf'$ is contained
  in a symmetric subalgebra which we then denote by $\hf$. We can
  assume the inclusion is proper.

  We use Table \ref{lcaph2} for $\lf\cap \hf$, where $Q=LU$ is the
  adapted parabolic for $Z=G/H$. We note that $\lf\cap \hf$ only
  depends on the real form $\gf$ and the complexification $\hf_\C$
  (see Lemma \ref{lcaph}).

  We start with $\gf=\sF_4^2$. From Table \ref{lcaph2} we deduce that
  either $\hf=\so(8,1)$ and $\lf_\hf=\so(7)$, or
  $\hf=\sp(2,1) \oplus \su(2)$ and $\lf_\hf=\so(4)\oplus \so(3)$.
  Since $\so(9,\C)$ does not admit non-trivial factorizations, no
  proper reductive subalgebra of $\hf$ can be spherical in the first
  case.  In the second case we observe with Remark \ref{lcaph
    embedding} that $\lf_\hf$ projects onto $\su(2)$, the second
  factor of $\hf$, and the cases in (1) emerge.

  We continue with $\gf=\sE_8^2$. Any symmetric subalgebra of $\gf$ is
  a real form of either $\hf_{1,\C}$ or $\hf_{2,\C}$ from the table
  above.  However, no proper reductive subalgebra of $\hf_{1}$ can
  satisfy the dimension bound \eqref{dimE_8^2}.  We assume
  $\hf'\subsetneq\hf_2$, a real form of $\hf_{2,\C}$, and let $Q=LU$
  be the adapted parabolic for $G/H_2$. Then
  \begin{equation}\label{dec hf2}
    \lf\cap \hf_2 + \hf'=\hf_2
  \end{equation}
  by \eqref{factor condition}.  Here $(\lf\cap \hf_2)_\C= \so(8,\C)$
  and the projection of this algebra to the second component of
  $\hf_{2,\C}$ must be zero as $\so(8,\C)$ is irreducible and of
  higher dimension than $\sl(2,\C)$.  Hence
  $(\lf\cap \hf_2)_\C\subset \sE_7^\C$. However by Proposition
  \ref{Oni2}, $\sE_7^\C$ admits no proper factorizations, and hence we
  must have $\sE_7^\C\subset \hf'$.  Thus $\lf\cap \hf_2\subset \hf'$
  which contradicts \eqref{dec hf2}.

  Next we investigate the real forms of $\gf_\C=\sE_7^\C$.  According
  to the table above the symmetric subalgebras in $\gf_\C$ are
  $\hf_{i,\C}$, $i=1,2,3$.

  We start with $\gf=\sE_7^2$ and note that the dimension bound
  \eqref{dimE_7^2} excludes that $\hf\subsetneq \hf_1$.  Next we
  consider the pair $(\gf,\hf_2)$ and the corresponding adapted
  parabolic $Q=LU$. Assume $\hf\subsetneq\hf_2$, then \eqref{dec hf2}
  holds as before.  Here $(\lf\cap \hf_2)_\C= \sl(2,\C)^3$ embeds
  in the first component $\so(12,\C)$ of $\hf_{2,\C}$ (see Remark
  \ref{lcaph embedding}).  By Proposition \ref{Oni2} there exists no
  proper factorization of $\so(12,\C)$ with $\sl(2,\C)^3$ as a
  factor, and hence we conclude that $\so(12,\C)\subset \hf_\C$.  Thus
  $\lf\cap \hf_2\subset \hf$ which contradicts \eqref{dec hf2}.

  For the third case we note that $\sE_6^\C$ has codimension 1 in
  $\hf_{3,\C}$ and does not admit proper factorizations by Proposition
  \ref{Oni2}. Hence if $\hf\subsetneq\hf_3$ then $\hf_\C=\sE_6^\C$.
  We deduce that $\hf$ is real spherical from Lemma \ref{center
    removed}. This gives (3).

  We move on with $\gf=\sE_7^3$, and start with the assumption that
  $\hf\subsetneq \hf_1$.  Here $Q=P$ by Lemma \ref{adapQ} and this
  implies that $\dim H_1/H_1\cap L=\dim G/P= 51$, and hence
  $\dim H_1\cap L=12$.  According to Proposition \ref{Oni2} the only
  factorizations for $\hf_1$ are given by
  \begin{itemize}
  \item $(\sl(8,\C), \symp(4,\C), \sl(7,\C))$,
  \item
    $(\sl(8,\C), \symp(4,\C), \sf(\gl(1,\C) \oplus \gl(7,\C)))$,
  \end{itemize}
  and none of these factors have dimension $12$. With Proposition
  \ref{tower-factor} we reach a contradiction.

  For the case of $\hf_2$ we first recall
  $\hf_2\cap\lf = \so(6) \oplus \so(2) \oplus \sl(2,\R)$.  It
  follows from Remark \ref{lcaph embedding} that $\hf_2\cap\lf$ does
  not surject onto the $\sl(2)$-component of $\hf_2$.  Hence no
  proper reductive subalgebras of $\hf_2$ can be real spherical.

  For $\hf_3$ we are again left to check whether a real form of the
  first component $\sE_6^\C$ of $\hf_{3,\C}$ is real spherical.  Here
  $(\lf\cap \hf_2)_\C= \so(8,\C)$ and thus the projection of
  $(\lf\cap \hf_2)_\C$ to the second component of $\hf_3$ is trivial,
  and hence no real form of $\sE_6^\C$ can be spherical.

  Finally we consider $\gf_\C=\sE_6^\C$ and $\gf=\sE_6^3$ or
  $\sE_6^4$.  The complex symmetric subalgebras $\hf_{1,\C}$,
  $i=1,\dots,4$, are given in the table.

  Since both $\hf_1$ and $\hf_4$ admit no factorizations there are no
  reductive real spherical subalgebras which are contained in $\hf_1$
  or $\hf_4$. We move on and assume $\hf\subsetneq \hf_{2}$, where
  $\hf_2$ is a real form of $\hf_{2,\C}$ in $\gf$.  Write
  $\hf_2=\hf_2' \oplus \hf_2''$ with $\hf_{2,\C}'=\sl(6,\C)$ and
  $\hf_{2,\C}''=\sl(2,\C)$.  {}From Table \ref{lcaph2} we infer
  that
  \begin{equation} \lf\cap \hf_2 =\uf(2) \oplus \uf(2) \quad
    \hbox{for} \quad \gf=\sE_6^3
  \end{equation} 
  \begin{equation} \lf\cap \hf_2 =\so(5) \oplus \so(3)\oplus \gl(1,\R)
    \quad \hbox{for} \quad \gf=\sE_6^4
  \end{equation} 
  We claim that $\hf$ is not spherical for $\gf=\sE_6^3$. Otherwise,
  according to Propositions \ref{Oni2}, \ref{tower-factor},
  $\lf_{\hf_2}$ would surject to a factor of $\hf_2''$.  But this is
  not possible by Remark \ref{lcaph embedding}.

  For $\gf=\sE_6^4$, we claim that $\lf\cap \hf_2$ surjects onto
  $\hf_2''=\su(2)$ and in particular that $\hf_2'=\sl(3,\Hb)$ is
  real spherical.  In order to establish that we let $V\subset \gf_\C$
  be the orthogonal complement of $\hf_\C$ in $\gf_\C$. Note that
  $\dim_\C V=40$ and that $V$ is an irreducible module for
  $\hf_\C$. Hence $V = \bigwedge^3 \C^6 \otimes \C^2$ as an
  $\hf_{2,\C}$-module.  Notice that $\af:=V\cap\zf(\lf)\neq \{0\}$ and
  that $\af$ is fixed under $\lf_\hf$. In order to obtain a
  contradiction, assume that $\lf_{\hf_2}\subset \hf_2'$. Then, as an
  $\lf_\hf$-module, $V = \bigwedge^3 \C^6\oplus \bigwedge^3
  \C^6$. Since $V^{\lf_\hf}\neq \{0\}$ we deduce that the irreducible
  $\hf_{2,\C}'=\sl(6,\C)$-module $\bigwedge^3 \C^6$ is spherical
  for
  $[\lf\cap\hf_2, \lf\cap\hf_2]_\C\simeq \sp(2,\C) \oplus
  \sp(1,\C)$.  But $\bigwedge^3\C^6$ decomposes under
  $\sp(3,\C)$ into $V(\omega_1)\oplus V(\omega_3)$ and hence is not
  spherical for the pair
  $(\sp(3,\C),\sp(2,\C) \oplus\sp(1,\C))$ by
  \cite{Kr}*{Tabelle 1}.  This gives the desired contradiction, and
  hence (2).

  Finally we come to the case of $\hf_{3,\C}$.  Here it is known that
  $\hf_{3,\C}'=\so(10,\C)$ is a complex spherical subgroup by
  \cite{Kr}.  {}From the list in Proposition \ref{Oni2} we extract
  that the only factorizations of $\so(10,\C)$ are given by
  $(\so(10,\C), \so(9,\C), \sl(5,\C)+\ff)$, $\ff\subset\uf(1)$.
  Now for $\gf=\sE_6^3, \sE_6^4$ we have
  $[\lf\cap\hf_3,\lf\cap\hf_3]_\C$ is $\so(6,\C)$ or $\spin(7,\C)$ and
  the factorization of $\hf_{3,\C}$ is not possible. This concludes
  the proof of the proposition.
\end{proof}

By combining Propositions \ref{classification su(p,q)},
\ref{classification su*}, \ref{classification so*},
\ref{classification so(p,q)}, \ref{classification sp(p,q)}, and
\ref{classification exceptional} we finally obtain Table~\ref{table
  real spherical}.

\section{Absolutely spherical pairs}

In this section we prove Theorem \ref{theorem classification}.  For
that it only remains to classify the absolutely spherical pairs, and
we refer to \cite{Berger} for the symmetric ones.

\subsection{The complex cases} We begin by determining the cases for
which $\gf$ has a complex structure.

\begin{proposition} \label{classification complex}Let $\gf$ be a
  complex simple Lie algebra and $\hf\subset\gf$ a real reductive
  subalgebra.  Then $(\gf,\hf)$ is real spherical if and only it is
  absolutely spherical. This is the case if and only if one of the
  following holds \renewcommand\labelenumi{(\roman{enumi})}
  \renewcommand\theenumi\labelenumi
  \begin{enumerate}
  \item\label{cc1} $\hf$ is a real form of $\gf$ (and hence
    symmetric),
  \item\label{cc2}$\hf$ is a complex spherical subalgebra of $\gf$,
  \end{enumerate}
  or $\hf$ is conjugate to $\hf_1\oplus \hf_2$ with
  \begin{enumerate}\setcounter{enumi}{2}
  \item\label{cc3} $(\hf_1,\hf_2)=(\zf(\hf),[\hf,\hf])$,
    $\dim_\R \hf_1=1$, and $(\gf,\hf_2)$ is one of the following
    complex spherical pairs
    \begin{enumerate}
    \item $(\sl(n+m,\C),\sl(n,\C) \times \sl(m,\C))$,
      $0< m< n$
    \item $(\sl(2n+1,\C),\sp(n,\C))$, $n\ge 2$
    \item $(\so(2n,\C),\sl(n,\C))$, $n$ odd, $n\ge 3$
    \item $(\sE_6^\C,\so(10,\C))$,
    \end{enumerate}
  \item \label{cc4}$\hf_1=\su(2)$ or $\sl(2,\R)$, and
    $(\gf,\hf_2)=(\sp(n+1,\C),\sp(n,\C))$, $n\ge 1$.
  \end{enumerate}
\end{proposition}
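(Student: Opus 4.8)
The plan is to collapse ``real spherical'' onto ``absolutely spherical'' via the quasi-split criterion, and then to classify the absolutely spherical pairs by exploiting the product decomposition $\gf_\C=\gf\times\oline\gf$ of the complexification. First I would observe that $\gf$, regarded as a real Lie algebra, is quasi-split: in the Cartan decomposition $\gf=\kf+i\kf$ with $\kf$ the compact real form, a maximal abelian subspace of $i\kf$ is $i\tf_0$ for a maximal torus $\tf_0\subset\kf$, and its centralizer in $\kf$ is the abelian algebra $\tf_0$. Hence Lemma \ref{lemma quasi-split}, applied to the real form $(\gf,\hf)$ of $(\gf_\C,\hf\otimes_\R\C)$, shows that $(\gf,\hf)$ is real spherical if and only if it is absolutely spherical (the implication ``$\Leftarrow$'' being also Lemma \ref{real form}). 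It therefore remains to decide, for $\gf$ complex simple and $\hf$ reductive, when $(\gf_\C,\hf_\C)=(\gf\times\oline\gf,\,\hf_\C)$ is complex spherical; here $\gf\times\oline\gf$ is only semisimple and $\hf_\C$ is stable under the swap involution $\tau$ with fixed algebra $\hf$.

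Next I would set up the structure of $\hf$. Since $\hf$ is reductive, its maximal complex ideal $\hf^{\mathrm{cx}}:=\hf\cap i\hf$ splits off a complementary totally real reductive ideal $\hf^{\mathrm{tr}}$; write $\hf^{\mathrm{tr}}_0:=\hf^{\mathrm{tr}}+i\hf^{\mathrm{tr}}$ for its complex hull and $\hf^+:=\hf^{\mathrm{cx}}\oplus\hf^{\mathrm{tr}}_0\subset\gf$. Under $\gf_\C=\gf\times\oline\gf$ one gets
\[
\hf_\C=\bigl(\hf^{\mathrm{cx}}\times\oline{\hf^{\mathrm{cx}}}\bigr)\oplus\Delta(\hf^{\mathrm{tr}}_0),
\]
with $\Delta$ the $\tau$-diagonal, and $\hf^+$ is the image of $\hf_\C$ under the first projection. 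Projecting a decomposition $\gf\times\oline\gf=\hf_\C+\mathfrak{b}_1\times\mathfrak{b}_2$ (with $\mathfrak{b}_i$ Borel) to the first factor shows that $\hf^+$ is spherical in $\gf$, hence occurs in Krämer's list \cite{Kr}. Conversely, if $\hf^+$ is spherical then so is $\hf^+\times\oline{\hf^+}$ in $\gf\times\oline\gf$; and since $\hf_\C\subseteq\hf^+\times\oline{\hf^+}$ is obtained by replacing the ideal $\hf^{\mathrm{tr}}_0\times\oline{\hf^{\mathrm{tr}}_0}$ by its diagonal, Proposition \ref{tower-factor} (with Corollary \ref{cor tower-factor} and Proposition \ref{Oni1}) reduces the sphericity of $\hf_\C$ to asking that the homogeneous space $(\hf^+\times\oline{\hf^+})/\hf_\C\cong\hf^{\mathrm{tr}}_0$ be spherical for $L\cap(H^+\times\oline{H^+})$. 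Because $\gf$ is complex, the complexification of the real homogeneous space $\gf/\hf^+$ is a product of two copies of the corresponding \emph{complex} spherical homogeneous space, so $L\cap(H^+\times\oline{H^+})$ inherits a product structure and the condition simplifies to requiring that $\lf\cap\hf^+$ — computed exactly as in Section \ref{section:LcapH} — surject onto the ideal $\hf^{\mathrm{tr}}_0$ of $\hf^+$; the admissible $\hf^{\mathrm{tr}}_0$ are further restricted by Onishchik's factorizations (Proposition \ref{Oni2}).

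Finally I would run the case analysis. If $\hf^{\mathrm{tr}}=0$ then $\hf=\hf^{\mathrm{cx}}$ is a complex subalgebra and $\hf_\C=\hf^{\mathrm{cx}}\times\oline{\hf^{\mathrm{cx}}}$ is spherical exactly when $\hf^{\mathrm{cx}}$ is complex spherical, giving \ref{cc2}. If $\hf^{\mathrm{cx}}=0$ then $\hf_\C=\Delta(\hf^{\mathrm{tr}}_0)$, and a dimension count via \eqref{db} forces $\hf^{\mathrm{tr}}_0=\gf$ — every proper reductive subalgebra of a complex simple Lie algebra is too small, as one reads off the tables of Sections \ref{max SL}--\ref{max ex} — so $\hf$ is a real form of $\gf$, hence symmetric by Lemma \ref{real symmetric}: this is \ref{cc1}. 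If both ideals are nonzero, then $\hf^+$ is a proper spherical subalgebra of $\gf$ carrying a nonzero ideal $\hf^{\mathrm{tr}}_0$ onto which $\lf\cap\hf^+$ surjects; scanning Krämer's list against this surjectivity requirement leaves exactly two possibilities: (a) $\hf^{\mathrm{tr}}_0=\C$ is the one-dimensional center of a symmetric $\hf^+$, which occurs precisely for $\hf^+$ one of $\mathfrak{s}(\gl(n,\C)\oplus\gl(m,\C))\subset\sl(n+m,\C)$ with $m\neq n$, $\,\sp(n,\C)\oplus\C\subset\sl(2n+1,\C)$, $\,\gl(n,\C)\subset\so(2n,\C)$ with $n$ odd, or $\so(10,\C)\oplus\C\subset\sE_6^\C$ — and then, after verifying by a Vinberg--Kimel'feld computation in the spirit of Lemma \ref{center removed} that the one-dimensional real center $\hf_1=\zf(\hf)$ does produce a spherical subalgebra, one gets \ref{cc3}; (b) $\hf^{\mathrm{tr}}_0=\sl(2,\C)$ is the $\sp(1,\C)$-ideal of the symmetric subalgebra $\sp(n,\C)\oplus\sp(1,\C)\subset\sp(n+1,\C)$ — onto which the corresponding $\lf\cap\hf^+$ does surject, since for the rank-one space $\Sp(n+1,\C)/\Sp(n,\C)\times\Sp(1,\C)$ the relevant ``$M$'' contains a diagonally embedded copy of $\Sp(1,\C)$ — giving \ref{cc4}, with $\hf_1$ any real form of $\sl(2,\C)$.

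The hard part is this last case: one must carefully bookkeep which ideals $\hf^{\mathrm{tr}}_0$ of which Krämer subalgebras $\hf^+$ are recovered by $\lf\cap\hf^+$, and in particular establish the required center-removal statement from scratch, since Lemma \ref{center removed} is proved only for non-complex $\gf$. A minor subsidiary point is the dimension estimate for proper reductive subalgebras of a complex simple Lie algebra used in the case $\hf^{\mathrm{cx}}=0$.
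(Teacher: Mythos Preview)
Your argument is correct in outline but takes a genuinely different route from the paper. The paper's proof is essentially a one-line citation: after observing (as you do) that $\gf$ is quasi-split so that real sphericity coincides with sphericity of $\hf_\C\subset\gf\oplus\gf$, it simply invokes Mikityuk's classification \cite{Mik}*{Section~5} of complex spherical subalgebras of the semisimple algebra $\gf\oplus\gf$. Mikityuk gives precisely the four types (i)--(iv), and restricting to $\tau$-stable subalgebras immediately yields the proposition.

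You instead rederive the relevant part of Mikityuk's result from scratch, using the tower machinery of Section~\ref{Towers}. Your reduction is sound: the decomposition $\hf_\C=(\hf^{\mathrm{cx}}\times\overline{\hf^{\mathrm{cx}}})\oplus\Delta(\hf^{\mathrm{tr}}_0)$ and the tower through $\hf^+\times\overline{\hf^+}$ do reduce the question to whether the projection of $\lf^+\cap\hf^+$ onto the ideal $\hf^{\mathrm{tr}}_0$ is surjective (surjectivity is both necessary, since a product of proper subgroups cannot cover the group by left-right translation, and sufficient, since the image of a Borel under a surjection is a Borel). The case analysis then recovers exactly (i)--(iv). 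What you gain is self-containment --- no appeal to the Brion--Mikityuk semisimple classification --- at the cost of a considerably longer argument: you must scan all of Kr\"amer's list for ideals onto which $\lf^+\cap\hf^+$ surjects, establish the center-removal statement for complex $\gf$ (your acknowledged ``hard part''), and verify the dimension claim ruling out proper $\hf^{\mathrm{tr}}_0\subsetneq\gf$ in the totally real case. Each of these is doable, but the paper sidesteps them entirely by citing \cite{Mik}.
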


\begin{proof} First observe that $\gf$, considered as a real Lie
  algebra, is quasisplit.  Hence $\hf\subset\gf$ is real spherical if
  and only if $\hf_\C\subset\gf_\C$ is spherical.  We may identify
  $\gf_\C$ with $\gf \oplus \gf$. Now according to \cite{Mik}*{Section 5},
  the complex spherical subalgebras $\tilde \hf$ of $\gf\oplus\gf$ are
  given as follows

  {\renewcommand\labelenumi{(\roman{enumi})}
    \renewcommand\theenumi\labelenumi
    \begin{enumerate}
    \item $\tilde\hf = \diag (\gf)$ (cf. \cite{Mik}*{Prop.\ 5.4}).
    \item $\tilde\hf=\hf_1 \oplus \hf_2$ with $\hf_i\subset\gf$
      complex spherical.
    \item There exists a complex spherical subalgebra
      $\hf_0\subset\gf$ with $\zf(\hf_0)\neq 0$, $[\hf_0,\hf_0]$
      complex spherical and
      $\tilde \hf = [\hf_0,\hf_0] \oplus \zf(\hf_0) \oplus
      [\hf_0,\hf_0]$ and $\zf(\hf_0)$ diagonally embedded (see
      \cite{Mik}, beginning of Section 5 with the notion of a {\it
        principal irreducible spherical pair}).
    \item $\gf=\sp(n+1,\C)$ and
      $\tilde\hf = \sp(n,\C) \oplus \sp(1,\C)
      \oplus\sp(n,\C)$ with $\sp(1,\C)$ diagonally embedded and
      $n\ge1$ (cf. \cite{Mik}*{Prop.\ 5.4}).
    \end{enumerate}} When restricted to subalgebras of the form
  $\tilde\hf= \hf_\C$ these four cases correspond to the four cases
  listed in the proposition. This is easily seen, with use of Krämer's
  list for \ref{cc3}.
\end{proof}

In Table~\ref{non-symmetric spherical} at the end of this section we
record the list of the non-symmetric pairs of case \ref{cc2}.  The
pairs in \ref{cc3}-\ref{cc4} are tabulated in Table~\ref{complex
  forms}.

\begin{remark}\label{remark to table 6}
  Inspecting Table~\ref{non-symmetric spherical} one realizes that it
  has a certain structure (cf.~\cite{JWolf}*{Table (12.7.2)}). In all
  cases but $(8)$ and $(9)$ there is a canonical intermediate
  subalgebra $\fhq_\C$, given in the last column, with the following
  properties (a)-(c).
  \begin{itemize}

  \item[(a)] The pair $(\gf_\C,\fhq_\C)$ is symmetric. Hence all of
    its real forms appear up to isomorphism in Berger's list.
  
  \item[(b)] Except for case $(10)$, the pair $(\fhq_\C,\hf_\C)$ is
    symmetric, as well. Even though $\fhq_\C$ may not be simple the
    real forms of these pairs are easily read off from Berger's list.

  \item[(c)] If $(\gf_\C,\hf_\C)$ is defined over $\R$ then also
    $\fhq_\C$ is defined over $\R$. Indeed,
    $\fhq_\C=N_{\gf_\C}(\hf_\C)$ in cases $(1)$, $(4)$, and
    $(11)$. Moreover,
    $\fhq_\C=N_{\gf_\C}(C_{\gf_\C}([\hf_\C,\hf_\C]))$ in cases $(2)$,
    $(3)$, and $(7)$. For cases $(5)$ and $(6)$ one argues as follows:
    for all real forms of $\gf_\C$, the defining representation $V$ is
    defined over $\R$. Then $\fhq_\C=C_{\gf_\C}(V^{\hf_\C})$ is
    defined over $\R$, as well. In case $(10)$ the isomorphism
    $\so^*(8)\cong\so(6,2)$ (via triality) shows that it suffices to
    consider real forms for which $V$ is defined over $\R$. Then the
    argument above works.
  \end{itemize}
\end{remark}

\subsection{The non-complex cases} \label{non-complex} We recall that
$\gf$ carries no complex structure if and only if it remains simple
upon complexification. In this case we say that $\gf$ is {\it
  absolutely simple}.

Assume $\gf$ is non-compact and absolutely simple. Using the reasoning
in Remark \ref{remark to table 6}, we obtain all non-symmetric,
absolutely spherical reductive subalgebras $\hf$.  The list is given
in Table \ref{non-symmetric real forms} below.  Only the last five
rows, which relate to (8), (9) and (10) above, require a separate
argument.

The cases involving real forms of $\sG_2$ are handled using the
following remarks.  The maximal compact subalgebra of $\sG_2^1$ is
$\su(2)+\su(2)$. Hence $\su(3)\not\subset \sG_2^1$. Moreover, the
invariant scalar product on the $7$-dimensional representation of
$\sG_2$ and $\sG_2^1$ has signature $(7,0)$ and $(4,3)$,
respectively. In the second one the isotropy group of a vector with
positive or negative square length is $\SL(3,\R)$ or $\SU(2,1)$,
respectively. This gives the pairs related to (8) and (9), and finally
$(10)$ can be reduced to case $(9)$ in the same way as (b) above.

This completes the proof of Theorem \ref{theorem classification}.

\subsection{Tables}
\label{app tables}

Here we tabulate (up to isomorphism) all absolutely spherical
non-symmetric pairs $(\gf, \hf)$ with $\gf$  non-compact, simple and
$\hf\subset \gf$ reductive:
\begin{itemize}
\item[-]Table~\ref{non-symmetric spherical} lists those pairs in which
  both $\gf$ and $\hf$ have a complex structure.  In this table all
  algebras are implied to be complex.  The table is due to Krämer
  \cite{Kr}.

\item[-]Table~\ref{complex forms} lists those pairs in which $\gf$ but
  not $\hf$ has a complex structure.  The table is extracted from
  Proposition \ref{classification complex}.

\item[-]Table~\ref{non-symmetric real forms} lists those pairs in
  which $\gf$ is absolutely simple. See Section \ref{non-complex}.
\end{itemize}

\begin{table}[b]
  \[
    \begin{array}{rllll}
      &\gf_\C&\hf_\C&&\fhq_\C\\
      \noalign{\smallskip}
      \cline{2-5}
      \noalign{\smallskip}
      (1)&\sl(m+n)&\sl(m)+\sl(n)&m>n\ge 1&\sf[\gl(m)+\gl(n)]\\
      (2)&\sl(2n+1)&\sp(n)+\ff&n\ge2,\ff\subset \C&\gl(2n)\\
      (3)&\sp(n)&\sp(n-1)+\C&n\ge3&\sp(n-1)+\sp(1)\\
      (4)&\so(2n)&\sl(n)&n\ge5\text{ odd}&\gl(n)\\
      (5)&\so(2n+1)&\gl(n)&n\ge 2&\so(2n)\\
      (6)&\so(9)&\spin(7)&&\so(8)\\
      (7)&\so(10)&\spin(7)+\C&&\so(8)+\C\\
      (8)&\sG_2&\sl(3)&&-\\
      (9)&\so(7)&\sG_2&&-\\
      (10)&\so(8)&\sG_2&&\so(7)\\
      (11)&\sE_6&\spin(10)&&\spin(10)+\C
    \end{array}
  \]
  \centerline{\Tabelle{non-symmetric spherical}}
\end{table}

\begin{table}
  \[
    \begin{array}{llll}
      \gf&\hf & \\
      \hline
      \sl(n+m,\C)& \sl(n,\C)+\sl(m,\C)+\zf &\zf \subset \C, \dim_\R \zf=1 & 0<m<n\\
      \sl(2n+1, \C) & \sp(n,\C) +\zf &\zf \subset \C, \dim_\R \zf=1 &    n\geq 2 \\ 
      \so(2n,\C) & \sl(n,\C)+\zf &\zf \subset \C, \dim_\R \zf=1 & n\geq 3,  n\text{ odd}\\
      \sE_6^\C &\so(10,\C)+\zf&\zf \subset \C, \dim_\R \zf=1 & \\ 
      \hline 
      \sp(n+1,\C) & \sp(n,\C) + \ff& \ff \in\{\sp(1), \sp(1,\R)\} & n\geq 1\\
      \hline 
    \end{array}
  \]
  \centerline{\Tabelle{complex forms}}
\end{table}

\begin{table}
  \[
    \begin{array}{lll}
      \gf&\hf \\
      \hline
      \sl(m+n,\R)& \sl(m,\R)+\sl(n,\R)&m>n\ge1\\
      \su(p_1+p_2,q_1+q_2) & \su(p_1,q_1)+\su(p_2,q_2)&p_1 +q_1>p_2+q_2\ge1\\ 
      \sl(m+n,\HH)& \sl(m,\HH)+\sl(n,\HH)&m>n\ge1\\
      \sl(2n+1,\R)&\sp(n,\R)+\ff&n\ge2,\ff\subset \R\\
      \su(2p+1,2q)&\sp(p,q)+\ff& p+q\ge 2,\ff\subset  i\R\\
      \su(n+1,n)&\sp(n,\R)+\ff& n\ge2,\ff\subset  i\R\\
      \hline
      \sp(n,\R)&\sp(n-1,\R)+\ff&n\ge 2,\ff\in\{\R,i\R\}\\ 
      \sp(p,q)&\sp(p-1,q)+i\R&p,q\ge1\\ 
      \hline
      \so(2p,2q)  & \su(p,q)&p\ge q\ge1,\ p+q\ \hbox{odd}\\
      \so(n,n)  & \sl(n,\R)& n\ge3\ \hbox{odd}\\
      \so^*(2n)&\su(p,q)&n=p+q\ge3\ \hbox{odd} \\ 
      \so(2p+1,2q)&\su(p,q)+i\R&p+q\ge2\\ 
      \so(n+1,n)&\sl(n,\R)+\R& n\ge2\\ 
      \hline
      \so(5,4)&\spin(4,3)\\
      \so(8,1)&\spin(7,0)\\
      \so(5,5)&\spin(4,3)+\R \\
      \so(6,4)&\spin(4,3)+i\R\\
      \so(8,2)&\spin(7,0)+i\R\\
      \so(9,1)&\spin(7,0)+\R\\
      \so^*(10)&\multicolumn{2}{l}{\spin(6,1)+ i\R,\ \spin(5,2)+i\R }\\
      \hline
      \sE_6^1&\so(5,5)\\
      \sE_6^2&\so(6,4),\ \so^*(10)\\
      \sE_6^3&\so(10),\ \so(8,2),\ \so^*(10)\\
      \sE_6^4&\so(9,1)\\ 
      \hline 
      \sG_2^1&\sl(3,\R),\ \su(2,1)\\
      \so(4,3)&\sG_2^1\\
      \so(4,4)&\sG_2^1\\
      \so(5,3)&\sG_2^1\\ 
      \so(7,1)&\sG_2\\
      \hline
    \end{array}
  \]
  \centerline{\Tabelle{non-symmetric real forms}}
\end{table}

\newpage


\begin{thebibliography} {10}

\bibitem{Ak} D.~Akhiezer, {\it Real group orbits in flag manifolds},
  in ``Lie groups: Structure, Actions, and Representations'', Progress
  in Mathematics, Vol. 306, Birkhäuser, Basel, 2013, 1--24.

\bibitem{AVE} E.~M.~Andreev, É.~B.~Vinberg and A.~G.~Élashvili, {\it
    Orbits of greatest dimension in semi-simple linear Lie groups},
  Functional Anal. Appl. {\bf 1} (1967), 257--261.


\bibitem{vdBS} E.~P.~van den Ban and H.~Schlichtkrull, {\it The
    Plancherel decomposition for a reductive symmetric space. I-II},
  Invent. Math. {\bf 161} (2005), 453--566 and 567--628.

\bibitem{Berger} M.~Berger, {\it Les espaces symétriques noncompacts},
  Ann. Sci. École Norm. Sup. (3) {\bf 74} (1957), 85--177.

\bibitem{BdS} A.~Borel and J.~de Siebenthal, {\it Les sous-groupes
    fermés de rang maximum des groupes de Lie clos},
  Comment. Math. Helv. {\bf 23} (1949), 200--221.

\bibitem{Bourbaki} N.~Bourbaki, {\it Éléments de
    mathématique. Fasc. XXXIV. Groupes et algèbres de Lie. Chapitre
    IV: Groupes de Coxeter et systèmes de Tits. Chapitre V: Groupes
    engendrés par des réflexions. Chapitre VI: systèmes de racines},
  Actualités Scientifiques et Industrielles, No. {\bf 1337}, Hermann,
  Paris, 1968.

\bibitem{BP} P.~Bravi and G.~Pezzini, {\it The spherical systems of
    the wonderful reductive subgroups}, J. Lie Theory 25 (2015),
  105--123.

\bibitem{Brion} M.~Brion, {\it Classification des espaces homogènes
    sphériques}, Compositio Math. {\bf 63}(2) (1987), 189--208.

\bibitem{C} S.~Chen, {\it On subgroups of the noncompact real
    exceptional Lie group $F_4^*$}, Math. Ann. {\bf 204}, 271--284
  (1973).

\bibitem{Delorme} P.~Delorme, {\it Formule de Plancherel pour les
    espaces symétriques réductifs}, Ann. of Math. {\bf 147} (1998)
  417--452.

\bibitem{D} E.~B.~Dynkin, {\it The maximal subgroups of the classical
    groups}, Selected papers by E.B. Dynkin with Commentary, 37--174,
  Amer. Math Soc., 2000.

\bibitem{Dynk} E.~B.~Dynkin, {\it Semisimple subalgebras of semisimple
    Lie algebras}, Selected Papers by E. B. Dynkin with Commentary,
  Amer. Math Soc., 2000, 175--312.

\bibitem{HC-works} Harish-Chandra, {\it Collected Papers, IV,
    1970--1983}, Springer, Heidelberg, 1984.

\bibitem{H} F.~R.~Harvey, Spinors and Calibrations. Perspectives in
  Mathematics, Vol. 9, Academic Press, Boston, MA, 1990.

\bibitem{Helgason} S.~Helgason, {\it Differential geometry, Lie
    groups, and symmetric spaces}, Pure and Applied Mathematics,
  Vol. 80, Academic Press, New York-London, 1978.

\bibitem{Kimelfeld}B.~Kimelfeld, {\it Homogeneous domains on flag
    manifolds}, J. Math. Anal. Appl. 121 (1987), 506--588.

\bibitem{KM} L.~Knauss and C.~Miebach, {\it Classification of
    spherical algebraic subalgebras of real simple Lie algebras of
    rank $1$}, J. Lie Theory  {\bf 28} (2018), 265--307.

\bibitem{Kn} F.~Knop, {\it Automorphisms, root systems, and
    compactifications of homogeneous varieties}, J.  Amer. Math.  Soc.
  {\bf 9} (1996), 153--174.

\bibitem{KK} F.~Knop and B.~Krötz, {\it Reductive group actions},
  arXiv: 1604.01005.

\bibitem{KKSS} F.~Knop, B.~Krötz, E.~Sayag and H.~Schlichtkrull, {\it
    Volume growth, temperedness and integrability of matrix
    coefficients on a real spherical space}, J. Funct. Anal. {\bf 271}
  (2016), 12--36.

\bibitem{KKS} F.~Knop, B.~Krötz and H.~Schlichtkrull, {\it The local
    structure theorem for real spherical spaces}, Compositio
  Mathematica {\bf 151} (2015), 2145--2159.

\bibitem{KKS2} \bysame, {\it The tempered spectrum of a real spherical
    space}, Acta Math. {\bf 218} (2017), 319--383.

\bibitem{classII} \bysame, {\it Classification of reductive real
    spherical pairs II: The semisimple case}, arXiv: 1703.08048.

\bibitem{Kom1} B.~P.~Komrakov, {\it Maximal subalgebras of real Lie
    algebras and a problem of Sophus Lie}, Soviet Math. Dokl. {\bf
    41(2)} (1990), 269--273.

\bibitem{Kom2} \bysame, {\it Primitive actions and the Sophus
    Lie problem}, The Sophus Lie Memorial Conference, 187--269,
  Scand. Univ. Press, Oslo, 1994.

\bibitem{Kr} M.~Krämer, {\it Sphärische Untergruppen in kompakten
    zusammenhängenden Liegruppen}, Compositio math. {\bf 38 (2)}
  (1979), 129--153.

\bibitem{KS1} B.~Krötz and H.~Schlichtkrull, {\it Finite orbit
    decomposition of real flag manifolds}, J. Eur. Math. Soc. {\bf 18}
  (2016), 1391--1403.

\bibitem{KS2} \bysame, {\it Multiplicity bounds and the
    subrepresentation theorem for real spherical spaces}, Trans. AMS
  {\bf 368} (2016), no. {\bf 4}, 2749--2762.

\bibitem{KS3} \bysame, {\it Harmonic analysis for real spherical
    spaces}, Acta Math. Sinica, { doi/10.1007/s10114-017-6557-9}.

\bibitem{LiE} \textsf{LiE}{\it , A Computer algebra package for Lie
    group computations}, available at \texttt{http://wwwmathlabo.\hbox
    to 0pt{}\penalty0univ-poitiers.fr/\string~maavl/LiE/}

\bibitem{Mik} I.~V.~Mikityuk, {\it On the integrability of Hamiltonian
    systems with homogeneous configuration spaces}, Math. USSR Sbornik
  {\bf 57(2)} (1987), 527--546.

\bibitem{Oni62} A.~L.~Onishchik, {\it Inclusion relations among
    transitive compact transformation groups}, Trudy
  Moskov. Mat. Obshch. {\bf 11} (1962), 199--242;
  Amer. Math. Soc. Transl. (2) {\bf 50} (1966), 5--58.

\bibitem{Oni69} A.~L.~Onishchik, {\it Decompositions of reductive Lie
    groups}, Math. USSR Sbornik {\bf 9} (1969), 515--554.

\bibitem{SK} M.~Sato and T.~Kimura, {\it A classification of
    irreducible prehomogeneous vector spaces and their relative
    invariants}, Nagoya Math. J. {\bf 65} (1977) 1--155.

\bibitem{JWolf} J.~Wolf, {\it Harmonic Analysis on Commutative
    Spaces}, Mathematical Surveys and Monographs, Vol. 142, American
  Mathematical Society, Providence, RI, 2007.
\end{thebibliography}
\end{document}